\documentclass[12pt]{amsart}



\usepackage{amssymb,amsmath,mathrsfs}


\usepackage{color, epsfig, verbatim, subfigure, epsf, enumitem} 
\usepackage[pagebackref,hypertexnames=false, colorlinks, citecolor=red, linkcolor=red]{hyperref}

\usepackage{graphicx}

\usepackage{setspace}
\setcounter{tocdepth}{1} 

\makeatletter
\@namedef{subjclassname@2020}{%
  \textup{2020} Mathematics Subject Classification}
\makeatother

\usepackage[T1]{fontenc}


\newtheorem{theorem}{Theorem}[section]

\newtheorem{lemma}[theorem]{Lemma}
\newtheorem*{lemma*}{Lemma}
\newtheorem{conjecture}[theorem]{Conjecture}

\newtheorem*{theorem*}{Theorem}
\newtheorem*{conjecture*}{Conjecture}
\newtheorem{corollary}[theorem]{Corollary}
\newtheorem*{corollary*}{Corollary}
\newtheorem{proposition}[theorem]{Proposition}
\newtheorem*{proposition*}{Proposition}



\theoremstyle{remark}
\newtheorem{remark}[theorem]{Remark}

\newtheorem{definition}[theorem]{Definition}
\newtheorem{example}[theorem]{Example}




\numberwithin{equation}{section}


\frenchspacing

\textwidth=13.5cm
\textheight=23cm
\parindent=16pt
\oddsidemargin=-0.5cm
\evensidemargin=-0.5cm
\topmargin=-0.5cm




\newcommand{\McC}{\raise.5ex\hbox{c}}

\newcommand{\D}{\mathbb{D}}
\newcommand{\T}{\mathbb{T}}

\newcommand{\C}{\mathbb{C}}

\newcommand{\R}{\mathbb{R}}
\newcommand{\p}{\mathfrak{p}}

\newcommand{\UHP}{\mathbb{H}} 
\newcommand{\cps}{\C\{z_1\}} 

\newcommand\xqed[1]{%
  \leavevmode\unskip\penalty9999 \hbox{}\nobreak\hfill
  \quad\hbox{#1}}
\newcommand\eox{\xqed{$\blacklozenge$}}  

\renewcommand{\Im}{\operatorname{Im}}


\begin{document}


\baselineskip=17pt



\title[Stable polynomials and bounded rational functions]{Local theory of stable polynomials and bounded rational functions of several variables}

\author[Bickel]{Kelly Bickel}
\address{Bucknell University, Department of Mathematics, 360 Olin Science Building, Lewisburg, PA 17837, USA.}
\email{kelly.bickel@bucknell.edu}

\author[Knese]{Greg Knese$^*$}
\address{Washington University in St. Louis, Department of Mathematics, St. Louis,
MO 63130, USA.}
\email{$^*$corresponding author:\ geknese@wustl.edu}

\author[Pascoe]{James Eldred Pascoe}
\address{Drexel University, Department of Mathematics, 3141 Chestnut Street, Philadelphia, PA 19104.}
\email{jep362@drexel.edu}

\author[Sola]{Alan Sola}
\address{Stockholm University, Department of Mathematics, 106 91, Stockholm, Sweden.}
\email{sola@math.su.se}

\date{\today}

\begin{abstract}
	We provide detailed local descriptions of stable polynomials
	in terms of their homogeneous decompositions, Puiseux expansions, 
	and transfer function realizations. 
	We use this theory to first prove that bounded rational functions on the polydisk
	possess non-tangential limits at every boundary point.
	We relate higher non-tangential regularity and distinguished boundary behavior of
	bounded rational functions to geometric properties of the zero sets of stable polynomials
	via our local descriptions.
	For a fixed stable polynomial $p$,
	we analyze the ideal of numerators $q$ 
	such that $q/p$ is bounded on the bi-upper half plane.  
	We completely characterize this ideal in several geometrically interesting situations
	including smooth points, double points, and ordinary multiple points of $p$.
	Finally, we analyze integrability properties of bounded rational functions and their derivatives on the bidisk.

\end{abstract}


 \subjclass[2020]{32A40, 13H10, 14H45}


\keywords{Stable polynomials, Schur functions, rational inner functions, level sets, singularities, local boundedness, derivative integrability}

\thanks{Grant support: KB supported by NSF DMS grant \#2000088; GK supported by NSF DMS grants \#1900816, \#2247702;
JEP supported by NSF DMS grants \#1953963, \#2319010}

\maketitle

\tableofcontents

\section{Introduction}\label{sec:intro}
	A multivariate \textbf{stable polynomial} $p \in \mathbb{C}[z_1, \dots, z_d]$ is a polynomial
	that does not vanish on a specified domain $\Omega \subseteq \mathbb{C}^d$.
	We generally take $\Omega$ to be the product of upper half planes $\mathbb{H}^d$,
	where \[\mathbb{H}:= \{ z \in \mathbb{C} : \Im(z) >0\},
	\]
	or the conformally-equivalent polydisk
	$\mathbb{D}^d$ with \[\mathbb{D}:= \{ z \in \mathbb{C} : |z|<1\}.\]
	Historically, the term ``stable polynomial" may have arisen in the context of stability of linear time-invariant systems 
	studied by Routh and independently by Hurwitz.
	Within the past few decades however, multivariate stable polynomials have emerged as critical tools for studying phenomena
	in fields including complex analysis, probability theory, control theory, electrical engineering, combinatorics,
	and dynamical systems, see \cite{BBL09, Gur08, GW06, Kne10, Kum02, lsv13, W11}.
	Some notable highlights include work of Br\"and\'en and Borcea \cite{bb09a, bb09b} classifying 
	operators preserving stable polynomials with applications to statistical physics;
	Kurasov and Sarnak  \cite{KS20} using stable polynomials to resolve questions on Fourier quasicrystals; and
	Marcus, Spielman, and Srivastava \cite{MSS1, MSS2} using the technique of
	interlacing families of stable polynomials to resolve major problems in graph theory and analysis.
	For a variety of reasons, which include the existence of simple reflection operations
	($z\mapsto \bar{z}$ or $z \mapsto 1/\bar{z}$), the strongest results/applications of
	 stable polynomials 
	are known in the settings of
	$\UHP^d$ and $\D^d$.  On the other hand, comparatively little is known about stable polynomials 
	on more general domains in $\C^d$.	
	
	Recently, the present authors 
	established a variety of results about
	boundary regularity and integrability of rational functions.
	Along the way a number of \emph{ad hoc} results about the local behavior of stable polynomials were developed.  
	We now standardize and significantly extend this local theory of stable polynomials to create a toolbox powerful enough 
	to address most all of these previous results, at least one conjecture (Conjecture 5.2 of \cite{bps19a}), 
	and several new questions which are posed in Section \ref{sec:struct}. 
	Our local theory gives precise, structural information about the homogeneous expansions, Puiseux factorizations,
	and realization formulas of stable polynomials---thus illuminating exactly how stable polynomials 
	behave on the boundary of their respective zero-free regions.  
	In addition, we construct stable polynomials with more exotic boundary zero sets.

	We investigate the fundamental \textbf{numerator criterion question}:
	 \begin{quote}
	Given a stable polynomial $p$ on $\Omega$, for which polynomials $q$ is $\tfrac{q}{p}$ bounded on $\Omega$?
	\end{quote}
	 In the one-variable setting,
	the zero set of $q$ must include all of the boundary zeros of $p$ by the fundamental theorem of algebra. 
	 In several variables, a correct answer requires a detailed analysis of local zero set behavior.
	We answer the numerator criterion question for stable polynomials with several types of zero set behavior at a distinguished
	 boundary zero, including smooth points, double points, and ordinary multiple points.
	Below, the \emph{Full Numerator Criterion} (Conjecture \ref{conj:fullnum})
	conjectures a complete characterization of the ideal of admissible numerators. 
 
 	The local theory of stable polynomials is used to refine 
	 and extend a number of results from \cite{bps18, bps19a, bps19b, Kne15}  to 
	  general bounded rational functions.
	 The properties studied include measures of non-tangential polynomial approximation, 
	 geometric constraints on boundary regions that witness singularities, and $L^p$-integrability of rational functions and
	 their derivatives. 
	 The next two subsections provide more detailed overviews of these two complementary goals:
	developing a local theory of stable polynomials and using it to characterize the structure and regularity of general bounded rational functions. 

	\subsection{Local theory of stable polynomials on $\mathbb{H}^2$}
	The local theory of stable polynomials is presented in Section \ref{sec:local}, 
	where we collect, refine, and significantly extend
	 results from both \cite{bps18, bps19a, Kne15} and other sources.
	We note that Agler, M\McC Carthy and Stankus \cite{AMS06, AMS08} studied the local geometry of zero sets near the boundary of the polydisk.
	Moreover, in analytic combinatorics and asymptotics in several variables,
	 various authors (see e.g. \cite{PemWil}, \cite{Tsi93}) have also investigated local
	aspects of stable polynomials. 

	Our local theory has three main tools:
	\textbf{homogeneous expansions}, Weierstrass and \textbf{Puiseux factorizations}, and \textbf{realization formulas}. 
	When possible, we consider $d$-variable stable polynomials, but some techniques require the restriction to $d=2$.
	The $d=2$ case is special because we have two tools at our disposal: Puiseux series and 
	transfer function type realization formulas \cite{AglMcCbook}.
	The latter tool is unavailable in more variables	
	due to the failure of And\^o's inequality
	in three or more variables \cite{Par70, Var74}.

	Let $p$ be a stable polynomial (here taken to mean no zeros in $\mathbb{H}^d$)
	 with $p(0,\dots,0)=0.$ Write $p = A +iB$, where $A,B$ are polynomials with real coefficients. 
	 Define the \textbf{reflection} $\bar{p}$ of $p$ by \[\bar{p}(z) = \overline{ p(\bar{z})}= A(z)- i B(z).\] 
	 The reflection operation creates a natural dichotomy in the class of stable polynomials; 
	 each stable $p$ factors as $p=p_1 p_2$ where $p_1$ is \textbf{pure stable}
	 meaning it has no factors in common with $\bar{p}_1$ and $p_2$ is \textbf{real stable}
	meaning $p_2 = c \bar{p}_2$ for some constant $c$ with $|c|=1$.  
	See Section \ref{sub:global} for details.
	
	Our primary result on homogeneous expansions is:

	\begin{theorem*}[\textbf{Homogeneous Expansions}] 
	Assume $p \in \C[z_1,\dots, z_d]$ has no zeros in $\UHP^d$.
	Write $p = A+iB = \sum_j P_j = \sum_j (A_j +iB_j)$, where $P_j$, $A_j$, $B_j$ are homogeneous polynomials
	of degree $j$ and $A_j, B_j$ have real coefficients. Let $M$ be the smallest number with $P_M \not \equiv 0$. Then,
		\begin{itemize}
			\item[a.] $P_M$ has no zeros on $\mathbb{H}^d$ and there is a unimodular $\mu$ such that $\mu P_M$ has real coefficients. 
			\item[b.] If $\mu=1$ and $p$ is pure stable, then $P_M = A_M$, $B_{M+1} \not \equiv 0$, and $\frac{A_M}{B_{M+1}}$ maps $\mathbb{H}^d$ to $\mathbb{H}$. 
		\end{itemize}
	\end{theorem*}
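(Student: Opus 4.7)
My plan for the first claim of part (a) is to rescale and apply Hurwitz's theorem. Define $p_t(z) := t^{-M} p(tz)$ for $t > 0$; each $p_t$ is zero-free on $\UHP^d$ since $z \mapsto tz$ preserves $\UHP^d$, and $p_t \to P_M$ locally uniformly as $t \to 0^+$. Multivariate Hurwitz on the connected domain $\UHP^d$ forces $P_M$ to be either identically zero there or nowhere zero; the former is excluded because $P_M \not\equiv 0$ as a polynomial. For the realness claim, I use that $P_M(-z) = (-1)^M P_M(z)$ extends the nonvanishing to $-\UHP^d$; hence for every $v \in \R^d$ and every $t \in \C$ with $\Im t \neq 0$, the point $v + t\mathbf{1}$ lies in $\UHP^d \cup (-\UHP^d)$, so $P_M(v + t\mathbf{1}) \neq 0$. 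Thus $t \mapsto P_M(v + t\mathbf{1})$ has only real roots, and its leading coefficient $P_M(\mathbf{1})$ is nonzero because $P_M(i\mathbf{1}) = i^M P_M(\mathbf{1})$ and $i\mathbf{1} \in \UHP^d$. Consequently $P_M(v + t\mathbf{1})/P_M(\mathbf{1})$ is a monic polynomial in $t$ with real coefficients for each fixed real $v$; viewing each coefficient as a polynomial in $v$ that takes real values on $\R^d$ shows it has real coefficients in $v$ as well. Setting $t = 0$ then yields that $P_M(z)/P_M(\mathbf{1})$ is a real polynomial, and $\mu := \overline{P_M(\mathbf{1})}/|P_M(\mathbf{1})|$ is the desired unimodular multiplier.

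The first claim of (b) is immediate: $\mu = 1$ forces $P_M$ to have real coefficients, so $B_M \equiv 0$ and $P_M = A_M$. For the remaining assertions, I apply one-variable stability along the ray $sw$ for $w \in \R_{>0}^d$. Since $sw \in \UHP^d$ iff $s \in \UHP$, the polynomial $h_w(s) := p(sw)/s^M$ has all roots $r_j$ in the closed lower half-plane and nonzero constant term $h_w(0) = A_M(w)$. Factoring $h_w(s) = A_M(w) \prod_j (1 - s/r_j)$ and computing the logarithmic derivative at $s = 0$ yields
\[
\frac{A_{M+1}(w) + i\,B_{M+1}(w)}{A_M(w)} \;=\; \frac{h_w'(0)}{h_w(0)} \;=\; -\sum_j \frac{1}{r_j}.
\]
Since $\Im r_j \leq 0$ forces $\Im(-1/r_j) = \Im(r_j)/|r_j|^2 \leq 0$, taking imaginary parts gives the sign constraint $B_{M+1}(w)/A_M(w) \leq 0$ for every $w \in \R_{>0}^d$.

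For $B_{M+1} \not\equiv 0$ I argue by contradiction. If $B_{M+1} \equiv 0$, then $p - \bar{p} = 2iB$ vanishes to order at least $M + 2$ at the origin, so $p$ and $\bar{p}$ share all homogeneous expansions through order $M + 1$. Invoking the Weierstrass/Puiseux factorization machinery developed in Section~\ref{sec:local}, this tangency forces $p$ and $\bar{p}$ to share a local analytic factor at the origin, hence a polynomial common factor---contradicting the coprimality that defines pure stability. Given (ii), the identity theorem upgrades the weak inequality above to the strict $A_M/B_{M+1} < 0$ on $\R_{>0}^d$, and homogeneity of degree $-1$ then yields $A_M/B_{M+1} \in i\R_{>0} \subset \UHP$ on the slice $i\R_{>0}^d$. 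To extend the Pick-function property $A_M/B_{M+1}\colon \UHP^d \to \UHP$ to all of $\UHP^d$, I would re-run the Vieta/logarithmic-derivative computation on the family of affine slices $s \mapsto p(v + sw)$ with $v \in \R^d$ and $w \in \R_{>0}^d$, controlling $\Im(A_M/B_{M+1})$ at every $v + iw \in \UHP^d$. I anticipate the principal obstacle to be step (ii): promoting higher-order boundary tangency into a genuine polynomial common factor requires the paper's Puiseux machinery in an essential way, whereas the one-variable Vieta estimates that handle the other steps only give the weak inequality.
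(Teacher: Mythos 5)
Part (a) is correct, and your slicing argument for realness---restricting $P_M$ to lines $v + t\mathbf{1}$ with $v\in\R^d$ and using that a monic polynomial in $t$ with all real roots and nonzero leading coefficient has real coefficients---is a nice self-contained version of the paper's reference to the general ``stable on $\UHP^d\cup(-\UHP)^d$ implies multiple of reflection'' dichotomy. Your Vieta/log-derivative computation along the ray $s\mapsto p(sw)$, $w\in\R_{>0}^d$, also matches the paper's approach in Proposition~\ref{prop:B1} up to the point where you only conclude the weak inequality $B_{M+1}(w)/A_M(w)\le 0$.

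The genuine gap is the step you flag yourself: $B_{M+1}\not\equiv 0$. Your proposed argument---that $p-\bar p$ vanishing to order $\ge M+2$ forces a local analytic common factor of $p$ and $\bar p$---is false in general. High-order tangency of two curve germs does not force a common factor (think of $z_2 - z_1^2$ and $z_2 - z_1^2 - z_1^{100}$, which are tangent to very high order at the origin yet coprime). Indeed, the Puiseux factorizations of $p$ and $\bar p$ share the same initial segments $q_j$ and cutoffs $2L_j$ but differ in the $\psi_j$ versus $\bar\psi_j$ terms, so the machinery you invoke does not produce a common factor. The paper's argument is cleaner and closes this gap at the same Vieta step: restrict to the diagonal $\zeta\mapsto p(\zeta\tau)$, $\tau=(1,\dots,1)$, and note that if \emph{all} roots were real then $\bar p(\zeta\tau)/p(\zeta\tau)$ would be a unimodular constant, contradicting $|\bar p/p|<1$ on $\UHP^d$ (which holds precisely because $p$ is pure stable). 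Hence at least one root lies strictly in $-\UHP$, so the imaginary part of the sum of reciprocals is strictly negative, giving $B_{M+1}(\tau)\ne 0$ directly. You already have every ingredient for this; you just need to replace the Puiseux gesture by the observation that pure stability forbids all roots of the diagonal slice from being real.

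The Pick-function claim is also left incomplete (``I would re-run the Vieta computation on affine slices''). The paper's Proposition~\ref{prop:AB} avoids this entirely: since $A/B = i\frac{1+\bar p/p}{1-\bar p/p}$ is already a Pick function on $\UHP^d$, one writes, for $z\in\UHP^d$ and $t>0$,
\[
0\le t\,\Im\!\left(\frac{A(tz)}{B(tz)}\right)=\Im\!\left(\frac{A_M(z)+tA_{M+1}(z)+\cdots}{B_{M+1}(z)+tB_{M+2}(z)+\cdots}\right),
\]
and sends $t\to 0$ to get $\Im\bigl(A_M(z)/B_{M+1}(z)\bigr)\ge 0$. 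This is substantially simpler and more robust than slicing, and you should adopt it in place of the sketch.
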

	The condition $\mu=1$ can be arranged by replacing $p$ with $\mu p$.
	Our formal statements are given as Theorem \ref{thm:stablehomog} and Propositions \ref{prop:B1}, \ref{prop:AB}. 
	The above theorem is particularly useful for studying the behavior of $p$ in nontangential regions
	near $\mathbb{R}^d$, the distinguished boundary of $\UHP^d$.  
	When $d=2$, part (b) implies that the tangents
	of $A$ and $B$ must interlace in a particular way, see Proposition \ref{prop:tangent}.
	We note that part (a) of our Homogeneous Expansion theorem was previously obtained in 
	Atiyah-Bott-G\r{a}rding \cite{ABG70} and discussed in Pemantle-Wilson \cite{PemWil}. 

	When $d=2$, more refined local information about stable polynomials can also be extracted via their Puiseux factorizations.  
	Our primary theorem about Puiseux factorizations illustrates the dichotomy between pure and real stable polynomials:
	\begin{theorem*} [\textbf{Puiseux Factorizations}]  Let $p$ have Weierstrass factorization 
	$p = u p_1\cdots p_k$ near $(0,0)$, where $u$ is a unit and each $p_j$ is an irreducible Weierstrass polynomial in $z_2$ of degree $M_j$. 
		\begin{itemize}
			\item[a.] Let $p$ be a pure stable polynomial. Then, near $(0,0)$
			\[
			p(z) = u(z) \prod_{j=1}^k \prod_{m=1}^{M_j} (z_2 + q_j(z_1) + z_1^{2L_j} \psi_j(\mu_j^m z_1^{1/M_j})),
			\]
			where each $L_j$ is a positive integer, $\mu_j = e^{2\pi i/M_j}$, $q_j \in \R[z]$ with $q_j(0)=0, q_j'(0)>0$,
			$\deg q_j < 2L_j$, and $\psi_j$ is holomorphic near $0$  with $\Im (\psi_j(0)) > 0$.
			\item[b.] Let $p$ be a real stable polynomial without monomial factors.  Then, each $M_j =1$ and near $(0,0)$ 
			\[
			p(z) = u(z) \prod_{j=1}^k  (z_2 + \phi_j(z_1)),
			\]
			where  each $\phi_j$ is holomorphic near $0$ with real coefficients, $\phi_j(0)=0,$ and $\phi_j'(0)>0$.
		\end{itemize}
	\end{theorem*}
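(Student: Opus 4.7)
The plan is to apply Weierstrass preparation to factor $p = u \cdot p_1 \cdots p_k$ near $(0,0)$ into irreducible Weierstrass polynomials $p_j$ of degree $M_j$ in $z_2$, and then use the Puiseux parametrization of each factor: there exists a holomorphic germ $\alpha_j$ near $0$ with $\alpha_j(0)=0$ such that
\[
p_j(z_1, z_2) = (\text{unit}) \cdot \prod_{m=1}^{M_j}\bigl(z_2 + \alpha_j(\mu_j^m z_1^{1/M_j})\bigr).
\]
By uniqueness of the Weierstrass factorization, each $p_j$ inherits real or pure stability from $p$, so the task reduces to extracting the structure of the power series $\alpha_j(w) = \sum_{n \geq 1} a_n w^n$ from the local stability condition $\Im \alpha_j(\mu_j^m w) \geq 0$, which must hold for every $m$ and every $w$ in a sector with $w^{M_j} \in \UHP$.

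For part (b), assume $p$ is real stable without monomial factors, so each $p_j$ is real stable, i.e., associate to a polynomial with real coefficients. A Hurwitz continuity argument shows that for every real $z_1$ near $0$, each root $z_2$ of $p_j(z_1, \cdot) = 0$ must be real: a non-real root would pair with its conjugate by the real symmetry of the zero set, and perturbing $z_1$ into $\UHP$ would move one of them into $\UHP$, contradicting stability. Applying this to both positive and negative real $z_1$ forces $\alpha_j(v) \in \R$ for $v$ running over the $(2M_j)$-th roots of small positive reals; a standard Fourier-independence argument on the coefficients $a_n$ yields $a_n = 0$ whenever $M_j \nmid n$ and $a_n \in \R$ whenever $M_j \mid n$. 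Hence $\alpha_j(w) = \phi_j(w^{M_j})$ with $\phi_j$ real, so all $M_j$ Puiseux branches coincide and irreducibility of $p_j$ forces $M_j = 1$ and $p_j = z_2 + \phi_j(z_1)$ up to a unit. The conditions $\phi_j(0) = 0$ and $\phi_j \not\equiv 0$ follow from $p_j(0,0) = 0$ and the no-monomial-factor hypothesis, while $\Im \phi_j \geq 0$ on $\UHP$ with $\phi_j$ real forces the leading term of $\phi_j$ to be $\phi_j'(0) z_1$ with $\phi_j'(0) > 0$: a leading term $c z_1^k$ with $k \geq 2$ would produce $\Im z_1^k = r^k \sin(k\theta)$ of changing sign as $\theta$ varies over $(0, \pi)$, contradicting stability.

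For part (a), assume $p$ is pure stable, so each $p_j$ is pure stable. Let $n_0$ be the smallest index where $a_{n_0}$ is non-zero and violates the pattern ``$M_j \mid n$ and $a_n \in \R$.'' Testing stability on $w = \epsilon e^{i\theta}$ with $\theta \in (0, \pi/M_j)$ so that $w^{M_j} \in \UHP$, the leading imaginary contribution to $\alpha_j(\mu_j^m w)$ is $|a_{n_0}|\epsilon^{n_0}\sin\bigl(n_0\theta + 2\pi m n_0/M_j + \arg a_{n_0}\bigr)$. For this to be non-negative for every $m$ and every $\theta \in (0, \pi/M_j)$, the $d := M_j/\gcd(n_0, M_j)$ distinct rotated sine-values must all remain non-negative as $\theta$ sweeps a positive-length interval; since $d$ points uniformly spaced by $2\pi/d$ on the unit circle cannot all lie in an arc of length $\pi$ for a range of $\theta$ (unless $d = 1$), we must have $d = 1$, i.e., $M_j \mid n_0$. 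Write $n_0 = \ell M_j$; the principal-branch positivity gives $\Im a_{n_0} \geq 0$, and $\Im a_{n_0} > 0$ since $a_{n_0}$ is non-real. To see $\ell$ is even, take $z_1 = -r + i\delta \in \UHP$ and let $\delta \to 0^+$: the preceding real, integer-power terms $q(z_1) := \sum_{k < \ell} a_{kM_j}\, z_1^k$ satisfy $\Im q(z_1) \to 0$; branch-dependent fractional-power corrections contribute at smaller orders $r^{\ell + l/M_j}$ for $l \geq 1$; and the first nonreal term yields $\Im(a_{n_0} z_1^\ell) \to \Im(a_{n_0})(-1)^\ell r^\ell$. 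For $\Im\tilde\alpha_m(z_1) \geq 0$ in this limit we require $(-1)^\ell \geq 0$, so $\ell$ is even. Set $L_j = \ell/2$; then $q_j(z_1) := \sum_{k < 2L_j} a_{kM_j}\, z_1^k \in \R[z]$ has $\deg q_j < 2L_j$ with $q_j(0) = 0$, and $\psi_j(w) := w^{-2L_j M_j}(\alpha_j(w) - q_j(w^{M_j}))$ is holomorphic with $\psi_j(0) = a_{2L_j M_j}$, so $\Im \psi_j(0) > 0$. Finally $q_j'(0) = a_{M_j} > 0$: were $a_{M_j} = 0$, the first non-zero real coefficient $a_{k' M_j}$ would have $k' \geq 2$ and produce sign-changing $\Im z_1^{k'}$ on $\UHP$ (as in part (b)), a contradiction; and $\Im(a_{M_j} z_1) = a_{M_j}\Im z_1 \geq 0$ forces $a_{M_j} > 0$.

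The main obstacle is the parity step in part (a): extracting $\ell$ even from the boundary limit $z_1 \to -r$ requires verifying that all higher-order Puiseux corrections---in particular the branch-dependent fractional-power terms---are dominated by the leading contribution $\Im(a_{n_0})(-1)^\ell r^\ell$ as $\delta \to 0^+$, uniformly across all $M_j$ branches. A closely related technical point is the ``$M_j \mid n_0$'' step, where the geometric argument that $d \geq 2$ continuously-rotating points cannot all remain in a half-circle of arc $\pi$ as $\theta$ sweeps $(0, \pi/M_j)$ must be executed carefully, especially in the degenerate $d = 2$ subcase where the antipodal pair briefly touches the real axis.
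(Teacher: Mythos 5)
The paper does not prove this theorem directly; it reduces the core local result to Theorem~\ref{thm:localtypes}, which it attributes to Lemma~C.3 of \cite{Kne15} (with a noted correction to allow the real-stable case), together with a short dichotomy argument ruling out ``wrong-type'' Weierstrass factors. You are therefore re-deriving the cited lemma from scratch; your overall strategy (Weierstrass preparation, Puiseux parametrization, then coefficient-by-coefficient extraction from the inequality $\Im\alpha_j \geq 0$ on $\{w : w^{M_j}\in\UHP\}$) is the right one, and part~(b) essentially goes through. Two loose ends worth tying off: at the start of part~(a) you should rule out the possibility that no violating index $n_0$ exists (i.e.\ $\alpha_j(w)=\phi_j(w^{M_j})$ with $\phi_j$ real)---the paper does this by observing such a factor would give $p$ infinitely many zeros on $\R^2$, contradicting pure stability; and ``each $p_j$ inherits pure/real stability'' needs the same kind of argument, since pure/real stability is a global property of polynomials, not something a Weierstrass factor carries automatically by uniqueness.

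The genuine gap is in the $M_j\mid n_0$ step. The claim that the leading imaginary contribution to $\alpha_j(\mu_j^m w)$ at $w=\epsilon e^{i\theta}$ is $|a_{n_0}|\epsilon^{n_0}\sin(n_0\theta + 2\pi m n_0/M_j + \arg a_{n_0})$ ``for every $\theta\in(0,\pi/M_j)$'' is false whenever $n_0>M_j$---which is always the case, since the first nonzero coefficient is $a_{M_j}>0$. For any fixed $\theta$ strictly interior to $(0,\pi/M_j)$, the real integer-power part contributes $\Im q(w^{M_j}) \approx a_{M_j}\epsilon^{M_j}\sin(M_j\theta)$, which is positive and of strictly lower order in $\epsilon$ than $\epsilon^{n_0}$, so it drowns out the $a_{n_0}$ term and the sweep argument (``$d$ rotated sine values must stay nonnegative as $\theta$ sweeps a positive interval'') has no traction. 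The remedy is exactly the tactic you already use in your parity step: pass to the boundary rays $\theta\in\{0,\pi/M_j\}$, where $\Im q(w^{M_j})$ vanishes identically. By continuity of $\alpha_j$, stability gives $\Im\bigl(a_{n_0}e^{i\pi n_0 k/M_j}\bigr)\geq 0$ for all $k=0,\dots,2M_j-1$, a finite system of inequalities rather than a sweep. The equal-spacing argument applied to the $2M_j/\gcd(n_0,2M_j)$ distinct rotated angles then forces $2M_j\mid n_0$---i.e.\ $M_j\mid n_0$ and $n_0/M_j$ even---and $\Im a_{n_0}>0$ all in one stroke. Your self-flagged concerns are slightly misdirected: the $z_1\to -r$ limit in the parity step has no uniformity problem (it is continuity of $\alpha_j$ at a fixed interior point of its domain followed by a standard leading-coefficient estimate), and the $d=2$ degeneracy is handled cleanly by the full set of $2M_j$ rays; the actual defect is the false dominance claim at interior $\theta$.
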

	
	As complicated as item (a) above looks, 
	a conceptual breakthrough in the present paper is that for
	most applications the Puiseux series parts, i.e. the $\psi_j$, can be forgotten
	and the most important data consists of
	\begin{itemize} 
	\item  the even integers $2L_1,\dots, 2L_k$ called local contact orders
	\item the real polynomials $q_1,\dots, q_k$ 
	\item the associated multiplicities $M_1,\dots, M_k$.
	\end{itemize}
	The maximum $K:= \max \{2L_1,\cdots, 2L_k\}$ is 
	called the \textbf{contact order} of $p$ at $(0,0)$
	because it measures how the zero set of a pure stable $p$, denoted $\mathcal{Z}_p$,
	 approaches $\mathbb{R}^2$.
	The papers \cite{bps18, bps19a} used contact order 
	to quantify regularity properties of related rational functions.

	This data is also reflected in natural perturbations of a pure stable $p$.
	Writing $p = A+iB$ as before, it turns out that $A+tB$ is real stable for $t\in \R$ while it is pure stable for $t \in \UHP$. 
	We are able to deduce two key properties about the family of polynomials $A+tB$. 
	First, for all but finitely many $t\in \R$, 
	\begin{equation} \label{intro-AtB}
	A(z)+tB(z) = u(z;t) \prod_{j=1}^{k} \prod_{m=1}^{M_j} (z_2 + q_j(z_1) + z_1^{2L_j}\psi_{j,m}(z_1;t)),
	\end{equation}
	using local contact orders and real polynomials from the Puiseux Factorization Theorem.
	Here $\psi_{j,m}(\cdot; t)$ is analytic at $0$ and $u(z;t)$ is a unit. This shows that the initial segments of
	the branches of the zero sets of $A+tB$ agree with both each other and those of $\mathcal{Z}_p$, resolving the main problem left open in \cite{bps19a} (Conjecture 5.2).
	Second, defining $K_{min}:= \min \{2L_1,\cdots, 2L_k\}$, the \textbf{universal contact order} of $p$ at $(0,0)$,
	we have that
	 in the homogeneous expansion of the unit in \eqref{intro-AtB}
	  \[
	  u(z;t) = 1 + \sum_{j\geq 1} u_j(z;t),
	  \]
	  the polynomials $u_j(z;t)$ are affine in $t$ for $j \leq K_{min}-2$. 
	These results appear in Theorem \ref{thm:segments} and Theorem \ref{thm:ucohomog} and prove critical in our later investigations of rational function regularity.

 In the $d=2$ setting,  useful global operator theoretic 
	formulas related to stable polynomials such as transfer function realizations and determinantal representations  have also been established, see 
	Grinshpan, Kaliuzhnyi-Verbovetskyi, Vinnikov, Woerdeman \cite{GKVVW16,GKVVW16b}, Woerdeman \cite{Woe13}, and
	previous work by the second author \cite{Kne20} which itself was based on recent work of Dritschel \cite{mD18}. 
Such formulas often encode local information about $p$.  We provide an overview of some of them in Subsection \ref{ss:rf}, but we focus now on a formula particularly suited
to the study of general bounded rational functions $q/p$.
Here $p$ is pure stable on $\UHP^2$ and we normalize so that $|q/p|<1$ in $\UHP^2$.
	Let $\gamma : \mathbb{D} \rightarrow \mathbb{H}$ be a M\"obius map. Then $f: = \gamma\circ (q/p)$ is rational, maps $\mathbb{H}^2 \rightarrow \mathbb{H}$,
	and is what is called a \textbf{rational Pick function}.
	We show that, up to conformal equivalence, every rational Pick function satisfies a particularly simple formula involving a matrix with positive imaginary part (PIP).  

	\begin{theorem*}[\textbf{PIP Realizations}]  Let $f$ be a nonconstant rational Pick
	function on $\mathbb{H}^2$. Then there exist conformal self-maps $\sigma_1, \sigma_2$ of $\mathbb{H}$ with $\sigma_2(0)=0$ such that for $g := \sigma_1 \circ f \circ (\sigma_2, \sigma_2)$,
	there exist 
		\begin{itemize}
			\item $c \in \mathbb{C}$, $\alpha, \beta \in \C^n$ for some positive integer $n$,
			\item $n\times n$ matrices $P_1,P_2$ satisfying $0 \le P_1,P_2 \le I$, and $P_1+P_2=I$, 
			\item an $n\times n$ matrix $S$ satisfying $\Im(S):= \tfrac{1}{2i} (S-S^*) \ge 0$
		\end{itemize}
	such that
	\begin{equation}\label{pipintro} g(z) = c + \beta^* \left(S+ z_1 P_1 +z_2 P_2 \right)^{-1} \alpha \quad \text{for } z\in \mathbb{H}^2,\end{equation}
	and  $\lim_{t\searrow 0} g(it,it) \ne \infty$. 
	\end{theorem*}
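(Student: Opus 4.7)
My plan is to reduce the problem in two stages. First, use the conformal self-maps $\sigma_1, \sigma_2$ to replace $f$ by a function $g$ whose diagonal limit at $(0,0)$ is finite. Second, obtain the PIP realization of $g$ by transporting to the polydisk, applying Agler's transfer function realization for rational Schur functions on $\D^2$, and transferring back via the operator Cayley transform.

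For the first stage, the non-tangential limit theorem for bounded rational functions on the polydisk (announced in the abstract and established earlier in the paper) implies, after Cayley transform, that $\lim_{t\searrow 0} f(it,it)$ exists in $\overline{\UHP}\cup\{\infty\}$. If the limit is $\infty$, post-compose with $\sigma_1\in\operatorname{Aut}(\UHP)$ sending $\infty$ to a finite point in $\UHP$; otherwise take $\sigma_1=\operatorname{id}$. The remaining freedom in $\sigma_2\in\operatorname{Aut}(\UHP)$ with $\sigma_2(0)=0$ (a two-parameter subgroup) is then used to normalize data such as the first-order derivative of the resulting $g$ at $(0,0)$, so the realization below takes its cleanest form.

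For the second stage, let $\tau(w)=(w-i)/(w+i)$ be the standard Cayley map $\UHP\to\D$, and set $G:=\tau\circ g\circ(\tau^{-1},\tau^{-1})$. Then $G$ is a rational function mapping $\D^2$ into $\overline{\D}$. Since $d=2$, the Schur class coincides with the Schur--Agler class by And\^o's inequality, so $G$ admits a finite-dimensional unitary transfer function realization
\[
G(\zeta) = A + C Z_P (I - D Z_P)^{-1} B,
\]
where $Z_P=\zeta_1 P_1+\zeta_2 P_2$ for orthogonal projections $P_1,P_2$ on $\C^n$ with $P_1+P_2=I$, and the colligation $\left(\begin{smallmatrix}A&C\\ B&D\end{smallmatrix}\right)$ is unitary (finite dimensionality comes from taking a minimal realization of the rational $G$).

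Pulling back through $g=\tau^{-1}\circ G\circ(\tau,\tau)$, I use the algebraic identity $(I-Z_P)^{-1}=\tfrac{1}{2i}\bigl((z_1+i)P_1+(z_2+i)P_2\bigr)$ and its companion for $I+Z_P$, together with the operator Cayley transform that converts the unitary colligation into a matrix with nonnegative imaginary part, to rewrite $(I-DZ_P)^{-1}$ as $\Lambda(S+z_1 P_1+z_2 P_2)^{-1}(I-D)^{-1}$ with $\Lambda=(z_1+i)P_1+(z_2+i)P_2$ and $S=i(I+D)(I-D)^{-1}$. After combining with the outer Cayley transform $\tau^{-1}$, the formula collapses to $g(z)=c+\beta^*(S+z_1 P_1+z_2 P_2)^{-1}\alpha$, with the projection structure $P_1+P_2=I$ preserved and $\Im(S)\ge 0$ inherited from the unitarity of the colligation. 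The main obstacle is precisely this final algebraic step: carefully tracking the interaction between the scalar Cayley transforms (acting on $\zeta_j\leftrightarrow z_j$) and the operator Cayley transform (acting on the colligation matrix), to verify that $\Im(S)\ge 0$, that the projections retain $P_1+P_2=I$, and that the outer factors $\Lambda$ and $(I-D)^{-1}$ combine cleanly with $C,B$ to produce new vectors $\beta,\alpha$ and a scalar $c$ of the required form.
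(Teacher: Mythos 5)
Your overall shape — Cayley transform to $\D^2$, obtain a finite-dimensional transfer-function realization, apply the operator Cayley transform $S = i(I+D)(I-D)^{-1}$ to move to a PIP realization on $\UHP^2$ — matches the paper's strategy (Proposition \ref{prop:simpleg} together with Theorem \ref{makehsimple}). But there is a genuine gap in how you handle the conformal maps, and it is precisely the point where the theorem has content.

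You treat $\sigma_2$ as a cosmetic normalization (``normalize data such as the first-order derivative''). In the paper $\sigma_2$ is doing real work: the operator Cayley transform requires $I - D$ (equivalently $I - U$) to be invertible, and for a general rational Schur function there is no reason this should hold for the minimal realization you obtain. The paper's proof of Theorem \ref{makehsimple} fixes this by first replacing $f$ by a unimodular multiple so $f^*(1,1)\ne 1$ (your $\sigma_1$), then choosing $\lambda\in\T\setminus\{1\}$ such that $1-\lambda D$ is invertible, $f$ is continuous at $(\lambda,\lambda)$, and $f(\lambda,\lambda)\ne 1$; pre-composing with $z\mapsto \lambda z$ then replaces $D$ by $\lambda D$ and makes $I-\tilde U$ invertible. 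The recentering shift by $\gamma(\bar\lambda)$, which produces $\sigma_2(0)=0$, is also absorbed into $S$. Your proposal never verifies that $(I-D)^{-1}$ exists, so the final algebraic step you flag as ``the main obstacle'' is actually blocked at the start, not just delicate. Separately, you claim $G$ has a finite-dimensional \emph{unitary} realization: for a rational Schur function that is not inner, a minimal finite-dimensional realization is generally only \emph{contractive}; the paper cites Theorem 1.3 of \cite{Kne20} for a finite-dimensional contractive colligation. This is what gives $\Im(S)\ge 0$ (strict unitarity would force $\Im(S)=0$, which is too special), and the contractive case is also where the invertibility of $I-D$ genuinely can fail, so these two oversights are linked.
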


Note the limit $\lim_{t\searrow 0} g(it,it)$ exists in $\overline{\UHP}\cup \{\infty\}$ 
since $g$ restricted to the
diagonal is a one variable rational Pick function.
	The formula \eqref{pipintro} is not valid for all rational Pick functions; Pick functions with certain
	singular behavior at $\infty$ must be represented with a more complicated formula as in the Type IV realizations of Agler, Tully-Doyle and Young
	\cite{ATDY16, PTDPick}.
	In essence, the conformal maps applied to $f$ are designed to
	avoid these global complications which should be unimportant to a local
	theory.

	The PIP realization theorem gives access to 
	boundary singular behavior of not just $p$ but also to a family of 
	perturbations $q+ \lambda p$
	where again $|q/p|<1$.  
	This is in contrast to our previous results which, while very detailed,
	focus on the less general 
	family of perturbations $A+tB$ which are
	simply multiples of the perturbations $\bar{p}+ \lambda p$ (for
	proper choice of $\lambda$ depending on $t$---see Remark \ref{AtBunimod}).

  \subsection{Structure and regularity of bounded rational functions}  \label{sec:struct}
	This paper presents a number of applications of the preceding theory
	to the study of rational functions $f=q/p$ which are bounded and analytic
	on either $\UHP^d$ or $\D^d$.  
	Often we rescale our functions so that $|f|\leq 1$, in which case $f$ is called a
	\textbf{rational Schur function} or \textbf{RSF}.
	While stable polynomials will always be the denominators of RSFs,
	stable polynomials are directly connected to 
	a special class of RSFs called
	\textbf{rational inner functions} or \textbf{RIFs} on $\mathbb{D}^d$ (resp. $\UHP^d$). 
 	These are	rational functions $\phi$ which are holomorphic 
	on $\mathbb{D}^d$ (resp. $\UHP^d$) and whose boundary values satisfy $|\phi(\zeta)| =1$
	for almost every $\zeta \in \mathbb{T}^d$ (resp. $\R^d$). 
	Here, $\mathbb{T}$ is the unit circle and $\mathbb{T}^d$
	is the \textbf{distinguished boundary} of $\mathbb{D}^d$.  

	An example RIF on $\D^2$ is
	  \begin{equation} \label{eqn:rif1} \phi(z_1, z_2) = \frac{2z_1z_2-z_1-z_2}{2-z_1-z_2}.\end{equation}
	RIFs are multivariate analogues of a crucial class of functions called finite Blaschke products 
	(see Garcia, Mashreghi and Ross \cite{GMRBook}),
 but unlike finite Blaschke products, RIFs can have boundary singularities
	as exhibited by \eqref{eqn:rif1} at $(1,1)$.
	RIFs appear frequently in multivariate function theory investigations
	for several reasons.
	Every bounded analytic function on $\mathbb{D}^d$ can be approximated locally uniformly
	by constant multiples of rational inner functions and so, results about RIFs can sometimes be generalized
	to all bounded analytic functions, see \cite{Rud69, Kne08}. In one and two variables, RIFs are the canonical solutions
	to Nevanlinna-Pick interpolation problems  and serve as essential examples of functions which preserve matrix inequalities,
	see \cite{AglMcCbook, AMY12b, PTDRegalPath, PTDRoyal, PTDPick}.
	RIFs also connect operator theory with systems and control engineering 
	(see e.g. Kummert \cite{Kum89} and Ball, Sadosky, and Vinnikov \cite{BSV05}).
	  
		
	The study of RIFs is made easier by the fact that there is a simple description of all RIFs on
	$\D^d$ or $\UHP^d$.	
	A theorem of Rudin and Stout adapted to $\UHP^d$ says that every RIF on $\UHP^d$ is of the form $\bar{p}/p$ for some pure stable $p$; 
	see \cite[Chapter 5]{Rud69}---a similar description is possible on $\D^d$.  
	A large amount of prior work has focused on regularity results for RIFs while the
	present work aims for general RSFs where no such simple description is known.
	Indeed, our first question is about exactly this lack of a simple description.
	
	Sections \ref{sec:ntlimits}-\ref{sec:integrability} are guided by the following questions:
	\begin{itemize}
		\item[Q1:] \textbf{Numerator Characterization.} Given a pure stable polynomial $p$, 
		for which polynomials $q$ is $\tfrac{q}{p}$ bounded on $\UHP^2$? (Section \ref{sec:numerator})
		\item[Q2:] \textbf{Non-tangential regularity.} How much non-tangential regularity do bounded rational functions have at singularities on the distinguished boundary? (Section \ref{sec:ntlimits})
		\item[Q3:]  \textbf{Horn regions.} How does the singular behavior of bounded rational functions
		manifest itself on the distinguished boundary (say, on $\mathbb{T}^2$ or $\mathbb{R}^2$)? (Section \ref{sec:horns})
		\item[Q4:] \textbf{Derivative integrability.} Can we determine the $L^p$ integrability of the partial derivatives of bounded rational functions near boundary singularities? (Section \ref{sec:integrability})
	\end{itemize}


	Our first question (\textbf{Q1}) goes to the heart of the theory and seeks a characterization of RSFs.
	One might naively conjecture that  $\tfrac{q}{p}$ would be bounded on $\mathbb{H}^2$ if $q$ vanishes to at least the same order as $p$ at the boundary zeros of $p$.
	This condition is necessary but not sufficient.
	A complete answer requires a detailed local analysis
	of $p$ at a boundary zero.
	Let us consider without loss of generality a zero at $(0,0)$.
	Since $\bar{p}/p$ is a rational inner function, it is clear that 
	if $q = f_1 p + f_2 \bar{p}$ for some functions $f_1,f_2$
	analytic at $(0,0)$,
	we have that $q/p$ is bounded on a neighborhood
	of $(0,0)$ intersected with $\UHP^2$.
In other words, if $q$ belongs to the ideal generated by $p$ and $\bar{p}$
in the ring $R_0 = \C\{z_1,z_2\}$ of convergent power series centered at $(0,0)$,
then $q/p$ is bounded in $\UHP^2$ near $(0,0)$.
	This ideal, denoted $(p,\bar{p})R_0$,
	 leads to a characterization of numerators
	in the generic yet simplest case of boundary zeros
	where $p$ vanishes to order $1$.  Higher order vanishing makes things more complicated
	as will be seen.  
	Nonetheless, we have the following local characterization, which appears as Theorem \ref{thm:onebranch}:

	\begin{theorem}
		Let $p$ be a pure stable polynomial on $\mathbb{H}^2$ and assume $p$ vanishes to order $1$ at $(0,0)$.
		For any  $q\in \mathbb{C}[z_1, z_2]$, the function $\tfrac{q}{p}$ is
		bounded on a neighborhood of $(0,0)$ intersected with $\mathbb{H}^2$ if and only if $q \in (p,\bar{p}) R_{0}$.
	\end{theorem}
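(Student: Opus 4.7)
The plan is as follows. The sufficiency direction is immediate once one writes $q = f_1 p + f_2 \bar p$ with $f_1, f_2 \in R_0$: then $q/p = f_1 + f_2 (\bar p/p)$, and $\bar p/p$ is rational inner on $\UHP^2$ hence bounded in modulus by $1$, while $f_1, f_2$ are bounded on a small neighborhood of $(0,0)$. So the content is the necessity. My approach is to use the Puiseux Factorization Theorem to reduce the question to a divisibility statement in one variable, and then establish that divisibility by a boundary probing argument.

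Because $p$ is pure stable and vanishes to order $1$ at $(0,0)$, the Puiseux Factorization Theorem forces $k=1$ and $M_1=1$. Thus locally
\[
p(z) = u(z)\bigl(z_2 + a(z_1)\bigr), \qquad a(z_1) := q_1(z_1) + z_1^{2L}\psi(z_1),
\]
where $u$ is a unit, $q_1\in\R[z]$ with $q_1(0)=0$, $q_1'(0)>0$, $\deg q_1 < 2L$, and $\Im\psi(0)>0$. Its reflection factors as $\bar p = \bar u\,(z_2+\bar a(z_1))$, and since $u, \bar u$ are units, $(p,\bar p)R_0 = (z_2+a,\,z_2+\bar a)R_0$. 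Now perform Weierstrass division of $q$ by the Weierstrass polynomial $z_2+a(z_1)$ to obtain $q = (z_2+a)\,h + r(z_1)$ with $h\in R_0$ and $r\in\C\{z_1\}$. The first summand lies in $(p,\bar p)R_0$ automatically, so the theorem will follow once we show $r(z_1)\in (z_2+a,\,z_2+\bar a)R_0$. Setting $z_2 = -a(z_1)$ in a putative identity $r = f(z_2+a)+g(z_2+\bar a)$ gives $r(z_1) = g(z_1,-a(z_1))(\bar a(z_1)-a(z_1))$, so membership is equivalent to divisibility of $r$ by $\bar a - a$ in $\C\{z_1\}$. But $\bar a - a = -2iz_1^{2L}(\Im\psi)(z_1)$ (after interpreting $\Im$ via reflection), and $\Im\psi(0)>0$ means $\bar a - a$ equals $z_1^{2L}$ times a unit. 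So it suffices to prove $z_1^{2L}$ divides $r$, i.e.\ $r$ vanishes to order at least $2L$ at the origin.

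For this divisibility I would use a boundary approach argument. Since $q/p$ is bounded near $(0,0)\cap\UHP^2$, so is $r(z_1)/(z_2+a(z_1))$ after absorbing the unit $u$ and the analytic term $h$. I would then pick a one-parameter test family $(z_1,z_2)=(x+iy,\,-a(x+iy)+i\delta)$ with $x$ real and $y,\delta>0$ small. For this to lie in $\UHP^2$ the parameters must satisfy $\delta > \Im a(x+iy)$, and expanding $a$ gives $\Im a(x+iy)\approx \Im\psi(0)\,x^{2L} + q_1'(0)\,y$. Choosing $\delta$ just slightly above this bound, the value of $z_2+a$ along the path has modulus comparable to $x^{2L}+y$, while $r(z_1)=r(x+iy)$ must be $O(x^{2L}+y)$ uniformly. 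Letting $y\to 0^+$ with $x$ real yields $|r(x)|\lesssim |x|^{2L}$ near $0$, and by holomorphy $r$ vanishes to order at least $2L$ at $0$. Combined with the preceding paragraph this completes the proof.

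The main obstacle, and the step where I would take the greatest care, is the boundary probing argument: one must verify that the test points $(z_1,z_2)$ genuinely lie in the specified neighborhood of $(0,0)$ in $\UHP^2$ uniformly in the parameters, and that the expansion of $\Im a(x+iy)$ is sharp enough to produce the order $2L$ rather than something weaker (this is where the hypotheses $q_1'(0)>0$ and $\Im\psi(0)>0$ from the Puiseux factorization are crucial). Everything else---the Weierstrass division, the reduction of ideal membership to a one-variable divisibility, and the identification of $\bar a-a$ up to unit with $z_1^{2L}$---is formal once the Puiseux structure is in hand.
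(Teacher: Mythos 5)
Your proof is correct and follows essentially the same strategy as the paper's: reduce via the Puiseux factorization and a Weierstrass-type division to showing the one-variable remainder $r$ vanishes to order at least $2L$, then establish this by a boundary probe along a path hugging the branch $z_2 = -a(z_1)$. The differences---dividing by $z_2+a$ rather than by the polynomial part $z_2+q_1$, identifying membership in $(p,\bar p)R_0$ via divisibility by $\bar a - a$ rather than first computing the ideal as $(z_2+q_1,z_1^{2L})R_0$, and probing along points strictly inside $\UHP^2$ rather than along a real path followed by a perturbation argument---are cosmetic.
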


	The forward and more difficult implication amounts to showing that
	boundedness of a rational function $\tfrac{q}{p}$ along
	certain curves derived from the Puiseux factorization of $p$
	forces $q$ to be in the given ideal.
	To state our results for higher order vanishing, we make reference to the Puiseux Factorization Theorem for pure stable polynomials and
	its notations.
	 Let us call each $z_2+q_j(z_1)$ an initial segment with cutoff $2L_j$ and multiplicity $M_j$.  We are able to identify a large subset of numerators $q$ where 
	$q/p$ is bounded near $(0,0)$ in $\UHP^2$ and we also achieve a conceptual reduction of the problem.
	Define the following product ideal
	\[
	\mathcal{I}  = \prod_{j=1}^{k} (z_2+q_j(z_1), z_1^{2L_j})^{M_j} R_{0}.
	\]
	Here $(z_2+q_j(z_1), z^{2L_j})$ is the ideal generated by $z_2+q_j(z_1)$
	and $z_1^{2L_j}$; $(z_2+q_j(z_1), z^{2L_j})^{M_j}$ is the ideal
	generated by $M_j$ products of elements of the former ideal; and $\mathcal{I}$
	is the product of all such ideals. It is worth noting that $\mathcal{I}$ is generally much
	larger than $(p,\bar{p}) R_{0}$.
	Also, define the polynomial
	\[
	[p](z) = \prod_{j=1}^{k} (z_2+ q_j(z_1) + i z_1^{2L_j})^{M_j}.
	\]
	We say a function is \textbf{locally $H^{\infty}$} if it is analytic and bounded
	on a neighborhood of $(0,0)$ intersected with $\UHP^2$.  This is
	to avoid confusion with  the concept of ``locally bounded''.

	\begin{theorem} \label{thm-intro-numcrit}
		Let $p$ be a pure stable polynomial on $\UHP^2$ with $p(0,0)=0$.
		Let $f\in \C[z_1,z_2]$.
		Then,
		\begin{itemize}
			\item If $f \in \mathcal{I}$, then $f/p$ is locally $H^{\infty}$.
			\item  $f/p$ is locally $H^{\infty}$ if and only if $f/[p]$ 
			is locally $H^{\infty}$.
			\item Suppose $p$ either has a double point, an ordinary multiple point, or repeated segments 
			(i.e. all $q_j(z_1)$ are the same). If $f/p$ is locally $H^{\infty}$ then $f \in \mathcal{I}$.
		\end{itemize}
	\end{theorem}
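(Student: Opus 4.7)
The theorem is a three-part statement; my plan is to prove the parts in the order (2), (1), (3), with all parts resting on a single-factor comparison lemma. The lemma asserts that for each $j$ there is a neighborhood $B$ of $(0,0)$ on which $|W_j|, |z_1^{2L_j}| \le |W_j + i z_1^{2L_j}|$ on $\UHP^2 \cap B$, where $W_j = z_2 + q_j(z_1)$. To prove the lemma I would first verify that the polynomial $W_j + i z_1^{2L_j}$ is locally stable on $\UHP^2$ near $(0,0)$, which uses $q_j'(0) > 0$ to dominate the higher-order perturbations; local stability together with the standard reflection argument (the polynomial and its reflection $W_j - iz_1^{2L_j}$ agree in modulus on the real distinguished boundary) gives $|W_j - i z_1^{2L_j}| \le |W_j + i z_1^{2L_j}|$ locally. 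Expanding the squares then yields $\Im(W_j \overline{z_1^{2L_j}}) \ge 0$, and this rearranges to both inequalities of the lemma.

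For bullet (2), I would use the Puiseux Factorization $p = u \cdot \prod_{j,m}(W_j + V_{j,m})$ with $V_{j,m} = z_1^{2L_j}\psi_j(\mu_j^m z_1^{1/M_j})$. Each ratio $(W_j + V_{j,m})/(W_j + iz_1^{2L_j}) = 1 + (\psi_j(\mu_j^m z_1^{1/M_j}) - i) \cdot z_1^{2L_j}/(W_j + iz_1^{2L_j})$ has first factor $O(z_1^{1/M_j}) \to 0$ and second factor bounded by the lemma, so the ratio tends to $1$ as $z \to 0$ in $\UHP^2 \cap B$. Combined with boundedness of the unit $u$, this gives the two-sided comparison of $p$ and $[p]$. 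For bullet (1), bullet (2) reduces the claim to bounding $f/[p]$ for $f \in \mathcal{I}$; a generator of $\mathcal{I}$ has the form $\prod_j W_j^{a_j} z_1^{2L_j(M_j - a_j)}$ with $0 \le a_j \le M_j$, and the corresponding $f/[p]$ equals $\prod_j (W_j/(W_j + iz_1^{2L_j}))^{a_j} (z_1^{2L_j}/(W_j + iz_1^{2L_j}))^{M_j - a_j}$, which is bounded via the lemma.

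For bullet (3), the converse under each hypothesis should proceed by combining bullet (2) with careful test-curve analysis. For ordinary multiple points (all $M_j = 1$), the branches of $\mathcal{Z}_p$ are distinct smooth curves, and evaluating $f$ along nontangential perturbations of each branch forces $f \in (z_2 + q_j(z_1), z_1^{2L_j})$ for each $j$; pairwise coprimality of these ideals (distinct initial segments) combines them into $f \in \mathcal{I}$ via a Chinese-remainder-type argument. For repeated segments ($q_j = q$ for all $j$), the ideal $\mathcal{I}$ collapses to an essentially single-branch ideal, reducing the problem to a direct factorization analysis of $[p]$. The main obstacle is the double-point case ($k=1$, $M_1 = 2$): the two Puiseux branches coincide to leading order, so ordinary polynomial substitutions cannot separate them. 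Here I would parameterize the two branches via the fractional variable $z_1^{1/2}$ and test $f/p$ along carefully chosen curves in $\UHP^2$ that approach each branch nontangentially while remaining separated at the scale $z_1^{2L_1}$, converting the resulting growth conditions on $f$ into the required ideal membership $f \in (z_2 + q_1(z_1), z_1^{2L_1})^2$.
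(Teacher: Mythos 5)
Your overall architecture---reduce to $[p]$ by a single-factor comparison, then attack bullet (3) by test curves---matches the paper's strategy, but two of the key steps as written are incorrect.

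\textbf{The single-factor lemma and its reflection proof.} You claim that $|W_j|, |z_1^{2L_j}|\le |W_j + iz_1^{2L_j}|$ on $\UHP^2$ near $(0,0)$, and you propose to deduce this from local nonvanishing of $W_j + iz_1^{2L_j}$ together with ``the standard reflection argument.'' This step fails. The inequality $|W_j - iz_1^{2L_j}|\le|W_j + iz_1^{2L_j}|$ is equivalent to $\Im\bigl(W_j\,\overline{z_1^{2L_j}}\bigr)\ge 0$, and that quantity does take negative values in $\UHP^2$ arbitrarily close to $(0,0)$. For instance with $W_j = z_2+z_1$, $2L_j=2$, take $z_1 = \epsilon(1+i\delta)$ and $z_2 = i\eta$; then $\Im\bigl((z_2+z_1)\overline{z_1^2}\bigr) = \epsilon^2\bigl[-2\epsilon\delta+(\epsilon\delta+\eta)(1-\delta^2)\bigr]\approx \epsilon^2(\eta-\epsilon\delta)$, which is negative whenever $\eta<\epsilon\delta$. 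The underlying issue is that $W_j + iz_1^{2L_j}$ is \emph{not} globally stable (e.g.\ $z_2 + z_1 + iz_1^2$ has zeros in $\UHP^2$), so $\overline{W_j+iz_1^{2L_j}}/(W_j+iz_1^{2L_j})$ is not a Schur function; and the maximum principle cannot be applied on $\UHP^2\cap B$ because the part of $\partial(\UHP^2\cap B)$ lying in $\UHP^2$ is not controlled. What is true, and what the paper uses, is the one-sided lower bound $\Im\bigl(q_j(z_1) + iz_1^{2L_j}\bigr)\ge c|z_1|^{2L_j}$ for $\Im z_1\ge 0$ and $|z_1|$ small (Theorem~\ref{PuiseuxLowerBound}, a purely Taylor-expansion estimate). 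Adding $\Im z_2\ge 0$ gives $|W_j + iz_1^{2L_j}|\ge c|z_1|^{2L_j}$, hence $|z_1^{2L_j}|\le C|W_j+iz_1^{2L_j}|$ and then $|W_j|\le (1+C)|W_j+iz_1^{2L_j}|$, with a constant $C=1/c$ that need not be $1$. These bounds with constants are exactly what you need for bullets (1) and (2), and the rest of your argument there goes through with this replacement.

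\textbf{The Chinese Remainder step for ordinary multiple points.} You propose to show $f\in(z_2+q_j(z_1),z_1^{2L_j})$ for each $j$ and then ``combine them into $f\in\mathcal{I}$ via a Chinese-remainder-type argument'' using pairwise coprimality. But the ideals $(z_2+q_j,z_1^{2L_j})$ are all $(z_1,z_2)$-primary and are not pairwise comaximal: the sum of any two of them is contained in the maximal ideal $(z_1,z_2)$. Consequently $\bigcap_j (z_2+q_j,z_1^{2L_j})$ is strictly larger than $\prod_j(z_2+q_j,z_1^{2L_j})$ in general (for instance $z_1^2\in(z_2,z_1^2)\cap(z_2-z_1,z_1^2)$ but $z_1^2\notin(z_2,z_1^2)(z_2-z_1,z_1^2)$). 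The paper's argument circumvents this: one first produces a smaller, more convenient generating set for $\mathcal{I}$ by iterated differencing of the generators $H_k := z_1^{2L_k}\prod_{j\ne k}(z_2+q_j)$, using that $q_j - q_k$ vanishes to order exactly $1$ at an ordinary multiple point; one then expands $f$ in the ``basis'' $\prod_{j>M-n}(z_2+q_j)$, reduces modulo $\mathcal{I}$ to control degrees, and evaluates $[p]$ and $f$ along the curves $z_2=-q_j(z_1)$ to kill the remaining coefficients. A similar reduce-then-test-curves structure handles the other two cases; no coprimality is used.

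\textbf{A classification slip.} You identify the hard subcase of a double point as $k=1$, $M_1=2$ (one irreducible Weierstrass factor of degree~2) and propose to attack it by testing along curves parametrized by $z_1^{1/2}$. But in that situation there is a single initial segment $q_1$, so it is already covered by the ``repeated segments'' case (which uses only integer-power test curves $z_2 = tz_1^{2L_{M-j}} - q(z_1)$ and is arguably the simplest of the three). The case that genuinely requires a new idea is two distinct initial segments $q_1\ne q_2$ ($k=2$, $M_1=M_2=1$), where the order of vanishing of $q_1-q_2$ enters; there the paper tests along $z_2=-q_2(z_1)$ and along $z_2 = tz_1^{2L_1}-q_1(z_1)$ for generic $t$.
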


	See Theorems \ref{thm:segmentinclusion}, \ref{thm:squarep}, 
	\ref{thm:reverseinclusion}.
	From the terminology of the theory of algebraic curves, a \emph{double point} occurs when $p$ vanishes
	to order $2$ at $(0,0)$ and an \emph{ordinary multiple point} occurs when $p$ vanishes to order $M$ 
	and has $M$ distinct tangents. 
	The first and last items are the basis for the following general conjecture.

	\begin{conjecture}[\textbf{Full Numerator Criterion}]  \label{conj:fullnum} 
	Let $p$ be a pure stable polynomial on $\mathbb{H}^2$. 
		For any $f \in \C[z_1,z_2]$, $f/p$ is locally $H^{\infty}$ if and only if
		$f \in \mathcal{I}$.
	\end{conjecture}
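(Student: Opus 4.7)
The sufficiency direction ($f\in\mathcal{I}$ implies $f/p$ is locally $H^\infty$) is Theorem \ref{thm:segmentinclusion}. For the necessity, my plan is as follows.

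First, apply Theorem \ref{thm:squarep} to reduce to the model polynomial: it suffices to prove that $f/[p]$ locally $H^\infty$ implies $f\in\mathcal{I}$, where $[p]=\prod_{j=1}^{k}(z_2+q_j(z_1)+iz_1^{2L_j})^{M_j}$. Abbreviate $w_j:=z_2+q_j(z_1)+iz_1^{2L_j}$; since $iz_1^{2L_j}\in(z_1^{2L_j})$, we have $(w_j,z_1^{2L_j})=(z_2+q_j(z_1),z_1^{2L_j})$ as ideals in $R_0$, so $\mathcal{I}=\prod_j(w_j,z_1^{2L_j})^{M_j}R_0$.

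Second, proceed by induction on $k$. The base case $k=1$ is the repeated segments case, covered by Theorem \ref{thm:reverseinclusion}, as is the special case where all $M_j=1$ (ordinary multiple point). For the inductive step, the main strategy is a \emph{branch separation} argument. For each fixed $j$, construct a one-parameter family of analytic curves $\gamma_{j,s}\subset\mathbb{H}^2$ that shadow the branch $\{w_j=0\}$ near $(0,0)$ while maintaining a lower bound $|w_\ell|\ge c\,|z_1|^{b_{j,\ell}}$ for every $\ell\ne j$, where the exponents $b_{j,\ell}$ come from comparing the Puiseux data $q_j$ versus $q_\ell$ together with the relevant $2L_*$. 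Along $\gamma_{j,s}$, local boundedness of $f/[p]$ yields an estimate of the form $|f|\le C|z_1|^{B_j}|w_j|^{M_j}$ with $B_j=\sum_{\ell\ne j}M_\ell b_{j,\ell}$. Letting $s$ vary and applying the single-branch case (Theorem \ref{thm:reverseinclusion}) should force $f\in z_1^{B_j}(w_j,z_1^{2L_j})^{M_j}$ for each $j$.

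The central obstacle is assembling these per-branch conditions into membership in the product ideal $\mathcal{I}$. In general $\prod_j(w_j,z_1^{2L_j})^{M_j}\subsetneq\bigcap_j(w_j,z_1^{2L_j})^{M_j}$, so one cannot simply intersect; one must exploit the extra factors $z_1^{B_j}$ together with the transversality encoded in the $b_{j,\ell}$ to recover the product structure, perhaps via a Koszul-type syzygy computation or an explicit monomial basis for $R_0/\mathcal{I}$ matched against evaluations on the separating curves. A secondary subtlety is designing $\gamma_{j,s}$ to lie in $\mathbb{H}^2$ despite the branch $\{w_j=0\}$ meeting $\mathbb{R}^2$ only tangentially at the origin; here the positivity condition $\Im(\psi_j(0))>0$ from the Puiseux Factorization Theorem is exactly what makes the construction feasible. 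An alternative would be to deform $p$ via the $A+tB$ family of Theorem \ref{thm:segments} into a polynomial with only ordinary multiple points and then pass to a limit, but transferring membership information for a fixed $f$ through such a deformation appears to require essentially new input.
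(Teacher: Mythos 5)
This statement is a \emph{conjecture} in the paper; the authors do not prove it, and its general case remains open. Your preliminary steps are sound and match what the paper actually establishes: sufficiency is Theorem \ref{thm:segmentinclusion}, the reduction from $p$ to $[p]$ is Theorem \ref{thm:squarep} (together with the correct observation that $(w_j,z_1^{2L_j})=(z_2+q_j(z_1),z_1^{2L_j})$ in $R_0$), and the three settled cases you cite---repeated segments, double points, ordinary multiple points---are exactly the content of Theorem \ref{thm:reverseinclusion}. The spirit of your branch-separation idea is also faithful to how those cases are proved: the paper restricts $f/[p]$ to real curves adapted to the initial segments and compares orders of vanishing.

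The gap is the one you name, and it is genuine, which is precisely why the statement is stated as a conjecture. Two points are worth separating. First, the argument is not really an induction on $k$: the branch-separation step extracts per-branch conditions $f\in z_1^{B_j}(w_j,z_1^{2L_j})^{M_j}R_0$ for all $j$ simultaneously; it does not produce a pure stable polynomial with $k-1$ segments to which the inductive hypothesis can be applied. Second, and more fundamentally, there is no general mechanism to combine those per-branch memberships into $f\in\prod_j(w_j,z_1^{2L_j})^{M_j}R_0$: products of ideals are strictly contained in intersections, and the auxiliary factors $z_1^{B_j}$ (with $B_j=\sum_{\ell\neq j}M_\ell\min\{\operatorname{ord}(q_j-q_\ell),2L_\ell\}$) need not repair this. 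In the cases Theorem \ref{thm:reverseinclusion} does settle, the paper circumvents exactly this difficulty by exhibiting an explicit generating set for $\mathcal{I}$, writing a general $f$ in a finite normal form $f_0+\sum_n f_n\cdot(\text{products of segments})$, reducing modulo $\mathcal{I}$ to obtain sharp degree bounds on the coefficient functions $f_n$, and then annihilating the $f_n$ one at a time using chosen curves; the combinatorics is tractable only because the segments all coincide, or there are just two, or all pairs separate at order one. In the general case one has mixed overlaps among the $q_j$ and mixed multiplicities $M_j$, and no analogous generating set or elimination order is known. Your fallback of deforming through the $A+tB$ family also hits the obstacle you flag: Theorem \ref{thm:segments} controls the branches of $A+tB$ only generically in $t$, and gives no way to transport ideal membership of a \emph{fixed} $f$ along the deformation. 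In short, the proposal correctly locates the obstruction and recovers the known special cases, but it does not close the conjecture.
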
 

After the posting of this paper in September 2021, 
	Conjecture \ref{conj:fullnum} was affirmed in a preprint of J. Koll\'ar \cite{Kollar}.
	While this supersedes the third item of Theorem \ref{thm-intro-numcrit} 
	(essentially Theorem \ref{thm:reverseinclusion}), the details
	of our proof may be of independent interest.  


	The conjecture suggests that, in many cases, the regularity of RSFs 
	 should either mirror (or be better than) that of related RIFs. Our answers to (\textbf{Q2})-(\textbf{Q4}) align with that intuition. 
	For example, the investigation of (\textbf{Q2}) is motivated by the fact that RIFs $\phi$ possess non-tangential limits, denoted $\phi^*(x)$,
	at \emph{every} distinguished boundary point $x$, see \cite{Kne15}.  (Roughly, saying $z \rightarrow x$ non-tangentially in $\mathbb{H}^d$ 
	means the quantities $|z_i-x_i|$ and $\Im(z_i)$  are all comparable as $z=(z_1,\dots, z_d) \rightarrow x=(x_1, \dots, x_d)$.)
	The existence of RSF limits could perhaps \emph{a priori} be more precarious because their
	modulus functions could encode additional singular behavior. This is illustrated by Figure \ref{faveRSFmodplot},
	which displays the modulus of the two-variable bounded rational function on $\mathbb{D}^2$:
	\begin{equation}
		f(z_1,z_2)=\frac{(z_1-1)(z_2-1)}{2-z_1-z_2}
		\label{eq:faveRSF}
	\end{equation} 
	on $\mathbb{T}^2$ identified  with $[-\pi,\pi)^2$, which is clearly discontinuous at $(1,1)$. 

	\begin{figure}[h!]
	      {\includegraphics[width=0.44 \textwidth]{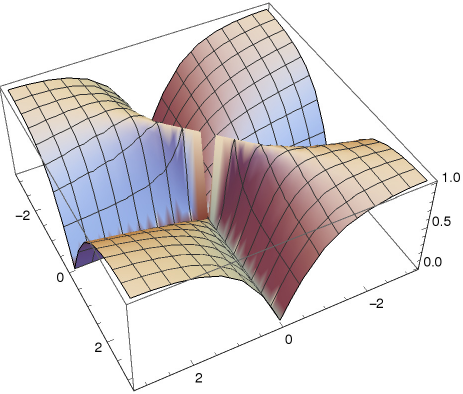}}
	  \caption{\textsl{Modulus of $f(z_1,z_2)=\frac{(z_1-1)(z_2-1)}{2-z_1-z
	_2}$ on $\mathbb{T}^2$.}}
	  \label{faveRSFmodplot}
	\end{figure}
	Despite such apparent obstructions, the following is true:

	\begin{theorem} \label{thm:introuhplim}
		If $f$ is a RSF in $\mathbb{H}^d$, then $f^*(x)$ exists at \textbf{every} point $x$ in $\mathbb{R}^d$.
	\end{theorem}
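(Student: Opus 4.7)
The plan is to combine the Homogeneous Expansion Theorem with an auxiliary lemma about bounded degree-zero rational functions, giving existence of non-tangential limits in all dimensions.

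Write $f = q/p$ with $p$ stable on $\mathbb{H}^d$ and $|f| \leq 1$. If $p(x) \neq 0$ then $f$ extends analytically across $x$, so assume $p(x) = 0$; after translation take $x = 0$. Expand $p = P_M + P_{M+1} + \cdots$ and $q = Q_N + Q_{N+1} + \cdots$ into homogeneous pieces. The Homogeneous Expansion Theorem says $P_M$ has no zeros on $\mathbb{H}^d$. Scaling $z = t\zeta$ with $\zeta \in \mathbb{H}^d$ fixed and $t \searrow 0$, the boundedness $|q/p| \leq 1$ forces $N \geq M$; if $N > M$ then $f(t\zeta) = O(t^{N-M}) \to 0$ uniformly on compact subsets of $\mathbb{H}^d$, while if $N = M$ then $f(t\zeta) \to R(\zeta) := Q_N(\zeta)/P_M(\zeta)$, again uniformly on compacts. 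Passing to the limit of $|f(t\zeta)| \leq 1$ gives $|R| \leq 1$ on all of $\mathbb{H}^d$, so $R$ is a bounded holomorphic function on $\mathbb{H}^d$ that is homogeneous of degree zero.

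The heart of the argument is the following lemma: \emph{any bounded holomorphic $R = Q/P$ on $\mathbb{H}^d$ with $P$ stable and $P, Q$ homogeneous of the same degree must be constant.} In dimension $d = 2$, the proof is direct: $R$ descends via $(z_1, z_2) \mapsto z_1/z_2$ to a meromorphic function on $\mathbb{P}^1$ whose image is the dense slit plane $\mathbb{C} \setminus (-\infty, 0]$; any pole would be approached from the dense image, contradicting boundedness, so $R$ is pole-free and hence constant. For $d \geq 3$, one similarly descends $R$ to a rational function on $\mathbb{P}^{d-1}$ bounded on the open, connected set $U \subseteq \mathbb{P}^{d-1}$ that is the image of $\mathbb{H}^d$; a careful analysis of the argument-based description of $\overline{U}$ shows any algebraic hypersurface in $\mathbb{P}^{d-1}$ meets $\overline{U}$ at points reachable from $U$, so $R$ can have no poles and is again constant.

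Given the lemma, the conclusion is immediate: for $z \to 0$ through a non-tangential approach region $\Gamma_\alpha = \{z \in \mathbb{H}^d : c|z_j| \leq \Im z_j \leq C|z_j| \text{ for all } j\}$, write $z = t\zeta$ with $t = |z|$ and $\zeta \in \Gamma_\alpha \cap S^{2d-1}$, a compact subset of $\mathbb{H}^d$ on which $P_M$ is bounded below in modulus. Uniform convergence in $\zeta$ gives $f(z) \to 0$ (case $N > M$) or $f(z) \to R(\zeta)$ (case $N = M$); but by the Key Lemma $R(\zeta)$ is a single constant, so $f^*(0)$ exists. The main technical obstacle is proving the Key Lemma for $d \geq 3$: the higher-dimensional case requires a careful geometric argument showing that algebraic hypersurfaces cannot hide entirely outside $\overline{U}$, ensuring the directional limit $R(\zeta)$ is truly independent of direction --- precisely what is needed for the non-tangential limit.
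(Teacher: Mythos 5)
Your reduction is sound and mirrors what one must prove: writing $p = P_M + \cdots$, $q = Q_N + \cdots$, the scaling $z = t\zeta$ correctly shows $N \geq M$, and that for the non-tangential limit to exist one needs the degree-zero homogeneous rational function $R = Q_M/P_M$ to be \emph{constant} on $\mathbb{H}^d$. Your Key Lemma is therefore exactly the right target, and it is equivalent to the paper's Lemma \ref{lem:limits2}. Your proof of it for $d=2$ is also correct: the image of $\mathbb{H}^2$ under $(z_1,z_2)\mapsto z_1/z_2$ is the slit plane $\mathbb{C}\setminus(-\infty,0]$, which is \emph{dense} in $\mathbb{P}^1$, so a rational function bounded there has no poles and is constant.

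The gap is in the step you yourself flag: for $d \geq 3$ the open set $U = \pi(\mathbb{H}^d) \subset \mathbb{P}^{d-1}$ is \emph{not dense}, so the ``poles are approached from a dense image'' mechanism is no longer available. Concretely, $[z] \in U$ iff all $z_j \neq 0$ and the arguments $\arg z_1, \dots, \arg z_d$ lie in some common open half-circle; taking $d=3$ and $[z_0] = [1 : e^{2\pi i/3} : e^{4\pi i/3}]$, the three arguments are spaced $2\pi/3$ apart and cannot fit in any open arc of length $\pi$, and the same is true for every point in a whole neighborhood of $[z_0]$. Thus $U^c$ has non-empty interior in $\mathbb{P}^2$, and $\overline{U} \neq \mathbb{P}^{d-1}$ when $d\geq 3$. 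The claim that ``any algebraic hypersurface in $\mathbb{P}^{d-1}$ meets $\overline{U}$ at points reachable from $U$'' is therefore not a routine consequence of density, and even if $\mathcal{Z}(P)\cap\overline{U}\neq\emptyset$ were established, one would still need that intersection to contain points off $\mathcal{Z}(Q)$ so that $|R|\to\infty$ is actually witnessed from inside $U$; if $\mathcal{Z}(P)\cap\overline{U}\subseteq\mathcal{Z}(Q)$ the argument collapses to an indeterminacy-point analysis you have not given. So the $d\geq 3$ case, which is the content of the theorem, is left open.

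For comparison, the paper sidesteps this geometry entirely with an algebraic lift: by Lemma \ref{lem:limits1}, $|q/p|<c$ on $\mathbb{H}^d$ is equivalent to the $(d+1)$-variable polynomial $c(w+i)p(z)-(w-i)q(z)$ being stable on $\mathbb{H}^{d+1}$. Applying Theorem \ref{thm:stablehomog} to \emph{that} polynomial (and to its rotations by $e^{i\theta}q$) forces the bottom homogeneous term $i(cP_M + e^{i\theta}Q_M)$ to be a unimodular multiple of a real-coefficient polynomial for every $\theta$, and a direct coefficient computation then yields $Q_M = bP_M$. That argument is uniform in $d$ and avoids the projective-geometric difficulty altogether. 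If you want to keep a geometric viewpoint, one route that does close the gap is to restrict $R$ to complex lines through a fixed interior point such as $[i:\cdots:i]\in U$: the trace of $U$ on such a line is an open set containing that point, and with a little more care one can reduce to a one-variable argument on each line; but this still requires justifying a density statement on lines and is not the sketch you have given.
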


	Theorem \ref{thm:introuhplim} appears as Theorem \ref{thm:uhplim}. 
	Though we direct the reader to Section \ref{sec:ntlimits} for most details,
	we note that the local theory of stable polynomials also provides insights into higher-order regularity.
	Our Theorem \ref{thm:dird} characterizes when a RSF has directional derivatives and Theorem \ref{thm:nontanreg} characterizes 
	when it has non-tangential polynomial approximations to given orders at boundary points. 
	Our analysis leads to an interesting new result for RIFs quantifying the relationships 
	between contact order and non-tangential behavior; namely, \emph{if a pure stable polynomial $p$ has universal contact order $K$ at $x$, then $\tfrac{\bar{p}}{p}$
	has a non-tangential polynomial approximation of order $K-2$ at $x$}, see Theorem \ref{thm:ucoreg}. This is a partial converse to a key result (Theorem $7.1$) in \cite{bps18}.


	As a complement to non-tangential behavior, (\textbf{Q3}) asks about``ultra-tangential'' behavior, namely behavior on the
	distinguished boundary. For RIFs, the papers \cite{bps18, bps19a}  addressed this by showing
	that if $\phi$ has a singularity at $(0,0)$, the unimodular level sets
	$\{ x \in \mathbb{R}^2: \phi(x) = \lambda\}$ for $\lambda \ne \phi^*(0,0)$ approach $(0,0)$ inside regions we call horns.
	A \emph{horn}  is a region with at least quadratic pinching (see Figure \ref{faveRSFlevelsplot}).
	Section \ref{sec:horns} shows that this result follows naturally, and more easily, from the Puiseux Factorization Theorem.
	 One way to interpret this RIF result is that if $(x_n)_n$ is a sequence in $\mathbb{R}^2$
	that manifests the discontinuity of $\phi$ at $(0,0)$, namely, if $x_n \rightarrow (0,0)$, but $(\phi(x_n))_n$ converges to some value different from $\phi^*(0,0)$,
	then the sequence $(x_n)_n$ becomes trapped in a union of horns. Our interpretation generalizes to the class of RSFs.

	\begin{theorem}\label{thm:introhorn}
		Suppose $f =\tfrac{q}{p}$ is an RSF with $p(0,0)=0$. If a sequence $(x_n)\subset \mathbb{R}^2$
		satisfies $x_n\to (0,0)$ and $f(x_n) \to c \ne f^*(0,0)$, then $(x_n)$ is eventually trapped in a finite union of horns.
	\end{theorem}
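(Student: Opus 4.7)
My plan is to prove the contrapositive: for each $\epsilon > 0$ I will exhibit constants $C = C(\epsilon)$ and $\delta = \delta(\epsilon)$ such that
\[
|f(x)-c_0| < \epsilon \qquad \text{whenever } x \in \mathbb{R}^2 \setminus \mathcal{H}_C \text{ and } \|x\| < \delta,
\]
where $c_0 := f^*(0,0)$ and $\mathcal{H}_C$ is a finite union of horns attached to the Puiseux initial segments of $p$. Granted this, setting $\epsilon = |c-c_0|/2$ forces $(x_n)$ eventually into $\mathcal{H}_C$, as it cannot simultaneously satisfy $\|x_n\| < \delta$ and $x_n \notin \mathcal{H}_C$ while $|f(x_n)-c_0|$ tends to $|c-c_0|$.

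I would first reduce to pure stable $p$: a real stable factor yields real-analytic zero curves by part (b) of the Puiseux Factorization Theorem, whose tubular neighborhoods can be absorbed into horns. Then normalize to $c_0 = 0$ by replacing $(q,f)$ with $(q - c_0 p, f - c_0)$ (still a bounded rational function). Part (a) of the Puiseux Factorization Theorem writes $p$ as a unit times a product of branch factors with initial segments $z_2 + q_j(z_1)$ and cutoffs $2L_j$, and I define
\[
H_{j,C} := \{(x_1, x_2) \in \mathbb{R}^2 : |x_2 + q_j(x_1)| \leq C |x_1|^{2L_j}\}, \quad \mathcal{H}_C := \bigcup_{j=1}^k H_{j,C}.
\]
On $\mathbb{R}^2 \setminus \mathcal{H}_C$ with $C$ large, the real part of each Puiseux factor dominates the imaginary perturbation $x_1^{2L_j}\psi_j(\mu_j^m x_1^{1/M_j})$, yielding the denominator comparison $|p(x)| \asymp \prod_j |x_2 + q_j(x_1)|^{M_j}$.

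The crux is the numerator estimate. Theorem \ref{thm:squarep} says $q/[p]$ is locally $H^\infty$ with $[p] := \prod_j (z_2 + q_j(z_1) + iz_1^{2L_j})^{M_j}$; a direct comparison gives $|[p](x)/p(x)| \leq |u(0,0)|^{-1}(1 + O(1/C))$ outside $\mathcal{H}_C$. The normalization $c_0 = 0$ combined with the Homogeneous Expansion Theorem and the identity $P_M = u(0,0) [p]_M$ forces $(q/[p])^*(0,0) = 0$, so $q$ vanishes strictly faster than $[p]$ non-tangentially at $(0,0)$. Converting this non-tangential vanishing into a quantitative bound via Theorem \ref{thm:nontanreg} applied to $q/[p]$ (or via a direct ideal-adapted decomposition $q = \sum_\sigma \prod_j(a_{j,\sigma}(z)(z_2 + q_j(z_1)) + b_{j,\sigma}(z)z_1^{2L_j})$ with the leading coefficients forced to vanish by $c_0 = 0$) yields $|f(x)| \leq o_{\|x\|\to 0}(1) + O(1/C)$ outside $\mathcal{H}_C$, which is $<\epsilon$ once $C$ is large and $\delta$ is small.

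The hard step is precisely this conversion: non-tangential vanishing of $q/[p]$ controls the function on cones into $\mathbb{H}^2$, whereas the desired bound lives on the boundary complement of horns. The toy case $p = z_2 + iz_1^2$ with $q = a(z)z_2 + b(z)z_1^2$ and $a(0) = 0$ illustrates the mechanism transparently: outside $\{|x_2| \leq Cx_1^2\}$ one computes $|f(x)| \leq |a(x)| + |b(0)|/C + o(1)$, where $|a(x)| \to 0$ supplies the non-tangential vanishing contribution and $|b(0)|/C$ the horn-width contribution. Propagating this decomposition uniformly across branches of arbitrary multiplicity is the main technical obstacle, and I expect it will require either iterated denominator replacement in the spirit of Theorem \ref{thm:squarep} or the PIP Realization Theorem applied to a suitable conformal image of $f$, reducing the boundary analysis to invertibility of the matrix pencil $S + x_1 P_1 + x_2 P_2$ on $\mathbb{R}^2 \setminus \mathcal{H}_C$.
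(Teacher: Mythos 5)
Your route is genuinely different from the paper's, but it has a gap that you yourself flag, and the gap is not cosmetic. Your plan hinges on writing $q$ in the ideal-adapted form $q \in \mathcal{I} = \prod_j (z_2+q_j(z_1),z_1^{2L_j})^{M_j}R_0$ and then reading off the bound outside horns term by term. But the fact that $q/p$ is locally $H^\infty$ only tells you $q \in \mathcal{I}^\infty(p,0)$; the inclusion $\mathcal{I}^\infty(p,0)\subseteq\mathcal{I}$ is exactly Conjecture \ref{conj:fullnum}, which the paper only establishes for smooth points (Theorem \ref{thm:onebranch}), double points, ordinary multiple points, and repeated segments (Theorem \ref{thm:reverseinclusion}). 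Your toy case $p=z_2+iz_1^2$ is the smooth-point case, so it cannot exhibit the obstruction. Your fallback, Theorem \ref{thm:nontanreg}, gives polynomial approximation of $q/[p]$ on \emph{non-tangential} cones pointing into $\UHP^2$, but you need control on the distinguished boundary $\R^2 \setminus \mathcal{H}_C$, which those cones do not reach; non-tangential vanishing of $q/[p]$ by itself does not propagate to that set without additional structure. Also, reducing "a real stable factor absorbed into horns" is glossed over --- in the paper the reduction is cleaner: by Theorem \ref{thm:nontanchar}, $\mathcal{Z}_p\cap\T^2\subset\mathcal{Z}_q\cap\T^2$ so any toral factor of $p$ divides $q$ and cancels, forcing $p$ atoral/pure stable outright.

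The paper avoids numerator characterization entirely by going through the PIP Realization Theorem, the tool you correctly guessed might be necessary. After Möbius normalizations (Theorem \ref{makehsimple}), $f$ is converted into a rational Pick function $g$ with a local realization
\[
g(w) = c - \bigl\langle \widehat{S}^{-1}\bigl(I+\hat{w}\,\widehat{S}^{-1}\bigr)^{-1}\alpha,\beta\bigr\rangle,
\qquad \hat{w} := w_{22}-w_{21}w_{11}^{-1}w_{12},
\]
coming from the kernel decomposition of $S$ in Theorem \ref{localrealization}. A Neumann-series estimate gives $|g(x)-g^*(0,0)|\lesssim\|\hat{x}\|$ when $\|\hat{x}\|$ is small; hence if $g(x)$ stays bounded away from $g^*(0,0)$, $\|\hat{x}\|$ is bounded below, which via $\|\hat{x}\|\le\|x\|+\|x\|^2\|x_{11}^{-1}\|$ forces $\|x\|^2\|x_{11}^{-1}\|\geq\tilde{C}>0$ once $\|x\|$ is small. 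Lemma \ref{lem:horns} then converts this operator-norm inequality into the horn condition: the slopes are determined by the eigenvalues of $Y=P_{\ker S}P|_{\ker S}$ (not directly read off the $q_j$'s, though they encode the same tangential data). This argument is fully quantitative, needs no ideal-membership input, and covers arbitrary boundary zeros. If you want to salvage your route, you would either need to prove the Full Numerator Criterion in general, or find a direct way to push the realization-free estimate onto the boundary complement of horns --- neither is done in your proposal.
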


	 Theorem \ref{thm:introhorn} appears as Theorem \ref{thm:horn} and follows from a delicate local analysis
	of the formula from the PIP Realization Theorem.  A $\mathbb{T}^2$-horn associated to the function $f$ in
	\eqref{eq:faveRSF} is given in Figure \ref{faveRSFlevelsplot} below. As before, $\mathbb{T}^2$ is identified
	with $(-\pi,\pi]^2.$ As $f^*(1,1)=0$, our theorem shows that every non-zero level set of this RSF must eventually be caught inside a horn region along the line $y=-x$.

	\begin{figure}[h!]
	      {\includegraphics[width=0.3 \textwidth]{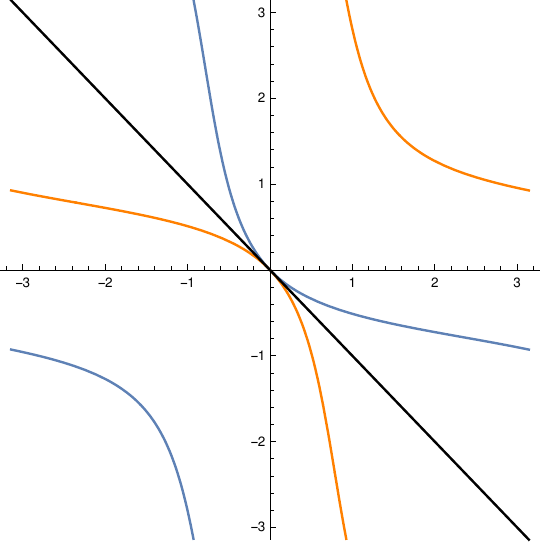}}
	  \caption{\textsl{A horn region for $f(z_1,z_2)=\frac{(z_1-1)(z_2-1)}{2-z_1-z_2}$. 
	The curves come from level sets with  $\lambda=1$ (black), $\lambda=\frac{1}{2}(1+i)$ (blue), and $\lambda=\frac{1}{2}(1-i)$ (orange).}}
	  \label{faveRSFlevelsplot}
	\end{figure}


	Our last question (\textbf{Q4}) proposes an alternate measure of regularity on the distinguished boundary.
	Derivative integrability encodes singular behavior because it roughly measures the rate at which a function
	runs through different values near the singularity. We restrict (\textbf{Q4}) to the bidisk to ensure a bounded domain of integration--- allowing us to study
	global integrability of derivatives without added technical difficulties.
	For an RIF $\phi$, derivative integrability is known to be governed by contact order \cite{bps18, bps19a};
	indeed, $\tfrac{\partial \phi}{dz_1} \in L^\p(\mathbb{T}^2)$ if and only if $\p < \tfrac{K+1}{K},$ where $K$ is the maximum contact order of $\phi$ at its singularities on $\mathbb{T}^2$.  
	Section \ref{sec:integrability} shows that the partial derivatives of RSFs possess nice integrability properties:

	\begin{theorem}\label{thm:introii}
		Let $p$ be a stable polynomial on $\mathbb{D}^2$ with finitely many zeros on $\mathbb{T}^2$. Then there is a finite list of numbers $\p_1,\ldots, \p_M \in (0,\infty]$ 
		such that for any $q\in \mathbb{C}[z_1,z_2]$,  
		\begin{equation} \label{eqn:II} \sup_{\p' > 0}\left\{\p'\colon \partial_{z_1}(q/p)\in L^{\p'}(\mathbb{T}^2)\right\} \end{equation}
		is equal to one of the $\p_j$'s. Moreover, the number $M$ is bounded by an algebraic characteristic of $p$.
	\end{theorem}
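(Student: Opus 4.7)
The plan is to reduce to local analysis at each boundary zero of $p$, classify the local integrability exponents combinatorially via the Puiseux Factorization Theorem, and show that this classification has only finitely many outcomes, all bounded by algebraic data of $p$ alone.

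First, I would localize. Since the set $Z := \mathcal{Z}_p \cap \T^2$ is finite by hypothesis and $q/p$ is smooth in a neighborhood of $\T^2\setminus Z$, both $q/p$ and $\partial_{z_1}(q/p)$ are bounded away from $Z$. Consequently, the supremum in \eqref{eqn:II} equals $\min_{x \in Z} \p_x(q)$, where $\p_x(q)$ denotes the supremum of $\p'$ for which $\partial_{z_1}(q/p) \in L^{\p'}(V_x \cap \T^2)$ on some $\T^2$-neighborhood $V_x$ of $x$. It therefore suffices to prove that, at each $x \in Z$, the exponent $\p_x(q)$ takes only finitely many values as $q$ varies over $\C[z_1,z_2]$, with the count bounded by algebraic invariants of $p$ at $x$.

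Next, fix $x \in Z$ and use the Cayley transform to pass to $\UHP^2$ with $x=(0,0)$. Factor $p=p_{\text{pure}}\cdot p_{\text{real}}$ and apply the Puiseux Factorization Theorem. The real stable factors contribute terms $z_2+\phi_j(z_1)$ that are nonsingular or transverse at $(0,0)$, whose contribution to the integrability is classical. For the pure stable part, the factorization
$$p_{\text{pure}}(z) = u(z)\prod_{j=1}^{k}\prod_{m=1}^{M_j}\bigl(z_2+q_j(z_1)+z_1^{2L_j}\psi_j(\mu_j^m z_1^{1/M_j})\bigr)$$
provides a branch decomposition of $\mathcal{Z}_p$ near $(0,0)$. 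Parametrizing each branch by $z_1=\zeta^{M_j}$ (with the corresponding $z_2$-image), the $L^{\p'}$-norm of $\partial_{z_1}(q/p)$ on $\T^2 \cap V_x$ decomposes into a finite sum of integrals of explicit rational integrands whose singular behavior is controlled by the order of vanishing of $q$ along each branch.

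I would then show that $\p_x(q)$ depends on $q$ only through a finite datum: for each branch $(j,m)$, the valuation $v_{j,m}(q)$ of $q$ along that branch, truncated at $(2L_j+1)M_j$. The rationale for the truncation is that vanishing of $q$ beyond this threshold corresponds to $q$ absorbing an extra factor of $\bar{p}$ on the branch, which produces unimodular boundary values and does not improve derivative integrability (the analogous phenomenon for rational inner functions is already visible in \cite{bps18, bps19a}). Since the truncated valuation vector takes values in a finite product of integer sets, there are at most $\prod_j((2L_j+1)M_j+1)^{M_j}$ possible values of $\p_x(q)$, whence
$$M \le \sum_{x \in Z}\prod_{j=1}^{k(x)}\bigl((2L_j(x)+1)M_j(x)+1\bigr)^{M_j(x)},$$
an algebraic invariant of $p$ (it is bounded by a function of the local intersection multiplicities of $p$ and $\bar{p}$ at the boundary zeros).

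The main obstacle will be the final step: establishing that $\p_x(q)$ really depends only on the truncated branchwise valuations. This requires an upper bound showing that any two $q, q'$ with matching truncated valuations yield integrands comparable up to bounded multiplicative factors, and a matching lower bound showing that distinct truncated valuations produce genuinely different exponents. Both can be approached by parametrizing each branch via Puiseux coordinates, using the interlacing of tangents of $A$ and $B$ from part (b) of the Homogeneous Expansions Theorem to control the imaginary part of $p$ on the branch, and reducing to a one-variable integrability problem of the form $\int |\zeta|^{a}|\zeta^{v}+c\zeta^{w}+\cdots|^{-\p'}\,d\zeta$, whose exponent is dictated by comparing the leading powers. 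Repeated branches (equal $q_j$'s) and branches of different multiplicities $M_j$ will require bookkeeping, but the finiteness conclusion is insensitive to the specific exponent formula and survives as long as the dependence factors through the truncated valuation vector.
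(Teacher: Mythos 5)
Your proposal takes a genuinely different route from the paper, and it has a gap at the step you yourself flag as the main obstacle.

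The paper's proof is purely algebraic and much shorter. It begins with the same localization to each $\tau \in \mathcal{Z}_p \cap \T^2$ and then observes that $\partial_{z_1}(q/p)$ is a polynomial over $p^2$, so the problem reduces to counting integrability indices of $p^2$ at $\tau$. The key device is to consider, for each $\p>0$, the ideal $Q^\tau_\p \subseteq R_\tau$ of local numerators $\hat q$ with $\hat q/p^2 \in L^\p_\tau(\T^2)$. These ideals are nested, they all contain $(p^2,\tilde p^2)R_\tau$ (since $p^2/p^2$ and $\tilde p^2/p^2$ are bounded), and the quotient $R_\tau/(p^2,\tilde p^2)R_\tau$ has finite dimension $N_\tau(p^2,\tilde p^2)=4N_\tau(p,\tilde p)$. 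If $\p_1 < \cdots < \p_M$ were distinct finite indices witnessed by $\hat q_1,\ldots,\hat q_M$, one chooses interleaving values $\kappa_j$ so that $\hat q_j \in Q^\tau_{\kappa_{j-1}}\setminus Q^\tau_{\kappa_j}$, and the resulting cosets $[\hat q_j]$ are linearly independent in the quotient (a nonzero relation would place the $\hat q_{j_0}$ of smallest index with nonzero coefficient in the wrong ideal). Hence $M \le 4N_\tau(p,\tilde p)$, giving the algebraic bound with no combinatorics of Puiseux branches whatsoever.

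Your approach instead tries to \emph{compute} the exponent $\p_x(q)$ as a function of the branchwise valuations of $q$, truncated at a threshold you propose. Two concrete problems arise. First, the asserted reduction, that $\p_x(q)$ depends only on the truncated valuation vector, is precisely the hard content and you do not prove it; it is not enough to say it can be approached via Puiseux coordinates and a one-variable integral, because the boundary integral couples the order of vanishing of $q$ with the distance from $\T^2$ to the branches of $p$ \emph{and} $\tilde p$ (i.e.\ with contact order), and the denominator is $p^2$, not $p$, which you do not track. Second, your truncation threshold $(2L_j+1)M_j$ is not the right stabilization point: the paper's Proposition on derivative integrability shows that the exponent stabilizes at $\infty$ once $q$ vanishes to the contact order $K_\tau$ along the relevant branch, not at $(2L_j+1)M_j$, so even if the reduction to valuations were established the proposed count of indices would be off. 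The paper's linear-algebra argument delivers the finiteness and the algebraic bound immediately, and then recovers the explicit exponent formula (your goal) only under the additional generic hypothesis of order-one vanishing, where the one-branch numerator characterization pins down the ideal $(p,\tilde p)R_\tau$ exactly.
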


	Theorem \ref{thm:introii} appears as Theorem \ref{thm:ii}. Under additional (generic) assumptions on $p$,
	in Theorem \ref{thm:RSFii} we obtain an exact characterization of the numbers $\p_1, \dots, \p_M$ in terms of contact order.
	For example, when $p=2-z_1-z_2$, the list of numbers satisfying \eqref{eqn:II} for some $q$ is exactly $\tfrac{3}{2}, 3, \infty.$
	Indeed, the function in \eqref{eq:faveRSF} has $\tfrac{\partial f}{dz_1} \in L^{\p}(\mathbb{T}^2)$ if and only if $\p<\frac{3}{2}$.
	Our exact characterization of $\p_1, \dots, \p_M$ relies on results in Section \ref{sec:numerator} (discussed earlier) and hence, relies on the Puiseux Factorization Theorem. 

	\subsection{Structure of the paper} 
		Section \ref{sec:local} details our local theory of stable polynomials,
		including results on homogeneous expansions, Puiseux factorizations, and realization formulas.
		Section \ref{sec:numerator} investigates the numerator criterion question while Sections \ref{sec:ntlimits}, \ref{sec:horns}, \ref{sec:integrability}
		respectively address non-tangential regularity, horn regions at singularities, and derivative integrability for RSFs.
		While this introduction provides an overview of key results, the reader should consult each section for precise definitions,
		additional results, and a variety of examples which are not mentioned here.  



\section{A local theory of stable polynomials} \label{sec:local}

\subsection{Some global theory of stable polynomials}\label{sub:global}
		We now describe a dichotomy for 
		stable polynomials already alluded to in the Puiseux factorization theorem.
		Very roughly, stable polynomials factor into a polynomial that 
		vanishes very little on the distinguished boundary and a polynomial that
		vanishes a lot on the distinguished boundary.  We also
		review some basic notions of stable polynomials on $\D^d$ compared to $\UHP^d$.

		 Recall that the reflection of a polynomial in the context of $\UHP^d$
		 is given by
		 \[
			\bar{p}(z) = \overline{p(\bar{z})}
		\]
		for $z \in \C^d$.  In the context of $\D^d$ the reflection 
		is degree dependent; if $p\in \C[z_1,\dots, z_d]$ has
		multidegree $n=(n_1,\dots, n_d)$ (i.e. degree $n_j$ in $z_j$)
		then the reflection of $p$ is
		\[
			\tilde{p}(z) = z^n  \overline{p(1/\bar{z}_1,\dots, 1/\bar{z}_d)} \text{ where } z^n := z_1^{n_1} \cdots z_d^{n_d}.
		\]
		Note the Cayley transform converts between the two notions of reflection: $\tilde{p}$ in the $\D^d$ setting versus $\bar{p}$ in the $\UHP^d$ setting.
		If $p\in \C[z_1,\dots, z_d],$ viewed as a function on $\D^d,$ has multidegree $n$ then we convert to a polynomial
		in the setting of $\UHP^d$ via
		\begin{align} \label{eqn:PH2}
			P(z)  &= (\mathbf{1} -i z)^n p\left( \frac{\mathbf{1}+iz}{\mathbf{1} - iz}\right)\\
			& = (1-iz_1)^{n_1}\cdots (1-iz_d)^{n_d} p\left( \frac{1+iz_1}{1-iz_1}, \dots, \frac{1+iz_d}{1-iz_d}\right) \nonumber
		\end{align}
		where $\mathbf{1} = (1,\dots,1) \in \C^d$ and we use convenient and temporary component-wise shorthands.
		Then, 
		\[
			\bar{P}(z) = (\mathbf{1} + i z)^n  \overline{p\left( \frac{\mathbf{1}+i\bar{z}}{\mathbf{1} -i\bar{z}}\right)}
			= (\mathbf{1}-iz)^n \tilde{p}\left( \frac{\mathbf{1}+iz}{\mathbf{1} - iz}\right) 
		\]
		shows a direct correspondence between the notions of reflection.
		It is straightforward to check that if $p\in \C[z_1,\dots, z_d]$ has
		no zeros in $\D^d$, then $\phi = \tilde{p}/p$ is a rational inner function on $\D^d$.
		Namely, $|\phi|  \leq 1$ in $\D^d$ and $|\phi|=1$ on $\T^d$ outside the zero set of $p$.
		(This is obvious if $p$ has no zeros on $\overline{\D^d}$ by the maximum principle;
		otherwise one can examine $p(rz)$ as $r\nearrow 1$.)   This type of homothety
		is unavailable in $\UHP$ so Cayley transform is the easiest
		way to see that if $p\in \C[z_1,\dots, z_d]$ has no zeros in
		$\UHP^d$ then $\bar{p}/p$ is a rational inner function on $\UHP^d$.

		Let us now review a basic dichotomy of stable polynomials.
		Any $p\in \C[z_1,\dots, z_d]$ with no zeros in $\D^d$ 
		can be factored into $p=p_1p_2$ where $p_1$ has no factors in common with $\tilde{p}_1$
		and $p_2$ is a constant multiple of $\tilde{p}_2$.  
		Indeed, writing $p = p_1 p_2$ and $\tilde{p} = q_1 p_2$ where $p_1$ and $q_1$ have
		no common factors we see that $p_2$ has no zeros in $\D^d \cup \{z\in \C: |z|>1\}^d$. 
		The rest follows from the next standard lemma.

		\begin{lemma}
			Any $q\in \C[z_1,\dots, z_d]$ with no zeros in $\D^d \cup \{z\in \C: |z|>1\}^d$ is a multiple of $\tilde{q}$.
		\end{lemma}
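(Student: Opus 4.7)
The plan is to prove that $\phi := \tilde{q}/q$ is a unimodular constant on $\D^d$, which will give $q = \bar{c}\,\tilde{q}$ for some $|c|=1$. The strategy is to apply the rational-inner-function fact stated in the excerpt immediately before the lemma (\emph{if $p\in \C[z_1,\ldots,z_d]$ has no zeros in $\D^d$, then $\tilde{p}/p$ is a rational inner function on $\D^d$}) twice: once to $q$, giving $|\phi|\leq 1$ on $\D^d$; and once to $\tilde{q}$, giving $|1/\phi|=|\tilde{\tilde{q}}/\tilde{q}|\leq 1$ on $\D^d$. Here $\tilde{\tilde{q}}=q$ follows from a one-line calculation with $\tilde{q}(z)=z^n\overline{q(1/\bar{z})}$. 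Together these force $|\phi|\equiv 1$ on $\D^d$, and an analytic function of constant modulus is constant, so $\phi\equiv c$ with $|c|=1$.

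The main obstacle is the hypothesis check for the second RIF application: we need $\tilde{q}$ itself to have no zeros in $\D^d$. The heuristic is immediate---if $w\in\D^d$ has all coordinates nonzero and $\tilde{q}(w)=0$, then $q(1/\bar{w})=0$ with $1/\bar{w}\in\{|z|>1\}^d$, contradicting the hypothesis on $q$. The delicate point is ruling out the possibility that $Z(\tilde{q})\cap\D^d$ is entirely trapped on the coordinate hyperplanes $\{z_j=0\}$, where the reflection map $w\mapsto 1/\bar{w}$ is singular.

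To close this gap, I would write $q(z)=\sum_\alpha a_\alpha z^\alpha$ so that $\tilde{q}(z)=\sum_\alpha\overline{a_\alpha} z^{n-\alpha}$. The coefficient of $z_j^0$ in $\tilde{q}$ equals, up to conjugation and reflection in the remaining variables, the leading coefficient of $q$ as a polynomial in $z_j$, which is nonzero because $\deg_{z_j} q = n_j$. Hence $z_j\nmid\tilde{q}$ for each $j$, so no coordinate hyperplane $\{z_j=0\}$ is an irreducible component of the algebraic variety $Z(\tilde{q})\subset\C^d$. An irreducible codimension-one complex algebraic variety cannot have a Euclidean-open subset swept out by finitely many proper, hence lower-dimensional, algebraic subvarieties (a dimension count), so every irreducible component of $Z(\tilde{q})$ that meets $\D^d$ must contain a point outside $\bigcup_j\{z_j=0\}$. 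Such a point has all nonzero coordinates and supplies the desired contradiction, so $\tilde{q}$ is zero-free on $\D^d$ and the two RIF applications close the proof.
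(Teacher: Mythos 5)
Your proof is correct, but it takes a genuinely different route from the paper's. The paper's argument slices to one variable: for each $\tau\in\T^d$ it considers $f(z)=q(z\tau)$, observes that $f$ has all its roots on $\T$, concludes $\tilde f$ is a unimodular multiple of $f$, and translates this (via $\tau^n\tilde f(z)=\tilde q(\tau z)$) into $|\tilde q/q|=1$ at interior points, after which the maximum principle finishes. Your argument instead establishes directly that $\tilde q$ is zero-free on $\D^d$, then applies the rational-inner-function observation symmetrically to $q$ and to $\tilde q$ to squeeze $|\tilde q/q|$ to be identically $1$ on $\D^d$. The trade-off: the paper's slicing argument is shorter and sidesteps the coordinate-hyperplane issue entirely, because the slice point $z\tau$ already has all coordinates of equal modulus; your route needs the extra dimension-count to rule out $Z(\tilde q)\cap\D^d$ hiding in $\bigcup_j\{z_j=0\}$, but in exchange it proves the independently useful fact that $\tilde q$ is itself nonvanishing on $\D^d$. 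Two small points worth tightening in your write-up: (i) the identity $\tilde{\tilde q}=q$ relies on $\tilde q$ having the same multidegree $n$ as $q$, which holds because $q(0)\neq 0$ forces $z_j\nmid q$ for every $j$ (your paragraph checks $z_j\nmid\tilde q$, which is the separate fact needed for the hyperplane argument); (ii) your penultimate sentence should say the component contains a point of $\D^d$ outside $\bigcup_j\{z_j=0\}$, which is what the Euclidean-open-set argument actually yields and what the contradiction requires.
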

		\begin{proof}
			Note that for any $\tau \in \T^d$, the 
			one variable function $f(z):= q(z \tau)$ only has zeros
			on $\T$ implying $\tilde{f}$ is a multiple of $f$ and therefore $|\tilde{f}/f| = 1$.
			But $\tau^n \tilde{f}(z) =\tilde{q}(\tau z)$ 
			so that $\phi = \tilde{q}/q$ attains modulus $1$ inside $\D^d$.
			Now $\phi$ is rational inner and therefore, by the maximum principle, the function
			$\phi$ must be constant and unimodular.
		\end{proof}

		Note that all polynomial factors of $q$ in the lemma satisfy the same hypothesis and conclusion as the lemma.

		The following terminology is borrowed from \cite{AMS06}.
		We shall call polynomials of the type $p_1$ \emph{atoral stable}.
		Atoral stable polynomials arise as the denominators of rational inner functions.
		Polynomials of the type $p_2$ are called \emph{toral stable}.
		These arise as defining polynomials for unimodular level sets of rational inner functions.
		Namely, given a nonconstant rational inner function $\phi = z^{\alpha} \tilde{p}/p$ ($\alpha \in \mathbb{N}^d$)
		and $\mu \in \T$, the set
		\[
		\{ z\in \C^d: \phi(z) = \mu\} 
		\]
		can be described by $\{z\in \C^d: z^{\alpha} \tilde{p}(z) - \mu p(z) = 0\}$ 
		if we ignore zeros of $p$.  Note that $z^{\alpha} \tilde{p} - \mu p$ is
		toral stable because it is non-vanishing in $\D^d$ (as a limit of $z^{\alpha} \tilde{p} - w p$ for $w\to \mu$ with $|w| \searrow 1$)
		and is a constant multiple of its reflection.

		\begin{figure}
		 {\includegraphics[width=1.0 \textwidth]{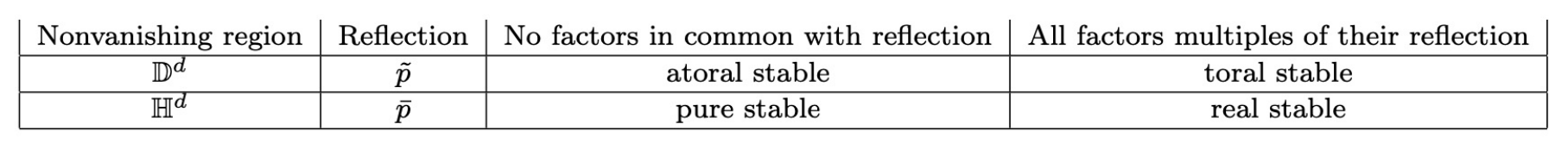}}

		 \caption{Dichotomies of stable polynomials}
		 \end{figure}

		In the upper half plane setting we have a dichotomy analogous to atoral versus toral.
		For $p\in \C[z_1,\dots, z_d]$ with no zeros in $\UHP^d$ we can factor
		$p = p_1 p_2$ where $p_1$ has no factors in common with $\bar{p}_1$
		and $p_2$ is a constant multiple of $\bar{p}_2$.  
		As above, $p_2$ will have no zeros in $\UHP^d\cup (-\UHP)^d$
		and this property alone implies $p_2$ is a multiple of $\bar{p}_2$.
		So, every factor of $p_2$ is a multiple of its reflection.
		We can then arrange for $p_2$ and all of its factors to have real coefficients by transferring a constant over to $p_1$.
		We shall call $p_1$ type polynomials \emph{pure stable} and type $p_2$ polynomials \emph{real stable}.
		``Pure stable'' is not common parlance.  Real stable refers to the fact
		that in one variable a real stable polynomial has all of its roots on the real axis.
		We end this section with remarks about homogeneous polynomials and distinguished boundary zero sets for stable
		polynomials in two variables.

		A homogeneous polynomial $P\in \C[z_1,\dots, z_d]$ with no 
		zeros in $\UHP^d$ is automatically real stable since $P(-z)$
		is a multiple of $P$.  If $d=2$, then such a homogeneous polynomial
		factors as
		\[
		P(z_1,z_2) = c \prod_{j=1}^{M} (a_j z_1 + b_j z_2)
		\]
		with $c \in \C, a_j,b_j \in \R$.
		Note $a z_1+b z_2$ is non-vanishing in $\UHP^2$ if and only if 
		$a ,b$ have the same sign, so we can further arrange $a_j,b_j \geq 0$ for all $j$
		by absorbing sign changes into $c$.  Thus, $P$ is a multiple of a homogeneous polynomial 
		in $\R_{+}[z_1,z_2]$, the polynomials with non-negative coefficients.

		In two variables, atoral (resp. pure) stable polynomials have finitely many zeros on $\T^2$ (resp. $\R^2$)
		which follows from B\'ezout's theorem since zeros on $\T^2$ 
		are zeros in common with $\tilde{p}$.
		In the pure stable case one should
		keep in mind that there can be common zeros on $(\R\times\{\infty\})\cup(\{\infty\}\times \R) \cup \{(\infty,\infty)\}$
		(e.g. $p$ vanishes at $(\infty,\infty)$ if $z_1^{n_1} z_2^{n_2} p(1/z_1,1/z_2)$ vanishes at $(0,0)$).
		Toral stable and real stable polynomials in two variables have curves 
		of zeros on the distinguished boundaries $\T^2$ (resp. $\R^2$) and no isolated zeros.  Later on 
		we present a local parametrization theorem for real stable
		polynomials, Theorem \ref{thm:localpar}, which states that locally the zero set of a real stable 
		polynomial on $\R^2$ is a union of smooth curves.  Of course away from 
		finitely many singularities the zero set will locally consist of a single smooth curve.

	\subsection{Homogeneous expansions}\label{subsec:homoexp}

		 We now discuss the Homogeneous Expansion
		Theorem from the introduction
		in Theorem \ref{thm:stablehomog} and Propositions \ref{prop:B1}, \ref{prop:AB}.

		\begin{theorem}\label{thm:stablehomog}
			Suppose $p\in \C[z_1,\dots, z_d]$ has no zeros in $\UHP^d$ and $p(0)=0$.
			We may decompose $p$ into homogeneous polynomials
			\[
			p(z) = \sum_{j=M}^{n} P_j(z)
			\]
			where $P_j\in \C[z_1,\dots,z_d]$ is homogeneous of degree $j$, $P_M \not\equiv 0$, and $n = n_1 + \dots + n_d$ is the total degree of $p$.    
			Then, $P_M$ has no zeros in $\UHP^d$
			and by homogeneity $P_M$ is necessarily real stable.
			In particular, there exists $\mu\in \T$ such that $\mu P_M \in \R[z_1,\dots, z_d]$.
		\end{theorem}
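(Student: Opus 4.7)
The plan is to deduce non-vanishing of $P_M$ on $\UHP^d$ from non-vanishing of $p$ by a homogeneous rescaling combined with Hurwitz's theorem, and then to use homogeneity together with the dichotomy discussed in Subsection \ref{sub:global} to extract real stability and the unimodular constant $\mu$.

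For the first and main step, fix $t > 0$ and consider
\[
g_t(z) \,=\, t^{-M}\, p(tz) \,=\, \sum_{j=M}^{n} t^{j-M}\, P_j(z),
\]
so that $g_t \to P_M$ locally uniformly on $\C^d$ as $t \searrow 0$. Because $\UHP^d$ is invariant under multiplication by positive real scalars, every $g_t$ is zero-free on $\UHP^d$. Applying the several-variable Hurwitz theorem on the connected domain $\UHP^d$ forces the limit $P_M$ to be either zero-free on $\UHP^d$ or identically zero there; since $P_M$ is a nonzero polynomial on $\C^d$ (so in particular nonzero on every nonempty open set), the second alternative is excluded. Hence $P_M$ has no zeros in $\UHP^d$.

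Homogeneity of degree $M$ gives $P_M(-z) = (-1)^M P_M(z)$, so $P_M$ is also zero-free on $(-\UHP)^d$, hence on $\UHP^d \cup (-\UHP)^d$. The upper half-plane analog of the lemma of Subsection \ref{sub:global} then yields $\bar{P}_M = c\, P_M$ for some constant $c \in \C$; the argument is particularly clean in our homogeneous setting, because for any $v \in \R_{>0}^d$ the one-variable polynomial $z \mapsto P_M(zv) = z^M P_M(v)$ has only the real root $z=0$, so $|\bar{P}_M/P_M|$ is identically $1$ along the line $\{zv : z \in \C\}$, which meets $\UHP^d$, and then the maximum principle applied to the rational inner function $\bar{P}_M/P_M$ on $\UHP^d$ forces it to be constant. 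Since $P_M \not\equiv 0$ is a polynomial and $\R_{>0}^d$ is open in $\R^d$, there exists $v \in \R_{>0}^d$ with $P_M(v) \neq 0$; evaluating at such a $v$ gives $\overline{P_M(v)} = \bar{P}_M(v) = c\, P_M(v)$, hence $|c| = 1$. Choosing a unimodular square root $\mu$ with $\mu^{2} = c$, the identity
\[
\overline{(\mu P_M)(\bar z)} \,=\, \bar{\mu}\, \bar{P}_M(z) \,=\, \bar{\mu}\, \mu^{2}\, P_M(z) \,=\, \mu\, P_M(z)
\]
shows $\mu P_M \in \R[z_1,\dots,z_d]$, completing the proof.

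The substantive obstacle is the Hurwitz step, which relies only on the rescaling invariance of $\UHP^d$ and the hypothesis $P_M \not\equiv 0$; once $P_M$ is known to be zero-free on $\UHP^d$, the passage to real stability and to the unimodular $\mu$ is a routine application of the classification already developed in Subsection \ref{sub:global}.
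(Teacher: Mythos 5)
Your proof is correct and takes essentially the same route as the paper, which obtains non-vanishing of $P_M$ on $\UHP^d$ by applying Hurwitz's theorem to $P_M(z)=\lim_{t\to 0^+}t^{-M}p(tz)$ and then invokes the homogeneity/reflection dichotomy of Subsection \ref{sub:global} for real stability. You have simply filled in the details that the paper leaves to the cited references and to the earlier discussion.
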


		Theorem \ref{thm:stablehomog} follows from work in Atiyah, Bott and G\r{a}rding\cite{ABG70} (Lemma 3.42) but as discussed in \cite{PemWil} (Proposition 11.1.6)
		it directly follows from Hurwitz's theorem applied to 
		\[
			P_M(z) = \lim_{t\to 0^{+}} \frac{1}{t^{M}} p(tz).
		\]

		\begin{example}\label{ex71}
			The polynomial $p(z_1,z_2) = 4-3z_1-3z_2+z_1^2z_2+z_1z_2^2$,
			taken from Example 7.1 in \cite{bps19a},
			has no zeros on $\D^2$.  Conformal mapping yields
			\[
			\begin{aligned}
			P(z_1,z_2) &= (1-iz_1)^2 (1-iz_2)^2 p\left(\frac{1+iz_1}{1-iz_1}, \frac{1+iz_2}{1-iz_2}\right) \\
			&= -4(z_2^2+ 4z_1z_2+ z_1^2 -2z_1^2 z_2^2 - 4iz_1z_2(z_1+z_2))
			\end{aligned}
			\]
			which has no zeros in $\UHP^2$.
			The bottom homogeneous term 
			\[
			-4(z_2^2+4z_1z_2+ z_1^2) = -4(z_2+(2+\sqrt{3})z_1)(z_2+(2-\sqrt{3})z_1)
			\]
			evidently has no zeros in $\UHP^2$ and has real coefficients.
			We return to this example in Examples \ref{ex71:return}, \ref{ex71puiseux}. 
			\eox
		\end{example}

		When $p\in \C[z_1,\dots, z_d]$ is pure stable its 
		homogeneous expansion has additional structure.
		The real and imaginary parts (via coefficients) of $p$ are
		\[
			A:=\tfrac{1}{2}(p+\bar{p}) \text{ and } B := \tfrac{1}{2i}(p-\bar{p})
		\]
		so that $A,B\in \R[z_1,\dots, z_d]$
		and $p=A+iB$.
\begin{remark}\label{AtBrs}
Notice that $A+tB$ is real stable for $t \in \R$ since
$A+tB = \frac{1}{2}( (1-it)p + (1+it) \bar{p}) = 0$
exactly when $\bar{p}/p = -\frac{1-it}{1+it}$ which happens to 
be a point on $\T$. Since $\bar{p}/p$ is a nontrivial
RIF this can only occur outside of $\UHP^d$.
Similarly, $B$ is also real stable.
\end{remark}

	\begin{proposition} \label{prop:B1}
		Suppose $p \in \C[z_1,\dots, z_d]$ is a pure stable polynomial
		and $p(0)=0$. 
		Write the homogeneous
		expansion of $p = \sum_{j\geq M} P_j$.  
		By Theorem \ref{thm:stablehomog},
		we may replace $p$ with a unimodular multiple so that $P_M = \bar{P}_M \in \R[z_1,\dots,z_d]$.
		Then, the lowest order homogeneous term of $A$ equals $P_M$ and $B$
		vanishes to order exactly $M+1$.
	\end{proposition}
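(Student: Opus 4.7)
The plan is to handle the two assertions in sequence. First, the equality $A_M = P_M$ and the vanishing $B_M \equiv 0$ both follow at once from the hypothesis $\bar{P}_M = P_M$: one computes $A_M = \tfrac{1}{2}(P_M + \bar{P}_M) = P_M$ and $B_M = \tfrac{1}{2i}(P_M - \bar{P}_M) = 0$. So the lowest order homogeneous term of $A$ is indeed $P_M$, and $B$ vanishes to order at least $M+1$.

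The substantive part is that $B$ cannot vanish to any higher order, which I would prove by contradiction. Suppose the lowest nonzero homogeneous part of $B$ is $B_k$ with $k \geq M+2$. The strategy is to reduce to one variable by restricting the rational inner function $\phi := \bar{p}/p$ to the complex diagonal. Setting $\Phi(s) := \phi(s\mathbf{1})$ for $s \in \UHP$ with $\mathbf{1} = (1,\dots,1)$, the bound $|\phi| \leq 1$ on $\UHP^d$ descends to $|\Phi| \leq 1$ on $\UHP$. Expanding via $\phi = 1 - 2iB/p$, together with $p(s\mathbf{1}) = s^M P_M(\mathbf{1}) + O(s^{M+1})$ and $B(s\mathbf{1}) = s^k B_k(\mathbf{1}) + O(s^{k+1})$, yields
\[
\Phi(s) = 1 - 2i\alpha\, s^{k-M} + O(s^{k-M+1}), \qquad \alpha := \frac{B_k(\mathbf{1})}{P_M(\mathbf{1})}.
\]
The key input is $\alpha \in \R \setminus \{0\}$. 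Reality is immediate because $P_M$ and $B_k$ both have real coefficients ($B_k$ inherited from the real stable polynomial $B$ of Remark~\ref{AtBrs}). Non-vanishing at $\mathbf{1}$ is forced by homogeneity: $P_M((1+is)\mathbf{1}) = (1+is)^M P_M(\mathbf{1})$ is nonzero for $s > 0$ because $(1+is)\mathbf{1} \in \UHP^d$ and $P_M$ has no zeros there by Theorem~\ref{thm:stablehomog}. Applying the same argument to $B_k$ --- which is real stable as the lowest homogeneous part of the real stable polynomial $B$ --- gives $B_k(\mathbf{1}) \neq 0$.

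With $\alpha \ne 0$ real and $n := k-M \geq 2$, the closing move is to plug $s = r e^{i\theta}$ into $|\Phi|^2 \leq 1$. A short computation using $\bar{s}^n - s^n = -2i\,\Im(s^n)$ gives
\[
|\Phi(r e^{i\theta})|^2 = 1 + 4\alpha\, r^n \sin(n\theta) + O(r^{n+1}),
\]
so letting $r \searrow 0$ with $\theta \in (0, \pi)$ fixed forces $\alpha \sin(n\theta) \leq 0$ for every such $\theta$. Since $n \geq 2$, the function $\sin(n\theta)$ changes sign on $(0, \pi)$, making $\alpha = 0$ the only possibility and contradicting $B_k(\mathbf{1}) \neq 0$. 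The main obstacle, in my view, is recognizing that the diagonal restriction is the correct reduction; once committed to it, the expansion of $\Phi$ is routine algebra and the finish is elementary trigonometry. Pure stability itself enters only lightly, through the modulus bound on $\bar{p}/p$ and the guarantee that $B \not\equiv 0$ (so that $k$ is actually finite); no finer coprimality statement is required.
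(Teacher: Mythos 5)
Your proof is correct, and it closes with a genuinely different mechanism from the paper's.

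Both proofs restrict to the diagonal $s \mapsto s\mathbf{1}$, and both begin exactly as you do, reading off $A_M = P_M$ and $B_M \equiv 0$ from the normalization $\bar{P}_M = P_M$. The divergence is in the finish. You argue by contradiction and lean on the Schur bound $|\Phi| \leq 1$: expanding $\Phi(s) = 1 - 2i\alpha s^n + O(s^{n+1})$ with $\alpha$ real (from the real coefficients of $P_M$ and $B_k$) and nonzero (from Theorem~\ref{thm:stablehomog} applied once each to $P_M$ and to the lowest homogeneous part $B_k$ of the real stable $B$), and then detecting the sign change of $\sin(n\theta)$ for $n \geq 2$. The paper instead gives a direct, positive argument without contradiction: it factors the one-variable restriction $\zeta \mapsto p(\zeta\mathbf{1}) = A_M(\mathbf{1})\zeta^M\prod_j(1+\alpha_j\zeta)$, notes that each $\alpha_j$ lies in the closed lower half plane because $p$ is zero-free on $\UHP^d$, and observes that if all $\alpha_j$ were real then $\bar p(\zeta\mathbf{1})/p(\zeta\mathbf{1})$ would be a unimodular constant, contradicting that $\bar p/p$ is a non-constant inner function. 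Matching the $\zeta^{M+1}$ coefficients then gives $B_{M+1}(\mathbf{1}) = A_M(\mathbf{1})\,\Im\bigl(\sum_j \alpha_j\bigr) \neq 0$ immediately. The paper's route gives the pointwise conclusion $B_{M+1}(\mathbf{1}) \ne 0$ in one pass and avoids the second appeal to Theorem~\ref{thm:stablehomog} (for $B$) and the trigonometric analysis; your route makes the modulus constraint $|\Phi|\le 1$ the visibly operative ingredient and is otherwise elementary. Both are valid, and the reliance on the reality of $\alpha$ in your argument plays the same structural role as the real-coefficient normalization does in the paper's coefficient comparison.
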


	\begin{proof}
		We have arranged for the lowest order homogeneous term of $p$
		to have real coefficients so we must have $A_M = P_M$ and $B_M = 0$.
		Assuming $p$ and $\bar{p}$ have no common
		factors implies $\phi=\bar{p}/p$ is non-constant and $|\phi| < 1$ inside $\UHP^d$.
		Set $\tau = (1,\dots, 1)$.  
		This implies $\zeta \mapsto p(\zeta \tau)$ has at least
		one root in the lower half plane otherwise
		it would have all real zeros and $\bar{p}(\zeta \tau)/p(\zeta \tau)$
		would be constant and unimodular.
		Now,
		\[
		\begin{aligned}
		p(\zeta \tau) &= A_M(\tau) \zeta^M + (A_{M+1}(\tau)+i B_{M+1}(\tau))\zeta^{M+1} + \cdots\\
		&= A_M(\tau)\zeta^M( 1+ (a+ib)\zeta + \cdots) 
		\end{aligned}
		\]
		where $a+ib = (A_{M+1}(\tau)+i B_{M+1}(\tau))/A_M(\tau)$.
		On the other hand if we factor $p(\zeta \tau)$
		\[
		\begin{aligned}
		p(\zeta \tau) &= A_M(\tau)\zeta^M \prod_{j}(1+\alpha_j \zeta) \\
		&= A_M(\tau)\zeta^M(1 + (\sum_j \alpha_j) \zeta + \cdots )
		\end{aligned}
		\]
		where $\alpha_j$ are in the closed lower half plane
		we see that $a+ib = \sum_j \alpha_j$.
		But at least one $\alpha_j$ must be in the open lower
		half plane so $b\ne 0$ and hence $B_{M+1}(\tau)\ne 0$.
	\end{proof}

\begin{remark}\label{AtBunimod}
	The above proposition has implications for the geometry of 
	unimodular level sets of rational inner functions.
	The unimodular level sets of $\phi = \bar{p}/p$ are given by 
	$A+t B \equiv 0$ for $t\in \R$ or $B\equiv 0$ 
	(corresponding to $t =\infty$).
	Indeed, for $\mu \in \T$,
	 the zero set $\bar{p}-\mu p = 0$ is the same
	as $A(1-\mu) -i B(1+\mu) = 0$
	or $A+ tB = 0$ for $t= i\frac{\mu+1}{\mu-1} \in \R$ when $\mu \ne 1$
	and $B=0$ for $\mu=1$.
	Therefore, the polynomials defining the unimodular level
	sets of $\phi$ all have initial homogeneous term $A_M$ with 
	the exception of the level set $B=0$ which has initial
	homogeneous term $B_{M+1}$ of one degree higher.  
	In two variables, this has the more direct geometric 
	interpretation that all of the unimodular level curves with the exception of $B=0$ 
	have the same set of tangents; namely the factors of $A_M$.  
\end{remark}

	The next proposition refines the relationship between $A_M$ and $B_{M+1}$.
	Note that $A/B = i\frac{1+\bar{p}/p}{1-\bar{p}/p}$ maps $\UHP^d$ into $\UHP$. 

	\begin{proposition} \label{prop:AB}
		With the setup of the previous proposition,
		let $A_M, B_{M+1}$ be the lowest order homogeneous terms of $A, B$. 
		Then, $A_M/B_{M+1}$ is a Pick function, i.e. 
		\[
		\text{Im}\left(\frac{A_M}{B_{M+1}}\right) \geq 0 \text{ in } \UHP^d.
		\]
	\end{proposition}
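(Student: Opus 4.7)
\medskip
\noindent\textbf{Proof proposal.} The plan is to derive the Pick property of $A_M/B_{M+1}$ from the already-established Pick property of $A/B$ by a homogeneous scaling argument, much as Theorem \ref{thm:stablehomog} is obtained from Hurwitz's theorem. The key inputs are: (i) $A/B = i(1+\bar p/p)/(1-\bar p/p)$ is a nontrivial rational map sending $\UHP^d$ into $\UHP$ (noted just before Proposition \ref{prop:AB}, and using that $|\bar p/p|<1$ on $\UHP^d$ by the maximum principle since $p$ is pure stable), and (ii) both $A_M$ and $B_{M+1}$ are nonzero real-coefficient homogeneous polynomials (Proposition \ref{prop:B1} together with the normalization that $P_M=A_M\in \R[z_1,\dots,z_d]$).

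First I would fix an arbitrary $z\in \UHP^d$ with $B_{M+1}(z)\neq 0$; since $B_{M+1}$ is a nonzero polynomial, such points form an open dense subset of $\UHP^d$. For each $t>0$ the point $tz$ lies in $\UHP^d$, so the homogeneous expansions give
\[
\frac{A(tz)}{B(tz)} \;=\; \frac{t^M\bigl(A_M(z)+tA_{M+1}(z)+\cdots\bigr)}{t^{M+1}\bigl(B_{M+1}(z)+tB_{M+2}(z)+\cdots\bigr)} \;=\; \frac{1}{t}\,w(t),
\]
where $w(t):=\dfrac{A_M(z)+tA_{M+1}(z)+\cdots}{B_{M+1}(z)+tB_{M+2}(z)+\cdots}$ is well-defined and continuous for all sufficiently small $t>0$, with $w(t)\to A_M(z)/B_{M+1}(z)$ as $t\to 0^+$.

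Next, by the Pick property of $A/B$ on $\UHP^d$ we have $\Im(A(tz)/B(tz))\geq 0$. Multiplying through by the positive real number $t$ yields $\Im w(t)\geq 0$ for all small $t>0$, and letting $t\to 0^+$ gives
\[
\Im\!\left(\frac{A_M(z)}{B_{M+1}(z)}\right) \;\geq\; 0.
\]
This holds on the dense open set $\{B_{M+1}\neq 0\}\cap \UHP^d$. Since $A_M/B_{M+1}$ is a rational function, it is meromorphic on $\UHP^d$, so the inequality extends (by continuity of $\Im$ on the set where the ratio is finite, and by interpreting the ratio as a map into $\overline{\UHP}$ at its poles) to all of $\UHP^d$, which is exactly the assertion that $A_M/B_{M+1}$ is a Pick function.

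There is no real obstacle here beyond this scaling step; the content of the argument is just that the Pick half-plane condition, being a closed condition, is preserved when one extracts the leading homogeneous terms via the rescaling $z\mapsto tz$ as $t\searrow 0$. The nondegeneracy of the limit, i.e.\ that we are dividing by the genuinely nonzero polynomial $B_{M+1}$ rather than a higher-order term, is precisely what Proposition \ref{prop:B1} supplies.
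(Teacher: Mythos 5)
Your proof is correct and takes essentially the same scaling argument as the paper: multiply $\Im(A(tz)/B(tz))\ge 0$ by $t>0$, expand homogeneously, and let $t\to 0^+$. The only cosmetic difference is that you work on the dense set $\{B_{M+1}\ne 0\}$ and extend by continuity, whereas the paper skips this step by observing that $B$ is real stable, hence (by Theorem \ref{thm:stablehomog} applied to $B$) its lowest homogeneous term $B_{M+1}$ already has no zeros in $\UHP^d$.
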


	\begin{proof}
		Since $B$ has no zeros in $\UHP^d$, $B_{M+1}$ has no zeros in $\UHP^d$.
		Since $A/B$ is a Pick function we can let $t>0$ and $z \in \UHP^d$
		and consider
		\[
		0\leq t \text{Im}\left(\frac{A(tz)}{B(tz)}\right) = \text{Im}\left( \frac{A_M(z) + t A_{M+1}(z) +\cdots}{B_{M+1}(z) + tB_{M+2}(z) +\cdots}\right).
		\] 
		Send $t\to 0$ to see that $A_M/B_{M+1}$ is a Pick function.
	\end{proof}

	In two variables this has the more geometric interpretation
	that the tangents of $A_{M}$ interlace the tangents of $B_{M+1}$. 
	The following proposition encodes that fact and introduces an added level of generality. Namely, it considers pairs of homogeneous polynomials, which we still denote $A_M, B_{M+1},$ that could possess monomial factors.
	We will see in the next section that if $p = A+i B$ is pure stable then we can 
	arrange for $A_M$ to have no factors of $z_1$ or $z_2$.
	Hence, the following proposition handles some pairs $A_M, B_{M+1}$ that do not originate from a pure stable $p$.

	\begin{proposition} \label{prop:tangent}
	Write 
	\[
	A_M = a z_1^r \prod_{j=1}^{M-r}(z_2+a_j z_1), \quad 
	B_{M+1} = b z_1^s \prod_{j=1}^{M+1-s}(z_2+b_j z_1)\]
	where $0\leq a_1\leq\cdots \leq a_{M-r}$, $0\leq b_1 \leq \cdots \leq b_{M+1-s}$.
	Set $a_j=\infty$ for $j=M-r+1,\dots, M$ and $b_j = \infty$ for $j=M+2-s,\dots, M+1$ if $r$ or $s$ are nonzero.
	Suppose $A_M/B_{M+1}$ is  a Pick function.
	Then, $r=s$ or $r+1=s$ and
	\[
	b_1\leq a_1\leq b_2 \leq \cdots \leq a_{M} \leq b_{M+1}.
	\]
	\end{proposition}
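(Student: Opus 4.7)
My approach is to reduce to classical one-variable rational Pick function theory by restricting $F := A_M/B_{M+1}$ to the slice $z_1 = x$ with $x > 0$. The first step is to verify that for each such $x$ the one-variable rational function
\[
G_x(z_2) := F(x, z_2) = \frac{a}{b}\, x^{r-s}\, \frac{\prod_{j=1}^{M-r}(z_2 + a_j x)}{\prod_{j=1}^{M+1-s}(z_2 + b_j x)}
\]
is Pick on $\UHP$. This is because for each $z_2 \in \UHP$ the one-variable map $z_1 \mapsto F(z_1, z_2)$ sends $\UHP$ into $\overline{\UHP}$, so its boundary limit at $z_1 = x$ still lies in $\overline{\UHP}$. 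The zeros $\{-a_j x\}$ and poles $\{-b_j x\}$ of $G_x$ are then all real and non-positive, and by a short argument using that $\Im G_x \geq 0$ on $\UHP$ together with the sign-change pattern on $\R$, the leading coefficient $(a/b)x^{r-s}$ of $G_x$ in $z_2$ must itself be real.

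The second step invokes the Nevanlinna representation of a nonzero rational Pick function $R = P/Q$ (in lowest terms): $R(w) = c + dw + \sum_k r_k/(w - \lambda_k)$ with $c \in \R$, $d \geq 0$, $\lambda_k \in \R$, and $r_k \leq 0$. In particular $\deg P - \deg Q \in \{-1, 0, 1\}$. Applied to $G_x$ (after canceling any common factors coming from coincidences $a_j = b_k$, which only decreases both $z_2$-degrees by the same amount), the $z_2$-degrees $M - r$ and $M + 1 - s$ give $s - r \in \{0, 1, 2\}$.

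To exclude the case $s = r + 2$, I substitute $z_1 = it$, $z_2 = is$ with $t, s > 0$. Using homogeneity of degree $-1$,
\[
F(it, is) = -\frac{i a}{bt}\, h(s/t), \qquad h(\zeta) := \frac{\prod_{j}(\zeta + a_j)}{\prod_{j}(\zeta + b_j)},
\]
and $h(\zeta) > 0$ for $\zeta > 0$, so the Pick inequality $\Im F \geq 0$ forces $\operatorname{Re}(a/b) \leq 0$. On the other hand, if $s = r + 2$ then $G_x$ grows linearly in $z_2$ with leading coefficient $(a/b) x^{-2}$, and by Nevanlinna the linear-growth coefficient of a Pick function is non-negative real; this forces $(a/b) \geq 0$ real. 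Combining these gives $a/b = 0$, hence $a = 0$, contradicting $A_M \not\equiv 0$. Thus $s \in \{r, r+1\}$, which is the first claim.

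Finally, the interlacing follows from the fact that the real zeros and real poles of the rational Pick function $G_x$ interlace canonically on $\R$, with the exact pattern determined by the behavior at $\pm\infty$ (which is $0$ when $s = r$ and the real constant $(a/b)x^{-1} \leq 0$ when $s = r + 1$) together with the non-positivity of all residues. Dividing the real-line ordering by $-x$ reverses the orderings and yields $b_1 \leq a_1 \leq \cdots \leq a_{M-r} \leq b_{M+1-s}$ in case $s = r$, and $b_1 \leq a_1 \leq \cdots \leq b_{M-r} \leq a_{M-r}$ in case $s = r + 1$; appending the $\infty$-entries from the $z_1^r$ and $z_1^s$ factors completes the required sequence $b_1 \leq a_1 \leq b_2 \leq \cdots \leq a_M \leq b_{M+1}$. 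The main obstacle is the sign bookkeeping used to rule out $s = r + 2$: one must combine a global constraint on $\operatorname{Re}(a/b)$ coming from an imaginary-axis substitution with a local constraint on $a/b$ extracted from the $z_2 \to \infty$ asymptotics of the slice $G_x$.
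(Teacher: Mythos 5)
Your proof is correct but takes a genuinely different route from the paper's.  To handle the monomial factors $z_1^r, z_1^s$, the paper does not first derive the degree constraint $s-r\in\{0,1,2\}$ and then exclude $s=r+2$; instead it considers $t^{s-r}A_M(tz_1,z_2)/B_{M+1}(tz_1,z_2)$, which is Pick for each $t>0$, and sends $t\to 0$ to obtain the monomial Pick function $(a/b)\,z_1^{r-s}z_2^{s-r-1}$.  Analyzing which monomials $c\,z_1^{m}z_2^{n}$ can be Pick immediately forces $(r-s, s-r-1)\in\{(0,-1),(-1,0)\}$, i.e., $s\in\{r,r+1\}$, with $c$ real and $c<0$ --- in a single step.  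Your approach reaches the same conclusion via two separate ingredients: the Nevanlinna degree bound $\deg P-\deg Q\in\{-1,0,1\}$ for the slice $G_x$, and then, to rule out $s=r+2$, a sign clash between $\Re(a/b)\le 0$ obtained on the positive imaginary diagonal of $F$ and the non-negativity of the linear term of $G_x$.  Both are valid; the paper's scaling-limit argument is more compact and simultaneously delivers the realness and sign of $a/b$, whereas your argument makes the degree constraints coming from the Nevanlinna representation explicit, at the cost of a case that then has to be excluded separately.  Two small things worth tightening: the preliminary ``short argument'' for realness of $(a/b)x^{r-s}$ is in fact redundant (the realness you actually use in step three comes for free from the Nevanlinna representation itself, which already says $d\ge 0$), and the final interlacing step, which you attribute to the canonical pole--zero interlacing of rational Pick functions, is precisely what the paper imports from Lemma 6.5 of \cite{bps19a} (which also yields the sign of the leading coefficient ratio), so you should be explicit that in the case $s=r+1$ the ``equal-degree'' interlacing starts with a pole, which is where the sign $a/b<0$ is used.
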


	\begin{proof}
	Suppose first that $A_M,B_{M+1}$ have no
	factors of $z_1$. 
	Then, $A_M(z_1,z_2) = a \prod_{j=1}^{M} (z_2+a_j z_1)$
	for $a_j \geq 0$ and $B_{M+1}(z_1,z_2) = b \prod_{j=1}^{M+1}(z_2+b_j z_1)$
	for $b_j\geq 0$ because $A_M$ and $B_{M+1}$ and all of their
	(linear) factors are real stable.  
	Then, 
	\[
	z_2\mapsto \frac{A_M(1,z_2)}{B_{M+1}(1,z_2)} = \frac{a}{b} \frac{\prod_{j=1}^{M}(z_2+a_j)}{\prod_{j=1}^{M+1} (z_2+b_j)}
	\]
	is a one variable real rational Pick function.
	By Lemma 6.5 of \cite{bps19a}, the zeros (or rather their negatives) must interlace.  
	Namely, if we write $0\leq a_1 \leq \cdots \leq a_M$, $0\leq b_1 \leq \cdots \leq b_{M+1}$
	then
	\[
	b_1 \leq a_1 \leq b_2 \leq \cdots \leq a_M \leq b_{M+1}.
	\]
	Lemma 6.5 of \cite{bps19a} also says $a/b<0$.  
	If $A_M$ or $B_{M+1}$ has a factor of $z_1$,
	write $A_M = z_1^r A^{\flat}_{M-r}$, $B_{M+1} = z_1^s B^{\flat}_{M+1-s}$ where $A^{\flat}_{M-r}, B^{\flat}_{M+1-s}$ have no factors of $z_1$.
	For $t>0$
	\[
	t^{s-r} \frac{A_M(tz_1,z_2)}{B_{M+1}(tz_1,z_2)} = z_1^{r-s} \frac{A^{\flat}_{M-r}(tz_1,z_2)}{B^{\flat}_{M+1-s}(tz_1,z_2)}
	\]
	is still a Pick function and if we send $t\to 0$ we get the Pick function
	\[
	z_1^{r-s} \frac{A^{\flat}_{M-r}(0,z_2)}{B^{\flat}_{M+1-s}(0,z_2)} = c z_1^{r-s} z_2^{s-r-1}
	\]
	for some constant $c$.  This is only possible if $s=r+1$ or $s=r$ and $c<0$.  
	We view this situation as $A_M$ having $r$ infinite slopes which then implies $B_{M+1}$
	has $r$ or $r+1$ infinite slopes.  
	If $r=s$, $A^{\flat}_{M-r}$ and $B^{\flat}_{M+1-r}$ have $M-r$ and $M+1-r$ slopes that interlace 
	as before and the addition of infinite slopes does not change the interlacing property.
	If $s=r+1$, then $A^{\flat}_{M-r}(1,z)/B^{\flat}_{M-r}(1,z)$ is a one variable Pick
	function so the $M-r$ roots of the numerator and denominator interlace.
	Since the ratio of the leading coefficients is negative,
	Lemma 6.5 of \cite{bps19a} states that the smallest slope of $B_{M+1}$
	is smaller than the smallest slope of $A_M$.  
	Thus, the $M-r$-th slope of $B_{M+1}$ is at most the $M-r$-th slope of $A_M$
	and the remaining $r+1$ infinite slopes of $B_{M+1}$ interlace with the remaining $r$ 
	infinite slopes of $A_M$.
	\end{proof}

	If $p=A+iB$ is pure stable with $P_M = A_M$, then the above proposition says that the tangents of $A_M$ and $B_{M+1}$ interlace. 
	Since these are the initial homogeneous terms of $A$ and $B$, we
	can say that their tangents interlace as well as those of $A+tB$ and $B$.

	\begin{example} \label{ex71:return}
		Returning to Example \ref{ex71},
		we have
		\[
		A = -4(z_2^2+4z_1z_2+ z_1^2-2z_1^2z_2^2) \quad B = 4z_1z_2(z_1+z_2).
		\]
		So, 
		\[
		\frac{A_2}{B_3} = -\frac{z_2^2+4z_1z_2+z_1^2}{z_1z_2(z_1+z_2)}
		\]
		is a Pick function.
	\end{example}

	\begin{example}\label{ex153homog}
		The Pick function $A_M/B_{M+1}$ need not be especially interesting.
		Consider the polynomial with no zeros on $\D^2$
		\[
		p(z_1,z_2) = 4-5z_1-2z_2+2z_1 z_2+3z_1^2-z_1^2z_2-z_1^3z_2
		\]
		(taken from \cite{Kne15} Example 15.3)
		converted to the polynomial with no zeros on $\UHP^2$
		given by
		\[
		P(z_1,z_2) = z_1+z_2-2z_1^3-6z_1^2 z_2 - i(z_1^2+z_1z_2-4z_1^3z_2).
		\]
		(A rescaling was also involved.)
		We have 
		\[
		\frac{A_1}{B_2} =\frac{(z_1+z_2)}{-z_1(z_1+z_2)}= -\frac{1}{z_1}.
		\] 
		We return to this example in Examples \ref{ex153puiseux}, \ref{ex:153}, \ref{ex:153:ideal}. \eox
	\end{example} 

	\begin{remark}\label{rem:uhptodisk}
		The above properties of lowest order homogeneous
		terms transfer to the polydisk via Cayley transform.

		Suppose $p\in \C[z_1,\dots,z_d]$, $p(1,\dots, 1)=0$,  and $p$ has
		multi-degree $n=(n_1,\dots,n_d)$.
		Then,
		\[
		P(z) = (1-iz_1)^{n_1}\cdots (1-iz_d)^{n_d} p\left(\frac{1+iz_1}{1-iz_1},\dots, \frac{1+iz_d}{1-iz_d}\right) \in \C[z_1,\dots, z_d]
		\]
		has zeros in $\UHP^d$ if and only if $p$ has zeros in $\D^d$; also, $P(0)=0$.  
		In order to compare homogeneous decompositions
		we write
		\[
		p(1+z_1,\dots,1+z_d) = \sum_{j\geq M} p_j(z) \text{ and } P(z) = \sum_{j\geq M} P_j(z).
		\]
		Since $\frac{1+iz}{1-iz} = 1+\frac{2iz}{1-iz}$ one can directly
		check by looking at the lowest order terms that
		\[
		P_M(z) = (2i)^M p_M(z).
		\]
		Thus, if $p$ has no zeros in $\D^d$, then $p_M$ has
		no zeros in $\UHP^d$ (or $(c\UHP)^d$ for any $c\ne 0$ 
		by homogeneity).
		This is how a version of Theorem \ref{thm:stablehomog}
		was originally stated in \cite{Kne15}.  By this correspondence
		the theorem from \cite{Kne15} directly implies
		Theorem \ref{thm:stablehomog}.
		 
		The finer properties involving $A_M, B_{M+1}$ would be
		more technical to state in the context of the polydisk.
	\end{remark}

\subsection{Puiseux expansions} \label{sec:puiseux}
While homogeneous expansions can provide 
some useful local information about stable polynomials
in two and more variables, using Puiseux expansions
we can give a nearly complete local description
of stable polynomials in two variables.  
Given $p\in \C[z_1,z_2]$ with
no zeros in $\UHP^2$, $p(0,0)=0$, 
 we can factor it as
\[
p = z_1^{\alpha_1} z_2^{\alpha_2} u p_1 \cdots p_N
\]
where $\alpha_1,\alpha_2$ are non-negative integers, 
$u\in \C\{z_1,z_2\}$ is analytic and non-vanishing at $0$
and $p_j \in \cps[z_2]$ are monic and irreducible in $z_2$ 
with coefficients in the ring of convergent power series
$\cps$ that vanish at $0$ (i.e.\ irreducible Weierstrass polynomials).
Note $\C\{z_1,z_2\}, \cps$ denote rings of convergent power series.

By Puiseux's theorem each $p_j$ formally factors into
\[
\prod_{k=1}^{m} (z_2 - g(\mu^k z_1^{1/m}))
\]
for $g\in \cps$, $g(0)=0$, and $\mu = e^{2\pi i/m}$ (see \cite{Simon} Theorems 3.5.1, 3.5.2).  Alternatively,
the zero set of $p_j$ near $(0,0)$ is parametrized by the map
\[
t \mapsto (t^m, g(t))
\]
defined for $t$ in a neighborhood of $0\in \C$.
Notice that since $p$ has no zeros in $\UHP^2$, the above map has
the property that it is injective and maps into $\C^2\setminus \UHP^2$.  
The following theorem derived from \cite{Kne15} gives a detailed
description of the possible $g\in \cps$.  

\begin{theorem}\label{thm:localtypes}
Let $g\in \C\{z\}$, $g\not\equiv 0$, $g(0)=0$, and assume $t \mapsto (t^m,g(t))$ 
is injective into $\C^2\setminus \UHP^2$.  
Then, $\phi(t) := -g(t)$ is of one of the two following forms
\begin{itemize}

\item Pure stable type:
\[
\phi(t) = q(t^m) +t^{2mL} \psi(t)
\]
where 
\begin{itemize}[label=$\diamond$]
\item $L$ is a positive integer,
\item $q \in \R[t],$ where $q(0)=0,$ $q'(0)>0$, $\deg q < 2L$,
\item $\psi\in \C\{t\}$ with $\Im \psi(0) > 0$.
\end{itemize}
If $m>1$ then $\psi$ is not of the form $\psi(t) = h(t^m)$ for $h\in \C\{t\}$.
\item Real stable type: $m=1$, $\phi \in \R\{t\}$ (analytic with real coefficients) and $\phi(0)=0, \phi'(0)>0$.
\end{itemize}
\end{theorem}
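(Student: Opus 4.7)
The approach is to translate the geometric hypothesis into an analytic boundary condition on $\phi(t) := -g(t)$ and then read off the Taylor coefficient structure by angular analysis on a collection of boundary rays. Since $(t^m,g(t))\in\C^2\setminus\UHP^2$ means at least one coordinate has nonpositive imaginary part, we must have $\Im\phi(t)\ge 0$ whenever $\Im(t^m)>0$. The locus $\{\Im(t^m)>0\}$ is the union of $m$ open sectors $S_j=\{2\pi j/m<\arg t<(2j+1)\pi/m\}$, and continuity of $\phi$ (holomorphic in a full disc at $0$) extends the constraint to the closures $\overline{S_j}$ and in particular to all $2m$ boundary rays $R_k=\{\arg t=k\pi/m\}$, $k=0,\dots,2m-1$.

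Next I expand $\phi(t)=\sum_{n\ge 1}c_n t^n$ with $c_n=a_n+ib_n$ and show inductively on $n$ that $c_j=0$ whenever $m\nmid j$ and $c_j\in\R$ whenever $m\mid j$, up to a first failure index $N$ (possibly $\infty$). Under this hypothesis on lower-order terms, each $c_j t^j$ with $j<n$ contributes zero imaginary part on every $R_k$ (since $\sin((j/m)k\pi)=0$ when $m\mid j$), so on $R_k$ the leading imaginary contribution is $r^n[a_n\sin(nk\pi/m)+b_n\cos(nk\pi/m)]$; positivity of $\Im\phi$ for small $r>0$ forces this bracket to be $\ge 0$ for each $k$, i.e.\ each rotate $c_n e^{ink\pi/m}$ lies in $\overline{\UHP}$. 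The distinct values of $nk\pi/m\pmod{2\pi}$ form $2m/\gcd(2m,n)$ equally spaced angles on the circle, and no three equally spaced unit vectors can all lie in a common closed half-plane; so when $m\nmid n$ (the count is $\ge 3$) we are forced to have $c_n=0$. When $n=mK$ with $K$ odd the two remaining constraints are $b_n\ge 0$ and $-b_n\ge 0$, giving $c_n\in\R$. When $n=mK$ with $K$ even only $b_n\ge 0$ remains: the induction continues while $b_n=0$ and terminates at $N=n=2mL$ (setting $K=2L$) as soon as $b_n>0$.

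If the induction never terminates then $\phi(t)=q(t^m)$ with $q\in\R\{s\}$, $q(0)=0$. For $m=1$ this is the real stable case, and $q'(0)=\phi'(0)>0$ follows from the fact that $\phi(is)/(is)\to\phi'(0)$ has nonnegative imaginary part, together with ruling out $\phi'(0)=0$ (which would force $\phi$ to start at order $\ge 2$, contradicting the sector condition because $\sin$ on an interval of length $\ge 2\pi$ takes both signs). For $m>1$ the identity $\phi(t)=q(t^m)$ would give $\phi(\mu t)=\phi(t)$ with $\mu=e^{2\pi i/m}$, contradicting the injectivity of $t\mapsto(t^m,g(t))$, so termination is forced. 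When the induction terminates, writing $\phi(t)=q(t^m)+t^{2mL}\psi(t)$ with $q\in\R[s]$ gathering the real coefficients $c_{mK}$ for $1\le K<2L$ and $\psi(t)=c_N+c_{N+1}t+\cdots$ yields the pure stable decomposition; $\Im\psi(0)=b_N>0$ comes from the termination condition, and $q'(0)=c_m>0$ comes from the initial step of the induction ($K=1$ is odd, so $c_m\in\R$, and the leading-order analysis on any single sector $S_j$ rules out $c_m\le 0$). Finally, $\psi$ cannot be of the form $h(t^m)$ for $m>1$: otherwise $\phi(t)=Q(t^m)$ for $Q(s):=q(s)+s^{2L}h(s)$, again contradicting injectivity.

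The main obstacle is the angular / Vandermonde-style argument at the heart of the induction: verifying that whenever $m\nmid n$ the $2m/\gcd(2m,n)\ge 3$ equally spaced half-plane constraints on $c_n$ force $c_n=0$, and correctly tracking the parity of $K=n/m$ to ensure the terminal index is $N=2mL$ (a multiple of $2m$) rather than an odd multiple of $m$. Once this combinatorial core is in place, the remainder of the proof---the base case of the induction, the sign of $c_m$, and the injectivity-based arguments ruling out the degenerate forms---is essentially bookkeeping.
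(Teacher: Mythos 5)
Your proof is correct. The paper itself does not include a self-contained argument for Theorem~\ref{thm:localtypes} (it defers to Lemma~C.3 of \cite{Kne15} with the caveat that the original proof must be corrected to allow all-real coefficients, i.e.\ the real stable case); your reconstruction follows the same coefficient-by-coefficient strategy that the paper's description of the cited lemma implies. The key steps check out: extending $\Im\phi\ge 0$ from the open sectors $\{\Im(t^m)>0\}$ to the closed sectors and hence the $2m$ boundary rays; the inductive bookkeeping, where the constraints $\Im(c_n e^{ink\pi/m})\ge 0$ over $k$ yield $2m/\gcd(2m,n)$ equally spaced half-plane constraints, which force $c_n=0$ when the count is $\ge 3$ (correct, since $m\nmid n$ implies $\gcd(2m,n)\ne m, 2m$); the parity dichotomy $c_{mK}\in\R$ for $K$ odd versus $b_{mK}\ge 0$ for $K$ even, pinning the termination index to $N=2mL$; and the injectivity arguments ruling out $\phi=Q(t^m)$ when $m>1$.

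One place you should tighten the write-up is the justification of $q'(0)=c_m>0$ in the pure stable case with $m>1$. As stated, ``the leading-order analysis on any single sector rules out $c_m\le 0$'' only directly excludes $c_m<0$ (via $\Im(c_m t^m)=c_m\Im(t^m)$ on a sector). To rule out $c_m=0$ you need the additional observation that if the first nonzero coefficient sits at $n=mK'$ with $K'\ge 2$ (or at $n=N=2mL$), then $t^n=(t^m)^{K'}$ and $K'\arg(t^m)$ sweeps an interval of length $\ge 2\pi$ as $t^m$ ranges over the upper half plane, so $\Im(c_n t^n)$ changes sign on the sector, a contradiction. You in fact make exactly this observation in the $m=1$ discussion, so the fix is simply to apply it uniformly. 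With that, the proof is complete.
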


We shall also say the associated Weierstrass polynomial
is of \emph{pure stable type} or \emph{real stable type} depending
on the type of the underlying function $g$ in its Puiseux expansion.
Using the above description, a Weierstrass polynomial of pure stable type
is of the form
\begin{equation} \label{WPpure}
\prod_{n=1}^{m} (z_2 + q(z_1) + z_1^{2L} \psi(\mu^n z_1^{1/m}))
\end{equation}
where $\mu = \exp(2\pi i/m)$.   

An interesting paper with some antecedents of this theorem is \cite{Tsi93} (see Sections 3-4).

\begin{proof}[Discussion of the proof of Theorem \ref{thm:localtypes}]
This theorem is actually a correction, a refinement, and a 
rephrasing
of Lemma C.3 from \cite{Kne15}.
Said Lemma C.3 states that given $\phi$ analytic 
in a neighborhood of $0 \in \C$, $\phi(0)=0$, and
$t\mapsto (t^k, \phi(t))$ injective into $\C^2 \setminus \UHP^2$,
then
$\phi$ has the following form
\[
\phi(t) = \sum_{j=1}^{M} a_j t^{jk} + b t^{2kL} + \sum_{j=2kL+1}^{\infty} b_j t^j 
\]
where $a_1<0, a_2,\dots, a_M \in \R$, and $\arg b \in (\pi ,2\pi)$.
A minor correction to this statement is
that there is also the case that $\phi$ is 
of the form
\begin{equation} \label{realphi}
\phi(t) = \sum_{j=1}^{\infty} a_j t^j
\end{equation}
where $k=1$, $a_1<0$ and $a_j \in \R$ for $j=2,3,\dots$.
This does not affect any of the results in \cite{Kne15}
because the real coefficient case was not relevant there.
In any case, the proof given in \cite{Kne15} is essentially correct
up to the point where it proves $\phi$ is necessarily of 
the form $\phi(t) = a t^k + \text{ higher order }$ with $a<0$.
However at this point there are two cases; 
either $\phi$ is of the form
\[
\phi(t) = \sum_{j=1} a_{j} t^{jk}
\]
with $a_j \in \R$ for all $j$
or
\[
\phi(t) = \sum_{j=1}^{M} a_j t^{jk} + b t^L + \text{ higher order }
\]
where either $b \notin \R$ or $L$ is not a multiple of $k$
(we allow for $M=1$).
For this second case the proof proceeds as written in \cite{Kne15}.
For the first case, we must have $k=1$ because
we assume $t\mapsto (t^k, \phi(t))$ is injective.

Theorem \ref{thm:localtypes} can now be deduced
immediately from our corrected Lemma C.3 (with $g(t)$ in
the statement of Theorem \ref{thm:localtypes} playing the role of 
$\phi$ in the statement of Lemma C.3 in \cite{Kne15}).
\end{proof}

The two types in Theorem \ref{thm:localtypes} are directly related to our global 
dichotomy of stable polynomials from Section \ref{sub:global}.
If $p$ is pure stable then locally it can only have 
irreducible Weierstrass factors of pure stable type---a 
real stable factor would imply infinitely many zeros
on $\R^2$.
If $p$ is real stable then locally it can only have
irreducible Weierstrass factors of real stable type
if we ignore monomial factors.
Indeed, if we had a pure stable factor parametrized via $\phi$ as above 
then 
for $x>0$, $p(x, -\phi(x^{1/m})) \equiv 0$.
But 
\[
\Im(\phi(x^{1/m})) = x^{2L}(\Im(\psi(0)) + O(x^{1/m}))
\]
is positive for small $x>0$.  So, $p$
has roots in $\R\times (-\UHP)$ which we could
perturb to get roots in $(-\UHP)^2$.

The above theorem also shows that 
real stable polynomials locally
factor into smooth branches near a zero on $\R^2$ 
since real stable
factors are degree one Weierstrass polynomials.
This was a main result of \cite{bps19a}, but it now follows
directly from Theorem \ref{thm:localtypes}.
The precise statement follows.

\begin{theorem}[Local parametrization of real stable polynomials \cite{bps19a}] \label{thm:localpar}
Let $p\in \R[z_1,z_2]$ have no zeros in $\UHP^2$ and $p(0,0)=0$. Assume that $p(0,\cdot), p(\cdot,0) \not\equiv 0$.  Then, there
exist $r,R>0$ and
$\phi_1,\dots, \phi_N\in \R\{z_1\}$ convergent on $|z_1|\leq r$ with $\phi_j(0)=0, \phi_j'(0) > 0$ 
such that 
$\mathcal{Z}_p\cap \R^2$
is described by
\[
\bigcup_{j=1}^{N} \{(x_1,x_2): x_2+\phi_j(x_1) = 0\}
\]
for $(x_1,x_2) \in (-r,r)\times(-R,R)$.
\end{theorem}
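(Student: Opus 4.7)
The plan is to reduce the statement to Theorem \ref{thm:localtypes} via Weierstrass preparation, then exploit the restrictive form of real stable type branches to extract smooth real parametrizations.

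First I would record that $p\in\R[z_1,z_2]$ is real stable: since $p$ has real coefficients we have $\bar p=p$, so $p$ is (a constant multiple of) its own reflection. Because $p(0,0)=0$ and the assumption $p(0,\cdot)\not\equiv 0$ (and $p(\cdot,0)\not\equiv 0$) rules out monomial factors $z_1^{\alpha_1}z_2^{\alpha_2}$, Weierstrass preparation in $\C\{z_1\}[z_2]$ yields a factorization on some polydisc neighborhood of $(0,0)$,
\[
p(z_1,z_2) = u(z_1,z_2)\,p_1(z_1,z_2)\cdots p_N(z_1,z_2),
\]
where $u\in\C\{z_1,z_2\}$ is a unit and each $p_j$ is a monic irreducible Weierstrass polynomial in $z_2$ with coefficients in $\cps$ vanishing at $0$.

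Next I would apply Theorem \ref{thm:localtypes} to each $p_j$. As discussed right after that theorem, a pure stable type factor would produce zeros of $p$ in $\R\times(-\UHP)$ that can be perturbed into $(-\UHP)^2$, contradicting the hypothesis. Hence every $p_j$ is of real stable type. But real stable type forces $m_j=1$, so $p_j$ has degree one in $z_2$ and is of the form
\[
p_j(z_1,z_2) = z_2 + \phi_j(z_1),\qquad \phi_j\in\R\{z_1\},\ \phi_j(0)=0,\ \phi_j'(0)>0.
\]
This is the key structural input, and is where essentially all of the work sits.

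Finally I would calibrate the domain. Pick $r>0$ small enough that every $\phi_j$ converges on $|z_1|\le r$, then choose $R>0$ with $|\phi_j(x_1)|<R$ for all $j$ and all $|x_1|\le r$, and such that both the Weierstrass factorization and the non-vanishing of $u$ hold on the closed bidisc $\{|z_1|\le r,\,|z_2|\le R\}$. For $(x_1,x_2)\in(-r,r)\times(-R,R)$, the equation $p(x_1,x_2)=0$ is then equivalent to $\prod_j(x_2+\phi_j(x_1))=0$. Because $\phi_j$ has real Taylor coefficients, $\phi_j(x_1)\in\R$, so $\mathcal{Z}_p\cap\R^2$ on this rectangle is exactly $\bigcup_{j=1}^N\{x_2+\phi_j(x_1)=0\}$, as claimed.

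The main obstacle is really the content of Theorem \ref{thm:localtypes}: that a local branch of a real stable polynomial must have conjugation-type $m=1$ and yield a power series with real coefficients. Once that is invoked, the remainder is routine bookkeeping, the only care needed being the choice of $R$ large enough that each real graph $\{x_2=-\phi_j(x_1)\}$ over $(-r,r)$ stays inside $(-R,R)$, so that no real zeros of $p$ escape the rectangle.
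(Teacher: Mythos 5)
Your proof is correct and follows essentially the same route the paper takes: apply Weierstrass preparation (ruling out monomial factors via $p(0,\cdot),p(\cdot,0)\not\equiv 0$), invoke Theorem \ref{thm:localtypes}, and eliminate pure-stable-type factors by the perturbation argument that produces a zero in $(-\UHP)^2$ and hence (by real coefficients, conjugating) a zero in $\UHP^2$. The paper gives exactly this reduction in the text immediately before the theorem and then states the result without a formal proof; you have simply written out the bookkeeping, and your account of it is sound.
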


The conditions $p(0,\cdot), p(\cdot,0)\not\equiv 0$
simply rule out monomial factors which can only 
contribute zero sets $z_1=0$ or $z_2=0$.

\begin{example} \label{ex71puiseux}
Let us again return to Example \ref{ex71}.
Now, $p(0,z_2) = 4z_2^2$ so we can write
\[
p(z_1,z_2) = \underset{=: u}{\underbrace{-4(1-4iz_1-2z_1^2)}}\left(z_2^2 + \frac{4z_1(1-iz_1)}{1-4iz_1-2z_1^2} z_2+ \frac{z_1^2}{1-4iz_1-2z_1^2}\right).
\]
Near $(0,0)$ we can factor $p/u$ explicitly into two degree one factors of pure stable type
$(z_2+f_1(z) )(z_2+ f_2(z))$
where
\[
f_1(z) := z_1\frac{(2(1-iz_1)+\sqrt{3-4iz_1-2z_1^2})}{1-4iz_1-2z_1^2} =
(2+\sqrt{3})z_1 + i \left(6+\frac{10}{\sqrt{3}}\right)z_1^2+\cdots
\]
\[
f_2(z) := z_1\frac{(2(1-iz_1)-\sqrt{3-4iz_1-2z_1^2})}{1-4iz_1-2z_1^2}
=
(2-\sqrt{3})z_1 + i \left(6-\frac{10}{\sqrt{3}}\right)z_1^2+\cdots
\]

On the other hand, $A = 4(z_2^2+4z_1z_2+z_1^2-2z_1^2z_2^2)$ factors
\[
\begin{aligned}
A &= 4(1-2z_1^2)\left( z_2^2 + \frac{4z_1}{1-2z_1^2} z_2 + \frac{z_1^2}{1-2z_1^2}\right) \\
&= 4(1-2z_1^2)\left( z_2+ z_1\frac{2+\sqrt{3+2z_1^2}}{1-2z_1^2}\right)\left( z_2+ z_1\frac{2-\sqrt{3+2z_1^2}}{1-2z_1^2}\right)
\end{aligned}
\]
into two degree one factors of real stable type. \eox
\end{example}

\begin{example}\label{ex153puiseux}
Consider again Example \ref{ex153homog}
\[
P(z_1,z_2) = z_1+z_2-2z_1^3-6z_1^2 z_2 - i(z_1^2+z_1z_2-4z_1^3z_2)
\]
which although it only vanishes to order 1 at $(0,0)$, its pure stable Puiseux factorization
still has some complexity to it.
Observe
\[
\begin{aligned}
P(z_1,z_2) &= \underset{=:u(z)}{\underbrace{(1-iz_1-6z_1^2+4iz_1^3)}}\left( z_2 + z_1 \frac{1-iz_1-2z_1^2}{1-iz_1-6z_1^2+4iz_1^3}\right) \\
&= u (z)(z_2+ z_1+4z_1^3+24 z_1^5 + 8i z_1^6 + O(z_1^7)).
\end{aligned}
\]
Similarly, $A(z_1,z_2) = z_1+z_2-2z_1^3-6z_1^2 z_2$ has the (trivial) Puiseux factorization
\[
\begin{aligned}
A(z_1,z_2) &= (1-6z_1^2)\left(z_2 + z_1 \left(1+ \frac{4z_1^2}{1-6z_1^2}\right) \right) \\
&= (1-6z_1^2)(z_2+z_1 + 4z_1^3+24z_1^5 + O(z_1^7)).
\end{aligned}
\]
\eox
\end{example}

We have the following local parametrization theorem for
pure stable polynomials, a direct result of Theorem \ref{thm:localtypes} and Puiseux's theorem.

\begin{theorem} \label{thm:purelp}
Let $p \in \C[z_1,z_2]$ be pure stable
and vanish to order $M$ at $(0,0)$.  
Factor $p = u p_1\dots p_k$ into a unit $u\in \C\{z_1,z_2\}$
and irreducible Weierstrass polynomials of pure stable type.
We may write each $p_j$ 
\[
p_j(z) = \prod_{m=1}^{M_j} (z_2 + q_j(z_1) + z_1^{2L_j} \psi_j(\mu_j^m z_1^{1/M_j}))
\]
with all of the data associated to a Puiseux expansion of pure stable type from Theorem \ref{thm:localtypes}.
\end{theorem}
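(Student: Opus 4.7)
The plan is to combine the Weierstrass preparation theorem, Puiseux's theorem, and Theorem \ref{thm:localtypes} with the pure/real stable dichotomy from Subsection \ref{sub:global}. First, I would apply Weierstrass preparation to write $p = z_1^{\alpha_1} z_2^{\alpha_2} u \, p_1 \cdots p_k$ as germs at $(0,0)$, where $u$ is a unit in $\C\{z_1,z_2\}$ and each $p_j \in \cps[z_2]$ is a monic irreducible Weierstrass polynomial of degree $M_j$. The monomial exponents $\alpha_1, \alpha_2$ must vanish: since $\bar{z}_j = z_j$ under reflection, any factor of $z_1$ or $z_2$ in $p$ would also appear in $\bar{p}$, contradicting pure stability. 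Then, by Puiseux's theorem, each $p_j$ admits a parametrization $t \mapsto (t^{M_j}, g_j(t))$ for some $g_j \in \cps$ with $g_j(0) = 0$, and since $p$ is nonvanishing on $\UHP^2$ this map lands in $\C^2 \setminus \UHP^2$; irreducibility of $p_j$ forces it to be injective. Hence Theorem \ref{thm:localtypes} applies and classifies each $g_j$ as either pure stable type or real stable type.

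The main obstacle is ruling out the real stable case. Suppose some $p_j$ were of real stable type; then Theorem \ref{thm:localtypes} gives $M_j = 1$ and $p_j(z) = z_2 + \phi_j(z_1)$ with $\phi_j \in \R\{z_1\}$, so $p_j$ equals its own coefficient reflection. Writing $p = p_j r$ locally and conjugating yields $\bar{p}(z) = \overline{p(\bar{z})} = p_j(z)\,\bar{r}(z)$ as germs, so $p_j$ divides both $p$ and $\bar{p}$. Consequently the analytic curve $\{z_2 + \phi_j(z_1) = 0\}$, which is uncountable in any neighborhood of $(0,0)$, lies inside $\mathcal{Z}_p \cap \mathcal{Z}_{\bar{p}}$. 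By Bezout's theorem, two polynomials in $\C[z_1,z_2]$ with no common polynomial factor have only finitely many common zeros, forcing $p$ and $\bar{p}$ to share an irreducible polynomial factor—contradicting pure stability of $p$.

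Therefore every $p_j$ is of pure stable type with data $M_j, q_j, L_j, \psi_j$ as in Theorem \ref{thm:localtypes}, and $g_j(t) = -q_j(t^{M_j}) - t^{2M_j L_j}\psi_j(t)$. Substituting into the Puiseux factorization
\[
p_j(z) = \prod_{m=1}^{M_j}\bigl(z_2 - g_j(\mu_j^m z_1^{1/M_j})\bigr), \qquad \mu_j = e^{2\pi i/M_j},
\]
and simplifying with $\mu_j^{M_j} = 1$ via the identities $(\mu_j^m z_1^{1/M_j})^{M_j} = z_1$ and $(\mu_j^m z_1^{1/M_j})^{2M_j L_j} = z_1^{2L_j}$ produces exactly the product formula in the theorem, while the unit $u$ is the one from Weierstrass preparation.
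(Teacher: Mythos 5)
Your proof is correct and takes essentially the same approach as the paper, which declares the theorem a ``direct result'' of Theorem \ref{thm:localtypes} and Puiseux's theorem and, in the preceding discussion, sketches the same exclusion of real-stable-type factors (``a real stable factor would imply infinitely many zeros on $\R^2$,'' itself a B\'ezout argument). You fill in the details the paper leaves implicit: the vanishing of the monomial exponents $\alpha_1,\alpha_2$, the injectivity of the Puiseux parametrization from irreducibility, the B\'ezout contradiction, and the final substitution $t = \mu_j^m z_1^{1/M_j}$.
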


The following corollary describes the possible lowest order
homogeneous terms of $p$.

\begin{corollary}\label{cor:purelp}
Assume the setup and conclusion of the previous theorem.
The lowest order
homogeneous term of $p$ must be of the form
\[
P_M = c \prod_{j=1}^{k} (z_2+a_j z_1)^{M_j}.
\]
for $a_j>0$ and $c\in \C$.
In particular, if we write $p=A+i B$ and multiply $p$ by a constant to force
$P_M=A_M$, then $A_M$ has no factors of $z_1$ or $z_2$
and $B_{M+1}$, the lowest order homogeneous term of $B$ by Proposition \ref{prop:B1},
has at most one factor of $z_1$ and at most one factor of $z_2$.
\end{corollary}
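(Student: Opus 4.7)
The plan is to compute $P_M$ directly from the factorization of $p$ supplied by Theorem \ref{thm:purelp}, using that the lowest order homogeneous term of a product is the product of the lowest order homogeneous terms (when these are nonzero). Recall each factor of $p_j$ has the form
\[
z_2 + q_j(z_1) + z_1^{2L_j}\psi_j(\mu_j^m z_1^{1/M_j}),
\]
and the data conditions give $q_j(z_1) = a_j z_1 + O(z_1^2)$ with $a_j = q_j'(0)>0$, while the perturbation $z_1^{2L_j}\psi_j(\mu_j^m z_1^{1/M_j})$ starts at degree $2L_j \geq 2$ in $z_1$. Thus the lowest degree homogeneous part of each such factor, viewed as a formal expansion, is the linear polynomial $z_2 + a_j z_1$, and upon multiplying $M_j$ factors (which together form the genuine polynomial $p_j$), the lowest order homogeneous term of $p_j$ is $(z_2 + a_j z_1)^{M_j}$.

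Multiplying across $j = 1,\dots,k$ gives that $p_1\cdots p_k$ has lowest order homogeneous term $\prod_{j=1}^{k}(z_2+a_j z_1)^{M_j}$, whose total degree is $\sum_j M_j = M$. Since the unit $u\in \C\{z_1,z_2\}$ has $u(0,0)\neq 0$, multiplication by $u$ preserves the lowest order homogeneous term up to the scalar factor $c := u(0,0)$, yielding
\[
P_M(z) = c \prod_{j=1}^{k}(z_2 + a_j z_1)^{M_j},
\]
with $a_j > 0$ as claimed. Once we have normalized so that $P_M = A_M$, the fact that every $a_j$ is strictly positive and strictly finite immediately rules out any factor of $z_1$ (which would be $a_j=\infty$) or $z_2$ (which would be $a_j=0$) in $A_M$.

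For the last statement about $B_{M+1}$, I invoke Proposition \ref{prop:tangent}, which applies since Proposition \ref{prop:AB} guarantees that $A_M/B_{M+1}$ is a Pick function on $\UHP^d$. Writing $A_M$ and $B_{M+1}$ in the form used there with $r,s$ the multiplicities of the $z_1$-factor, we have $r = 0$ by the preceding paragraph, so the relation $s \in \{r, r+1\}$ forces $s \leq 1$, i.e. $B_{M+1}$ has at most one factor of $z_1$. For the $z_2$-factor count, the interlacing inequality $b_1 \leq a_1 \leq b_2 \leq \cdots \leq a_M \leq b_{M+1}$ combined with $a_1 > 0$ shows $b_2,\dots,b_{M+1-s}>0$, so at most $b_1$ can equal $0$, meaning $B_{M+1}$ has at most one factor of $z_2$. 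The main thing to be careful about here is simply matching conventions (slopes versus factors, infinite slopes encoding $z_1$-factors), after which the conclusion is a direct read-off from interlacing; no substantive obstacle arises.
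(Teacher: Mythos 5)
Your proof is correct and follows the same route as the paper: read off $P_M$ from the Puiseux product by taking lowest-degree terms of each factor, absorb $u(0,0)$ into the constant $c$, and then apply Proposition \ref{prop:tangent} with $r=0$ to bound the $z_1$- and $z_2$-factors of $B_{M+1}$. The paper's write-up is terser on the $B_{M+1}$ claim (it only explicitly mentions $s\in\{0,1\}$ for the $z_1$-factors, leaving the $z_2$-factor count to the reader), so your explicit read-off from the interlacing chain $b_1\leq a_1\leq b_2\leq\cdots$ combined with $a_1>0$ is a welcome clarification rather than a divergence.
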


\begin{proof}
Multiplying out $p_j$ in Theorem \ref{thm:purelp} we get
\[
p_j(z) = (z_2+q_j'(0)z_1)^{M_j} + \text{ higher order terms}
\]
and multiplying $p_1,\dots, p_k$ together we get the desired form
for $P_M$ since the unit $u$ only affects this up to a constant multiple.

The observation about $A_M$ is evident while the claim about $B_{M+1}$ 
follows from Proposition \ref{prop:tangent} (set $r=0$ in that proposition
to see that $B_{M+1}$ has either $0$ or $1$ factors of $z_1$).
\end{proof}

In particular, if $k=1$ (i.e. we have a single irreducible Weierstrass factor)
then the corresponding algebraic curve has a single tangent line through $(0,0)$;
i.e. the lowest order homogeneous term of $p$ is a power of a linear
polynomial $(az_1+bz_2)^M$.  This is more
generally true. See \cite{Abh} Chapter 18, where this fact
(stated in slightly different language) is called the \emph{Tangent Lemma}.
Stated contra-positively, if $p$ has more than one tangent at $(0,0)$
then it is locally reducible---i.e. has more than one irreducible Weierstrass factor.
If $p$ has all distinct tangents at $(0,0)$
(i.e. an ordinary multiple point)
 then it is a product of degree one Weierstrass polynomials.

\subsubsection{Stable polynomials with non-trivial Puiseux expansions}

To our knowledge, the literature contains no examples of stable polynomials 
with non-trivial Puiseux expansions.  It turns out that there is a way
to build such examples from our local description.  
We present an example and a theorem generalizing the example.

\begin{example}
A simple example of a non-trivial pure stable branch is
\[
h(z) = z+ i z^2 + c z^{5/2}
\]
where $c$ is a constant.  We will see that $|c|\leq \sqrt{2}$ is
a convenient assumption.
Concretely, for $z=x+iy$ with $0<|z|< \tfrac{1}{2}$ and $y\geq 0$
\[
\begin{aligned}
\Im h(z) \geq y + x^2-y^2 - |c||z|^{5/2} & \geq y(1-|z|) + x^2 - |c| |z|^{5/2}\\
&\geq  |z|^2(1-\sqrt{2} |z|^{1/2}) > 0.
\end{aligned}
\]
 We can then build the polynomial
\[
p_1(z_1,z_2) = (z_2+z_1+ i z_1^2 + c z_1^{5/2})(z_2+z_1+ i z_1^2 - c z_1^{5/2}) 
= (z_2+z_1+iz_1^2)^2 - c^2 z_1^5
\]
which has no zeros when $0<|z_1|<1/2$, $\Im z_1 \geq 0$, and $\Im z_2 \geq 0$.
In particular, $p_1$ is non-vanishing on $\{z_1: |z_1-i/4| < 1/4\}\times \UHP$.
The map $\sigma(z_1) = \frac{z_1}{1- 2 iz_1}$ sends $\UHP$ onto $\{z_1: |z_1-i/4|<1/4\}$
so that
\[
\begin{aligned}
p_2(z_1,z_2) &= (1-2 iz_1)^5p_1(\sigma(z_1),z_2) \\
&= (1-2iz_1)((1-2iz_1)^2 z_2 + z_1(1-2i z_1) + i z_1^2)^2 - c^2 z_1^5
\end{aligned}
\]
has no zeros in $\UHP^2$ for $|c| \leq \sqrt{2}$.  \eox
\end{example}

The next theorem gives a
lower bound on the imaginary part of Puiseux branches of pure stable type
which lets us generalize this example.

\begin{theorem} \label{PuiseuxLowerBound}
Let 
\[
h(z) = q(z) + z^{2L} \psi( z^{1/m})
\]
be a branch of pure stable type as in Theorem \ref{thm:localtypes}.  
Namely, $q\in \R[z]$ with $\deg q<2L$, $q(0)=0$, $a:=q'(0)>0$ and $\psi \in \C\{t\}$
with $b:=\Im \psi(0)>0$.  
We assume $z^{1/m}$ has a branch cut in the lower half plane
so that $h$ is analytic on a domain containing $\{z: 0<|z|<\epsilon \text{ and } \Im z \geq 0\}$.
Then, there exist $r,c>0$ such that for $0<|z|<r$ and $\Im z \geq 0$ we have
\[
\Im h(z) \geq c |z|^{2L}.
\]
In particular, $h(z) \ne 0$ for $0<|z|<r$ and $\Im z \geq 0$.
\end{theorem}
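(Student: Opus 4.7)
The plan is to parametrize points in the closed upper half plane by polar coordinates $z=\rho e^{i\theta}$ with $\rho>0$ and $\theta\in[0,\pi]$; the prescribed branch cut of $z^{1/m}$ makes $h$ analytic on this region and gives $z^{1/m}=\rho^{1/m}e^{i\theta/m}$ lying in the closed wedge of angle $\pi/m$ about the positive real axis, where $\psi$ is analytic. I would then establish the lower bound $\Im h(z)\geq c\rho^{2L}$ by expanding the two summands of $h$ and then splitting $\theta$ into two complementary ranges.

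For the polynomial part, since $q$ has real coefficients, $q(0)=0$, and $q'(0)=a>0$, I would write $\Im q(z)=y\,R(x,y)$ for a polynomial $R$ with $R(0,0)=a$; thus for $\rho$ small enough $\Im q(z)\geq \tfrac{a}{2}\rho\sin\theta$. For the convergent part, write $\psi(w)=\psi(0)+\eta(w)$ with $\eta(0)=0$ so that $|z^{2L}\eta(z^{1/m})|\leq C_1\rho^{2L+1/m}$, and compute
\[
\Im\bigl(z^{2L}\psi(0)\bigr)=\rho^{2L}\bigl(a_0\sin(2L\theta)+b\cos(2L\theta)\bigr),
\]
where $a_0:=\Re\psi(0)$. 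Combining,
\[
\Im h(z)\;\geq\;\tfrac{a}{2}\rho\sin\theta\;+\;\rho^{2L}\bigl(a_0\sin(2L\theta)+b\cos(2L\theta)\bigr)\;-\;C_1\rho^{2L+1/m}.
\]

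Now the argument splits by $\theta$. Since $b>0$, I choose $\delta\in(0,\pi/2)$ with $a_0\sin(2L\theta)+b\cos(2L\theta)\geq b/2$ on $[0,\delta]\cup[\pi-\delta,\pi]$ (possible because this expression equals $b$ at both $\theta=0$ and $\theta=\pi$). On that range the first summand is non-negative and the second is at least $(b/2)\rho^{2L}$, so for $\rho$ small the error is absorbed and $\Im h(z)\geq (b/4)\rho^{2L}$. On the complementary range $[\delta,\pi-\delta]$ one has $\sin\theta\geq\sin\delta>0$, and since $2L\geq 2$ the first summand $\tfrac{a}{2}\rho\sin\delta$ dominates $\rho^{2L}$ by a factor of $\rho^{1-2L}\to\infty$; choosing $\rho$ sufficiently small makes it exceed $(|a_0|+b+1)\rho^{2L}$, which absorbs both the bounded oscillatory term and the $\rho^{2L+1/m}$ error, yielding $\Im h(z)\geq \rho^{2L}$. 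Taking $r$ smaller than both thresholds and $c=\min(b/4,1)$ gives the desired estimate, and $h(z)\neq 0$ follows at once.

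The main (though mild) obstacle is bookkeeping at the interface: one must choose $\delta$ before shrinking $\rho$, so that the constants in the ``endpoint'' region and the ``bulk'' region are both determined and then $\rho$ is taken small enough to absorb both error terms $C_1\rho^{2L+1/m}$ and the bounded oscillation. A secondary subtlety is that the argument genuinely uses $2L$ being a positive \emph{even} integer only implicitly, through the fact that $\cos(2L\theta)=1$ at both endpoints $\theta=0,\pi$; if $L$ were replaced by a half-integer the splitting would fail and no such uniform lower bound could hold, which is consistent with the fact that pure stable type requires $L$ to be an integer.
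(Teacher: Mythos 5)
Your proof is correct and achieves the same estimate, but the route differs from the paper's in a meaningful way. The paper works in Cartesian coordinates $z=x+iy$ and proceeds by bounding each contribution separately: $\Im q(z)=ay(1-O(|z|))$ from the real-coefficient polynomial, and $\Im(z^{2L}\psi(z^{1/m}))\geq bx^{2L}-yO(|z|)-O(|z|^{2L+1/m})$ from the analytic tail, so that $\Im h(z)\geq ay(1-O(|z|))+bx^{2L}-O(|z|^{2L+1/m})$. The lower bound $c|z|^{2L}$ then drops out in one stroke because $y\geq 0$ is small enough that $ay\geq c_1y^{2L}$ (here $2L\geq 2$ is used) while $bx^{2L}$ covers the $x$-direction, and the error $O(|z|^{2L+1/m})$ is absorbed. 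You instead pass to polar coordinates $z=\rho e^{i\theta}$, isolate $\Im(z^{2L}\psi(0))=\rho^{2L}(a_0\sin 2L\theta+b\cos 2L\theta)$, and split the argument by angular sector: near the real axis the $\cos$ term dominates with constant $b/2$, while in the bulk $\sin\theta$ is bounded below and the linear term $\tfrac{a}{2}\rho\sin\theta$ dominates $\rho^{2L}$ since $2L\geq 2$. Both approaches exploit the same two sources of positivity — the slope $a>0$ and the imaginary part $b>0$ — but the paper's Cartesian decomposition avoids the case split and the careful ordering of quantifiers ($\delta$ before $\rho$) that you rightly flag as the main bookkeeping hazard in your version. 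Your polar-coordinate formulation has the minor pedagogical advantage of making the role of $2L\pi$ being an integer multiple of $2\pi$ explicit, which you note at the end.
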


\begin{proof}
Write $z=x+iy$.  Note that $\Im (z^j) = y O(|z|^{j-1})$ for $j \geq 2$
since $z^j - \bar{z}^j = (z-\bar{z})\sum_{k=0}^{j-1} z^k\bar{z}^{j-1-k}$.  
This implies
\[
\Im q(z) = a y(1 - O(|z|))
\]
for $|z|$ sufficiently small
since $q$ has real coefficients.
Note next that $\text{Re}(z^{2L}) = x^{2L} + y O(|z|^{2L-1})$
by directly expanding $(x+iy)^{2L}$.  
Then, 
\[
\begin{aligned}
\Im (z^{2L} \psi(z^{1/m}))  &\geq (\Im z^{2L}) \text{Re}(\psi(0)) + b \text{Re}(z^{2L}) -O(|z|^{2L+1/m})\\
&\geq b x^{2L} - y O(|z|) - O(|z|^{2L+1/m}).
\end{aligned}
\]
Combining we get
\[
\Im h(z) \geq ay(1-O(|z|)) + b x^{2L} - O(|z|^{2L+1/m})
\geq c|z|^{2L}
\]
for some $c>0$ when $|z|$ is small enough and $y\geq 0$.
\end{proof}

If we choose $\psi$ above to be a polynomial then
the resulting product 
\[
p_1(z_1,z_2) = \prod_{j=1}^{m} (z_2+ q(z_1) + z_1^{2L} \psi(\mu^j z_1^{1/m}))
\]
with $\mu = \exp(2\pi i/m)$ 
will be a polynomial with no zeros in $(\D_{2r}\cap \UHP)\times \UHP$
for $r$ sufficiently small. 
(One can even choose $\psi$ to be rational and clear denominators.)
Note that $p_1$ has no zeros on the domain $\D_{r}(i r)\times \UHP$.
The map $z \mapsto \sigma(z) = \frac{2r z}{1-i z}$ sends $\UHP$ onto $\D_{r}(ir)=\{z: |z-ir|<r\}$.
Then,
\[
p_2(z_1,z_2) = (1-iz_1)^N p_1(\sigma(z_1), z_2)
\]
is a polynomial for $N$ large enough and has no zeros in $\UHP^2$.  
Note that since $\sigma(0)=0$, $p_2$ has a non-trivial
Puiseux factorization around $(0,0)$.

\subsubsection{Initial Segments and Contact Order} \label{subsec:seg} 

Given a pure stable polynomial $p\in \C[z_1,z_2]$ with $p(0,0)=0$
with real and imaginary coefficient decomposition $p = A+ i B$
recall that $A+tB$ is real stable for $t\in \R$ and $B$ is also real stable (Remark \ref{AtBrs}). 
How do the Puiseux expansions of $p$ and $A+tB$ compare?

Theorem \ref{thm:segments} below, which is proven in the next subsection, 
says that generically in $t$ there is an exact correspondence
between the smooth branches of $A+tB$ and the Puiseux branches of $p$.
Basically, $p$ factors locally as
\[
u(z) \prod_{j=1}^{k} \prod_{m=1}^{M_j} (z_2 + q_j(z_1) + z_1^{2L_j}\psi_{j}(z_1^{1/M_j}))
\]
and we prove that $A+tB$ generically in $t$ factors \emph{analytically}
in the same general way:
\[
A(z)+tB(z) = u(z;t) \prod_{j=1}^{k} \prod_{m=1}^{M_j} (z_2 + q_j(z_1) + z_1^{2L_j}\psi_{j,m}(z_1;t)).
\]
Furthermore, we can show that $\psi_{j,m}(0;t)$ must depend
on $t$, so that this generic expansion is the best possible
in the sense that we cannot extend one of these branches
further in a way that is independent of $t$.

In order to state this more precisely,
let us say that a real polynomial $q \in \R[z_1]$ is an order $N$ initial
segment of a branch of a two variable polynomial $p$
if the branch is of the form $z_2 + \phi(z_1)$ 
where $\phi(z_1) -q(z_1)$ vanishes to order $N$ or higher. 
We allow for $\phi$ to be a Puiseux series or a bona fide 
analytic function.

Now, Theorem \ref{thm:purelp} says $p$ has a complete list of initial
segments $q_j$ occurring $M_j$ times with order $2L_j$.
Theorem \ref{thm:segments} below says the same holds
generically for $A+tB$ and furthermore
none of these generic segments can be extended independently of $t$.
Specifically, we \emph{cannot} write down a ``better'' 
factorization
\[
A(z)+tB(z) = u(z;t) \prod_{j=1}^{k} \prod_{m=1}^{M_j} (z_2 + \tilde{q}_j(z_1) + z_1^{2\tilde{L}_j}\tilde{\psi}_{j,m}(z_1;t)).
\]
with $\tilde{L}_j \geq L_j$ for all $j$ and $\tilde{L}_j > L_j$ for some $j$.
Here is the formal statement.

\begin{theorem} \label{thm:segments}
Assume the setup, conclusion, and notations of Theorem \ref{thm:purelp}.
Set as usual $A = (p+\bar{p})/2$, $B = (p-\bar{p})/(2i)$.
Then, for all but finitely many $t \in \R$ we can factor
\[
A(z)+tB(z) = u(z;t) \prod_{j=1}^{k} \prod_{m=1}^{M_j} (z_2 + q_j(z_1) + z_1^{2L_j}\psi_{j,m}(z_1;t))
\]
where $\psi_{j,m}(z_1;t) \in \R\{z_1\}$, $u(z;t) \in \C\{z_1,z_2\}$ for each $t$ and $u(0;t) \ne 0$.

We can do no better in the sense that we \textbf{cannot} find:
 a pair $(j,m)$, a polynomial $\tilde{q}_j$ of degree less than $2\tilde{L}_j > 2L_j$ and
an analytic function $\tilde{\psi}_{j,m}(z_1;t)$ such that the factor above corresponding to $(j,m)$ can generically be replaced with
\[
z_2 + \tilde{q}_j(z_1) + z_1^{2\tilde{L}_j}\tilde{\psi}_{j,m}(z_1;t).
\]
where $\tilde{q}_j-q_j$ vanishes to order at least $2L_j$.
\end{theorem}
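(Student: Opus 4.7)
By Proposition \ref{prop:B1} (applied after multiplying $p$ by the unimodular constant of Theorem \ref{thm:stablehomog} so that $\mu=1$), the polynomial $A$ has lowest order homogeneous term $A_M = P_M$ and $B$ vanishes to order exactly $M+1$. Hence $A+tB$ vanishes to order exactly $M = \sum_j M_j$ at $(0,0)$ for every $t\in\R$, with the same lowest homogeneous term $A_M$. By Remark \ref{AtBrs}, $A+tB$ is real stable, and by Corollary \ref{cor:purelp}, $A_M$ has no factors of $z_1$ or $z_2$. Thus for all but finitely many $t$, $A+tB$ has no monomial factors at $(0,0)$, and Theorem \ref{thm:localpar} gives
\[
A(z)+tB(z) = u(z;t)\prod_{r=1}^{M}\bigl(z_2 + \phi_r(z_1;t)\bigr),
\]
with each $\phi_r \in \R\{z_1\}$, $\phi_r(0;t)=0$, $\phi_r'(0;t)>0$. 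Since $A_M = c\prod_j (z_2 + a_j z_1)^{M_j}$ with $a_j = q_j'(0)$, matching lowest homogeneous terms forces exactly $M_j$ of the $\phi_r$ to have slope $a_j$; relabel these as $\phi_{j,1},\dots,\phi_{j,M_j}$.

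The crux of the argument is a substitution $z_2 = -q_j(z_1) + z_1^{2L_j}w$ evaluated using the Puiseux factorization of both $p$ and $\bar{p}$. Factors of $p$ sharing the index $j$ become $z_1^{2L_j}(w + \psi_j(\mu_j^{m'} z_1^{1/M_j}))$, and their product over $m'$ is Galois-symmetric and equals $z_1^{2L_j M_j}F_j(z_1,w)$ for some $F_j\in\C\{z_1\}[w]$ with $F_j(0,w)=(w+\psi_j(0))^{M_j}$. Factors with index $j' \ne j$ (initially assuming distinct tangents $a_{j'}\ne a_j$) contribute $z_1^{M_{j'}}$ times a term with nonzero constant $(a_{j'}-a_j)^{M_{j'}}$ at the origin. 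Collecting these and doing the analogous calculation for $\bar p$ yields
\[
(A+tB)(z_1, -q_j(z_1)+z_1^{2L_j}w) = z_1^{N_j}\,\Phi(z_1,w;t), \qquad N_j := 2L_jM_j + \sum_{j'\ne j}M_{j'},
\]
where
\[
\Phi(0,w;t) = K\left[\tfrac{1-it}{2}\,u(0,0)(w+\alpha)^{M_j} + \tfrac{1+it}{2}\,\overline{u(0,0)}(w+\bar\alpha)^{M_j}\right]
\]
with $\alpha = \psi_j(0)$ (so $\Im\alpha>0$), $K = \prod_{j'\ne j}(a_{j'}-a_j)^{M_{j'}}\ne 0$, and the leading $w$-coefficient nonzero for all but finitely many $t$.

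Isolating the Weierstrass factor $W_j(z_1,z_2;t):=\prod_{m}(z_2+\phi_{j,m}(z_1;t))$ and dividing out the contributions from the other factors (which supply a factor of $z_1^{\sum_{j'\ne j}M_{j'}}$ times a unit), one obtains
\[
W_j(z_1,-q_j(z_1)+z_1^{2L_j}w;t) = z_1^{2L_jM_j}\,\tilde\Phi(z_1,w;t),
\]
where $\tilde\Phi(0,\cdot;t)$ is a nonzero polynomial in $w$ of degree $M_j$. Setting $f_m := \phi_{j,m}(\cdot;t)-q_j$, the identity $\prod_m(z_1^{2L_j}w + f_m) = z_1^{2L_jM_j}\tilde\Phi$ forces each elementary symmetric function $e_n(f_1,\dots,f_{M_j})$ to be divisible by $z_1^{2L_j n}$. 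Substituting $T = z_1^{2L_j}S$ in the monic polynomial $\prod_m(T-f_m)$ rewrites it as $z_1^{2L_jM_j}Q(z_1,S)$ with $Q\in\C\{z_1\}[S]$ monic of degree $M_j$ in $S$; since $Q(0,\cdot)$ is a genuine degree-$M_j$ polynomial, every root $f_m/z_1^{2L_j}$ has a finite limit at $z_1=0$. Thus each $f_m$ vanishes to order $\ge 2L_j$, and equality in the sum (forced by the computation of the constant term) gives the desired form $\phi_{j,m}(z_1;t)=q_j(z_1)+z_1^{2L_j}\psi_{j,m}(z_1;t)$ with $\psi_{j,m}\in\R\{z_1\}$.

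For the optimality assertion, observe that $\psi_{j,m}(0;t)$ is a root (in $w$) of $\tilde\Phi(0,w;t)$; the explicit form above shows the ratio $(1-it)u(0,0)/(1+it)\overline{u(0,0)}$ of its two defining coefficients traces a nontrivial arc of the unit circle as $t$ varies over $\R$, so the roots depend analytically and non-trivially on $t$. A longer initial segment $\tilde q_j$ would require $\psi_{j,m}(0;t)$ to equal the fixed real coefficient of $z_1^{2L_j}$ in $\tilde q_j - q_j$ on a set of positive measure, contradicting this non-constancy. The principal technical obstacle throughout is the combinatorial bookkeeping when several irreducible Weierstrass factors of $p$ share the tangent $a_j$ or even an initial segment: in those cases the substitution produces extra vanishing from the ``other'' factors, and one must group branches by agreement of initial segments rather than by tangent alone, applying the above argument inductively on each group.
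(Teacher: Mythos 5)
Your proof takes a genuinely different route from the paper: rather than the direct substitution $z_2 = -q_j(z_1) + z_1^{2L_j}w$ followed by a Newton-polygon/elementary-symmetric-function analysis of the resulting Weierstrass factor, the paper introduces a one-variable divisibility invariant $O(n,q;p)$ (the order of the $z_1$-factor of $p(z_1, z_1^n z_2 - q(z_1))$), observes that $O(n,q;\bar p) = O(n,q;p)$ and hence $O(n,q;A+tB)=O(n,q;p)$ for all but one $t$, and then shows via Lemma~\ref{lem:segment} that the successive differences $O(n+1,q;p)-O(n,q;p)$ count branches with initial segment $q$ of order $n+1$. The combinatorial heart of the paper's proof is an inclusion--exclusion argument over the distinct pairs $(Q_j, K_j)$, which never needs to decide which branch of $A+tB$ goes with which $p_j$.

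That last point is exactly where your argument has a genuine gap. Your key computational step asserts that, after substituting $z_2 = -q_j(z_1) + z_1^{2L_j}w$, the factors with index $j'\ne j$ contribute $z_1^{M_{j'}}$ times a unit of $\C\{z_1,w\}$. This is true only when $a_{j'}\ne a_j$, since the unit's value at the origin is $(a_{j'}-a_j)^{M_{j'}}$. If $a_{j'}=a_j$ but $q_{j'}\ne q_j$ (say they first differ at order $s<\min\{2L_j,2L_{j'}\}$), the substituted factor is $z_1^{s}\cdot(\text{something vanishing at }(0,0)\text{ in }(z_1,w))$ and the isolation of $W_j$ by slope is no longer even well-defined: the branches of $A+tB$ can only be grouped by their common tangent $a_j$, and there is no \emph{a priori} way to decide which of those branches belongs to $q_j$ versus $q_{j'}$. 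The same ambiguity arises, worse, if $q_{j'}$ extends $q_j$ past order $2L_j$. You acknowledge this in the closing sentence, but ``group branches by agreement of initial segments\dots applying the above argument inductively'' is precisely the nontrivial combinatorial bookkeeping that constitutes most of the theorem; it is not a corollary of your distinct-tangent computation, and it would require you to first establish that the grouping is generically possible and canonical---which is the content being proved. In short, you have correctly handled the case of an ordinary multiple point, but the passage to the general case (overlapping tangents and nested initial segments) is missing, and that is where the actual difficulty lies. The optimality half of your argument is correct in spirit (the root set of $\tilde\Phi(0,\cdot;t)$ varies nontrivially with $t$ since $(1-it)u(0,0)(w+\alpha)^{M_j}+(1+it)\overline{u(0,0)}(w+\bar\alpha)^{M_j}$ cannot vanish at a fixed real $w$ for all $t$, as $\Im\alpha>0$), but it, too, is only stated in the distinct-tangent setting.
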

We prove the above theorem later in this section.


Of particular geometric interest in the above data 
are the highest order initial segments.
Set $K = \max\{2L_1,\dots, 2L_k\}$ and for
clarity reorder so $K=2L_1\geq 2L_2\geq \cdots \geq 2L_k$.
The quantity $K$ has a couple of interpretations.

First, $K$ measures asymptotically how closely the
zero set of $p$ or $\bar{p}$ approaches the distinguished
boundary $\R^2$.  
Indeed, for any branch of $p$, say the branch 
\begin{equation} \label{pjbranch}
z_2 + q_j(z_1) + z_1^{2L_j} \psi_j(\mu_j^r z_1^{1/M_j})=0,
\end{equation}
if we take $z_1=x\in \R$ then on this branch
\begin{equation} \label{branchco}
\Im z_2 = -x^{2L_j} (\Im \psi_j(0) + O(x^{1/M_j}))
\end{equation}
vanishes to the precise order $2L_j$.  
This implies that for $x$ close to $0$
\begin{equation} \label{uhpco}
\inf\{ |\Im z_2|: p(x,z_2) = 0\} \approx |x|^K.
\end{equation}
Here ``$\approx$'' means bounded above and below
by constants.
The number $K$ satisfying \eqref{uhpco} is called
the \emph{contact order} of $p$ at $(0,0)$.
The notion of contact order is conformally
invariant in the sense that if we use a Cayley
transform from $\UHP^2$ to $\D^2$ sending $(0,0)$ to $(1,1)$
and convert $p$ to an atoral
stable polynomial $q$
then for $z_1 \in \T$ close to $1$
\begin{equation} \label{eqn:CO}
\inf\{ 1-|z_2|: \tilde{q}(z_1,z_2) = 0 \} \approx |1-z_1|^K.
\end{equation}
A main result of \cite{bps18} connects
contact orders to integrability of derivatives
of rational inner functions.

A second interpretation of $K$ is that 
for any $t_1,t_2 \in \R$ outside of some
finite set $S$ of exceptional points,
$A+t_1 B$ and $A+t_2 B$ have branches
say $z_2+ \phi(z_1;t_1)$ and $z_2+\phi(z_1;t_2)$
such that $\phi(z_1;t_1)-\phi(z_1;t_2)$ 
vanishes to order $K$.  Furthermore, $K$
is the largest integer with this property.
This interpretation of $K$ was referred to as 
the \emph{order of contact} of $p$.  
One can convert this to a statement
about unimodular level sets of rational inner functions
on the bidisk via Cayley transform.
These two interpretations of $K$, contact order and order of contact, 
 were how this material
was originally approached in \cite{bps18, bps19a}
and the following fundamental result of \cite{bps19a}
can be established from Theorem \ref{thm:segments} by the above discussion.

\begin{theorem}[\cite{bps19a} Theorem 3.1] \label{thm:OCCO}
Order of contact equals contact order.
\end{theorem}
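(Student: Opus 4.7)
The plan is to establish Theorem \ref{thm:OCCO} as an immediate corollary of the Puiseux theory developed in Theorem \ref{thm:purelp} and Theorem \ref{thm:segments}, by showing that both the contact order and the order of contact of $p$ at $(0,0)$ coincide with the combinatorial invariant $K:=\max\{2L_1,\dots,2L_k\}$, where the $L_j$ come from the Puiseux factorization of $p$.

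For the contact order, the argument is direct: on each branch \eqref{pjbranch} of $p$, formula \eqref{branchco} gives $|\Im z_2| \approx |x|^{2L_j}$ as $x\to 0$ in $\R$. Since $|x|^{2L_j}$ is smaller for larger $2L_j$ when $|x|<1$, the branch realizing $2L_j=K$ is the one approaching $\R^2$ most closely, and hence $\inf\{|\Im z_2|:p(x,z_2)=0\}\approx |x|^K$, matching the definition \eqref{uhpco}.

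For the order of contact, Theorem \ref{thm:segments} supplies, for all but finitely many $t\in\R$, a Puiseux factorization of $A+tB$ whose branches are $z_2+q_j(z_1)+z_1^{2L_j}\psi_{j,m}(z_1;t)$, with the same $q_j$ and $2L_j$ as in $p$. The lower bound is obtained by pairing a branch of $A+t_1B$ with the same-indexed branch of $A+t_2B$ at the index $j_0$ where $2L_{j_0}=K$: the difference equals $z_1^K(\psi_{j_0,m}(z_1;t_1)-\psi_{j_0,m}(z_1;t_2))$, which vanishes to order at least $K$. For the upper bound, a case analysis on a pair of branches with indices $(j,m),(j',m')$ shows: if $q_j\ne q_{j'}$, the difference has leading term coming from $q_j-q_{j'}$ and vanishes to order at most $\max(\deg q_j,\deg q_{j'})<K$; and if $q_j=q_{j'}$ with $2L_j<2L_{j'}\le K$, the leading term $z_1^{2L_j}\psi_{j,m}(0;t_1)$ is generically nonzero, yielding vanishing order $2L_j<K$.

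The main obstacle lies in the remaining case $q_j=q_{j'}$ and $2L_j=2L_{j'}=K$, where the constant terms $\psi_{j,m}(0;t_1)$ and $\psi_{j',m'}(0;t_2)$ could in principle cancel for all generic $t_1,t_2$ and push the order of contact above $K$. This is precisely where the maximality ("no better") clause of Theorem \ref{thm:segments} is invoked: were such generic cancellation to persist, the leading Taylor coefficients of $\psi_{j,m}(z_1;t)$ forced to be $t$-independent could be absorbed into the initial segment to produce a strictly richer polynomial $\tilde{q}_j$ with $\deg\tilde{q}_j<2\tilde{L}_j$ for some $\tilde{L}_j>L_j$, contradicting the non-extendability asserted by Theorem \ref{thm:segments}. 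With this final case ruled out, the order of contact is exactly $K$, so it agrees with the contact order as claimed.
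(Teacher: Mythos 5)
Your proof takes a genuinely different route from the paper's. The paper establishes Theorem \ref{thm:OCCO} as a quick consequence of Lemma \ref{lem:segment} together with the elementary relations \eqref{basicO} and \eqref{orderstabilize} for the counting function $O(n,q;\cdot)$; in particular, it shows that for \emph{every} pair $t_1\neq t_2$ the polynomials $A+t_1B$ and $A+t_2B$ cannot share an initial segment of order $\tilde{K}>K$, which sidesteps any case analysis on branch pairings and any genericity bookkeeping in $t$. Your argument instead invokes the full strength of Theorem \ref{thm:segments}, including its non-extendability clause, and proceeds by directly matching branches of $A+t_1B$ and $A+t_2B$. The identification of contact order with $K$ via \eqref{branchco}--\eqref{uhpco}, and the lower bound for order of contact via matching the index-$j_0$ branches, are both fine.

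The case analysis for the upper bound has genuine gaps. First, you omit the case $q_j=q_{j'}$ with $2L_j=2L_{j'}<K$, which does occur (for instance whenever some $M_j>1$ with $2L_j<K$, or when two distinct Weierstrass factors share the same segment and cutoff below $K$); there the difference is $z_1^{2L_j}\bigl(\psi_{j,m}(z_1;t_1)-\psi_{j',m'}(z_1;t_2)\bigr)$, and ruling out order-$(>K)$ agreement requires the same non-extendability argument you only sketch for your case $2L_j=2L_{j'}=K$. Second, in the case $q_j\neq q_{j'}$ the assertion that the difference vanishes to order at most $\max(\deg q_j,\deg q_{j'})$ "because the leading term comes from $q_j-q_{j'}$" is not right as stated: $q_j-q_{j'}$ can vanish to order $\geq\min(2L_j,2L_{j'})$, so cancellation with the $z_1^{2L_j}\psi_{j,m}$ terms must be examined, and one again ends up needing generic nonvanishing of $\psi$ coefficients. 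Third, the claim that $\psi_{j,m}(0;t_1)$ is generically nonzero is not asserted in Theorem \ref{thm:segments}; it does follow from non-extendability (take $\tilde{q}_j=q_j$ and $2\tilde{L}_j=2L_j+1$), but this must be spelled out, and analogous statements about the higher Taylor coefficients are needed in the other cases. Finally, since the indices $(j,m)$ and $(j',m')$ realizing an order-$\tilde{K}$ agreement could a priori depend on $(t_1,t_2)$, a pigeonhole step is needed to fix a single pairing before arguing that the offending coefficients are $t$-independent. These gaps can in principle be filled, but doing so essentially reproduces the combinatorics that the paper's $O(n,q;\cdot)$-based argument avoids entirely; that shorter route, which needs only Lemma \ref{lem:segment} and not the full Theorem \ref{thm:segments}, is worth internalizing.
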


Theorem \ref{thm:segments} also constitutes a 
resolution to Conjecture 5.2 of \cite{bps19a} about
the concepts of \emph{fine contact order} and \emph{fine order
of contact}.   Fine contact order refers to 
the contact order of an individual branch of the zero 
set of $p$ as in equation \eqref{branchco} 
where it is shown that an individual branch \eqref{pjbranch} has
contact order $2L_j$.
The fine contact orders associated to $p$ can be read off 
from Theorem \ref{thm:purelp}: we have fine contact 
order $2L_j$ occuring $M_j$ times for $j=1,\dots, k$.
Fine order of contact refers to the order of vanishing
of the difference of two different (analytic) branches of $A+tB$.
Namely, if $z_2+ \phi(z_1,t_1)$ is a branch
of $A+t_1B$ and $z_2+\phi(z_1;t_2)$ is a branch of
$A+t_2 B$ then the fine order of contact
of these two branches is the order of vanishing of $\phi(z_1;t_1)-\phi(z_1;t_2)$.
Conjecture 5.2 of \cite{bps19a} stated that
one could generically (with respect to $t$) group the branches
of $A+t_1 B$ and $A+t_2 B$ so that the fine order of contacts
exactly match the fine contact orders of $p$.
Theorem \ref{thm:segments} does exactly this.
The next subsection is occupied with its proof.

\subsubsection{Proof of Theorem \ref{thm:segments}}

For convenience we will use $(x,y)$ in place
of the variables $(z_1,z_2)$.
Assume the setup of Theorems \ref{thm:purelp} and \ref{thm:segments}.
Consider 
\begin{equation} \label{blowup}
p(x, x^n y - q(x)) 
\end{equation}
where $n \geq 1$, $q \in \R[x]$ and $q(0)=0$. 
Let $O(n,q; p)$ be the largest integer $N$ such that $x^N$ divides \eqref{blowup}.
Intuitively, $O(n,q;p)$ helps us measure
the presence of $q(x)$ as an initial segment
of branches of $p$.  It is difficult to tease
out exactly what $O(n,q;p)$ is measuring and
the key idea that follows is that it is more
fruitful to see how this quantity \emph{changes} 
with respect to $n$.

Note that $O(n,q; \bar{p}) = O(n,q;p)$ because $q$ has real coefficients.
Since $A+tB$ is a linear combination of $p$ and $\bar{p}$,
\begin{equation} \label{basicO}
\begin{aligned}
O(n,q; A+tB) &\geq O(n,q;p) \text{ and } \\
O(n,q;A+tB) &= O(n,q;p) \text{ for all $t$ with at most one exception.}
\end{aligned}
\end{equation}
Indeed, if we had $O(n,q;A+tB)> O(n,q;p)$ for $t=s_1,s_2$ distinct then
since $p$ is a linear combination of $A+s_1 B, A+s_2 B$ we would have
\eqref{blowup} divisible by a higher power of $x$.

As in the previous section, we say that
a branch $y+q_j(x) + x^{2L_j} \psi(\mu_j^r x^{1/M_j})$ has initial
segment $q\in \R[x]$ of order $N$
if 
\[
q_j(x) -q(x) + x^{2L_j} \psi(\mu_j^r x^{1/M_j})
\]
vanishes to order $N$ or higher. 

\begin{lemma} \label{lem:segment}
Given $q \in \R[x]$ with $q(0)=0$,
the quantity 
\begin{equation} \label{branchcounter}
O(n+1, q; p) - O(n, q; p)
\end{equation}
counts the number of branches of $p$ such that $q$ is an initial segment of order $n+1$.
Similarly, 
\[
O(n+1,q;A+tB)- O(n,q;A+tB)
\]
counts the number of branches of $A+tB$ such that $q$ is an initial segment of order $n+1$.
\end{lemma}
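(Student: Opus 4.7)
My plan is to reduce $O(n, q; p)$ to contributions from each irreducible Weierstrass factor $p_j$ in the decomposition $p = u p_1 \cdots p_k$, and then derive a closed formula for $O(n, q; p_j)$. Since the substitution $y \mapsto x^n y - q(x)$ leaves the unit $u$ still non-vanishing at $x=0$ (because $u(0, -q(0)) = u(0,0) \neq 0$), the $x$-order of $p(x, x^n y - q(x))$ equals $\sum_j O(n, q; p_j)$. Thus the problem reduces to understanding each $p_j$ in isolation.

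For a fixed $j$ I would work with the formal Puiseux factorization from Theorem \ref{thm:purelp},
\[
p_j(x, y) = \prod_{m=1}^{M_j} (y - \phi_{j,m}(x)), \qquad \phi_{j,m}(x) := -q_j(x) - x^{2L_j}\psi_j(\mu_j^m x^{1/M_j}).
\]
The key observation is that the polynomial part of every $\phi_{j,m}$ is the common $-q_j$, so the Puiseux order $\nu_{j,m}$ of $q(x) + \phi_{j,m}(x)$ is independent of $m$; call its common value $\nu_j$. Setting $d_j := \operatorname{ord}(q - q_j)$, one sees that $\nu_j = \min(d_j, 2L_j)$: if $d_j < 2L_j$ the polynomial difference dominates, while if $d_j \geq 2L_j$ the leading term is $-x^{2L_j}\psi_j(0)$ plus (possibly) a real polynomial term at order $2L_j$, and since $\Im \psi_j(0) > 0$ these cannot cancel. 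Moreover the leading coefficient $c_j^*$ of $(q + \phi_{j,m})/x^{\nu_j}$ at $x = 0$ is likewise independent of $m$ and non-zero. In particular, $q$ is an initial segment of order $N$ for every branch of $p_j$ precisely when $\nu_j \geq N$, so it plays that role for either all $M_j$ branches of $p_j$ or none.

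Next I would establish the closed formula $O(n, q; p_j) = M_j \min(n, \nu_j)$ by case analysis. If $n \leq \nu_j$, factor $x^n$ out of each $x^n y - (q + \phi_{j,m})$; the remaining $M_j$-fold product is a monic polynomial in $y$ whose non-leading coefficients vanish at $x = 0$, producing an $x$-order of exactly $nM_j$. If $n > \nu_j$, factor $x^{\nu_j}$ instead; the resulting product has constant-in-$y$ term $(-c_j^*)^{M_j} \neq 0$ at $x = 0$ while higher $y$-coefficients pick up positive powers of $x$, giving $x$-order $M_j \nu_j$. In both cases the symmetric-function coefficients in $y$ are honest (integer-power) series in $x$ by Galois invariance. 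Consequently
\[
O(n+1, q; p_j) - O(n, q; p_j) = \begin{cases} M_j & \text{if } \nu_j \geq n+1, \\ 0 & \text{otherwise,} \end{cases}
\]
and summing over $j$ yields exactly the number of branches of $p$ for which $q$ is an initial segment of order $n+1$. The parallel claim for $A + tB$ follows by the same argument applied to the factorization provided by Theorem \ref{thm:segments}, which for all but finitely many $t$ supplies branches sharing the same $q_j$, $2L_j$, and $M_j$.

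The main obstacle I anticipate is bookkeeping in the case $n > \nu_j$: verifying that the constant-in-$y$ coefficient of the reduced product is genuinely non-zero at $x = 0$. The delicate subcase is $d_j = 2L_j$, where a real polynomial coefficient at order $2L_j$ could in principle cancel $-\psi_j(0)$; positivity of $\Im \psi_j(0)$ rules this out. A secondary issue, routine but worth making explicit, is that the elementary symmetric functions of the Puiseux conjugates $\phi_{j,1}, \ldots, \phi_{j,M_j}$ produce integer-power series in $x$, so that $x$-orders from each factor really do sum to the claimed totals.
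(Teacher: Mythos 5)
Your proposal is correct and uses essentially the same idea as the paper's proof: in both cases the key computation is that the $x$-order contributed by each Puiseux factor $x^n y - q(x) + q_j(x) + x_1^{2L_j}\psi_j(\mu_j^m x^{1/M_j})$ is $\min(n,s)$ where $s$ is the order of vanishing of $q_j - q + x^{2L_j}\psi_j$, so the difference across $n \to n+1$ is $1$ or $0$ according to whether $s \geq n+1$. Your write-up goes a bit further in spelling out details the paper leaves implicit (the unit's non-contribution, the $m$-independence of the order $\nu_j = \min(\operatorname{ord}(q-q_j),2L_j)$ and hence the closed formula $O(n,q;p_j) = M_j\min(n,\nu_j)$, and the integer-series check via symmetric functions), but the structure and decisive observation are the same.
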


\begin{proof}
Let us look at a single branch $y + q_j(x) + x^{2L_j}\psi_j(\mu_j^r x^{1/M_j})$ of $p$.
Now, if $q$ is an initial segment of order $n+1$ then
\begin{equation} \label{branchj}
q_j(x)- q(x) + x^{2L_j}\psi_j(\mu_j^r x^{1/M_j})
\end{equation}
vanishes to order $n+1$ or higher.  This is only
possible if $n+1 \leq 2L_j$ since 
$\psi_j(0)$ has non-zero imaginary part.
Then, 
\begin{equation} \label{pk1}
x^{n+1} y - q(x) + q_j(x) + x^{2L_j}\psi_j(\mu_j^r x^{1/M_j})
\end{equation}
has largest $x$-factor $x^{n+1}$ while
\begin{equation} \label{pk}
 x^{n}y - q(x) + q_j(x) + x^{2L_j}\psi_j(\mu_j^r x^{1/M_j}).
\end{equation}
has largest $x$-factor $x^n$.
So, the contribution to \eqref{branchcounter} is $1$.

On the other hand, if \eqref{branchj} vanishes to
order $s<n+1$ then
both \eqref{pk1} and \eqref{pk} are divisible by $x^s$ but no higher power of $x$
making the contribution to \eqref{branchcounter} equal to $0$.

The argument for $A+tB$ is easier because its branches 
are analytic and have real coefficients.  In particular,
there is no ``$\psi$'' term to worry about.
\end{proof}

As a basic illustration of the lemma we can prove
a more precise version of Theorem \ref{thm:OCCO}.
Suppose as before $K=2L_1 \geq 2L_2 \geq \cdots \geq 2L_k$.
Then, $O(K, q_1;p) > O(K-1, q_1;p)$ while 
\begin{equation} \label{orderstabilize}
O(n+1,q;p) = O(n, q;p)
\text{ for any } q\in \R[x] \text{ and } n\geq K.
\end{equation}
Now for all but at most one $t$, say $t\ne t_0$,
$O(K-1,q_1;A+t B) = O(K-1, q_1; p)$
so that for $t\ne t_0$
\[
O(K, q_1; A+tB) - O(K-1, q_1; A+tB) >0
\]
which implies that for  $t\ne t_0$, $A+tB$
has a branch with initial segment $q_1$ of order $K$.
This implies the order of contact is at least $K$.
On the other hand, suppose 
there exist two values of $t$, say $t_1\ne t_2$,
and a real polynomial $q$ that is an initial segment
of order $\tilde{K}> K$ of $A+t_1 B$ and $A+t_2 B$.
Then, by \eqref{basicO} we can say without loss of generality that $O(\tilde{K}, q; A+t_1 B) = O(\tilde{K}, q; p)$ 
and then
\[
O(\tilde{K}, q; p) =O(\tilde{K}, q; A+t_1 B)  > O(\tilde{K}-1,q;A+t_1 B) \geq O(\tilde{K}-1,q;p)
\]
contradicts \eqref{orderstabilize}.

\begin{proof}[Conclusion of the proof of Theorem \ref{thm:segments}]
With Lemma \ref{lem:segment} in hand the main
issue now is a combinatorial one.
We can count occurrences of initial segments
in branches of $p$ and these must agree generically
with $A+tB$ but the maximal initial segments
of $p$ can overlap in a variety of ways
so we must perform a type of inclusion-exclusion analysis.
Let us write the data from Theorem \ref{thm:purelp}
as triples $(q_j(x), L_j, M_j)$.
If we ever have $q_i=q_j$ and $L_i=L_j$ let us
regroup our triple into $(q_i(x), L_i, M_i+M_j)$.
Do this as many times as necessary so that we have a list
\[
(Q_1(x), K_1, N_1), \dots (Q_r(x), K_r, N_r)
\]
with the pairs $(Q_j, K_j)$ all distinct, $K_j$'s non-increasing,
and $\sum_j N_j = M$, where $M$ is the order of vanishing of $p$ at $(0,0)$.

With all of this preparation, 
\[
b_j := O(2K_j, Q_j; p) - O(2K_j-1, Q_j; p)
\]
equals the number of branches of $p$ with
initial segment $Q_j$ of order $2K_j$.  Then, for all but $2$ values of $t$,
 $A+t B$ has
$b_j$ branches with initial segment $Q_j$ of order $2K_j$.  
Since $K_1$ is maximal, $b_1 = N_1$.  
If $Q_2$ is an initial segment of order $2K_2$ of $Q_1$ 
then $b_2 = N_1 +N_2$
otherwise $b_2=N_2$.  
In either case we deduce that $p$ and
hence generically $A+tB$
has $N_1$ branches with initial segment $Q_1$
and $N_2$ different branches with initial segment $Q_2$.
In general,
\[
b_j = \sum_{i \in S_j} N_i \text{ where }  S_j=\{ i\leq j: Q_j \text{ is an initial segment of } Q_i \text{ of order } 2K_j\}.
\]
Since $j \in S_j$, 
\[
b_j = N_j + \sum_{i<j, i \in S_j} N_i
\]
and this shows $N_j$ can be
computed in terms of $\{N_1,\dots,N_{j-1}\}\cup\{b_j\}$.
By induction then, $N_j$ can be computed entirely
in terms of $\{b_1,\dots, b_j\}$
with knowledge of the sets $S_j$ above.
Now, $N_j$ equals the number of branches
of $p$ 
that have initial segment $Q_j$ but cannot extend to a 
longer initial segment (the $\psi$ terms in the 
Puiseux expansion for $p$ block this). 
Since we can compute $N_j$ using $\{b_1,\dots, b_j\}$
we see that this statement holds generically for $A+tB$.
Thus, generically $A+tB$ has $N_j$ branches 
with initial segment $Q_j$ that do not extend to 
longer initial segments.  
This proves Theorem \ref{thm:segments}.
\end{proof}

Notice that each use of $b_j$
requires us to potentially avoid $2$ values
of $t$ in $A+tB$.  Thus, $A+tB$
has the structure in Theorem \ref{thm:segments}
for all but at most $2r$ values of $t$,
where $r$ (see above) is at most $k$,
the number of irreducible Weierstrass factors
of $p$ at $(0,0)$.  
A simpler yet cruder statement
would be to say that $A+tB$ has the desired
factorization for all but $2M$ values of $t$
since $M$, the order of vanishing at $(0,0)$,
can be read off fairly easily from $p$
and equals the total number
of branches.

\subsubsection{Switching variables}

Thus far the two variables $z_1,z_2$ have played distinct roles.
In this section, we prove that 
when we switch variables in Theorem \ref{thm:purelp},
 the cutoff data $2L_1,\dots, 2L_k$ is preserved and
 the initial segments in $z_2$ can be computed from those
 in $z_1$.
 In \cite{bps19a} (Theorem 4.1) it was already shown that contact order does not
depend on whether we examine contact order with respect to $z_1$ or $z_2$.

In order to state our theorem on switching variables
we need to introduce some notation.
Let $q \in \C[z_1]$ be a polynomial with $q(0) = 0, q'(0) \ne 0$
and let $L \geq 1$.
We define $I_{2L}(q) \in \C[z_2]$ to be the polynomial of degree less than
$2L$ such that
\[
I_{2L}(q)(z_2) - q^{-1}(z_2) 
\]
vanishes to order at least $2L$.
Here $q^{-1}$ is the analytic functional inverse
of $q$ guaranteed by the inverse function theorem and $I_{2L}(q)$ is just 
the power series of $q^{-1}$ cut off past degree $2L-1$.

\begin{theorem} \label{thm:switch}
Assume the setup and conclusion of Theorem \ref{thm:purelp}.  
Then,
\[
p(z) = v(z) \prod_{j=1}^{k} \prod_{m=1}^{M_j} (z_1 - I_{2L_j}(q_j)(-z_2) + z_2^{2L_j} \tilde{\psi}_j(\mu_j^m z_2^{1/M_j}))
\]
where $v(z_1,z_2) \in \C\{z_1,z_2\}$ is a unit, $\tilde{\psi}_j \in \C\{z_2\}$, and $\mu_j = \exp(2\pi i/M_j)$.
\end{theorem}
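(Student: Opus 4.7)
The plan is to work one irreducible Weierstrass factor $p_j$ at a time and invert its Puiseux parametrization with respect to $z_2$. By Theorem \ref{thm:purelp}, the zero set of $p_j$ near $(0,0)$ is parametrized injectively by
\[
t\mapsto (t^{M_j},z_2(t)),\quad z_2(t) := -q_j(t^{M_j})-t^{2L_j M_j}\psi_j(t).
\]
Since irreducibility in the local ring $\C\{z_1,z_2\}$ is independent of the distinguished variable, $p_j$ is (up to a unit) also an irreducible Weierstrass polynomial in $z_1$, so Puiseux's theorem applies with $z_2$ playing the role of distinguished variable. The theorem will follow by identifying the resulting Puiseux data explicitly and multiplying the $k$ factors back together, absorbing all the units into a single $v(z)$.

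First, I would verify that the multiplicity is preserved. Because $q_j(0)=0$ and $q_j'(0)>0$ with $\deg q_j<2L_j\le 2L_j M_j$, the expansion $z_2(t)=-q_j'(0)t^{M_j}+O(t^{M_j+1})$ shows that $z_2(t)$ vanishes to order exactly $M_j$ at $t=0$. Hence the Puiseux denominator for the branch parametrization in $z_2$ is $M_j$, i.e.\ the Weierstrass polynomial in $z_1$ has the same degree $M_j$.

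Next, I would identify the initial segment. The analytic inverse $h:=q_j^{-1}$ exists near $0$ since $q_j'(0)\ne 0$. Writing $-z_2(t)=q_j(t^{M_j})+t^{2L_j M_j}\psi_j(t)$ and expanding
\[
h\bigl(q_j(t^{M_j})+t^{2L_j M_j}\psi_j(t)\bigr)=t^{M_j}+h'\!\bigl(q_j(t^{M_j})\bigr)t^{2L_j M_j}\psi_j(t)+O(t^{4L_j M_j})
\]
shows $z_1-h(-z_2)=O(t^{2L_j M_j})$ along the branch. By the defining property of $I_{2L_j}(q_j)$, the analytic germ $h(-z_2)-I_{2L_j}(q_j)(-z_2)$ vanishes to order at least $2L_j$ in $z_2$, hence to order at least $2L_j M_j$ in $t$ via $z_2=O(t^{M_j})$. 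Combining,
\[
z_1-I_{2L_j}(q_j)(-z_2) = t^{2L_j M_j}\,F(t)
\]
for some $F\in\C\{t\}$, which is the desired remainder in the parameter $t$.

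Finally, I would convert from the parameter $t$ to $z_2^{1/M_j}$. By Puiseux inversion of $z_2=z_2(t)$, there is an analytic $G$ with $G(0)=0$, $G'(0)\ne 0$, such that the $m$-th branch is described by $t=G(\mu_j^{m}z_2^{1/M_j})$ for $m=1,\dots,M_j$. The crucial arithmetic point is that $M_j\mid 2L_j M_j$, so $\mu_j^{2L_j M_j}=1$ and
\[
t^{2L_j M_j}=G(\mu_j^{m}z_2^{1/M_j})^{2L_j M_j}=z_2^{2L_j}\cdot A(\mu_j^{m}z_2^{1/M_j}),
\]
with $A\in\C\{z\}$, $A(0)\ne 0$. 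Composing with $F\circ G$ and absorbing the sign, I obtain
\[
z_1-I_{2L_j}(q_j)(-z_2)=-z_2^{2L_j}\,\tilde\psi_j(\mu_j^{m}z_2^{1/M_j})
\]
with $\tilde\psi_j\in\C\{z\}$ \emph{independent} of $m$. Taking the product over $m=1,\dots,M_j$ yields the Weierstrass factorization of $p_j$ in $z_1$ up to a unit; multiplying over $j$ and collecting all unit factors into $v(z)$ completes the proof. The main delicate point is precisely the last step: ensuring that the $M_j$ analytic branches can be encoded by the common function $\tilde\psi_j$ evaluated at $\mu_j^{m}z_2^{1/M_j}$, which is exactly what the $M_j$-fold symmetry of the Puiseux parametrization $t\mapsto(t^{M_j},z_2(t))$ together with the divisibility $M_j\mid 2L_j M_j$ provides.
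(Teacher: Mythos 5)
Your proof is correct, but it takes a genuinely different route from the paper's. The paper's proof \emph{assumes} (via Theorem \ref{thm:localtypes}) that the $z_1$-Weierstrass factor also has a pure-stable Puiseux parametrization $z_1 + q_2(z_2) + z_2^{2L_2}\psi_2(\cdot)$, passes to $M$-th roots $\phi_3 = \phi_1^{1/M}$, $\phi_4 = \phi_2^{1/M}$ (which are functional inverses up to a root of unity), substitutes one into the other, and then uses a real-versus-imaginary-coefficient argument to force $L_1 = L_2$ and to identify $-q_2(-s)$ with the truncated inverse $I_{2L_1}(q_1)(s)$. You bypass the invocation of Theorem \ref{thm:localtypes} in the $z_1$ direction entirely: you construct the $z_1$-factorization directly by applying the analytic inverse $h = q_j^{-1}$ to $-z_2(t)$, observing that $z_1 - h(-z_2)$ and $h(-z_2) - I_{2L_j}(q_j)(-z_2)$ each vanish to order $\ge 2L_jM_j$ in $t$, and then reparametrizing via $t = G(\mu_j^m z_2^{1/M_j})$ and the divisibility $M_j \mid 2L_jM_j$. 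This is more direct and avoids the imaginary-part cancellation argument. What that argument buys the paper, and what your write-up stops just short of, is the sharper conclusion that the cutoff is \emph{exactly} $2L_j$ (equivalently $\tilde\psi_j(0)\ne 0$), which is the content of the remark ``the cutoffs are all preserved.'' For Theorem \ref{thm:switch} as literally stated ($\tilde\psi_j \in \C\{z_2\}$ with no normalization) your argument suffices; if you want the preserved cutoff you should add one more line --- e.g.\ computing the coefficient of $t^{2L_jM_j}$ in $z_1 - I_{2L_j}(q_j)(-z_2)$, whose imaginary part is $-\Im\psi_j(0)/q_j'(0)\ne 0$ since $h - I_{2L_j}(q_j)$ and $q_j$ have real coefficients, so $F(0)\ne 0$ and hence $\tilde\psi_j(0)\ne 0$. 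You also leave implicit that the product over $m$ has coefficients in $\C\{z_2\}$ (by symmetry of $\{\mu_j^m z_2^{1/M_j}\}_m$) and therefore agrees with the $z_1$-Weierstrass factor of $p_j$ by uniqueness; these are routine but worth a sentence.
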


Thus, the cutoffs are all preserved and the initial segments get transformed
to $-I_{2L_j}(q_j)(-z_2)$.  

\begin{proof}
As in Theorem \ref{thm:purelp}, let $p\in \C[z_1,z_2]$ be a pure stable polynomial.
To begin, we explain why
the number and degrees of the irreducible Weierstrass polynomials of $p$ match when we switch variables. 
An irreducible Weierstrass polynomial in $z_2$
of pure stable type is of the form \eqref{WPpure}
\[
g(z)= \prod_{m=1}^{M} (z_2 + q_1(z_1) + z_1^{2L_1} \psi_1(\mu^m z_1^{1/M}) ).
\]
Since $q_1(0)=0,q_1'(0)>0$ we see that $g(z_1,0)$ vanishes to order $M$
and therefore
this can be factored into a unit $v\in \C\{z_1,z_2\}$ times a Weierstrass polynomial in $z_1$
\[
g(z) = v(z)( z_1^M + a_1(z_2) z_1^{M-1} + \cdots + a_M(z_2)).
\]
This Weierstrass polynomial in $z_1$ must be irreducible.
If it factored the resulting irreducible Weierstrass
polynomial factors of pure stable type 
would be equal to a unit times Weierstrass polynomials
in $z_2$ by the same reasoning; this would contradict irreducibility
of $g$ with respect to $z_2$.
This proves that the irreducible Weierstrass polynomial factors in $z_2$
of $p$ are unit multiples of the irreducible Weierstrass polynomial factors
in $z_1$ of $p$.  

Again using $g$ for an irreducible Weierstrass polynomial of pure stable 
type we also have a pure stable type Puiseux factorization with respect to $z_1$
\[
g(z) = v(z) \prod_{m=1}^{M}(z_1 + q_2(z_2) + z_2^{2L_2}\psi_2(\mu^m z_2^{1/M}))
\]
and we would like to know $L_1=L_2$ and we would like to compute $q_2$ from
$q_1$.  We emphasize that $q_2$ is not the same as in
the statement of Theorem \ref{thm:switch}---since we are isolating a 
single irreducible Weierstrass polynomial we think it is safe to have $q_1$ and $q_2$
play new roles strictly during this proof.

Let 
\[
\phi_1(t) = -(q_1(t^M) + t^{2L_1M} \psi_1(t))
\text{ and } \phi_2(t) = -(q_2(t^M) + t^{2L_2 M} \psi_2(t))
\]
and note 
\[
\phi_1(t) = t^M( -q_1'(0) + \dots) \text{ and } \phi_2(t) = t^M(-q_2'(0)+ \dots)
\]
because $q_1(0)=q_2(0)=0$ and $q_1'(0),q_2'(0)\ne 0$.
By these representations $\phi_1, \phi_2$ both have
analytic $M$-th roots near $0$
\[
\phi_3(t) = \phi_1(t)^{1/M} = t((-q_1'(0))^{1/M} + \dots) 
\]
and
\[
 \phi_4(t) = \phi_2(t)^{1/M} = t((-q_2'(0))^{1/M} + \dots). 
\]
These $M$-th roots are determined by a choice of
$M$-th root of $-q_1'(0), -q_2'(0)$.
Notice $\phi_3'(0) = (-q_1'(0))^{1/M}, \phi_4'(0) = (-q_2'(0))^{1/M}$
are both nonzero so $\phi_3, \phi_4$ have
local analytic functional inverses near $0$.

Locally $g$'s zero set can be parametrized via the injective maps
\[
t \mapsto (t^M, (\phi_3(t))^{M}) \text { or } t \mapsto ((\phi_4(t))^{M}, t^M).
\]
Then, $(\phi_3(\phi_4(t)))^M = t^M$
which implies that  $\phi_3(\phi_4(t))$ is an $M$-th root of unity
times $t$.
Namely, $\phi_3(t)$ and $\phi_4(t)$ are functional
inverses up to a multiple of an $M$-th root of unity.
Reverting back to $\phi_1$ we have
\[
t^M = \phi_1( \phi_4(t))
\]
and using our formula for $\phi_1$ we have
\[
\begin{aligned}
-t^M &= q_1( \phi_4(t)^M) + (\phi_4(t))^{2L_1M} \psi_1( \phi_4(t))\\
&= q_1(\phi_2(t)) + \phi_2(t)^{2L_1} \psi_1(\phi_4(t))\\
&= q_1(\phi_2(t)) + (-q_2'(0))^{2L_1} \psi_1(0) t^{2L_1M} + O(t^{2L_1M+1})
\end{aligned}
\]
where the last term is a stand-in for some analytic function vanishing to order at least $2L_1M+1$.
The last equality follows from
\[
\begin{aligned}
\phi_2(t)^{2L_1} \psi_1(\phi_4(t))& = 
t^{2L_1M}( -q_2'(0)+ O(t))^{2L_1} (\psi_1(0)+ O(t))\\
&= t^{2L_1M} (-q_2'(0))^{2L_1} \psi_1(0) + O(t^{2L_1M+1})
\end{aligned}
\]
using initial expansions of $\phi_2, \psi_1$, and $\phi_4$.

Expanding the composition with $q_1$ we have
\[
q_1( -q_2(t^M) - t^{2L_2M} \psi_2(t)) = q_1(-q_2(t^{M})) 
- t^{2L_2M} q_1'(0) \psi_2(0) + O(t^{2L_2M+1})
\]
which follows from examining $q_1$ term by term via
\[
( -q_2(t^M) - t^{2L_2M} \psi_2(t))^{j} = (-q_2(t^M))^{j} + O(t^{2L_2M+1})
\]
which holds since $q_2$ vanishes at $0$.
We single out the linear term 
\[
( -q_2(t^M) - t^{2L_2M} \psi_2(t)) = -q_2(t^M) - t^{2L_2M}\psi_2(0) + O(t^{2L_2M+1})
\]
to keep track of the contribution of the first non-real complex term.
Altogether
\[
\begin{aligned}
-t^M =& q_1(-q_2(t^{M})) 
- t^{2L_2M} q_1'(0) \psi_2(0) + O(t^{2L_2M+1})\\
&+ t^{2L_1M} (-q_2'(0))^{2L_1} \psi_1(0) + O(t^{2L_1 M+1}).
\end{aligned}
\]
Recall that $\psi_1(0), \psi_2(0) \in \UHP$ while
$q_1'(0), q_2'(0)  \ne 0$, $q_1,q_2 \in \R[t]$.
In order for the imaginary coefficients
on the right hand side to vanish we must have $L_1 = L_2$.
Then, more simply put
\[
-t^M = q_1(-q_2(t^M)) + O(t^{2L_1M}).
\]
We can replace $s=-t^M$ to see that
\[
s = q_1(-q_2(-s)) + O(s^{2L_1}).
\]
In words, $-q_2(-s)$ is a truncation of the 
power series for the functional inverse of $q_1$ (truncated
below order $2L_1$);
namely, $-q_2(-s) = I_{2L_1}(q_1)(s)$---exactly what we wanted to show.
\end{proof}

\subsubsection{Universal contact order} \label{sec:uco}
Assume the setup and conclusion of Theorem \ref{thm:purelp}.
In \cite{bps18, bps19a} contact order, which is the maximum of 
the cutoffs in Theorem \ref{thm:purelp}
\[
\max\{2L_1,\dots, 2L_k\},
\]
appeared naturally
in the study of integrability of derivatives of rational inner functions.
A condition which is easier to analyze in the context
of boundary regularity of rational inner functions 
is \emph{universal contact order}.
The \emph{universal contact order} of $p$ is
the minimum 
\[
K_{min} = \min\{2L_1,\dots, 2L_k\}.
\]
Its geometric content is that all branches of $p$
have this order of contact or higher with the distinguished boundary.
In other terms, generically we can match up 
branches of $A+t_1 B$ and $A+t_2 B$ so that
they all have order of contact $K_{min}$ or higher.

\begin{remark} \label{bidiskuco}
These interpretations of universal contact order
are conformally invariant and give a sensible geometric
way of defining this concept for atoral stable
polynomials (i.e. the bidisk setting).  
The explanation parallels that of
contact order in Section \ref{subsec:seg}.
\end{remark}

Recall the factorization from Theorem \ref{thm:segments}
\begin{equation} \label{AtBfactor}
A(z)+tB(z) = u(z;t) \prod_{j=1}^{k} \prod_{m=1}^{M_j} (z_2 + q_j(z_1) + z_1^{2L_j}\psi_{j,m}(z_1;t))
\end{equation}
which holds for all but finitely many $t\in \R$.
Here $u(z;t) \in \R\{z_1,z_2\}$ is a uniquely determined unit for each $t$ but it
is not clear how $u$ depends on  $t$.  
Universal contact order tells us something about this dependence.

\begin{theorem}\label{thm:ucohomog}
   Assume $p\in \C[z_1,z_2]$ is pure stable and 
   has universal contact order $K_{min}\geq 2$ (an even integer).  
  Write $p = A+i B$ into real and imaginary (coefficient) polynomials
  and if necessary, multiply by a constant so that the lowest homogeneous term $P_M$ satisfies $P_M =  A_M$
  and the coefficient of $z_2^M$ in $A_M$ is $1$. 
  Then, for generic values of $t$, 
  the local factorization \eqref{AtBfactor} of $A+tB$ 
  has the property that in the homogeneous expansion of the unit
  \[
  u(z;t) = 1 + \sum_{j\geq 1} u_j(z;t),
  \]
  the polynomials $u_j(z;t)$ are affine in $t$ for $j \leq K_{min}-2$.  
\end{theorem}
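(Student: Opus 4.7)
The plan is to expand both sides of $A+tB = u(z;t)\,P(z;t)$ into homogeneous polynomials in $(z_1,z_2)$ and induct degree by degree, where
$P(z;t) := \prod_{j,m}(z_2 + q_j(z_1) + z_1^{2L_j}\psi_{j,m}(z_1;t))$
is the explicit product from Theorem \ref{thm:segments}. Since $A+tB$ is manifestly affine in $t$, the whole task reduces to showing that the low-order homogeneous pieces of $P$ are actually $t$-independent, so that the recursion for $u$ forces each $u_k$ to inherit the affine-in-$t$ dependence.

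The first step is to establish that $P(z;t)$ has $t$-independent homogeneous parts in degrees $M,M+1,\dots,M+K_{min}-2$. Each factor splits as $(z_2+q_j(z_1)) + z_1^{2L_j}\psi_{j,m}(z_1;t)$: the first summand is $t$-independent of lowest homogeneous degree $1$ (because $q_j'(0)>0$), while the second summand is $t$-dependent of lowest homogeneous degree $\geq 2L_j\geq K_{min}$. Expanding the product, any $t$-dependent monomial must select the second summand from at least one factor and a degree-$\geq 1$ contribution from each of the remaining $M-1$ factors, for total degree $\geq M-1+K_{min}$. Thus $P_{M+j}$ is independent of $t$ whenever $j\leq K_{min}-2$.

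I would then run the induction on $k$. The base case follows from the degree-$M$ identity $A_M = u_0 P_M$: by Corollary \ref{cor:purelp}, $P_M = \prod_j(z_2+q_j'(0)z_1)^{M_j}$, and under the stated normalization both $A_M$ and $P_M$ are monic of degree $M$ in $z_2$, so $u_0=1$. For $1\leq k\leq K_{min}-2$, extracting the degree-$(M+k)$ part of $A+tB=u\cdot P$ gives
\[
u_k(z;t)\,P_M(z) \;=\; A_{M+k}(z)+tB_{M+k}(z)\;-\;\sum_{i=0}^{k-1} u_i(z;t)\,P_{M+k-i}(z),
\]
where each $P_{M+k-i}$ in the sum is $t$-independent by the first step (since $k-i\leq K_{min}-2$). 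Assuming inductively that $u_0,\dots,u_{k-1}$ are affine in $t$, the right-hand side is affine in $t$. Writing $u_k(z;t) = \sum_{n\geq 0} t^n u_k^{(n)}(z)$ and matching coefficients of $t^n$ in $u_k P_M = (\text{affine in }t)$ forces $u_k^{(n)}\,P_M \equiv 0$ for $n\geq 2$; since $\C[z_1,z_2]$ is an integral domain and $P_M\not\equiv 0$, this yields $u_k^{(n)}=0$ for $n\geq 2$, and the induction closes.

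The main (mild) obstacle is justifying that $u_k(z;t)$ is genuinely polynomial in $t$ so that matching powers of $t$ is legal; this is built into the induction itself, since the recursion expresses $u_k P_M$ as a polynomial in $t$ of degree $\leq 1$ whenever the prior $u_i$ are affine in $t$, and division by the $t$-independent $P_M$ keeps $u_k$ in $\C[z_1,z_2][t]$. The cutoff $k\leq K_{min}-2$ is sharp: at $k=K_{min}-1$ the recursion picks up $u_0\cdot P_{M+K_{min}-1}$, and $P_{M+K_{min}-1}$ carries the first generally nonlinear-in-$t$ contribution of the form $\psi_{j,m}(0;t)\,z_1^{2L_j}$ multiplied into the remaining degree-$1$ factors.
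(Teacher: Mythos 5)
Your proof is correct and follows essentially the same strategy as the paper's: show that the homogeneous pieces of the Weierstrass product are $t$-independent up to degree $M+K_{\min}-2$ (because any $t$-dependence forces one factor to contribute degree $\geq 2L_j \geq K_{\min}$ and the remaining $M-1$ factors each contribute degree $\geq 1$), then propagate affine $t$-dependence through the triangular division recursion $u_k P_M = A_{M+k}+tB_{M+k} - \sum_{i<k} u_i P_{M+k-i}$. The only cosmetic difference is that the paper first ``forgets'' the individual cutoffs $2L_j$ by rewriting each factor with a uniform cutoff $K_{\min}$ before expanding, whereas you work directly with the original Puiseux factors; the substance of the degree count and the recursion is identical.
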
	
 
 This theorem is used later when we study boundary regularity of rational inner functions.
 Note that the theorem is vacuous for $K_{min}=2$ as it should be.
 
 \begin{proof}
Given universal contact order $K=K_{min}$ we can rewrite \eqref{AtBfactor}
in a more convenient form where we ``forget'' some currently irrelevant 
information (i.e. portions of initial segments beyond order $K$)
\[
A(z)+tB(z) = u(z;t) W(z;t) = u(z;t) \prod_{j=1}^{M} (z_2 + h_j(z_1) + z_1^{K} \psi_j(z_1;t)).
\]
 Here $h_j \in \R[z_1]$ are polynomials with $h_j(0)=0$, $h'_j(0) >0$, $\deg h_j < K$
 and $\psi_j(z_1;t) \in \R\{z_1\}$.
Expanding we have 
\[
W(z;t) := 
 A_M(z) + R_{M+1}(z) + \dots + R_{M+K-2}(z) + F_{M+K-1}(z;t)
 \]
 where $R_{j}$ is a degree $j$ homogeneous polynomial and $F_{M+K-1}$
 is analytic in $z$ and vanishes to order at least $M+K-1$.
 The $R_j$ have no $t$ dependence but $F_{M+K-1}$
 may.
 Next, we consider the effect on the unit $u(z;t)$
 in the factorization \eqref{AtBfactor}.
 Write $u(z;t) = 1 + \sum_{j\geq 1} u_j(z;t)$
 where $u_j$ is homogeneous of degree $j$ in $z$.
 Then, for fixed $z$ 
 \[
 \frac{A(\lambda z)+tB(\lambda z)}
 {W(\lambda z;t)} = 1 + \sum_{j\geq 1} \lambda^j u_j(z;t)
 \]
 can be viewed as an analytic function of $\lambda$ and
 because the numerator and denominator vanish to order $M$
 the result is analytic and non-zero at $0$ whenever $A_M(z)\ne 0$.
    We can perform division of power series 
   (via solving a triangular system) to conclude that 
   $u_j(z;t)$ is affine with respect to $t$.  
   Indeed, 
   \[
   A_{M+1}(z) + t B_{M+1}(z)  = A_M(z) u_1(z;t)  + R_{M+1}(z)
   \]
   implies 
   \[
   u_1(\cdot;t) = \frac{A_{M+1}-R_{M+1}}{A_M} + t \frac{B_{M+1}}{A_M}
   \]
   and then recursively
   \[
   u_n(\cdot;t) = A_M^{-1} (A_{M+n} + t B_{M+n} - (R_{M+1} u_{n-1} + \cdots + R_{M+n}))
   \]
   is affine with respect to $t$ for $n \leq K-2$.  
   \end{proof}

This concludes our local theory related to Puiseux factorizations.

\subsection{Realization formulas} \label{ss:rf}

Our third and final method for analyzing local behavior
of stable polynomials is via transfer function realization formulas.
This technique is decidedly ``global''  and restricted to two dimensions but can be effectively 
utilized for local questions.  
It is difficult to disentangle this topic from our applications
but it has been so important for theorem discovery and proof
that it deserves some discussion here.

Let $p \in \C[z_1,z_2]$ be atoral stable with multidegree $n=(n_1,n_2)$
and corresponding rational inner function $\phi = \tilde{p}/p$.
Then there exists a $(1+|n|)\times (1+|n|)$ unitary $U = \begin{bmatrix} A & B \\ C & D\end{bmatrix}$
such that
\[
\phi(z) = A + B z_P (I - D z_P)^{-1} C = A + B(I-z_P D)^{-1} z_P C
\]
where $z_P = z_1 P + z_2 (I-P)$ for some $|n|\times |n|$ projection $P$ onto
an $n_1$ dimensional space.
Conversely, every such formula produces a rational inner function.  
This is easiest to see from the formula
\[
\begin{bmatrix} A & B \\ C & D \end{bmatrix} \begin{bmatrix} 1 \\ z_P (I-D z_P)^{-1} C \end{bmatrix} = \begin{bmatrix} \phi(z) \\ (I-Dz_P)^{-1} C\end{bmatrix}.
\]
By Cramer's rule and a proper accounting of degrees, one can show that
\[
p(z) = p(0) \det(I-D z_P).
\]
Thus, every atoral stable polynomial has a contractive determinantal representation
of the above form.
When $|\alpha| =1$, $p(z)- \alpha \tilde{p}(z)$ is toral stable and 
has a unitary determinantal representation
\[
p(z) - \alpha\tilde{p}(z) = (p(0)-\alpha \tilde{p}(0)) \det(I - V_{\alpha} z_P)
\]
where
\[
V_{\alpha} = D + \frac{\alpha}{1-\alpha A} CB
\]
is a unitary for $|\alpha| = 1$.  It is also contractive for $|\alpha|\leq 1$
and the above formulas hold for $\alpha \ne 1/A = p(0)/\tilde{p}(0)$.  Since $V_{\alpha}$
is unitary for  $\alpha\in \T$, we also see that $D$ is a rank one
perturbation of a unitary.
This is presented
in Section 9 of \cite{Kne19}.

One can use a Cayley transform to get determinantal representations of 
polynomials with no zeros on $\UHP^2$.
If $p \in \C[z_1,z_2]$ has no zeros on $\UHP^2$ and total degree $n$
then there exist a constant $c\in \C$ and $n \times n$ matrices $A_0, A_1, A_2$ satisfying
$\text{Im}(A_0), A_1,A_2 \geq 0$, $A_1+A_2 = I$ such that
\[
p(z) = c \det(A_0 + A_1 z_1 + A_2 z_2).
\]
If $p$ is real stable one can refine the representation so that $\text{Im}(A_0)= 0$. 
For details and additional discussion in the generic stable case, we refer the reader to Theorem 3.2 in \cite{Kne19b}. For details about the real stable refinement, we recommend the reader consult Theorem 6.6 in \cite{BB10} and Corollary 1 in \cite{Kne16}, with the caveat that their discussions occur in the language of hyperbolic polynomials. 
(An even deeper result of Helton-Vinnikov \cite{HV07} implies we can take $A_0,A_1,A_2$ to
be real symmetric.) 
In terms of local theory, the structure of the (possible) kernel of $A_0$ in relation
to the operators $A_1,A_2$ can reveal properties of the the zero set of $p$ near $(0,0)$.
However, the material in the previous sections on homogeneous and Puiseux expansions
seems more appropriate for understanding atoral/pure stable polynomials
and their associated toral/real stable perturbations (namely, $p-\alpha \tilde{p}$ in the
polydisk setting and $A+tB$ in the upper half plane setting).
More general perturbations, described next, are closely related to rational non-inner Schur functions,
and more general realization formulas are an effective tool
for their study.  

Consider a nonconstant rational Schur function on $\D^2$, namely $f = q/p$, where $p,q\in \C[z_1,z_2]$
have no common factors,
$p$ has no zeros in $\D^2$ and $|f(z)|\leq 1$ on $\D^2$.
By Theorem \ref{thm:nontanchar} below, $\mathcal{Z}_p\cap \T^2 \subset \mathcal{Z}_q\cap \T^2$
and therefore any potential toral factors of $p$ would be factors of $q$ and can 
be divided out.  So, $p$ is necessarily atoral stable.
Now, since
$p(z) + w q(z) \ne 0$ for $z \in \D^2$ and  $w\in \overline{\D}$ 
we get a more general family of perturbations
of stable polynomials
than simply $p+ w \tilde{p}$
by considering $p+w q$.
Further interest in perturbations of stable polynomials comes from
a problem studied in \cite{Kne19} of characterizing the extreme points
of real rational Pick functions.

Rational Schur functions on $\D^2$ possess contractive
transfer function realizations.  
Indeed, by Theorem 1.3 in \cite{Kne20}, there is a finite-dimensional Hilbert space $\mathcal{H}$,  
a contraction $U = \begin{bmatrix} A & B \\ C & D \end{bmatrix}$ on  $\mathbb{C} \oplus \mathcal{H}$, 
and a projection $P$ on $\mathcal{H}$ such that 
\begin{equation} \label{eqn:ftfr}
 f(z) = A + B(I-z_PD)^{-1}z_PC \text{ for } z \in \D^2,
 \end{equation}
where $z_P = z_1 P + z_2 (I-P)$. 
Note that not only is $U$ now merely a contraction, but we also do not have clear control
on the (finite) dimension of $\mathcal{H}$.
As we have seen, local behavior is made more apparent
in the upper half plane setting so we perform change of variables
to $f:\D^2 \to \D$ to obtain a rational Pick function $g: \UHP^2 \to \UHP$.
Note that we alter both the domain and range of $f$
to obtain more natural formulas.  
Specifically,  define a conformal map $\gamma: \mathbb{D} \rightarrow \UHP$  by 
\begin{equation} \label{eqn:gammaps}
\gamma(z) =i \frac{1+z}{1-z}  \ \ \text{ so that } \gamma^{-1}(w) = \frac{w-i}{w+i}
\end{equation}
and set 
\begin{equation} \label{eqn:g}
g(w) = \gamma \circ f (\gamma^{-1}(w_1), \gamma^{-1}(w_2)).
\end{equation}
 
\begin{proposition} \label{prop:simpleg}
Taking $g, \mathcal{H}, U$ defined as above, if $I-U$ is invertible, 
then $g$ satisfies
\begin{equation} \label{eqn:realization} 
g(w) = c - \left \langle (w_P + S)^{-1} \alpha, \beta \right \rangle_{\mathcal{H}}
\end{equation}
for $w\in \mathbb{H}^2$, where
\begin{equation} \label{eqn:T}  
T:=i (I +U)(I-U)^{-1} := \begin{bmatrix} c & \beta^* \\ \alpha & S \end{bmatrix}, \text{ for some } c \in \C, \alpha, \beta \in \mathcal{H}, \text{ and } S \in \mathcal{L}(\mathcal{H}). 
\end{equation}
Since $U$ is a contraction, 
$\text{Im}(T)= \frac{1}{2i}(T-T^*)\geq 0$, 
$c\in \UHP$, and $\text{Im}(S)\geq 0$.
\end{proposition}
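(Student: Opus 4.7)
The plan is to transport the transfer function realization of $f$ through the coordinate-wise Cayley transforms in both domain and range simultaneously. First, I would rewrite \eqref{eqn:ftfr} in the compact pivoted form
\[
U \begin{bmatrix} 1 \\ z_P \eta \end{bmatrix} = \begin{bmatrix} f(z) \\ \eta \end{bmatrix}, \qquad \eta := (I - D z_P)^{-1} C,
\]
which is merely a repackaging of $f(z) = A + B z_P \eta$ together with $\eta = C + D z_P \eta$. This form interacts cleanly with both $I - U$ and $I + U$ applied to $\begin{bmatrix}1 \\ z_P \eta\end{bmatrix}$, and since $I - U$ is assumed invertible, I can apply $i(I+U)(I-U)^{-1} = T$ to obtain
\[
T \begin{bmatrix} 1 - f(z) \\ (z_P - I)\eta \end{bmatrix} = \begin{bmatrix} i(1 + f(z)) \\ i(z_P + I)\eta \end{bmatrix}.
\]

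Next comes the key algebraic observation: because $z_P$ and $w_P$ both reduce with respect to the complementary projections $P$ and $I - P$ to scalar multiplications by the componentwise Cayley transforms (thanks to \eqref{eqn:gammaps}), one has $z_P = (w_P - iI)(w_P + iI)^{-1}$ and therefore
\[
z_P - I = -2i(w_P + iI)^{-1}, \qquad i(z_P + I) = 2i w_P(w_P + iI)^{-1} = -w_P(z_P - I).
\]
Combined with the scalar identity $i(1 + f(z)) = g(w)(1 - f(z))$ coming from $g = \gamma \circ f \circ (\gamma^{-1},\gamma^{-1})$, the previous display collapses, after dividing the scalar component by $1 - f(z)$, to
\[
T \begin{bmatrix} 1 \\ \sigma \end{bmatrix} = \begin{bmatrix} g(w) \\ -w_P \sigma \end{bmatrix}, \qquad \sigma := \frac{(z_P - I)\eta}{1 - f(z)},
\]
valid on the set where $1 - f(z) \ne 0$ and $z_P$ is invertible.

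Finally, writing $T = \begin{bmatrix} c & \beta^* \\ \alpha & S \end{bmatrix}$ as in \eqref{eqn:T}, the bottom block reads $\alpha + S\sigma = -w_P \sigma$, so $\sigma = -(w_P + S)^{-1}\alpha$, and substituting into the top block $g(w) = c + \beta^* \sigma$ yields precisely \eqref{eqn:realization}. The main technical obstacle is ensuring that all the inverses make sense. Since $U$ is a contraction, a standard Cayley transform computation gives $\mathrm{Im}\,T \geq 0$, so $\mathrm{Im}(w_P + S) \geq \mathrm{Im}\,w_P > 0$ for $w \in \UHP^2$ and $(w_P + S)^{-1}$ is well defined throughout $\UHP^2$. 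The identity above is initially derived only on the Zariski-dense open subset where $f(z) \ne 1$ and $z_1, z_2 \ne 0$, but since both sides of \eqref{eqn:realization} are rational functions of $w$, the identity extends by analyticity to all of $\UHP^2$.
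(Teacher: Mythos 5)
Your derivation is correct, and the computations check out at every step. The pivoted form $U\begin{bsmallmatrix}1\\ z_P\eta\end{bsmallmatrix} = \begin{bsmallmatrix}f(z)\\ \eta\end{bsmallmatrix}$ is indeed equivalent to \eqref{eqn:ftfr}, the Cayley identities $z_P - I = -2i(w_P+iI)^{-1}$ and $i(z_P+I) = -w_P(z_P-I)$ follow because $z_P$ and $w_P$ are simultaneously diagonalizable via $P$, and the division by the scalar $1-f(z)$ followed by reading off blocks of $T$ gives exactly \eqref{eqn:realization}. The invertibility argument for $w_P + S$ via $\operatorname{Im} T \geq 0$ (standard Cayley computation from $U$ contractive) and strict positivity of $\operatorname{Im}(w_P)$ on $\UHP^2$ is sound, and the analytic-continuation step from $\{f(z)\neq 1\}$ to all of $\UHP^2$ is fine since both sides are rational.

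Where the two arguments differ is in self-containment: the paper simply cites Theorems 4.1 and 4.2 of \cite{bptd20} and offers no in-line argument, whereas you have supplied the full Cayley-transform computation directly. Your route is essentially the expected one that converts a contractive transfer-function realization on the polydisk to a positive-imaginary-part (Herglotz/Nevanlinna-style) realization on the polyhalfplane, and it has the advantage of making the proposition verifiable without consulting the external reference. One small note: you flag ``$z_1, z_2 \neq 0$'' as a hypothesis needed for the dense-subset argument, but nothing in your derivation actually uses invertibility of $z_P$ --- only $f(z)\neq 1$ is needed --- so this restriction can be dropped.
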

This formula follows from Theorems 4.1 and 4.2 in \cite{bptd20}. 
 Since both sides are rational functions, the formula
holds in $\UHP^2$ as well as any points where both sides do not have a pole.
We shall refer to $g$'s formula as a PIP (positive imaginary part) realization.
The formula for $g$ can also be used to construct rational Pick functions since
a function $g$ as in \eqref{eqn:realization} satisfies
\[
\begin{aligned}
\Im g(w) =& \left\langle \Im(T) \begin{pmatrix} 1 \\ -(w_P + S)^{-1} \alpha \end{pmatrix}, \begin{pmatrix} 1 \\ -(w_P + S)^{-1} \alpha \end{pmatrix}\right\rangle \\
&+ \langle \Im(w_P) (w_P + S)^{-1} \alpha, (w_P+S)^{-1} \alpha \rangle
\end{aligned}
\]
which is non-negative whenever $w \in \UHP^2$.   
The representation \eqref{eqn:realization} does not represent all rational Pick functions.
In particular, if $g$ satisfies \eqref{eqn:realization}
then  $\lim_{t \to \infty} g(it,it)  = c \ne \infty$
which is not true for all rational Pick functions.
Producing a unified representation for all rational Pick functions in two variables
turns out to be somewhat technical,
and the paper \cite{ATDY16} produces what are called type IV Nevanlinna representations
to cover all cases.  These representations are intricate and necessarily so.
This seems to be more of an issue of the behavior of $g$
at $\infty$ which is not what we are interested in here.
To account for this we find a conformal self-map of $\UHP$ 
that fixes $0$ and perturbs $\infty$ so that
an arbitrary rational Pick function is conformally equivalent
to one of the form \eqref{eqn:realization}.  

\begin{theorem}\label{makehsimple}
Let $h:\UHP^2 \to \UHP$ be a non-constant rational Pick function.
Then, there exist automorphisms 
$\sigma_1,\sigma_2: \UHP \to \UHP$ 
where $\sigma_2(0)=0$
such that $g(w) = \sigma_1 ( h ( \sigma_2(w_1),\sigma_2(w_2)))$
has a realization as in \eqref{eqn:realization} and \eqref{eqn:T}
with $g^*(0,0) = \lim_{t\searrow 0} g(it,it) \ne \infty$.
\end{theorem}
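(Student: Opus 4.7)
The plan is to translate the problem to the polydisk via the Cayley transform, invoke the contractive transfer function realization of the corresponding Schur function, and then use the flexibility of the conformal modifications $\sigma_1, \sigma_2$ to arrange the hypothesis of Proposition \ref{prop:simpleg}. First, I would set $f = \gamma^{-1} \circ h \circ (\gamma, \gamma)$, a non-constant rational Schur function on $\D^2$, and apply Theorem 1.3 of \cite{Kne20} to obtain a contractive realization $f(z) = A + B(I-z_PD)^{-1}z_PC$ with contraction $U = \begin{pmatrix} A & B \\ C & D \end{pmatrix}$ on $\C \oplus \mathcal{H}$. Any automorphism $\sigma_i \in \operatorname{Aut}(\UHP)$ pulls back under $\gamma$ to a Möbius automorphism $\tau_i = \gamma^{-1} \circ \sigma_i \circ \gamma$ of $\D$, and the constraint $\sigma_2(0)=0$ translates to $\tau_2(-1)=-1$.

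Next, I would post-compose $f$ with a unimodular rotation $w \mapsto \mu w$ for $\mu \in \T$, giving $\tilde f = \mu f$ with contractive realization matrix $\tilde U = \operatorname{diag}(\mu, I)U$ (still a contraction since $\operatorname{diag}(\mu,I)$ is unitary). Using the block-determinant/Schur complement formula, when $I-D$ is invertible one computes
\[
\det(I - \tilde U) = \mu\,\det(I-D)\bigl(\bar\mu - f^*(1,1)\bigr),
\]
so $1 \in \operatorname{spec}(\tilde U)$ iff $\mu = \overline{f^*(1,1)}$. Since $f^*(1,1) \in \overline{\D}$ exists by the boundary-limit Theorem \ref{thm:introuhplim}, this excludes at most one unimodular $\mu$. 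For any other $\mu$, $I - \tilde U$ is invertible and Proposition \ref{prop:simpleg} directly provides the PIP representation \eqref{eqn:realization} for $\tilde g := \sigma_1 \circ h$, where $\sigma_1(z) = \gamma(\mu \gamma^{-1}(z)) \in \operatorname{Aut}(\UHP)$.

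To simultaneously enforce $g^*(0,0) \ne \infty$, I would observe that since $\sigma_2$ fixes $0$, $g^*(0,0) = \sigma_1(h^*(0,0))$; this is $\infty$ precisely when $h^*(0,0)$ equals the unique pole of $\sigma_1$, which rules out at most one additional $\mu \in \T$. Excluding two points from the unit circle still leaves an uncountable set of admissible $\mu$, so we select one, and take $\sigma_2 = \operatorname{id}$, which trivially fixes $0$.

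The main obstacle is the case in which $I - D$ fails to be invertible, for which the block-determinant analysis above does not directly apply. In this case, the plan calls for first pre-composing with a non-trivial $\tau_2 \in \operatorname{Aut}(\D)$ fixing $-1$ (equivalently, a non-trivial $\sigma_2 \in \operatorname{Aut}(\UHP)$ fixing $0$), so that the resulting $f \circ (\tau_2, \tau_2)$ admits a contractive realization whose $D$ block is invertible, before applying the rotation step. Verifying that a generic choice of small $\tau_2$ accomplishes this requires tracking how the contractive realization transforms under pre-composition by Möbius maps; this can be done through a direct calculation or, more conceptually, by appealing to the structural realization theory in \cite{bptd20}. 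Once this perturbation step is secured, the overall argument is a codimension-one genericity statement in the $(\mu, \tau_2)$-parameter space.
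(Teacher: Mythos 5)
Your argument in the case $I-D$ invertible is essentially correct and is close in spirit to one step of the paper's proof (the paper also post-composes $f$ with a unimodular constant to arrange $f^*(1,1)\ne 1$). But the gap you flag in the non-invertible case is genuine, and your proposed fix does not close it. The case $I-D$ singular is precisely the case of interest: the transfer function realization extends analytically to $(1,1)$ whenever $I-D$ is invertible, so $f$ having a singularity at $(1,1)$ \emph{forces} $I-D$ to be singular. Your fallback plan is to pre-compose by an automorphism $\tau_2$ of $\D$ fixing $-1$ and argue genericity. But such $\tau_2$ is never a rotation of $\D$ (rotations fix $0$, not the boundary point $-1$), and a general Blaschke pre-composition has no simple, controllable effect on the realization matrix $U$; there is no determinant identity analogous to your $\det(I-\tilde{U})=\mu\det(I-D)(\bar\mu-f^*(1,1))$, and no reason to expect ``generic small $\tau_2$'' to produce an invertible $D$-block. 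This is not a matter of filling in a direct calculation; it is where your strategy breaks down.

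The paper avoids this obstacle by not insisting that the pre-composition be applied in a single piece that fixes the base point. It builds $\sigma_2$ as a composite: first the rotation $z\mapsto\lambda z$ on $\D$, which replaces $D$ by $\lambda D$, so that $I-\lambda D$ is invertible for all but the finitely many $\lambda\in\T$ lying in $\{1/\nu:\nu\in\operatorname{spec}(D)\}$ — no genericity argument, just finite spectrum; then the Cayley map to $\UHP$; then a translation $w\mapsto w+\gamma(\bar\lambda)$ by the \emph{real} number $\gamma(\bar\lambda)$. This translation is exactly what restores the constraint $\sigma_2(0)=0$, and on the PIP side it only shifts $S\mapsto S+\gamma(\bar\lambda)I$, which preserves $\Im S\ge 0$ since $\gamma(\bar\lambda)\in\R$. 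Your insistence on $\sigma_2=\operatorname{id}$ (equivalently, $\tau_2$ fixing $-1$ directly) rules out the rotation, which is the only pre-composition with a tractable action on the realization. If you drop that restriction and follow the two-stage construction of $\sigma_2$, the argument goes through; as written, the proposal has a real gap in the singular case.
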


\begin{proof}
Let $m_1$ be a M\"obius transformation sending
$\D$ to $\UHP$ and $1$ to $0$.
Then, $f = m_1^{-1} \circ h \circ (m_1,m_1):\D^2 \to \D$
is an RSF.  As discussed above $f$ has a contractive
transfer function realization as in \eqref{eqn:ftfr}.
If necessary we replace $f$ with a unimodular multiple
in order to guarantee $f^{*}(1,1):=\lim_{r\nearrow 1} f(r,r) \ne 1$.  
This limit exists because $\zeta \mapsto f(\zeta,\zeta)$ is a one variable RSF.    
Since $\dim \mathcal{H} <\infty$ and $f$ is a nonconstant rational Schur function, 
there is a $\lambda \in \mathbb{T}$ with $\lambda \ne 1$ such that $(1-\lambda D)$ is invertible, $f$ is continuous at $(\lambda, \lambda)$, and $f(\lambda, \lambda) \ne 1$. Then \eqref{eqn:ftfr} extends to $(\lambda, \lambda)$. Define $\tilde{f}$ by $\tilde{f}(z) = f(\lambda z)$ and define a contraction $\tilde{U}$ on  $\mathbb{C} \oplus \mathcal{H}$  by
 \[ \tilde{U} = \begin{bmatrix} \tilde{A} & \tilde{B} \\ \tilde{C} & \tilde{D} \end{bmatrix} = \begin{bmatrix} A & B \\ \lambda C & \lambda D \end{bmatrix}.\]
Then for $z\in \mathbb{D}^2$, we have
\[ \tilde{f}(z) = \tilde{A} + \tilde{B}(I-z_P\tilde{D})^{-1}z_P \tilde{C}.\]
By our choice of $\lambda$,  $(I-\tilde{D})$ is invertible and 
\[ 
1-\tilde{A} - \tilde{B}(I-\tilde{D})^{-1}\tilde{C} = 1 - f(\lambda, \lambda) \ne 0.
\]
These two facts paired with the inverse formula for block $2\times 2$ matrices imply that $I-\tilde{U}$ is invertible. 
Using $\gamma$ as in \eqref{eqn:gammaps}, 
set $\tilde{g} := \gamma \circ \tilde{f} \circ \gamma^{-1}$
and
\[  \tilde{T}:=i (I +\tilde{U})(I-\tilde{U})^{-1} := \begin{bmatrix} c & \beta^* \\ \alpha & \tilde{S} \end{bmatrix}, \text{ for } c \in \mathbb{C}, \ \alpha, \beta \in \mathcal{H}, \text{ and } \tilde{S} \in \mathcal{L}(\mathcal{H}).\]
By Proposition \ref{prop:simpleg}, 
$\tilde{g}$ is a rational Pick function on $\UHP^2$ and for $w \in \UHP^2$,
\[\tilde{g}(w) = c - \left \langle (w_P + \tilde{S})^{-1} \alpha, \beta \right \rangle_{\mathcal{H}}.\]
We define our proposed $g$ by $g(w) = \tilde{g}(w + \gamma(\bar{\lambda})).$ 
Then setting $S = \tilde{S} + \gamma(\bar{\lambda}) I$, it is easy to see that 
\[ T:=\begin{bmatrix} c & \beta^* \\ \alpha & S \end{bmatrix} \]
still has positive imaginary part and 
 \[
 g(w) = c - \left \langle (w_P + S)^{-1} \alpha, \beta \right \rangle_{\mathcal{H}}.
 \]
 In the course of the proof, $g$ was obtained by applying M\"obius maps to $h$
 and pre-composing $h$ with
 $\sigma_2(w_1) := m_1( \lambda \gamma^{-1}(w_1 + \gamma(\bar{\lambda})))$
 in each component.
 Evidently, $\sigma_2(0)= 0$ and $g$ satisfies $g^*(0,0) = \gamma (f^*(1,1)) \in \mathbb{C}$, since $f^*(1,1) \ne 1.$
\end{proof}

With this
in hand then we can dig into the kernel structure
of $S$ in order to understand the behavior of $g$ 
near $(0,0)$ via realizations.

\begin{theorem} \label{localrealization}
Suppose $g:\UHP^2 \to \UHP$ is rational, non-constant,
and satisfies $g^*(0,0) := \lim_{t\searrow 0} g(it,it) \ne \infty$.
Further suppose $g$ possesses a PIP realization as in Proposition \ref{prop:simpleg}.
Let $\widehat{S}= S|_{\text{Range}(S)}:\text{Range}(S) \to \text{Range}(S)$ be 
the compression of $S$ to $\text{Range}(S)$.
Then, 
\begin{itemize}
\item $\alpha,\beta$ belong to  $\text{Range}(S)$, 
\item $\widehat{S}$ is invertible with positive imaginary part, and
\item \[
\begin{aligned}
g(w) &= c - \left \langle \left(\widehat{S}+w_{22} - w_{21} w_{11}^{-1} w_{12}\right)^{-1}  \alpha,  \beta  \right \rangle\\
&=c - \left \langle \widehat{S}^{-1}\left(I+(w_{22} - w_{21} w_{11}^{-1} w_{12}) \widehat{S}^{-1} \right)^{-1}  \alpha,  \beta  \right \rangle
\end{aligned}
\]
where $w_P  = w_1 P + w_2(I-P) =\begin{bmatrix} w_{11} & w_{12} \\ w_{21} & w_{22} \end{bmatrix}$ is
the block decomposition of $w_P$ according to $(\text{Range}(S))^{\perp} \oplus \text{Range}(S)$.
\end{itemize}
\end{theorem}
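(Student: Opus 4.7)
The plan is to proceed in three steps: extract the block structure of $S$ from $\Im S \geq 0$, use the positive imaginary part of $T$ together with the boundary hypothesis $g^*(0,0)\neq \infty$ to force $\alpha,\beta$ into $\text{Range}(S)$, and finally derive the Schur complement formula by a routine block-inverse computation.

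First, I would show $\ker S = \ker S^*$, so that $\mathcal{H} = \ker S \oplus \text{Range}(S)$ is an orthogonal direct sum and $S$ has block form $\begin{bmatrix} 0 & 0 \\ 0 & \widehat{S}\end{bmatrix}$. Writing $S = S_R + i S_I$ with $S_I = \Im S \geq 0$, if $Sv=0$ then $\langle S_R v, v\rangle + i\langle S_I v, v\rangle = \langle Sv,v\rangle = 0$, so $\langle S_I v, v\rangle = 0$; since $S_I \geq 0$, this forces $S_I v = 0$, and then $Sv=0$ gives $S_R v = 0$, hence $S^* v = 0$. The reverse inclusion is symmetric. Invertibility of $\widehat{S}$ is automatic, because $\widehat{S}v = 0$ for $v \in \text{Range}(S)$ would put $v \in \ker S \cap \text{Range}(S) = \{0\}$, and $\Im \widehat{S} \geq 0$ because it is the compression of $\Im S$.

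Next, to show $\alpha,\beta \in \text{Range}(S)$, I decompose $\alpha = \alpha_K + \alpha_R$, $\beta = \beta_K + \beta_R$ in the splitting and exploit $\Im T \geq 0$. A direct computation from $T = \begin{bmatrix} c & \beta^* \\ \alpha & S\end{bmatrix}$ gives
\[
\langle \Im T \, (\zeta, y), (\zeta, y)\rangle = \Im c \, |\zeta|^2 + \Im\bigl(\zeta \langle \alpha - \beta, y\rangle\bigr) + \langle \Im S \, y, y\rangle \geq 0
\]
for all $(\zeta,y) \in \mathbb{C} \oplus \mathcal{H}$. Restricting to $y \in \ker S$ kills the last term (since $\ker S = \ker S^*$ implies $\Im S \, y = 0$), and testing with $\zeta$ real and then purely imaginary produces the standard discriminant argument forcing $\langle \alpha - \beta, y\rangle = 0$ for every $y \in \ker S$; thus $\alpha_K = \beta_K$. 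To upgrade this to $\alpha_K = \beta_K = 0$, I specialize to the diagonal $w = (it,it)$, where $w_P = itI$ and the block decomposition yields
\[
g(it,it) = c - \langle (itI + \widehat{S})^{-1} \alpha_R, \beta_R\rangle - \tfrac{1}{it}\langle \alpha_K, \beta_K\rangle.
\]
The first two terms have finite limits as $t\searrow 0$, so the hypothesis $g^*(0,0) \ne \infty$ forces $\langle \alpha_K, \beta_K\rangle = 0$; combined with $\alpha_K = \beta_K$, this gives $\|\alpha_K\|^2 = 0$, hence both vanish.

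Finally, with $\alpha, \beta \in \text{Range}(S)$, the inner product $\langle (w_P + S)^{-1}\alpha, \beta\rangle$ extracts precisely the $(2,2)$ block of the inverse of
\[
w_P + S = \begin{bmatrix} w_{11} & w_{12} \\ w_{21} & w_{22} + \widehat{S}\end{bmatrix},
\]
which by the standard Schur complement identity equals $\bigl(\widehat{S} + w_{22} - w_{21} w_{11}^{-1} w_{12}\bigr)^{-1}$. Invertibility of $w_{11}$ throughout $\UHP^2$ holds because $P$ is an orthogonal projection, so its compression $P_{11}$ to $(\text{Range}(S))^\perp$ is a positive contraction and $\Im w_{11} = (\Im w_1) P_{11} + (\Im w_2)(I - P_{11}) > 0$ for $w \in \UHP^2$. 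The second displayed formula follows by factoring $\widehat{S}^{-1}$ out of the inverse. The main subtlety is the middle step: $\Im T \geq 0$ alone only yields $\alpha_K = \beta_K$, and one genuinely needs the boundary hypothesis to rule out the remaining dimension; everything else is a mixture of standard operator theory and the block-inverse identity.
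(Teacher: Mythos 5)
Your proof is correct and follows essentially the same route as the paper's: the paper isolates the facts $\ker S=\ker S^*$ and $\alpha-\beta\in\operatorname{Range}(S)$ into a separate Lemma (its Lemma~\ref{lem:pip}), and then in the proof of Theorem~\ref{localrealization} plugs $w=(it,it)$ into the realization, uses the hypothesis $g^*(0,0)\ne\infty$ to kill the $1/(it)$-term (which the paper writes as $\|\beta_1\|^2/(it)$), and finishes with the same block $2\times 2$ inversion via the Schur complement of $w_{11}$. Your version inlines the lemma, spells out the discriminant argument that the paper only gestures at, and reaches the conclusion slightly differently by observing that $\alpha_K=\beta_K$ turns the divergent coefficient into $\|\alpha_K\|^2$ rather than $\|\beta_1\|^2$; this is a cosmetic variation, not a different proof.
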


The value of this new decomposition is that singular behavior at $(0,0)$ is encapsulated within the
term $w_{22} - w_{21} w_{11}^{-1} w_{12}$.  

\begin{lemma} \label{lem:pip} Let $\mathcal{H}$ be a finite dimensional Hilbert space and assume that an operator $T$ on $\mathbb{C} \oplus \mathcal{H}$ 
with positive imaginary part is
given by
\[ T = \begin{bmatrix} c & \beta^* \\ \alpha & S \end{bmatrix}, \text{ for } c \in \mathbb{C}, \ \alpha, \beta \in \mathcal{H}, \text{ and } S \in \mathcal{L}(\mathcal{H}).\]
Then, $\alpha - \beta \in \text{Range}(S)$.
Moreover, $\text{Ker}(S) = \text{Ker}(S^*)$, so  $\mathcal{H}= \text{Ker}(S) \oplus \text{Range}(S)$ and with respect to this decomposition,
  $S = \begin{bmatrix} 0 & 0 \\ 0 & \widehat{S} \end{bmatrix}$ for $\widehat{S} =P_{\text{Range}(S)}S|_{\text{Range}(S)}.$
\end{lemma}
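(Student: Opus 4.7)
The plan is to unpack the hypothesis $\mathrm{Im}(T) \geq 0$ in block form and exploit two standard facts about positive semidefinite operators: that a PSD block matrix forces a range containment between its off-diagonal and diagonal blocks, and that an operator with PSD imaginary part has $\mathrm{Ker}(S) = \mathrm{Ker}(S^*)$.

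First I would compute
\[
\mathrm{Im}(T) \;=\; \frac{1}{2i}(T-T^*) \;=\; \begin{bmatrix} \mathrm{Im}(c) & \frac{\beta^* - \alpha^*}{2i} \\[2pt] \frac{\alpha - \beta}{2i} & \mathrm{Im}(S) \end{bmatrix} \;\ge\; 0.
\]
In particular the $(2,2)$ block $\mathrm{Im}(S)$ is PSD on $\mathcal{H}$. Next I would prove $\mathrm{Ker}(S) = \mathrm{Ker}(S^*)$: if $Sv=0$, then $\langle Sv,v\rangle = 0$, so taking imaginary parts yields $\langle \mathrm{Im}(S)v, v\rangle = 0$; because $\mathrm{Im}(S) \geq 0$ this forces $\mathrm{Im}(S)v = 0$, and then $S^*v = Sv - 2i\,\mathrm{Im}(S)v = 0$. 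The reverse inclusion is symmetric (apply the same argument to $S^*$, which has $\mathrm{Im}(S^*) = -\mathrm{Im}(S) \le 0$). Hence $\mathcal{H} = \mathrm{Ker}(S) \oplus \mathrm{Range}(S)$, and with respect to this decomposition $S|_{\mathrm{Ker}(S)} \equiv 0$ while $S$ maps $\mathcal{H}$ into $\mathrm{Range}(S)$, so the block form $S = \bigl[\begin{smallmatrix} 0 & 0 \\ 0 & \widehat{S}\end{smallmatrix}\bigr]$ is immediate with $\widehat{S} = P_{\mathrm{Range}(S)} S|_{\mathrm{Range}(S)}$.

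For the claim $\alpha - \beta \in \mathrm{Range}(S)$, I would invoke the standard PSD block lemma: if $\bigl[\begin{smallmatrix} A & B \\ B^* & C \end{smallmatrix}\bigr] \geq 0$ then $\mathrm{Ker}(C) \subseteq \mathrm{Ker}(B)$ (proved quickly by plugging $(u, tv)$ with $Cv=0$ into the quadratic form and letting $t \to \pm\infty$, which forces $\mathrm{Re}\langle Bv, u\rangle = 0$ for all $u$, hence $Bv = 0$). Applied here with $C = \mathrm{Im}(S)$, this yields $\mathrm{Ker}(\mathrm{Im}(S)) \subseteq \mathrm{Ker}\!\left(\frac{\beta^* - \alpha^*}{2i}\right)$, equivalently $\alpha - \beta \in \mathrm{Range}(\mathrm{Im}(S))$. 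Combining with the observation from the previous step that $\mathrm{Ker}(S) \subseteq \mathrm{Ker}(\mathrm{Im}(S))$ (equivalently $\mathrm{Range}(\mathrm{Im}(S)) \subseteq \mathrm{Range}(S)$) gives $\alpha - \beta \in \mathrm{Range}(S)$.

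No step is particularly treacherous; the only mild care needed is getting the adjoint bookkeeping right in the block matrix (the $(2,1)$ entry $\tfrac{\alpha-\beta}{2i}$ is indeed the adjoint of the $(1,2)$ entry $\tfrac{\beta^*-\alpha^*}{2i}$) and noting that the kernel equality $\mathrm{Ker}(S) = \mathrm{Ker}(S^*)$ must be established \emph{before} using it to promote $\mathrm{Range}(\mathrm{Im}(S))$ to $\mathrm{Range}(S)$—so while the lemma lists $\alpha - \beta \in \mathrm{Range}(S)$ first, it is cleanest to prove the kernel statement first and harvest both conclusions from it.
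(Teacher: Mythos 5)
Your proof is correct and follows essentially the same strategy as the paper's. The kernel argument ($Sv=0 \Rightarrow \langle\operatorname{Im}(S)v,v\rangle=0 \Rightarrow \operatorname{Im}(S)v=0 \Rightarrow S^*v=0$, and symmetrically) is identical; for the range containment the paper decomposes $\alpha-\beta=\gamma_1+\gamma_2$ with $\gamma_2\in\operatorname{Ker}(S)$ and tests $\operatorname{Im}(T)$ directly on vectors $z\oplus m\gamma_2$ to force $\gamma_2=0$, which is exactly the computation underlying the PSD block lemma you cite, just carried out in place rather than packaged. Your extra intermediate step ($\alpha-\beta\in\operatorname{Range}(\operatorname{Im}(S))$, then $\operatorname{Range}(\operatorname{Im}(S))\subseteq\operatorname{Range}(S)$ via $\operatorname{Ker}(S)\subseteq\operatorname{Ker}(\operatorname{Im}(S))$ and the kernel equality) is a harmless refactoring of the same containments the paper uses implicitly; the parenthetical ``equivalently'' deserves a half-sentence noting that passing from $\operatorname{Ker}(S)^\perp$ to $\operatorname{Range}(S)$ uses $\operatorname{Ker}(S)=\operatorname{Ker}(S^*)$, but you do prove that first, so the logic is sound.
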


\begin{proof} We first prove the assertion about $\text{Ker}(S)$. Since $T$ has positive imaginary part, so does $S$. Thus, if $x \in \text{Ker}(S)$, then 
\[ \langle \text{Im}(S) x, x \rangle = 0, \text{which implies } \| \text{Im}(S)^{1/2} x \| =0 \text{, which implies } \text{Im}(S) x =0,\]
which shows that $S^*x =0$. A symmetric argument gives the reverse containment so $ \text{Ker}(S) = \text{Ker}(S^*)$. From this, $ \text{Ker}(S) = \left( \text{Range}(S) \right)^{\perp}$ and so $\mathcal{H} = \text{Ker}(S) \oplus \text{Range}(S)$. Writing $S$ with respect to this decomposition immediately gives  $S = \begin{bmatrix} 0 & 0 \\ 0 & \widehat{S} \end{bmatrix}$.

Now write $\alpha - \beta  = \gamma_1 + \gamma_2$, where $\gamma_1 \in \text{Range}(S)$ and $\gamma_2 \in \text{Ker}(S)$. Then apply $\text{Im}(T)$ to vectors $z \oplus m\gamma_2 \in \mathbb{C} \oplus \mathcal{H}$ where $m \in \mathbb{C}$.  If $\gamma_2 \ne 0$, appropriate choices of $z$ and $m$ will give contradictions to $\text{Im}(T) \ge 0$. Thus, $\gamma_2=0$ and $\alpha -\beta \in \text{Range}(S).$
\end{proof}

\begin{proof}[Proof of Theorem \ref{localrealization}]
By Lemma \ref{lem:pip} we have that $\hat{S}$ is invertible with positive imaginary part. 
To see that $\alpha,\beta \in \text{Range}(S)$,
we write operators and $\beta=\beta_1 + \beta_2$ using the decomposition $\text{Ker}(S) \oplus \text{Range}(S)$
and compute
\[ \begin{aligned}
g^*(0,0) =& c - \lim_{t \searrow 0} \left \langle (it I+S)^{-1} \alpha, \beta \right \rangle \\ 
 =&  c - \lim_{t \searrow 0} \left \langle (it I+S)^{-1}  (\alpha-\beta), \beta \right \rangle - \lim_{t \searrow 0} \left \langle (it I+S)^{-1} \beta, \beta \right \rangle \\
 =& c-\lim_{t\searrow} \left \langle \begin{bmatrix} it  & 0 \\ 0 & it + \widehat{S} \end{bmatrix}^{-1} \begin{bmatrix} 0 \\ \alpha - \beta \end{bmatrix} ,  \begin{bmatrix} \beta_1 \\ \beta_2 \end{bmatrix} \right \rangle \\
&- \lim_{t \searrow 0} \left \langle  \begin{bmatrix} it  & 0 \\ 0 & it + \widehat{S} \end{bmatrix}^{-1} \begin{bmatrix} \beta_1 \\ \beta_2 \end{bmatrix}, \begin{bmatrix} \beta_1 \\ \beta_2 \end{bmatrix}  \right \rangle \\
 =& c-\lim_{t\searrow} \left \langle (it +\widehat{S})^{-1} ( \alpha - \beta),\beta_2 \right \rangle\\
 & -\lim_{t \searrow 0} \left \langle  \begin{bmatrix} 1/(it)  & 0 \\ 0 & (it + \widehat{S})^{-1} \end{bmatrix} \begin{bmatrix} \beta_1 \\ \beta_2 \end{bmatrix}, \begin{bmatrix} \beta_1 \\ \beta_2 \end{bmatrix}  \right \rangle\\
=& c- \left \langle \widehat{S}^{-1} ( \alpha - \beta),\beta_2 \right \rangle - \lim_{t \searrow 0} \frac{1}{it} \| \beta_1 \|^2 - \left \langle \widehat{S}^{-1} \beta_2,\beta_2 \right \rangle.
\end{aligned} 
\]
Since this limit exists, $\beta_1 = 0$ and we have $\beta = \beta_2 \in \text{Range}(S)$. As $\beta, \alpha-\beta \in \text{Range}(S)$, so is $\alpha.$ 

Now we can rewrite
\[ g(w) = c-  \left \langle \left(\begin{bmatrix} w_{11} & w_{12} \\ w_{21} & w_{22} \end{bmatrix}  +\begin{bmatrix} 0 & 0 \\ 0 & \widehat{S} \end{bmatrix} 
\right)^{-1} \begin{bmatrix} 0 \\ \alpha \end{bmatrix} , \begin{bmatrix} 0 \\ \beta \end{bmatrix} \right \rangle, \]
where $w_p$ has been written using the decomposition $\mathcal{H} =\text{Ker}(S) \oplus \text{Range}(S).$ Since $\text{Im}(w_P) = \text{Im}(w_1) P +  \text{Im}(w_2) (I-P)$, it is a strictly positive operator for $w \in \UHP^2$.
Then $ \text{Im}(S+w_P)$ is strictly positive as well and so $(S+w_P)^{-1}$ exists. Let $Y = P_{\ker(S)} P P_{\ker(S)}$. Then $0 \le Y \le I$ and 
\[ w_{11} = w_1 Y + w_2 (I-Y)\]
has strictly positive imaginary part for $w \in \UHP^2$. This implies that $w_{11}$ is invertible. 
Then, omitting some calculations, the inverse formula for block $2\times 2$ operators
implies that $\widehat{S}+w_{22} - w_{21} w_{11}^{-1} w_{12}$
is invertible
and one can  show 
\[ 
\begin{aligned}
g(w) &= c - \left \langle \left(\widehat{S}+w_{22} - w_{21} w_{11}^{-1} w_{12}\right)^{-1}  \alpha,  \beta  \right \rangle\\
&=c - \left \langle \widehat{S}^{-1}\left(I+(w_{22} - w_{21} w_{11}^{-1} w_{12}) \widehat{S}^{-1} \right)^{-1}  \alpha,  \beta  \right \rangle.
\end{aligned}\]
\end{proof}

\section{Non-tangential boundary regularity} \label{sec:ntlimits}

We now discuss non-tangential limits and more general non-tangential regularity
of rational inner functions and rational Schur functions.  
The first main result here is that RSFs have non-tangential limits at \emph{every} boundary point.
We proceed to give a necessary and sufficient condition for higher order boundary
regularity in terms of homogeneous expansions.
Some of the essence of these ideas is in \cite{Kne15}
but many ideas have been simplified and written for the upper half plane.
The final portion of this section goes deeper into two variable RIFs.
One of the main goals is to give a partial converse
to a theorem in \cite{bps18, bps19a} which 
says that the non-tangential boundary regularity of an RIF implies 
a certain amount of contact order of the associated
stable polynomial.
We show  that
universal contact order (see Section \ref{sec:uco}) 
of a stable polynomial implies non-tangential boundary
regularity of the associated RIF.

\subsection{Non-tangential limits of RSFs} 
In \cite{Kne15}, several approaches and basic results for studying boundary behaviors of rational functions on both $\mathbb{D}^2$ and more generally, on $\mathbb{D}^d$ were established. 
Extending these techniques, we give a proof of existence of non-tangential limits for bounded rational functions $f=\frac{q}{p}$ at every $\zeta \in \mathbb{T}^d$. In order to prove our theorem, we only need to study singular points $\tau \in \mathbb{T}^d$ where $p(\tau)=0$.
As this is a local question, we analyze it at $0$ in the upper half plane setting $\UHP^d$.

Broadly speaking we define non-tangential approach regions to a boundary 
point to be regions where the
distance to the boundary point in question
 is comparable to the distance to the boundary.
 This notion is invariant under conformal maps between $\D$ and $\UHP$
 so our results have straightforward conversions between
 $\D^d$ and $\UHP^d$.
To precisely define non-tangential approach regions to $0$ via  $\UHP^d$, we define
\begin{equation} \label{Dz}
D_z=\{|z_1|,\dots, |z_d|, \Im z_1, \dots, \Im z_d\}
\end{equation}
for any $z\in \UHP^d$.  Then, a non-tangential approach region to $0$ via  $\UHP^d$ is a set
\[
AR_c = \{z\in \UHP^d: c\geq x/y \geq 1/c \text{ for any } x,y\in D_z\}
\]
for $c\geq 1$.  Letting $z\to 0$ non-tangentially is equivalent
to letting $r:=|z_1| \to 0$ while restricting $z \in AR_c$.

Then, a rational function $f = \frac{q}{p}$ on $\UHP^d$ is non-tangentially bounded at $0$
if it is bounded on $AR_c\cap \{z\in\UHP^d: |z_1|<r\}$ for $c\geq 1$ and $r>0$ sufficiently small.
Similarly, $f$ has a non-tangential limit $\omega$ at $0$ if the limit
\[
f^*(0)=\lim_{\underset{z\in AR_c}{z\to 0}} f(z)
\]
exists and equals $\omega$.

Luckily, it is not necessary to dwell on these definitions as they have direct connections to 
homogeneous expansions. 
Such connections were established in \cite{Kne15} in the setting of the polydisk. 
The following theorem records the 
upper half plane analogues of those key results from \cite{Kne15}.  

\begin{theorem} \label{thm:nontanchar}
Let $p\in \C[z_1,\dots, z_d]$ have no zeros in $\UHP^d$ and assume
$p$ vanishes to order $M$ at $0$.  Let $q\in \C[z_1,\dots,z_d]$ and $f=q/p$.  Then,
\begin{enumerate}
\item $f$ is non-tangentially bounded at $0$ in $\UHP^d$ if and only if $q$ vanishes to
order at least $M$ at $0$.
\item $f$ has a non-tangential limit at $0$ via $\UHP^d$ if and only if $q$ vanishes to order at least $M$
and the $M$-th order homogeneous term in $q$, say $Q_M$, is a constant multiple of the
$M$-th order homogeneous term in $p$, namely $P_M$; i.e.\ we have $Q_M = bP_M$.
In this case we have
\[
f^*(0)=\lim_{z\to 0, z\in AR_c} f(z) = b.
\]
\end{enumerate}
\end{theorem}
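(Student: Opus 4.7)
The plan is to build both parts on the Homogeneous Expansion Theorem (Theorem \ref{thm:stablehomog}), whose sole output I need is that $P_M$ has no zeros in $\UHP^d$. From this I would extract the following key lower bound: for each $c \geq 1$ there exist $\kappa, r_0 > 0$ such that $|p(z)| \geq \kappa |z_1|^M$ for every $z \in AR_c$ with $|z_1| < r_0$. This reduces to a compactness argument: writing $z = r\zeta$ with $r=|z_1|$, the set $K_c := \{\zeta \in AR_c : |\zeta_1|=1\}$ is a compact subset of $\UHP^d$ because $\Im \zeta_j \geq 1/c$ for all $j$, and $AR_c$ is scale-invariant. On $K_c$ the continuous, non-vanishing function $|P_M|$ attains a positive minimum; homogeneity then gives $|P_M(z)| \gtrsim r^M$, and the tail $\sum_{j>M} P_j(z) = O(r^{M+1})$ is absorbed for $r < r_0$.

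For part (1), the $\Leftarrow$ direction is immediate: if $q$ vanishes to order $\geq M$, then $|q(z)| \lesssim r^M$ on $AR_c \cap \{|z_1|<r_0\}$, so $|f|$ is bounded there. For $\Rightarrow$, suppose instead $q$ has a nonzero homogeneous component $Q_N \not\equiv 0$ with $N < M$. I would choose $\zeta \in \UHP^d$ with $Q_N(\zeta) \neq 0$ (possible since $Q_N$ is a nonzero polynomial and $\UHP^d$ is open), note $\zeta \in AR_c$ for some $c$ sufficiently large, and examine $f$ along the ray $r\zeta$. There $|q(r\zeta)| \sim |Q_N(\zeta)| r^N$ while $|p(r\zeta)| \leq C r^M$, so $|f(r\zeta)| \gtrsim r^{N-M} \to \infty$, contradicting non-tangential boundedness.

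For part (2), the $\Leftarrow$ direction is a one-line telescoping: if $Q_M = bP_M$ and $q,p$ both vanish to order $\geq M$, then
\[
f(z) - b \;=\; \frac{q(z) - b\,p(z)}{p(z)},
\]
and the numerator vanishes to order at least $M+1$ (its $M$-th homogeneous piece is $Q_M - bP_M = 0$), so it is $O(r^{M+1})$, while the denominator is $\gtrsim r^M$ by the key lower bound; hence $|f(z) - b| \lesssim r$ uniformly on each $AR_c$. For $\Rightarrow$, existence of a non-tangential limit implies non-tangential boundedness, hence $q$ vanishes to order $\geq M$ by part (1), and direct substitution gives
\[
\lim_{r\searrow 0} f(r\zeta) \;=\; \frac{Q_M(\zeta)}{P_M(\zeta)}
\]
for every $\zeta \in K_c$. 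Equality of this ratio to the single value $f^*(0)$ on the open set $K_c \subset \UHP^d$ forces the polynomial identity $Q_M = f^*(0)\,P_M$, so $b = f^*(0)$.

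The only conceptual input is the non-vanishing of $P_M$ on $\UHP^d$; everything else is quantitative bookkeeping. The mildly delicate point is the quantifier convention in the definitions: non-tangential boundedness is assumed for \emph{every} $c$ (with $r_0$ allowed to depend on $c$), which is exactly the strength needed to run the ray argument for any choice of $\zeta$; and the key lower bound must likewise be established for every $c$, which the compactness of $K_c$ delivers uniformly.
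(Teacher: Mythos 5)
Your argument is correct in substance, and it directly develops in $\UHP^d$ what the paper delegates to the polydisk results of \cite{Kne15} (translated via Remark \ref{rem:uhptodisk}). The decisive ingredient is indeed the non-vanishing of $P_M$ on $\UHP^d$ from Theorem \ref{thm:stablehomog}, run through a compactness argument on a unit slice of the approach region; the rest is, as you say, bookkeeping.

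The one flaw is the final step of part (2), where you call $K_c$ an \emph{open} set. You had correctly noted a paragraph earlier that $K_c = \{\zeta\in AR_c : |\zeta_1|=1\}$ is a \emph{compact} subset of $\UHP^d$; it lies inside the real hypersurface $\{|\zeta_1|=1\}$ and has empty interior in $\C^d$, so the identity theorem cannot be invoked on it as stated. The repair is immediate and in fact removes the detour through $K_c$ entirely: the ray computation
\[
\lim_{r\searrow 0} f(r\zeta) = \frac{Q_M(\zeta)}{P_M(\zeta)}
\]
is valid for \emph{every} $\zeta\in\UHP^d$, not only those with $|\zeta_1|=1$, because $P_M(\zeta)\ne 0$ there, and any $\zeta\in\UHP^d$ lies in the interior of $AR_c$ for $c$ large enough (all entries of $D_\zeta$ are strictly positive, so their pairwise ratios are strictly less than some $c$, and this persists under small perturbations), so $r\zeta\to 0$ non-tangentially. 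Consequently $Q_M/P_M \equiv f^*(0)$ on the nonempty open set $\UHP^d\subset\C^d$, which forces the polynomial identity $Q_M = f^*(0)\,P_M$. (A smaller point of the same flavor appears in your part (1), $\Rightarrow$ direction: for the growth estimate $|q(r\zeta)|\sim |Q_N(\zeta)|\,r^N$ to hold one should take $N$ to be the \emph{order of vanishing} of $q$, i.e.\ minimal; taken as written, with an arbitrary $N<M$ having $Q_N\not\equiv 0$, the conclusion $|f(r\zeta)|\to\infty$ still follows from whichever lower-order term first survives at $\zeta$, but the displayed asymptotic is not literally right.)
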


 The main theorem proved in this section is the following:

\begin{theorem}\label{thm:uhplim}
Assume $p,q \in \C[z_1,\dots,z_d]$, $p$ has no zeros in $\UHP^d$,
and $f = q/p$ is bounded and analytic in $\UHP^d$.
Then, $f$ has a non-tangential limit at $0$ via $\UHP^d$.
\end{theorem}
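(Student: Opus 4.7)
The plan is to invoke Theorem \ref{thm:nontanchar}, which characterizes the existence of a non-tangential limit of $f = q/p$ at $0 \in \UHP^d$ by two conditions: (i) $q$ vanishes at $0$ to order at least $M$, the vanishing order of $p$; and (ii) the leading homogeneous term $Q_M$ of $q$ is a constant multiple of $P_M$. Condition (i) is immediate, since boundedness of $f$ on $\UHP^d$ implies non-tangential boundedness at $0$, and Theorem \ref{thm:nontanchar}(1) then forces the required vanishing of $q$.

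For condition (ii), I would apply a blow-up at the origin. Set $R := Q_M/P_M$; by Theorem \ref{thm:stablehomog}, $P_M$ has no zeros in $\UHP^d$, so $R$ is analytic there. For each fixed $z \in \UHP^d$, the homogeneous expansions yield
\[
f(tz) = \frac{Q_M(z) + tQ_{M+1}(z) + \cdots}{P_M(z) + tP_{M+1}(z) + \cdots} \;\longrightarrow\; R(z) \quad \text{as } t \to 0^+,
\]
and since the left-hand side is always bounded by $\|f\|_\infty$, so is the limit. Thus $R$ is a bounded analytic, degree-zero homogeneous rational function on $\UHP^d$, and the proof reduces to showing any such function is constant.

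The key observation is that homogeneity extends the sup-norm bound on $R$ far beyond $\UHP^d$. For any $w \in \C \setminus (-\infty, 0]$ one can choose $\lambda \in \UHP$ with $\lambda w \in \UHP$; then the point $(\lambda, \ldots, \lambda, \lambda w, \lambda, \ldots, \lambda)$, with $\lambda w$ in the $j$-th slot, lies in $\UHP^d$, and degree-zero homogeneity identifies the one-variable slice $R(1, \ldots, 1, w, 1, \ldots, 1)$ with $R$ evaluated at this point, hence bounded by $\|f\|_\infty$. The same rescaling with $|w|$ large controls the slice near $w = \infty$, so each slice is a rational function of $w$ bounded in a punctured neighborhood of every candidate pole on $\PP^1$; hence the slice has no poles at all and is constant. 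Iterating coordinate-by-coordinate---fixing one frozen coordinate at a time, applying the same scaling trick, and invoking that a rational function constant on a nonempty open set is constant everywhere---shows $R$ itself is constant, so $Q_M = bP_M$ identically. The main obstacle is essentially bookkeeping: at each step one must verify that the rescalings $\lambda \in \UHP$ genuinely transport the chosen slice point into $\UHP^d$, which amounts to the arguments of the involved coordinates fitting inside a common interval of length less than $\pi$. This is easily arranged by taking the frozen coordinates to be positive reals, so the argument goes through cleanly in all dimensions.
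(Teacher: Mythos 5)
Your proposal is correct and takes a genuinely different route from the paper's. Both arguments begin by reducing, via Theorem~\ref{thm:nontanchar}, to showing $Q_M = bP_M$. The paper's Lemma~\ref{lem:limits1} encodes the bound $|q/p|<c$ as the \emph{stability} of the $(d{+}1)$-variable polynomial $c(w+i)p(z)-(w-i)q(z)$ on $\UHP^{d+1}$; Lemma~\ref{lem:limits2} then applies Theorem~\ref{thm:stablehomog} to the rotated numerators $e^{i\theta}q$ to see that $cP_M+e^{i\theta}Q_M$ is, for every $\theta$, a unimodular multiple of a real-coefficient polynomial, and a trigonometric-polynomial computation on coefficients forces proportionality. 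You instead observe that $R:=Q_M/P_M$ is a degree-zero homogeneous rational function which, by the blow-up $f(tz)\to R(z)$, inherits the sup-norm bound on $\UHP^d$; homogeneity then transports that bound to a full slit-plane $\C\setminus(-\infty,0]$ in each coordinate slice with the remaining coordinates frozen at positive reals, and the one-variable slice is pole-free hence constant by Liouville, which propagates to the constancy of $R$ (and hence $Q_M-bP_M\equiv 0$, since e.g.\ it vanishes on the Zariski-dense set $(0,\infty)^d$). What your route buys is that it uses only the nonvanishing of $P_M$ on $\UHP^d$ from Theorem~\ref{thm:stablehomog} plus classical one-variable function theory, whereas the paper's route leans more heavily on the structure theorem (real coefficients up to unimodular) applied to a family of auxiliary stable polynomials in one extra variable. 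A small caution: ``bounded in a punctured neighborhood of every candidate pole'' should read ``bounded on a nonempty open subset of every punctured neighborhood,'' and the set $\C\times(0,\infty)^{d-1}$ on which $R$ becomes constant is not open in $\C^d$, so the closing step is a Zariski-density (or iterated one-variable identity) argument rather than the open-set principle you cite; these are precisely the bookkeeping points you flagged, and they are filled without difficulty.
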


In light of Theorem \ref{thm:nontanchar},
the only thing to prove is that $Q_M = b P_M$ where
$p$ vanishes to order $M>0$ with lowest order homogeneous term $P_M$, 
and $q$ has $M$-th order homogeneous term $Q_M$ (which could be zero).  This follows immediately from Lemmas \ref{lem:limits1} and \ref{lem:limits2} given below. 
Note that we are assuming $f$ is non-constant and $M>0$ since otherwise
the result is trivial.

\begin{lemma} \label{lem:limits1}
Assuming the above setup,
we have $|f|<c$ in $\UHP^d$
if and only if the $d+1$ variable polynomial
\[
c(w+i) p(z) - (w-i) q(z) \in \C[z_1,\dots,z_d, w]
\]
has no zeros in $\UHP^{d+1}$.
\end{lemma}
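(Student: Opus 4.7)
The key observation is that the one-variable Cayley transform $w\mapsto (w-i)/(w+i)$ is a conformal bijection $\UHP \to \D$; equivalently, $w\in\UHP\setminus\{i\}$ if and only if $|(w+i)/(w-i)|>1$, with $w\mapsto (w+i)/(w-i)$ having a pole at $w=i$. The polynomial $P(z,w):= c(w+i)p(z)-(w-i)q(z)$ is arranged so that, away from $w=i$ and from zeros of $p$, its zero set is precisely
\[
\frac{q(z)}{c\,p(z)} \;=\; \frac{w+i}{w-i}.
\]
The plan is to exploit this rewriting in both directions of the equivalence.

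For the forward direction, I assume $|f|<c$ on $\UHP^d$ and suppose for contradiction that $P(z_0,w_0)=0$ at some $(z_0,w_0)\in \UHP^{d+1}$. Since $p$ has no zeros in $\UHP^d$ and $P(z,i)=2ic\,p(z)$, the exceptional cases $p(z_0)=0$ and $w_0=i$ are ruled out directly from the definition of $P$, so the displayed identity holds at $(z_0,w_0)$. But then $|q(z_0)/(c\,p(z_0))|=|f(z_0)|/c<1$ while $|(w_0+i)/(w_0-i)|>1$, a contradiction.

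For the converse, I assume $P$ has no zeros in $\UHP^{d+1}$ and first derive the weak bound $|f|\le c$ on $\UHP^d$. Suppose instead that $|f(z_0)|>c$ at some $z_0\in\UHP^d$; then $\lambda_0:= c\,p(z_0)/q(z_0)$ (note $q(z_0)\neq 0$) satisfies $|\lambda_0|<1$, so the inverse Cayley transform $w_0 := i(1+\lambda_0)/(1-\lambda_0)$ lies in $\UHP$. The elementary identities $w_0+i = 2i/(1-\lambda_0)$ and $w_0-i = 2i\lambda_0/(1-\lambda_0)$ give
\[
P(z_0,w_0) \;=\; \frac{2i}{1-\lambda_0}\bigl(c\,p(z_0)-\lambda_0\,q(z_0)\bigr) \;=\; 0,
\]
contradicting the hypothesis. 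Thus $|f|\le c$ on $\UHP^d$. Since $f$ is non-constant (as noted just before the lemma), the one-variable maximum modulus principle applied slice-by-slice rules out the attainment of $|f|=c$ at any interior point, giving the strict inequality $|f|<c$ on $\UHP^d$.

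The only mildly delicate points are handling the exceptional values $w=i$ and zeros of $p$ (both dispatched from the explicit form of $P$) and passing from $|f|\le c$ to $|f|<c$ (handled by maximum modulus together with non-constancy). Everything else is bookkeeping in the geometry of the Cayley transform.
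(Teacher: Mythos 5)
Your proof is correct and follows essentially the same route as the paper's: both pass through the Cayley transform $w\mapsto(w-i)/(w+i)$ to identify $\UHP$ with $\D$, and both invoke the maximum modulus principle together with non-constancy of $f$ to upgrade the non-strict inequality to a strict one. The only difference is organizational: the paper phrases the argument as a chain of ``iff'' equivalences, while you split it into forward and reverse implications via direct substitution and contradiction.
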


\begin{proof}
We have $c>|\frac{q}{p}|$ in $\UHP^{d}$ if and only if
\[
c - \zeta \frac{q(z)}{p(z)}
\]
is non-vanishing for $\zeta \in \overline{\D}, z\in \UHP^d$
which happens if and only if
\[
c - \zeta \frac{q(z)}{p(z)}
\]
is non-vanishing for $\zeta \in \D, z\in \UHP^d$
by the maximum principle (since $f$ is assumed 
nonconstant).
This is equivalent to
\[
c - \frac{w-i}{w+i} \frac{q(z)}{p(z)}
\]
being non-vanishing for $w \in \UHP, z\in \UHP^d$
which in turn holds if and only if
\[
c(w+i)p(z) - (w-i) q(z)
\]
is non-vanishing on $\UHP^{d+1}$.  
\end{proof}

\begin{lemma}  \label{lem:limits2}
Assuming the above setup, if $f$ is bounded on $\UHP^d$ 
then $Q_M$ is a constant multiple
of $P_M$.
\end{lemma}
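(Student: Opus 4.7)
The plan is to combine Lemma \ref{lem:limits1} with Theorem \ref{thm:stablehomog} to produce a one-complex-parameter family of ``perturbed lowest terms'' $\alpha P_M + Q_M$ that are all real stable, and then to exploit this rigidity to conclude that $Q_M$ is a scalar multiple of $P_M$. Since $\lambda f = (\lambda q)/p$ is bounded on $\UHP^d$ with the same sup norm for any $\lambda \in \T$, Lemma \ref{lem:limits1} applied to $\lambda f$ gives that
\[
F_{c,\lambda}(z,w) := c(w+i)p(z) - (w-i)\lambda q(z)
\]
has no zeros in $\UHP^{d+1}$ whenever $c > \|f\|_\infty$. As $p$ vanishes to order exactly $M$ at $0$ and $q$ to order at least $M$ (the latter by Theorem \ref{thm:nontanchar}(1), applicable because $f$ is a fortiori non-tangentially bounded), the lowest homogeneous term of $F_{c,\lambda}$ in $(z_1,\ldots,z_d,w)$ at the origin is the $w$-independent polynomial $i(cP_M(z) + \lambda Q_M(z))$ of degree $M$. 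By Theorem \ref{thm:stablehomog} this lowest term is real stable. Absorbing the parameters into $\alpha := c\bar\lambda^{-1}$, which ranges over all of $\{\alpha\in\C : |\alpha|>\|f\|_\infty\}$, we conclude that $\alpha P_M + Q_M$ is real stable for every such $\alpha$ (and if some $\alpha P_M + Q_M\equiv 0$ the conclusion of the lemma is already immediate, so we may suppose it is nonzero throughout).

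By Theorem \ref{thm:stablehomog} there is a unimodular $\mu$ with $\mu P_M\in\R[z]$; after replacing $p$ by $\mu p$ (so that $f$ is replaced by the equally-bounded $\bar\mu f$) we may assume $P_M\in\R[z]$ outright. Real stability of $\alpha P_M + Q_M$ then produces $\eta(\alpha)\in\T$ with $\eta(\alpha)(\alpha P_M + Q_M)\in\R[z]$. Writing $Q_M = Q_R + i Q_I$ with $Q_R,Q_I\in\R[z]$ and $\eta(\alpha)=e^{i\theta(\alpha)}$ and setting the coefficient-wise imaginary part of $\eta(\alpha)(\alpha P_M+Q_M)$ equal to zero yields the polynomial identity
\[
\sin\theta(\alpha)\bigl(Q_R + \Re(\alpha)\,P_M\bigr) + \cos\theta(\alpha)\bigl(Q_I + \Im(\alpha)\,P_M\bigr) \;=\; 0
\]
valid for every $\alpha$ with $|\alpha|>\|f\|_\infty$. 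If we can find $\alpha_1,\alpha_2$ with $\theta(\alpha_1)\not\equiv\theta(\alpha_2)\pmod{\pi}$, the two corresponding instances form a $2\times 2$ linear system in the unknowns $(Q_R,Q_I)$ with determinant $\sin(\theta(\alpha_1)-\theta(\alpha_2))\ne 0$; solving it expresses each of $Q_R,Q_I$ as a real scalar multiple of $P_M$, so $Q_M = bP_M$ for some $b\in\C$.

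The main obstacle is ruling out the degenerate case that $\theta(\alpha)$ is constant modulo $\pi$ throughout $\{|\alpha|>\|f\|_\infty\}$. Supposing so and calling the common value $\theta_0$, the displayed identity regroups as
\[
(\sin\theta_0\,Q_R + \cos\theta_0\,Q_I) + \Im(e^{i\theta_0}\alpha)\,P_M \;=\; 0
\]
for all such $\alpha$. The bracketed term is independent of $\alpha$, while $\alpha\mapsto \Im(e^{i\theta_0}\alpha)$ is a nonzero $\R$-linear functional on $\C$ and therefore non-constant on the exterior of any disk; combined with $P_M\not\equiv 0$ this is a contradiction. Hence $\theta(\alpha)$ cannot be constant modulo $\pi$, and the argument of the previous paragraph applies, completing the proof.
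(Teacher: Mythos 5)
Your proof is correct, and it takes a genuinely different route from the paper after the common opening move. Both arguments start identically: Lemma~\ref{lem:limits1} and Theorem~\ref{thm:stablehomog} applied to $c(w+i)p - (w-i)\lambda q$ show that $cP_M + \lambda Q_M$ is a unimodular multiple of a real-coefficient homogeneous polynomial. From there the paper fixes $c$, varies only the phase $\lambda = e^{i\theta}$, and runs a coefficient-by-coefficient argument: for each pair of multi-indices $\alpha,\beta$ the quantity $(cp_\alpha + e^{i\theta}q_\alpha)(cp_\beta + e^{-i\theta}\overline{q_\beta})$ is a real-valued trigonometric polynomial in $\theta$, which forces the cross-ratio $p_\beta q_\alpha = p_\alpha q_\beta$ and hence proportionality of the coefficient vectors. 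You instead let the combined parameter $\alpha$ sweep a two-real-dimensional region (the exterior of a disk), normalize $P_M$ to be real once and for all, and treat $Q_R,Q_I\in\R[z]$ as two polynomial-valued unknowns in a $2\times 2$ real linear system indexed by two choices of $\alpha$. The paper's argument is more local (it never needs to exclude a degeneracy; the trig-polynomial identity does all the work at the level of a single pair of coefficients), while yours is more global and reveals the linear-algebraic structure directly, at the cost of having to rule out the case where $\theta(\alpha)$ is constant mod $\pi$. Your handling of that degenerate case — observing that $\Im(e^{i\theta_0}\alpha)$ is a nonconstant $\R$-linear functional on the exterior of a disk while the other term is $\alpha$-independent — is clean and correct. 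Two small remarks: the substitution should be $\alpha = c\bar\lambda$ rather than $c\bar\lambda^{-1}$ (for $|\lambda|=1$ these are $c/\lambda$ versus $c\lambda$), but since both trace out the same exterior region as $(c,\lambda)$ varies this is only a notational slip; and you correctly note, as the paper leaves implicit, that the argument needs $cP_M + \lambda Q_M\not\equiv 0$, the alternative being trivially conclusive.
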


\begin{proof}
Recall that Theorem \ref{thm:stablehomog} says that for $p\in\C[z_1,\dots,z_d]$ with
no zeros in $\UHP^d$ and $p(0)=0$, the lowest order term $P_M$ in the homogeneous expansion
of $p$ has no zeros in $\UHP^d$ and is a multiple of a polynomial with real coefficients.
We we may assume without loss of generality that $P_M \in \R[z_1,\dots z_d]$.

 Choose $c >0$ so that $|\frac{q}{p}|<c$  in $\UHP^d$.
Then $|e^{i\theta} \frac{q}{p}|$ is also bounded by $c$ for every $\theta\in \mathbb{R}$.
By Lemma \ref{lem:limits1} and Theorem \ref{thm:stablehomog}, 
the lowest order homogeneous
term of 
\[
c(w+i)p(z) - (w-i) e^{i\theta}q(z)
\]
has real coefficients up to
a unimodular multiple.
The bottom homogeneous
term is 
\[
i(c P_M + e^{i\theta} Q_M).
\]
Thus,
for every $\theta \in \mathbb{R}$ there exists $\psi \in \mathbb{R}$ 
such that 
\[
e^{i\psi}(c P_M + e^{i\theta} Q_M)
\]
has real coefficients.

Write the coefficients of $P_M,Q_M$ as $p_{\alpha}, q_{\alpha}$.
Consider two distinct indicies $\alpha, \beta$ 
where $p_{\alpha}\ne 0$.
Then,
\[
e^{i\psi}(cp_{\alpha} + e^{i\theta} q_{\alpha}) \text{ and } e^{i\psi}(cp_{\beta} + e^{i\theta} q_{\beta}) 
\]
are both real-valued.  So,
\[
(cp_{\alpha} + e^{i\theta} q_{\alpha})(cp_{\beta} + e^{-i\theta} \overline{q_{\beta}})
\]
is real for all $\theta$. Viewing this as a trigonometric polynomial
we see that the coefficient of $e^{i\theta}$ and the coefficient of $e^{-i\theta}$
must be conjugate so 
\[
p_{\beta} q_{\alpha} = p_{\alpha} q_{\beta}
\]
and therefore
\[
q_{\beta} = \frac{q_{\alpha}}{p_{\alpha}} p_{\beta}.
\]
This holds for an arbitrary index $\beta$, so
this implies $Q_M = \frac{q_{\alpha}}{p_{\alpha}} P_M$.
\end{proof}

Similar to \cite{Kne15}, we can also study the existence of boundary directional derivatives. 
For $v \in \UHP^d$ the directional derivative of $f$ at $0$ in direction $v$ is given by
\[
D_v f(0) = \lim_{r\to 0^{+}} \frac{f(rv)-f^*(0)}{r}
\]
where $f^{*}(0)$ is the non-tangential limit of $f$ at $0$.

\begin{theorem} \label{thm:dird}
Let $p,q \in \mathbb{C}[z_1, \dots, z_d]$ 
and assume $p$ has no zeros in $\UHP^d$. If $f = \frac{q}{p}$
is bounded in $\UHP^d$,
then $D_vf(0)$ exists for every $v \in \UHP^d$.  
\end{theorem}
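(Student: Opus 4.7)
The plan is to reduce this to a short computation with homogeneous expansions, recycling what we already know about non-tangential limits. First, if $p(0) \neq 0$ there is nothing to prove since $f$ is analytic at $0$, so I may assume $p$ vanishes to order $M \geq 1$ at $0$ with lowest order homogeneous term $P_M$. By Theorem \ref{thm:uhplim} the non-tangential limit $b := f^{*}(0)$ exists, and by Theorem \ref{thm:nontanchar} the $M$-th homogeneous term of $q$ equals $b P_M$. Therefore $g := q - b p$ has vanishing $M$-th homogeneous term at $0$ and so vanishes to order at least $M+1$ there.

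Next, I would expand along the ray $\{rv : r > 0\}$ for $v \in \UHP^d$. Writing the homogeneous decompositions $p = \sum_{j \geq M} P_j$, $g = \sum_{j \geq M+1} G_j$ (where $G_{M+1} = Q_{M+1} - b P_{M+1}$ and possibly $G_{M+1} \equiv 0$), I compute
\[
\frac{f(rv) - b}{r} \;=\; \frac{g(rv)}{r\, p(rv)} \;=\; \frac{r^{M+1} G_{M+1}(v) + r^{M+2} G_{M+2}(v) + \cdots}{r^{M+1} P_M(v) + r^{M+2} P_{M+1}(v) + \cdots} \;=\; \frac{G_{M+1}(v) + r G_{M+2}(v) + \cdots}{P_M(v) + r P_{M+1}(v) + \cdots}.
\]
The key point that makes this limit exist is that the denominator is nonzero at $r = 0$: by Theorem \ref{thm:stablehomog}, $P_M$ has no zeros in $\UHP^d$, so $P_M(v) \neq 0$. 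Sending $r \searrow 0$ yields
\[
D_v f(0) \;=\; \frac{G_{M+1}(v)}{P_M(v)},
\]
which is a well-defined finite complex number for every $v \in \UHP^d$.

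In truth, there is no real obstacle left to overcome; the substantial work was already carried out in establishing that $f$ has a non-tangential limit and that the $M$-th homogeneous term of $q$ is forced to be a scalar multiple of $P_M$. The only remaining ingredient is the nonvanishing of $P_M$ on $\UHP^d$, which is exactly the content of the Homogeneous Expansion Theorem and which guarantees that the ratio defining $D_v f(0)$ is both well-defined and convergent. As a byproduct, the proof yields an explicit formula for $D_v f(0)$ in terms of the first two nontrivial homogeneous parts of $q - bp$ and $p$.
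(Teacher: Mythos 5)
Your proof is correct and follows essentially the same route as the paper: reduce to the case $p(0)=0$, invoke Theorems \ref{thm:uhplim} and \ref{thm:nontanchar} to conclude $Q_M = bP_M$, expand $\frac{f(rv)-b}{r}$ in homogeneous terms along the ray, cancel $r^{M+1}$, and pass to the limit using nonvanishing of $P_M$ on $\UHP^d$. The only cosmetic difference is your introduction of $g = q - bp$ as intermediate notation; the formula you obtain, $D_v f(0) = \frac{Q_{M+1}(v) - bP_{M+1}(v)}{P_M(v)}$, matches the paper's.
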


\begin{proof}
As above, we assume $p$ vanishes to order $M>0$ else $f$ is smooth at $0$.  
Let us write out homogeneous expansions, $p = \sum_{j=M}^{n} P_j$, $q= \sum_{j=M}^{n} Q_j$
where $n$ is  the maximum of the total degrees of $p$ and $q$.  By Theorems \ref{thm:nontanchar} and \ref{thm:uhplim},
we have $Q_M = bP_M$ and $f^*(0)=b$.
Then,
\[
\begin{aligned}
\frac{f(rv) - b}{r} &= \frac{1}{r} \left(\frac{q(rv)}{p(rv)} - b\right) \\
&=   \frac{1}{r} \frac{Q_{M+1}(rv) -b P_{M+1}(rv) + \sum_{j>M+1} (Q_j(rv)-bP_j(rv))}{ P_{M}(rv) + \sum_{j>M}   P_j(rv)}\\
&= \frac{Q_{M+1}(v) -b P_{M+1}(v) + \sum_{j>M+1} r^{j-M-1} (Q_j(v)-bP_j(v))}{ P_{M}(v) + \sum_{j>M} r^{j-M}  P_j(v)}.
\end{aligned}
\]
Sending $r\to 0^{+}$ we get
\[
D_vf(0)= \frac{ Q_{M+1}(v)-b P_{M+1}(v)}{P_{M}(v)},
\]
which exists because $P_M$ is nonvanishing on $\mathbb{H}^d.$
\end{proof}

This translates easily to the polydisk $\D^d$.
In particular, for each direction $-\delta$ pointing into $\mathbb{D}^d$ at $\tau$ let $D_{-\delta}f(\tau)$ denote the associated directional derivative
\[D_{-\delta}f(\tau) = \lim_{r \rightarrow 0^+} \frac{ f(\tau-\delta r)-f^*(\tau)}{r},\]
where $f^*(\tau)$ is the non-tangential limit of $f$ at $\tau$.  

\begin{theorem} \label{thm:derivative} Let $p,q \in \mathbb{C}[z_1, \dots, z_d]$ 
and assume $p$ has no zeros in $\mathbb{D}^d$. If $f = \frac{q}{p}$
is bounded in $\D^d$,
  then for every $\tau \in \mathbb{T}^d$, $f$ has a directional derivative $D_{-\delta}f(\tau)$ for every direction $-\delta$ pointing into $\mathbb{D}^d$ at $\tau.$  In particular, the directional derivative at $\tau = (1,\dots, 1)$ is given by
\[
D_{-\delta}f(\tau) = \frac{ Q_{M+1}(\delta) - f^*(\tau) P_{M+1}(\delta)}{P_M(\delta)}
\]
where $P_M, P_{M+1}, Q_{M+1}$ are associated homogeneous terms of $p,q$.
\end{theorem}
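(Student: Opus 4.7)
The plan is to transfer Theorem \ref{thm:dird} from $\UHP^d$ to $\D^d$ via the Cayley correspondence in Remark \ref{rem:uhptodisk}, and then run the same kind of cancellation argument that drove the proof of Theorem \ref{thm:dird}, now adjusted for a shifted base point $\tau$. First I would reduce to $\tau=(1,\dots,1)$ by replacing $p(z),q(z)$ with $p(\bar{\tau}_1z_1,\dots,\bar{\tau}_dz_d)$ and $q(\bar{\tau}_1z_1,\dots,\bar{\tau}_dz_d)$; a direction $-\delta$ pointing into $\D^d$ at $\tau$ corresponds to the direction $-(\bar{\tau}_1\delta_1,\dots,\bar{\tau}_d\delta_d)$ pointing into $\D^d$ at $(1,\dots,1)$, so nothing is lost.

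Next I would invoke the polydisk analogues of Theorems \ref{thm:uhplim} and \ref{thm:nontanchar} (obtained by conformal change of variable via Remark \ref{rem:uhptodisk}) to deduce two facts: the non-tangential limit $f^*(\tau)$ exists; and if one writes the homogeneous expansions around $\tau$
\[
p(\tau-z)=\sum_{j\geq M}P_j(z),\qquad q(\tau-z)=\sum_{j\geq M}Q_j(z),
\]
with $P_j,Q_j$ homogeneous of degree $j$ and $P_M\not\equiv 0$, then necessarily $Q_M=f^*(\tau)P_M$. (If $q$ vanishes to strictly higher order at $\tau$, this reads $Q_M\equiv 0$ and $f^*(\tau)=0$.)

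With this in hand, the computation is a direct mimic of the proof of Theorem \ref{thm:dird}. Using homogeneity of each $P_j$ and $Q_j$,
\[
\frac{f(\tau-r\delta)-f^*(\tau)}{r}
=\frac{1}{r}\cdot\frac{\sum_{j\geq M+1}r^{j}\bigl(Q_j(\delta)-f^*(\tau)P_j(\delta)\bigr)}{\sum_{j\geq M}r^{j}P_j(\delta)}
=\frac{Q_{M+1}(\delta)-f^*(\tau)P_{M+1}(\delta)+O(r)}{P_M(\delta)+O(r)},
\]
where the $M$-th order terms cancel by $Q_M=f^*(\tau)P_M$. Letting $r\searrow 0$ yields the claimed formula, provided the limiting denominator $P_M(\delta)$ is nonzero.

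The only non-routine step, and the main obstacle I would expect, is exactly this nonvanishing of $P_M(\delta)$ on directions $-\delta$ pointing into $\D^d$ at $\tau$. Here I would appeal once more to Remark \ref{rem:uhptodisk}: the lowest homogeneous term $P_M(z)$ of $p(\tau-z)$ agrees up to the nonzero factor $(\pm 2i)^M$ with the lowest homogeneous term of the Cayley-transformed polynomial $P(w)=(1-iw)^{n}p\bigl(\tfrac{1+iw}{1-iw}\bigr)$ around $0$, and by Theorem \ref{thm:stablehomog} that lowest term has no zeros in $\UHP^d$. The linearization of the componentwise Cayley map $w\mapsto \tfrac{1+iw}{1-iw}$ at $w=0$ is $w\mapsto 1+2iw$, which carries $\UHP^d$ bijectively onto the open cone of directions $-\delta$ pointing into $\D^d$ at $\tau=(1,\dots,1)$ (up to a positive scalar), so the required nonvanishing on inward directions $-\delta$ transfers. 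This completes the argument.
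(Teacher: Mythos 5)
Your proof is essentially the paper's intended argument: the paper proves Theorem~\ref{thm:dird} on $\UHP^d$ by an explicit homogeneous-expansion computation and then simply asserts that Theorem~\ref{thm:derivative} ``translates easily to the polydisk,'' which is exactly the translation you carry out. The three ingredients you use---existence of $f^*(\tau)$, the relation $Q_M=f^*(\tau)P_M$, and nonvanishing of $P_M$ on the inward cone---are the correct translations of Theorems~\ref{thm:uhplim}, \ref{thm:nontanchar}, and \ref{thm:stablehomog}, and the final computation mirrors that of Theorem~\ref{thm:dird}. Two small remarks. First, your reduction has a notational slip: to move the singularity from $\tau$ to $(1,\dots,1)$ you should replace $p(z),q(z)$ by $p(\tau_1 z_1,\dots,\tau_d z_d),\,q(\tau_1 z_1,\dots,\tau_d z_d)$ (so that evaluating at $(1,\dots,1)$ gives $p(\tau)=0$), not by $p(\bar\tau z)$; note that the direction correspondence $-\delta\mapsto-(\bar\tau_1\delta_1,\dots,\bar\tau_d\delta_d)$ that you wrote is consistent with this corrected substitution, not with the one you stated. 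Second, the nonvanishing of $P_M(\delta)$ on inward directions can be obtained a bit more directly than via the Cayley linearization: once one knows, via Remark~\ref{rem:uhptodisk} and Theorem~\ref{thm:stablehomog}, that the lowest homogeneous term $p_M$ of $p(1+z)$ has no zeros in $\UHP^d$, homogeneity gives that $p_M$ has no zeros in $(c\,\UHP)^d$ for any $c\ne 0$ (as the paper itself notes), and taking $c=-i$ gives nonvanishing on $\{\operatorname{Re}>0\}^d$, which is precisely the cone of $\delta$ with $-\delta$ pointing into $\D^d$ at $(1,\dots,1)$.
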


\begin{remark} \label{rem:grad} Since Theorem \ref{thm:derivative} gives formulas for the directional derivatives, one can easily test whether a given $f$ has a non-tangential gradient at $\tau$ (i.e.\ whether the directional derivative formula is linear in $\delta$).  If $q$ vanishes to order $N$ greater than $M+1$, the formula $D_{-\delta}f(\tau)  \equiv 0$ holds and $f$ trivially has a non-tangential gradient. If $N=M+1$ or $N=M$, then $f$ has a non-tangential gradient at $\tau$ if and only if $P_{M}$ is a factor of $Q_{M+1}-f^*(\tau) P_{M+1}.$
\end{remark}

\begin{example} Let $p(z) = 2-z_1-z_2$, so that the associated RIF is $\phi(z) = \frac{2z_1z_2-z_1-z_2}{2-z_1-z_2}$. Then
\[ f(z)  := (1-z_1) \frac{2z_1z_2-z_1-z_2}{2-z_1-z_2} + 1\]
is in $H^{\infty}(\mathbb{D}^2)$,
the space of bounded analytic functions on $\D^2$.
 Writing $f = \frac{q}{p}$ and computing the homogeneous expansions at $\tau=(1,1)$  gives
\[ p(1+z_1, 1+z_2) = -z_1-z_2 \ \ \text{ and } \ \ q(1+z_1, 1+z_2) = -z_1 -z_2 -z_1^2-z_1z_2-2z_1^2z_2,\]
so $N=M=1$ and  
\[ \begin{aligned}
P_M(z) =- (z_1 +z_2), &\quad  P_{M+1}(z) = 0,\\
 Q_M(z) = -(z_1 +z_2), &\quad Q_{M+1}(z) = -z_1(z_1+z_2). \end{aligned}\]
Then Remark \ref{rem:grad} implies that $f$ has a non-tangential gradient at $(1,1)$. Moreover,  $f$ is bounded on $\overline{\mathbb{D}^2} \setminus \{(1,1)\}$. Thus, if we define $f(1,1) := f^*(1,1)=1$, then $f$ is continuous on $\overline{\mathbb{D}^2}$.  In contrast, one can show that $\phi$ is not continuous on $\overline{\mathbb{D}^2}$ and does not have a non-tangential gradient at $(1,1)$. 
\eox
\end{example}

\subsection{Higher non-tangential boundary regularity of rational functions}

In this section, we study when an analytic function $f:\UHP^d \to \C$ 
has a nontangential polynomial approximation of order $k$ at $0$.
We specifically look at when 
there exists
a polynomial $F\in \C[z_1,\dots,z_d]$ of degree at most $k$ such that
as $z \to 0$ non-tangentially within $\UHP^d$ we have
\begin{equation} \label{eqn:Ckdef}
f(z)  = F(z) + o(r^{k})
\end{equation}
where $r=|z_1|$. One can take any equivalent quantity in $D_z$ in place of $r=|z_1|$ (recall \eqref{Dz}).
It turns out that for certain rational functions
when this non-tangential ``little-o'' condition holds, then a non-tangential 
``big-O'' condition automatically
holds.
Following terminology in the literature, see \cite {AM14, Kne15}, we will say $f$ is ``non-tangentially $C^k$'' at $0$ if \eqref{eqn:Ckdef} holds. However, we caution the reader that this does not actually imply (even non-tangentially) continuity of the $k^{th}$ derivative of $f$ near $0$. 

It possible to characterize in simple algebraic
terms when a rational function is non-tangentially $C^k$.
First, write $f = q/p$ with $p,q\in \C[z_1,\dots,z_d]$ and $p$ having no
zeros in $\UHP^d$.  We assume at the very least
that $f$ has a non-tangential limit at $0$.
As in Theorem \ref{thm:nontanchar}, we can write 
\[
\begin{aligned}
p &= P_M + P_{M+1} + \text{ higher order terms} \\
q &= b P_M + Q_{M+1} + \text{ higher order terms}.
\end{aligned}
\]  
For fixed $z \in \C^d$, the one variable function
\[
\lambda \mapsto f(\lambda z) = \frac{b P_M(z) + \lambda Q_{M+1}(z) + \cdots}{P_M(z) + \lambda P_{M+1}(z) + \cdots }
\]
is analytic for $\lambda$ near $0$ when $P_M(z)\ne 0$.
We can expand into a power series
\begin{equation} \label{lamexpand}
f(\lambda z) = \sum_{j\geq 0} F_j(z) \lambda^j
\end{equation}
using power series division.  Notice that the $F_j$ are well-defined functions on $\{z: P_M(z) \ne 0\} \supset \UHP^d$.
While $F_j$ is homogeneous of order $j$, it need not be a polynomial.  Doing 
the power series division one can recursively show that the $F_j$ are rational with
denominator $P_M^j$.  
For instance, $F_0 = b$, $F_1 = \frac{Q_{M+1} - bP_{M+1}}{P_M}$,
\[
F_2 = \frac{Q_{M+2} -bP_{M+2} - P_{M+1} F_1}{P_M}.
\]

\begin{theorem}\label{thm:nontanreg}
Let $f:\UHP^d \to \C$ be analytic and rational $f=q/p$.  
Consider the expansion \eqref{lamexpand}.
Then, $f$ is non-tangentially $C^k$ at $0$ via $\UHP^d$ if and only if
the sum
\[
F(z) := \sum_{j=0}^{k} F_j(z) \text{ belongs to } \C[z_1,\dots,z_d],
\]
in which case we have non-tangentially
\[
f(z) = F(z) + O(r^{k+1})
\]
where $r = |z_1|$ or any other comparable quantity in $D_z$.
\end{theorem}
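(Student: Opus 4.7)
My plan is to reduce both directions of the equivalence to coefficient matching in the one-variable power series $f(\lambda z) = \sum_{j\geq 0} F_j(z)\lambda^j$, and then to upgrade a pointwise little-$o$ condition to a uniform big-$O$ estimate using the non-vanishing of $P_M$ on $\UHP^d$ from Theorem \ref{thm:stablehomog}.

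First I would handle the sufficient direction. Assume $F := \sum_{j=0}^k F_j$ is a polynomial and set $R := q - Fp \in \C[z_1,\ldots,z_d]$. Substituting $\lambda z$ into the identity $q(\lambda z) = p(\lambda z) \cdot \sum_{j\geq 0} F_j(z) \lambda^j$ and matching the coefficient of $\lambda^{M+m}$ on both sides gives
$$Q_{M+m}(z) = \sum_{j=0}^m F_j(z) P_{M+m-j}(z)$$
for every $m \geq 0$, where $Q_{M+m}, P_{M+m}$ denote the homogeneous parts of $q, p$ of degree $M+m$. For $m = 0, 1, \ldots, k$ this exactly equals the homogeneous part of $Fp$ in degree $M+m$, so $R$ vanishes at $0$ to order at least $M+k+1$. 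Next, on a non-tangential approach region $AR_c$, the rescaled slice $AR_c \cap \{|z_1| = 1\}$ is a compact subset of $\UHP^d$, bounded away from $\partial \UHP^d$. By Theorem \ref{thm:stablehomog}, $P_M$ is continuous and non-vanishing on $\UHP^d$, so $|P_M|$ has a positive lower bound on this slice; homogeneity then gives $|P_M(z)| \geq \delta_c r^M$ uniformly on $AR_c$ for $r = |z_1|$. Since the higher-order terms of $p$ contribute $O(r^{M+1})$, this yields $|p(z)| \geq \tfrac{\delta_c}{2} r^M$ for small $r$, and combined with the trivial polynomial bound $|R(z)| = O(r^{M+k+1})$ I obtain $|f(z) - F(z)| = O(r^{k+1})$ non-tangentially. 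This simultaneously establishes the non-tangential $C^k$ condition and the sharper big-$O$ refinement in the theorem statement.

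For the necessary direction, suppose $f(z) = \tilde F(z) + o(r^k)$ non-tangentially for some polynomial $\tilde F$ of degree at most $k$, and decompose $\tilde F = \sum_{j=0}^k \tilde F_j$ into homogeneous pieces. I would fix an arbitrary $z \in \UHP^d$ with $P_M(z) \ne 0$ and test the hypothesis along the ray $\lambda \mapsto \lambda z$ for small $\lambda > 0$; since $D_{\lambda z} = \lambda D_z$ preserves every ratio defining $AR_c$, this ray is a non-tangential approach to $0$. The hypothesis becomes $f(\lambda z) = \sum_{j=0}^k \tilde F_j(z) \lambda^j + o(\lambda^k)$, and comparing with $f(\lambda z) = \sum_{j \geq 0} F_j(z) \lambda^j$ forces $F_j(z) = \tilde F_j(z)$ for $j = 0, 1, \ldots, k$. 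Since each $F_j - \tilde F_j$ is of the form $N_j/P_M^j$ for some polynomial $N_j$ and vanishes on the non-empty open set $\{z \in \UHP^d : P_M(z) \ne 0\}$, every $N_j$ is identically zero, so $F_j$ equals the polynomial $\tilde F_j$ and $F = \tilde F$ is a polynomial.

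The only genuinely analytic input, and where I expect to be most careful, is the uniform lower bound $|P_M(z)| \geq \delta_c r^M$ on $AR_c$ that drives the sufficient direction. This is delivered cleanly by Theorem \ref{thm:stablehomog} and a compactness argument on the slice $AR_c \cap \{|z_1|=1\}$. Everything else is bookkeeping with formal power series plus the observation that radial approach in $\UHP^d$ is automatically non-tangential.
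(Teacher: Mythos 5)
Your proposal is correct and follows essentially the same route as the paper's own proof: the necessary direction tests the hypothesis along rays $\lambda \mapsto \lambda z$ and matches coefficients in the power series expansion \eqref{lamexpand}, while the sufficient direction shows $q - Fp$ vanishes to order $\geq M+k+1$ and invokes the non-tangential lower bound $|P_M(z)| \gtrsim r^M$ coming from Theorem \ref{thm:stablehomog}. You spell out the coefficient-matching bookkeeping and the compactness argument for the lower bound in more explicit detail than the paper does, but the underlying ideas are identical. (A small redundancy: since $P_M$ is non-vanishing on all of $\UHP^d$, the set $\{z \in \UHP^d : P_M(z) \neq 0\}$ in your necessary direction is just $\UHP^d$.)
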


\begin{proof}
If $f$ is non-tangentially $C^k$, then there exists $G\in \C[z_1,\dots, z_d]$ of degree at most $k$
such that $f(z) - G(z) = o(r^{k})$ non-tangentially at $0$.  
In particular, for fixed $z\in \UHP^d$, 
\[
F(\lambda z) - G(\lambda z) =  (f(\lambda z) - G(\lambda z)) - (f(\lambda z) - F(\lambda z)) = o(|\lambda|^{k}) 
\]
which is only possible if $F\equiv G$.

Conversely, if $F \in \C[z_1,\dots, z_d]$ then by construction, $q-F p$ vanishes to order at least $M+k+1$
while $|P_M(z)| > c r^M$ in a non-tangential approach region.
This implies that
\[
f(z) - F(z) = \frac{q(z)-F(z) p(z)}{P_M(z) (1+ \sum_{j\geq 1} \frac{P_{M+j}(z)}{P_M(z)})}  = \frac{O(r^{k+1})}{1 + O(r)} = O(r^{k+1}).
\]
\end{proof}

\begin{example} \label{ex153:regularity}
Consider Example \ref{ex153homog} (also studied in Example \ref{ex153puiseux})
\[
P(z) = A(z) + i B(z) = (z_1+z_2-2z_1^3-6z_1^2 z_2) - i (z_1^2+z_1z_2-4z_1^3 z_2)
\]
and the associated rational Pick function
\[
f = -B/A = \frac{z_1^2+z_1z_2-4z_1^3 z_2}{z_1+z_2-2z_1^3-6z_1^2 z_2}.
\]
To examine its regularity we look at
\[
f(\lambda z) = \lambda \frac{z_1^2+z_1z_2-4z_1^3 z_2 \lambda^2}{z_1+z_2-(2z_1^3+6z_1^2 z_2)\lambda^2} = \sum_{j\geq 1} \lambda^j F_j(z).
\]
Performing the power series division we get
\[
F_1(z) = z_1, F_2=0, F_3(z) = 2z_1^3, F_4=0, F_5(z) =\frac{4z_1^5(z_1+3z_2)}{z_1+z_2}
\]
which shows $f$ is non-tangentially $C^4$ but not $C^5$.  
We will be able to read this directly from the Puiseux expansion in Example \ref{ex153puiseux}
using Theorem \ref{thm:ucoreg} in the next section.
It says that $f$ is non-tangentially $C^4$ at $(0,0)$ 
since $P$ has universal contact order $6$.
\eox
\end{example}

\subsection{Universal contact order implies boundary regularity}

Let $p \in \C[z_1,z_2]$ be pure stable and $p(0,0)=0$.
Define the rational inner function on $\UHP^2$, $\phi = \bar{p}/p$.
In this section we show that a universal contact order condition on $p$ implies non-tangential
regularity of $\phi$ at $(0,0)$.  It is more revealing to study
the associated Pick function $f = -B/A$, where $p = A+i B$ is
the decomposition of $p$ into real and imaginary (coefficient) polynomials.
It will not be difficult to then convert back and forth between $f$ and $\phi$ 
since
\[
\frac{1+if}{1-if} = \phi \quad \text{and}\quad f= i \frac{1-\phi}{1+\phi}  = i(1-\phi)(1-\phi + \phi^2 + \cdots).
\]

We shall normalize $p$ so that the lowest order homogeneous
term of $p$, say $P_M$, has non-negative real coefficients and the coefficient of $z_2^M$ in $P_M$ equals $1$. 
As a note, Corollary \ref{cor:purelp}
describes both how to normalize $P_M$ to have non-negative real coefficients and gives a formula for $P_M$ that shows $z_2^M$ must have a nonzero coefficient. 
Then by Proposition \ref{prop:B1}, 
$B$ vanishes to order $M+1$ and we therefore have $f^*(0,0)=0$.

\begin{theorem}\label{thm:ucoreg}
If $p$ has universal contact order at $(0,0)$ given by the
even integer $K_{min} \geq 2$ then 
$f$ is non-tangentially $C^{K_{min}-2}$ at $0$.
\end{theorem}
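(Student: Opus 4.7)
The plan is to combine Theorem~\ref{thm:ucohomog} with the characterization of non-tangential regularity in Theorem~\ref{thm:nontanreg}. The latter reduces non-tangential $C^{K_{min}-2}$-ness of $f = -B/A$ at $0$ to showing that the partial sum $\sum_{j=0}^{K_{min}-2} F_j(z)$ of the homogeneous terms in $f(\zeta z) = \sum_{j\geq 0} F_j(z)\zeta^j$ is a polynomial in $z$; since $f^*(0,0) = 0$ forces $F_0 \equiv 0$, the task is to show that each homogeneous component $F_j$ for $1 \leq j \leq K_{min}-2$ is a polynomial. Theorem~\ref{thm:ucohomog} supplies, for generic $t\in\R$, a factorization $A+tB = u(z;t)\,W(z;t)$ in which $W(z;t) = A_M + R_{M+1} + \cdots + R_{M+K_{min}-2} + (\text{higher order in } z)$ with each $R_\ell$ a $t$-independent homogeneous polynomial of degree $\ell$, and $u_j(z;t) = a_j(z) + t\,b_j(z)$ affine in $t$ for $j\leq K_{min}-2$.

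I would then match homogeneous components of degree $M+n$ on both sides of $A+tB = uW$ and separately equate the $t^0$ and $t^1$ coefficients. For $0\leq n\leq K_{min}-2$ this only involves ingredients controlled by Theorem~\ref{thm:ucohomog}, and yields
\[
A_{M+n}(z) = \sum_{j+k=n} a_j(z)\,R_{M+k}(z), \qquad B_{M+n}(z) = \sum_{j+k=n} b_j(z)\,R_{M+k}(z),
\]
with the conventions $a_0 = 1$, $b_0 = 0$, and $R_M := A_M$. Introducing the formal generating series $a(\zeta) = \sum_j a_j\zeta^j$, $b(\zeta) = \sum_j b_j\zeta^j$, and $R(\zeta) = \sum_k R_{M+k}\zeta^k$, these identities translate to the congruences $\zeta^{-M}A(\zeta z) \equiv a(\zeta)R(\zeta)$ and $\zeta^{-M}B(\zeta z) \equiv b(\zeta)R(\zeta) \pmod{\zeta^{K_{min}-1}}$. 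Since $R(\zeta)$ has nonzero constant term $A_M(z)$ for generic $z$, it is invertible in $\C(z)[[\zeta]]$, so forming the ratio cancels the $R(\zeta)$ factors and produces
\[
f(\zeta z) \equiv -\frac{b(\zeta)}{a(\zeta)} \pmod{\zeta^{K_{min}-1}}.
\]
Consequently $F_n$ for $1\leq n\leq K_{min}-2$ is the $\zeta^n$-coefficient of $-b(\zeta)/a(\zeta)$, determined by the recursion $F_n = -b_n - \sum_{i=1}^{n-1} F_i\,a_{n-i}$ as a universal polynomial expression in $a_1,\ldots,a_n,b_1,\ldots,b_n$.

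The main obstacle is to confirm that each $a_j, b_j$ with $j\leq K_{min}-2$ is genuinely a polynomial in $z$, and not merely a rational function with powers of $A_M$ in its denominator as the matching identities above naively suggest. For this I would observe that $u(z;t) = (A(z)+tB(z))/W(z;t)$ is, for each generic $t$, a bona fide analytic nonvanishing unit in a neighborhood of $(0,0)$: the Weierstrass-type product $W(z;t)$ is analytic in $z$ and vanishes to order exactly $M$ with leading homogeneous term $A_M$, and $A+tB$ does the same, so the quotient is analytic and nonzero at $(0,0)$. Hence every homogeneous Taylor component $u_j(z;t)$ is an honest polynomial in $z$, and its affine-in-$t$ coefficients $a_j, b_j$ are therefore polynomials. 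This upgrades each $F_j$ for $1\leq j\leq K_{min}-2$ to a homogeneous polynomial in $z$; the partial sum $\sum_{j=0}^{K_{min}-2} F_j$ then lies in $\C[z_1,z_2]$, and Theorem~\ref{thm:nontanreg} concludes that $f$ is non-tangentially $C^{K_{min}-2}$ at $0$.
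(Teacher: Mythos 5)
Your proof is correct and follows essentially the same route as the paper: both use Theorem~\ref{thm:ucohomog} to get the affine-in-$t$, polynomial coefficients of the unit $u(z;t)$, observe that the $W$-factor cancels when forming $-B/A$ modulo order $K_{min}-1$, and then apply Theorem~\ref{thm:nontanreg}. Your generating-function bookkeeping in $\zeta$ and your closing paragraph justifying that $a_j,b_j$ are genuine polynomials merely make explicit what the paper states more tersely (the polynomiality is already part of the conclusion of Theorem~\ref{thm:ucohomog}), but the underlying argument is identical.
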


\begin{proof}
The key ingredients are Theorem \ref{thm:ucohomog} and Theorem \ref{thm:nontanreg}.
For simplicity in the proof we write $K_{min} = K$.
As in the proof of Theorem \ref{thm:ucohomog}, let
\begin{equation}\label{eqn:WP}
A(z) + t B(z) = u(z;t) W(z;t)
\end{equation}
be our Weierstrass preparation theorem factorization of $A+tB$.
For generic $t$,
\[
W(z;t) = A_M(z) + R_{M+1}(z) + \cdots + R_{M+K-2}(z) + F_{M+K-1}(z;t)
\]
where the $R_{M+j}$ are homogeneous polynomials of the indicated order
and do not depend on $t$
while $F_{M+K-1}$ is analytic in $z$, vanishes to order at least $M+K-1$,
and may depend on $t$.
By Theorem \ref{thm:ucohomog}, the homogeneous decomposition
\[
u(z;t) = 1 + \sum_{j\geq 1} u_j(z;t)
\]
satisfies 
$u_j(z;t) = G_j(z) + t H_j(z)$ 
for $j\leq K-2$ and generic $t\in \R$, 
where $G_j,H_j$ are homogeneous polynomials of order $j$.

Let $(z)^{M-K-1}$ denote the ideal in $\C\{z_1,z_2\}$ generated by homogeneous polynomials
of degree $M-K-1$.  This is often just a convenient notation for disregarding higher order
terms.
If we examine \eqref{eqn:WP} modulo $(z)^{M+K-1}$ then
\[
\begin{aligned}
&\sum_{j=0}^{K-2} A_{M+j}(z) \\
&= (1+ \sum_{j=1}^{K-2} G_{j}(z))(A_M(z) + R_{M+1}(z) + \cdots + R_{M+K-2}(z)) \text{ mod } (z)^{M+K-1}
\end{aligned}
\]
\[\begin{aligned}
&\sum_{j=1}^{K-2} B_{M+j}(z) \\
&= (\sum_{j=1}^{K-2} H_{j}(z))( A_M(z) + R_{M+1}(z) + \cdots + R_{M+K-2}(z)) \text{ mod } (z)^{M+K-1}.
\end{aligned}\]
As a result
\begin{equation} \label{eqn:WPdiv}
\begin{aligned}
&\frac{\sum_{j=1}^{K-2} B_{M+j}(z)+ (z)^{M+K-1} }{\sum_{j=0}^{K-2} A_{M+j}(z)+(z)^{M+K-1} } \\ =& 
\frac{(\sum_{j=1}^{K-2} H_{j}(z))( A_M(z) + R_{M+1}(z) + \cdots + R_{M+K-2}(z))}{(1+ \sum_{j=1}^{K-2} G_{j}(z))(A_M(z) + R_{M+1}(z) + \cdots + R_{M+K-2}(z))} \\
&=\frac{(\sum_{j=1}^{K-2} H_{j}(z))}{(1+ \sum_{j=1}^{K-2} G_{j}(z))}. 
\end{aligned}
\end{equation}
According to Theorem \ref{thm:nontanreg}, the
regularity of $f$ is governed by whether the initial terms of
\[
f(\lambda z) = \sum_{j\geq 1} \lambda^j F_j(z)
\]
are polynomials.
Note
\[
f(\lambda z) = - \frac{\sum_{j=1}^{K-2} \lambda^j B_{M+j}(z) + \text{ higher order terms}}{\sum_{j= 0}^{K-2} \lambda^j A_{M+j}(z) + \text{ higher order terms}}
\]
so the terms $F_j$ up to $j=K-2$ match those of \eqref{eqn:WPdiv}.
But the last function in \eqref{eqn:WPdiv} is analytic at $0$
so its homogeneous terms are polynomials.
Therefore, each $F_j$ for $j\leq K-2$
must be a polynomial.
Furthermore, since $B$ and $A$ have real coefficients, the $F_j$ have real coefficients.
\end{proof}

\subsection{Intermediate Loewner class and $B^J$ points}
In this section we connect the previous result to some 
work in \cite{bps18,bps19a}.  Since various past results 
are stated in upper half plane or polydisk settings 
a certain degree of flexibility is required from the reader.

Past work used the concept of a $B^J$ point 
to formulate non-tangential regularity.
In turn, the definition of a $B^J$ point is based on a class of Pick
functions (analytic maps from $\UHP^2$ to $\UHP$)
called the intermediate L\"owner class which is denoted $\mathcal{L}^{J_-}$. 
The class $\mathcal{L}^{J_-}$ was originally defined in \cite{Pas18}
 using behavior at $(\infty, \infty)$; we present a modified version 
of the class here, as was done in \cite{bps18},  using behavior at $(0,0)$ instead.

\begin{definition} For a positive integer $J$,  a two-variable Pick function $g:\UHP^2\to \UHP$ is in the \emph{intermediate L\"owner class at $(0,0)$}, denoted $\mathcal{L}^{J_-}$, if $ \lim_{s\searrow 0}|g(is, is )| =0$
 and if for $1 \le j \le 2J-2$, there exist homogeneous polynomials $G_j$ of degree $j$ with real coefficients such that
 \begin{equation}  \label{eqn:NTE2}  g(w) = \sum_{j=1}^{2J-2} G_j(w) +O \left( |w|^{2J-1} \right) \quad \text{ non-tangentially}.
 \end{equation}
\end{definition}

Define the following conformal maps 
\[
\begin{aligned}
\alpha \colon  \mathbb{D} \rightarrow \UHP, \ \alpha(z) := i \left[ \frac{1-z}{1+z} \right] \ \ \text{ and } \ \
\alpha^{-1} \colon \UHP \rightarrow \mathbb{D}, \ \alpha^{-1}(w) := \frac{1+iw}{1-iw}.
\end{aligned}
\]
 Using those,  we can translate $\mathcal{L}^{J-}$ to $\mathbb{D}^2$ and define $B^J$ points:
\begin{definition}  \label{def:BJ} Let $\phi$ be a RIF on $\mathbb{D}^2$ with a singularity at $\tau =(1, 1) \in \mathbb{T}^2$ with non-tangential value $1 \in \mathbb{T}$. Define 
\begin{equation} \label{eqn:gdef} g_{\tau} (w):= \alpha\left(\phi\left( \alpha^{-1}(w_1),  \alpha^{-1}(w_2) \right) \right) = i \left[ \frac{1- \phi( \alpha^{-1}(w_1), \alpha^{-1}(w_2))}{1+\phi(  \alpha^{-1}(w_1),  \alpha^{-1}(w_2))} \right].\end{equation}
Then $\tau$ is a \emph{$B^J$ point of $\phi$} if $g_{\tau}$ is in the intermediate L\"owner class  $\mathcal{L}^{J_-}$ at $(0,0)$.
\end{definition}

Combining Theorem $7.1$ from  \cite{bps18} with Theorem $4.1$ from \cite{bps19a} yields the following connection between $B^J$ points and contact order.
We still let $\tau = (1,1)$ here.

 \begin{theorem} 
 \label{thm:BJ1} Let $p$ be an atoral stable polynomial on $\mathbb{D}^2$ with $p(\tau)=0$. Let $\phi = \frac{\tilde{p}}{p}$ and assume $\phi^*(\tau)=1$. If $\tau$ is a $B^J$ point of $\phi$, then the contact order $K$ of $p$ at $\tau$ satisfies $K/2 \ge J.$ 
 \end{theorem}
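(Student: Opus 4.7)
My plan is to pass to the upper half plane via Cayley transform and carry out a local analysis. Write $p$'s upper-half-plane avatar as $P = A + iB$ (pure stable, vanishing at $(0,0)$, normalized so that the lowest homogeneous term of $P$ has real coefficients). Under this transformation, $g_\tau$ becomes the rational Pick function $f = -B/A$ (since $g_\tau = i(1-\bar P/P)/(1+\bar P/P) = -B/A$), and the hypothesis that $\tau$ is a $B^J$ point translates to the statement that $f$ admits a non-tangential polynomial approximation of order $2J-2$ at $(0,0)$. By Theorem~\ref{thm:nontanreg} there exists a polynomial $F$ of degree at most $2J - 2$ with $f(z) = F(z) + O(|z|^{2J-1})$ non-tangentially.

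I would next convert this analytic statement into an algebraic one. Multiplying through by $A$, the polynomial $B + AF$ satisfies $|B(z) + A(z)F(z)| = O(|A(z)| |z|^{2J-1}) = O(|z|^{M + 2J - 1})$ non-tangentially, where $M$ is the order of vanishing of $P$ at the origin (equivalently of $A$, by Proposition~\ref{prop:B1}). Since a polynomial that is non-tangentially $O(r^N)$ at $0$ must have all homogeneous components of degree $<N$ vanish identically (a homogeneous-expansion bootstrap as in Lemma~\ref{lem:limits2}), it follows that $B + AF$ vanishes algebraically at $(0,0)$ to total order at least $M + 2J - 1$.

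To extract contact-order information, I would invoke the Puiseux Factorization Theorem~\ref{thm:purelp} and the initial-segment matching of Theorem~\ref{thm:segments}. After stripping off monomial factors, factor $A$ into its smooth real-stable branches $z_2 + \phi_i(z_1) = 0$ and do the same for $B$, obtaining branches $z_2 + \psi_l(z_1) = 0$; Theorem~\ref{thm:segments} guarantees that the $\phi_i$'s and $\psi_l$'s share the initial segments $q_j$ of $P$'s Puiseux branches up to order $2L_j$. Restricting $B + AF$ to a $\phi_i$-branch of $A$ forces $B$ evaluated along that branch to vanish in $z_1$ to order at least $M + 2J - 1$, which in turn controls how many and how deeply the $\psi_l$'s can match $\phi_i$. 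Summing these matching constraints across all branches, while noting that a single polynomial $F$ of degree $\leq 2J-2$ must simultaneously absorb the discrepancies on every branch, and using the interlacing assertions of Proposition~\ref{prop:tangent} to rule out degenerate cancellations, then forces $K = \max_j 2L_j \geq 2J$.

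The hardest step will be the final branch-counting: a naive per-branch estimate gives only $K \geq (M + 2J - 1)/(M+1)$, which loses roughly a factor of two against the desired $K \geq 2J$, so genuine use must be made of the common polynomial $F$ simultaneously satisfying all branchwise constraints (rather than having $F$ free on each branch). If this delicate counting turns out to be too intricate to carry out cleanly within the framework developed here, a safety net is available: one can invoke Theorem~7.1 of~\cite{bps18} to conclude contact order $\geq 2J$ with respect to one variable and then combine it with our Theorem~\ref{thm:switch} (equivalently, the variable symmetry recorded in Theorem~\ref{thm:OCCO}) to obtain the variable-independent bound stated in the theorem.
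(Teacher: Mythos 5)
Your proposal is structured as a main argument plus a ``safety net,'' and the safety net is the right call: the paper itself gives \emph{no proof} of Theorem~\ref{thm:BJ1}, instead attributing the result entirely to Theorem~7.1 of \cite{bps18} together with the variable-symmetry of contact order (Theorem~4.1 of \cite{bps19a}, recorded in this paper as Theorem~\ref{thm:switch} --- note this is not Theorem~\ref{thm:OCCO}, which is the ``order of contact equals contact order'' statement, a different result). Your fallback cites exactly these results and correctly identifies $g_\tau$ with $-B/A$ after Cayley transform and the $B^J$ hypothesis with non-tangential $C^{2J-2}$ regularity of $f=-B/A$, so it reproduces what the paper does.

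The main from-scratch argument, by contrast, has the gap you yourself flag, and it is a genuine one. The early steps are sound: one does get that the polynomial $B+AF$ vanishes to total order at least $M+2J-1$ at the origin (the bootstrap you want is the uniqueness argument inside the proof of Theorem~\ref{thm:nontanreg}, not Lemma~\ref{lem:limits2}, but the mathematics is right). The trouble is what comes next. Restricting to a branch $z_2=-\phi_i(z_1)$ of $A$ only yields
$\sum_{\ell}\operatorname{ord}\bigl(\psi_\ell-\phi_i\bigr)\geq M+2J-1$,
a sum over the $M+1$ branches $\psi_\ell$ of $B$, and nothing in the proposal relates any single summand to the contact order $K=\max_j 2L_j$ of $p$. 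The parameters $t=0$ and $t=\infty$ corresponding to $A$ and $B$ may well be among the finitely many exceptional values in Theorem~\ref{thm:segments}, so the branches of $A$ and $B$ need not carry the generic initial-segment structure of $A+tB$, and the interlacing of Proposition~\ref{prop:tangent} constrains only the first-order slopes, not higher-order agreement. The appeal to ``a single $F$ of degree $\leq 2J-2$ working on every branch simultaneously'' plus interlacing is where the real work would have to live, and it is not carried out. One can check the argument closes cleanly when $M=1$ (restrict to $z_2=-q_1(z_1)$; then $B(z_1,-q_1)$ vanishes to order exactly $2L_1$ while $A(z_1,-q_1)F(z_1,-q_1)$ vanishes strictly higher, forcing $2L_1\geq 2J$), but already for $M\geq 2$ the leading coefficient of $p(z_1,-q_1(z_1))$ is a product involving $\psi_j(0)^{M_j}$ and can be real, so the ``imaginary part detects $B$'' argument breaks. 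This is presumably why the paper defers to the proof in \cite{bps18} rather than deriving the theorem from its own local toolkit.
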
 

The previous section gives a partial converse to this. Specifically, assume that $p$ has universal contact order $K_{min}$ (see Remark \ref{bidiskuco})
at $\tau$ and define a pure stable $P= A+iB$ on $\mathbb{H}^2$ via \eqref{eqn:PH2}. Then $P$ has universal contact order $K_{min}$ at $(0,0)$. Setting $\psi =\bar{P}/P$, we have $\psi^*(0,0)=1$, so Theorem \ref{thm:nontanchar} implies that $A_M = P_M$.  Setting $g= -B/A$, Theorems \ref{thm:nontanreg} and \ref{thm:ucoreg} show that there exist homogeneous polynomials $G_j$ of degree  $j$ such that 
\[g(w) = \sum_{j=1}^{K_{min}-2} G_j(w) +O \left( |w|^{K_{min}-1} \right) \quad \text{ non-tangentially}.\]
The last sentence of the proof of Theorem \ref{thm:ucoreg} also shows that the $G_j$ have real coefficients. Tracking through the definitions gives
\[ g (w)= \alpha\left(\phi\left( \alpha^{-1}(w_1),  \alpha^{-1}(w_2) \right) \right),\]
which implies $\phi$ has a $B^{K_{min}/2}$ point at $\tau$. We can summarize this as follows:
 
\begin{corollary} 
 \label{cor:BJ1} Let $p$ be an atoral stable polynomial on $\mathbb{D}^2$ with $p(\tau)=0$. Let $\phi = \frac{\tilde{p}}{p}$ and assume $\phi^*(\tau)=1$. If $K_{min}$ is the universal contact order of $p$ at $\tau$, then $\tau$ is a $B^{K_{min}/2}$ point of $\phi$.
 \end{corollary}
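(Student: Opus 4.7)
The plan is to reduce the polydisk statement to the upper half plane setting where Theorem \ref{thm:ucoreg} applies directly, then carefully match the resulting non-tangential expansion to the definition of a $B^J$ point. First I would use the Cayley transform \eqref{eqn:PH2} to convert $p$ into a pure stable polynomial $P = A+iB \in \mathbb{C}[z_1,z_2]$ with no zeros in $\UHP^2$ and $P(0,0)=0$. By the conformal invariance discussed in Remark \ref{bidiskuco}, the universal contact order of $P$ at $(0,0)$ equals $K_{min}$, the universal contact order of $p$ at $\tau$.

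Next I would translate the hypothesis $\phi^*(\tau)=1$ into homogeneous data about $P$. A short computation shows $\psi := \bar{P}/P$ satisfies $\psi^*(0,0)=1$, and the characterization of non-tangential limits (Theorem \ref{thm:nontanchar}) together with Proposition \ref{prop:B1} shows that (after a unimodular normalization) the lowest homogeneous term $P_M$ of $P$ equals $A_M$ and has real coefficients. This is precisely the normalization that Theorem \ref{thm:ucoreg} requires. Applying that theorem to the Pick function $g := -B/A$ gives that $g$ is non-tangentially $C^{K_{min}-2}$ at $(0,0)$; by Theorem \ref{thm:nontanreg} there are homogeneous polynomials $G_1,\dots,G_{K_{min}-2}$ with
\begin{equation*}
g(w) \;=\; \sum_{j=1}^{K_{min}-2} G_j(w) + O(|w|^{K_{min}-1}) \qquad \text{non-tangentially,}
\end{equation*}
and since $A,B \in \mathbb{R}[z_1,z_2]$, inspection of the recursive formulas in the proof of Theorem \ref{thm:nontanreg} shows each $G_j$ has real coefficients. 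Finally I would observe that $\lim_{s\searrow 0} g(is,is) = 0$ because the quotient $-B/A$ at the diagonal is governed by $-B_{M+1}/A_M \cdot s \to 0$ as $s\to 0$ (using $B_{M+1}\not\equiv 0$ from Proposition \ref{prop:B1} and $A_M$ non-vanishing on $\UHP^2$), so $g \in \mathcal{L}^{(K_{min}/2)_-}$ at $(0,0)$.

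The last step is to identify this $g$ with the $g_\tau$ of Definition \ref{def:BJ}. A direct computation with the Cayley transforms shows $g = \alpha \circ \phi \circ (\alpha^{-1},\alpha^{-1}) = g_\tau$, because $\alpha(\zeta) = i(1-\zeta)/(1+\zeta)$ converts the boundary value $1$ of an inner function to the value $0$ of a Pick function and intertwines the reflections $\tilde{p}/p$ on $\mathbb{D}^2$ with $\bar{P}/P$ on $\UHP^2$. Since $g_\tau \in \mathcal{L}^{(K_{min}/2)_-}$ at $(0,0)$, by Definition \ref{def:BJ}, $\tau$ is a $B^{K_{min}/2}$ point of $\phi$.

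I expect no genuine obstacle, as the heavy lifting is done by Theorem \ref{thm:ucoreg}; the only mild subtlety is verifying the bookkeeping of the Cayley transforms so that $g$ coincides with $g_\tau$ exactly, and checking that the unimodular normalization of $P$ used to apply Theorem \ref{thm:ucoreg} does not alter the conclusion about $\phi$ itself. Both are straightforward once the hypothesis $\phi^*(\tau)=1$ is in hand.
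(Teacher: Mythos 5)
Your proposal follows essentially the same route as the paper: Cayley-transform $p$ to $P=A+iB$ on $\UHP^2$, use $\phi^*(\tau)=1$ to force $A_M=P_M$, apply Theorem \ref{thm:ucoreg} to $g=-B/A$, and then identify $g$ with $g_\tau$. The only cosmetic difference is that no unimodular renormalization of $P$ is actually required, since $\psi^*(0,0)=1$ together with Theorem \ref{thm:nontanchar} already forces $P_M$ to have real coefficients.
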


\section{Horn regions and more general regularity}\label{sec:horns}

What can be said about non-non-tangential boundary behavior of RIFs or RSFs?  
One way to examine this behavior is to look ultra tangentially, specifically its behavior on the
distinguished boundary. 
Because RIFs only take unimodular values on $\mathbb{R}^2,$ we can analyze their ultra-tangential behavior by studying their unimodular level sets restricted to $\mathbb{R}^2.$
This was addressed in \cite{bps18, bps19a} (in the bidisk setting) where it was basically shown that for RIFs $\phi$ and unimodular $e^{i\theta_0} \ne \phi^*(0,0)$, the unimodular level sets 
\[ \left \{ (x_1, x_2) \in \mathbb{R}^2: \phi( x_1, x_2) = e^{i\theta_0} \right\} \]
are constrained to horn shaped regions near $(0,0)$. This is illustrated in Figure \ref{hpfavehorn}, which shows that  the unimodular level sets pass through the singularity $(0,0)$ within a region that has at least quadratic pinching. Such detailed understanding of level sets of a RIF was in turn used to establish results concerning integrability of their partial derivatives, see \cite[Section 5]{bps18}.

\begin{figure}[h!]
	      {\includegraphics[width=0.44 \textwidth]{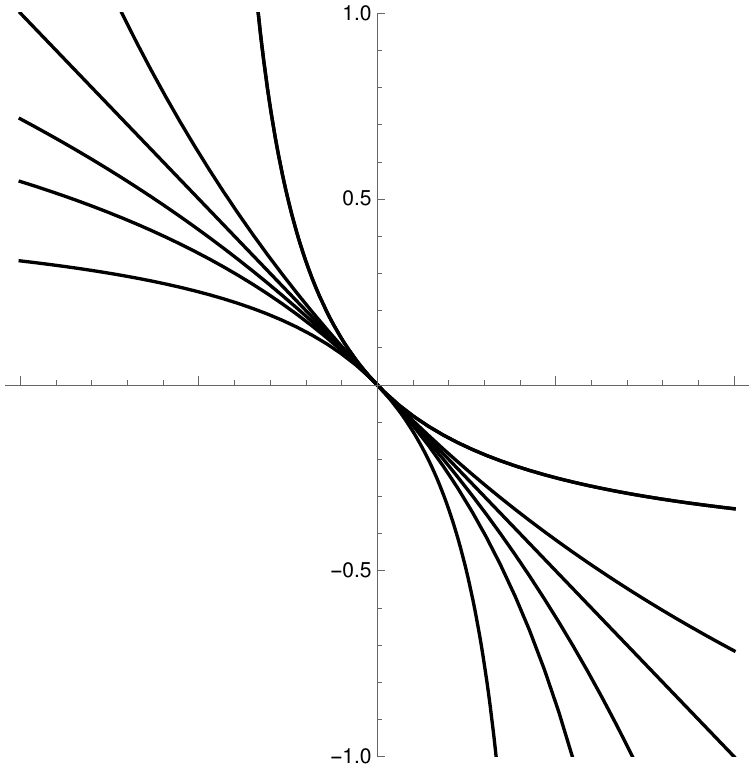}}
	  \caption{\textsl{Several unimodular level sets of $\phi=-\frac{x_1+x_2+2ix_1x_2}{x_1 + x_2-2ix_1x_2}$ in $\mathbb{R}^2$, indicating the presence of a horn region at $(0,0)$.}}
	  \label{hpfavehorn}
	\end{figure}

The local theory from Section \ref{sec:local} also reveals this level set behavior
and we pursue this line of thought in the next subsection. 
Specifically, in prior work, the authors used realization theory to establish this. Below, we show how constrained level set behavior actually follows from  Puiseux series analysis. 

After that we delve into the more difficult question
of understanding  distinguished boundary behavior of RSFs.  Our goal is to obtain an analogue of the level set behavior described above for RIFs. In this setting, some functions, such as the one given in Figure \ref{faveRSFlevelsplot}, possess some level sets with curves that lie in $\mathbb{R}^2$. As shown in that figure, these level sets appear to approach the singularity within a region with quadratic pinching.

However, for general RSFs, it is no longer the case that  level sets will necessarily include entire curves that lie within $\mathbb{R}^2$.
Instead, in what follows, we show that if a level set (not associated to $\phi^*(0,0)$) contains a sequence of points on $\mathbb{R}^2$ converging to $(0,0)$, then those points have to get caught in a horn region that is independent of the particular sequence. This analysis is complicated and requires the use of our local realization formula from Section \ref{ss:rf}.

\subsection{Horn regions for RIFs via Puiseux expansions}
Let $p$ be pure stable and let $\phi = \bar{p}/p$ be the associated RIF on $\UHP^2$.
Write $p = A+iB$ where $A = \frac{1}{2}(p+\bar{p})$, $B = \frac{1}{2i}(p-\bar{p})$.
If $p$ vanishes to order $M>0$ at $(0,0)$, then we multiply
$p$ by a constant so that $P_M = A_M$, and $B$ vanishes
to order $M+1$ by the Homogeneous Expansion
Theorem from the introduction.
The unimodular level sets of $\phi$ coincide with
the zero sets on $\R^2$ of $A- t B$ for $t\in \R$ or $B$ (which
corresponds to $t=\infty$) if we omit the points
of $\R^2$
such as $(0,0)$ where $p=0$ and $\phi$ is 
not defined.
These unimodular level sets also coincide with the
real level sets of the Pick function $f = A/B$ where again $f =\infty$
corresponds to $B=0$.  
In what follows, 
since we are mostly discussing behavior on $\R^2$ it is convenient
to use the variable $x=(x_1,x_2)$ instead of $z=(z_1,z_2)$.
As described in Section \ref{sec:puiseux}, $A-tB$
factors into $M$ analytic and real branches with negative slope at $(0,0)$.
Namely, there exist
 $\psi(x_1;t,j) \in \R\{x_1\}$  where $t\in \R$, $j=1,\dots, M$ with $\psi(0;t,j)=0, \psi'(0;t,j) > 0$ where
\[
A(x)-t B(x) = u(x;t) \prod_{j=1}^{M} (x_2+\psi(x_1;t,j))
\]
and we order the functions so that $\psi(x_1;t,j)$ is increasing with respect to  $j$ for fixed $x_1>0$ and $t\in \R$.
We can describe the ``level region'' described by $s_1\leq A/B \leq s_2$ in 
a punctured neighborhood of $(0,0)$ in $\R^2$ as a union of 
regions trapped between graphs of our analytic branches.
We will show this for positive $x_1$ since a similar 
result holds for negative $x_1$.  Note that
the ordering of the
branches $\{\psi(x_1;s,j)\}_{j}$ can change going from $x_1>0$ to $x_1<0$
and this is why we focus on $x_1>0$.

\begin{theorem}
Given the above setup,  for $s_1<s_2$ in $\R$, there exist $r,R>0$ such that
for $x \in (0,r)\times (-R,R)\setminus\{(0,0)\}$
the level region
\[
\{x : s_1 \leq A(x)/B(x) \leq s_2\}
\]
is given by
\begin{equation} \label{levelregions}
\bigcup_{j=1}^M \{(x_1,x_2): x_2 \in [-\psi(x_1;s_1,j),-\psi(x_1;s_2,j)]\}.
\end{equation}
\end{theorem}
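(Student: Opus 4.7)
The plan is to reduce the two-variable statement to a one-variable analysis on horizontal slices. Fix $x_1 \in (0, r)$ and consider $g(x_2) := f(x_1, x_2) = A(x_1, x_2)/B(x_1, x_2)$. Since $f$ is a Pick function from $\UHP^2$ to $\UHP$ with real coefficients, the slice $w_2 \mapsto f(x_1, w_2)$ is a one-variable Pick function on $\UHP$ (obtained as a boundary limit of $\UHP^2$ slices), hence $g$ is a real rational Pick function on $\mathbb{R}$, strictly monotone increasing on every maximal open interval between consecutive poles. This monotonicity is the one key tool I will need.

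First I would choose $r, R > 0$ small enough that, for every $t \in [s_1, s_2]$ and every $j = 1, \ldots, M$, the branch $x_1 \mapsto -\psi(x_1; t, j)$ sends $(0, r)$ into $(-R, R)$, and that no zero of $A - tB$ other than these $M$ branches enters $(0, r) \times (-R, R)$. Both are routine by continuity and compactness: the $\psi(\cdot; t, j)$ vanish at $x_1 = 0$ and depend continuously on the compact parameter $t \in [s_1, s_2]$, while any zeros of $A - tB$ not emanating from $(0, 0)$ stay uniformly bounded away from the origin as $t$ ranges in $[s_1, s_2]$. Shrinking $r$ further we may also assume $g$ is non-constant for every $x_1 \in (0, r)$, since the set of slices on which $A/B$ is constant is a proper subvariety.

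For fixed $x_1 \in (0, r)$, the real solutions of $g(x_2) = t$ in $(-R, R)$ are precisely $\{-\psi(x_1; t, j)\}_{j=1}^M$, so strict monotonicity of $g$ between its poles forces these $M$ distinct points to occupy $M$ distinct (possibly truncated) pole intervals of $g$. Moreover, for each fixed $j$ the continuous curve $t \mapsto -\psi(x_1; t, j)$ cannot cross a pole of $g$ (a pole is never attained as a value of $g$), so it stays inside a single pole interval $I_j$ as $t$ varies over the connected set $[s_1, s_2]$. Restricted to $I_j$, $g$ is a strictly increasing bijection onto its image, so $g^{-1}([s_1, s_2]) \cap I_j$ is exactly the closed interval $[-\psi(x_1; s_1, j), -\psi(x_1; s_2, j)]$; the endpoints appear in the correct order because $g$ is increasing and $s_1 < s_2$. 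Taking the union over $j$ and letting $x_1$ range over $(0, r)$ then gives the claimed formula. The substantive step is recognizing $g$ as a real rational Pick function with the monotonicity property; the main remaining care is ensuring the pairing $j \leftrightarrow I_j$ is coherent across $[s_1, s_2]$, but this is handled at once by the continuous-curve-avoiding-poles observation.
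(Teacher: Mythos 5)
Your proof is correct, and it rests on the same engine as the paper's: fixing a slice $x_1 \in (0,r)$ and exploiting the fact that $g(x_2) = A(x_1,x_2)/B(x_1,x_2)$ is a one-variable real rational Pick function, hence strictly increasing between its poles. Where the two proofs diverge is in the final combinatorial bookkeeping. The paper shows that on the slice, $g$ attains the values $s_1$ and $s_2$ in strict alternation beginning with $s_1$, by a contradiction argument that must rule out $s_2$ coming first; that ruling-out requires a bit of care with the behavior of $g$ near the endpoints $\pm R$ and around poles. You instead note that for each $j$ the continuous curve $t \mapsto -\psi(x_1;t,j)$ never meets a pole of $g$ (since $g$ takes the finite value $t$ there), so it is trapped in a single pole interval $I_j$, and that the $I_j$'s are pairwise distinct because the $M$ solutions of $g = t$ are distinct for each fixed $t$. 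This immediately identifies $g^{-1}([s_1,s_2]) \cap I_j$ as $[-\psi(x_1;s_1,j),-\psi(x_1;s_2,j)]$ with the endpoints in the right order, and the union is automatically exhaustive since every point of the level region solves $g = t$ for some $t\in[s_1,s_2]$ and hence lies on one of the branches. Your pole-interval observation cleanly replaces the paper's interlacing-by-contradiction step. The one place your write-up is less explicit than the paper's is in securing the uniform count of exactly $M$ zeros of $A - tB$ in the truncated rectangle as $t$ ranges over $[s_1,s_2]$: the paper runs a careful argument-principle argument, while you invoke continuity and compactness more loosely. That is a presentational rather than a mathematical gap.
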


\begin{proof}
Observe that
\[
A(0,z_2) - t B(0,z_2) = z_2^M(A_M(0,1)+ z_2(A_{M+1}(0,1)-t B_{M+1}(0,1)) + \cdots)
\]
and so for $t$ in any fixed compact interval $I=[s_1,s_2]\subset \R$ we can find $R$ such that
the above expression is non-zero for $0<|z_2|\leq R$ and $t\in I$.  
Then, there exists $r>0$ such that $A(z)-tB(z)\ne 0$
for $|z_1|\leq r$, $|z_2|=R$, $t \in I$.
By the argument principle, $z_2 \mapsto A(z_1,z_2)-tB(z_1,z_2)$ 
has $M$ zeros for $|z_1|\leq r, |z_2|<R, t\in I$.
Since $A-t B$ is real stable, for each fixed 
$x_1\in \R$ we see that the univariate polynomial 
$z_2 \mapsto A(x_1,z_2)- t B(x_1,z_2)$
is either real stable or identically zero
(as follows from Hurwitz's theorem by taking $z_1\in \UHP \to x_1 \in \R$).
A real stable univariate polynomial has only real zeros.
Therefore, for fixed $x_1 \in [0,r]$ and $t\in I$, 
the function of $x_2 \in (-R,R)$
given by  $x_2 \mapsto A(x_1,x_2) - t B(x_1,x_2)$
has $M$ (real) zeros.
We can shrink $r$ to force the zeros to be distinct
for $x_1\in (0,r]$.
We can further shrink $r$ if necessary to force
the branches $\psi(x_1;s_1,j), \psi(x_1,s_2,j)$ for $j=1,\dots, M$
to be analytic for $|x_1|\leq r$ and bounded by $R$.
Then, the $M$ real zeros of $x_2 \mapsto A(x)-s_1 B(x)$
are exactly given by $x_2 = -\psi(x_1;s_1,j)$ for
$j=1,\dots, M$ and similarly for $s_2$. 

Define $g(z_2) = A(x_1,z_2)/B(x_1,z_2)$
for fixed $x_1  \in (0,r)$.
Now, $g$ is a one variable non-constant Pick function
(i.e. maps $\UHP$ to $\UHP$)
which implies that $g$ is strictly
increasing on $\R$ except at poles where
it jumps from $\infty$ to $-\infty$.
On the interval $(-R,R)$
we have already established that $g$ attains
every value in the interval $I$ exactly $M$
times.  The points where $g$ attains
the values $s_1$ and $s_2$ must interlace
since $g$ increases.  Among these points
we claim that $g$ attains the value $s_1$
first.  Suppose $g$ attains the value $s_2$
first, say at $y_0 \in (-R,R)$.  Then,
$g$ alternates attaining $s_1$ and $s_2$,
say at points $y_1< y_2< y_3<\dots <y_{2M-1}$
where $g(y_1)=s_1, g(y_{2}) = s_2, \dots, g(y_{2M-1}) = s_1$.
So, $g$ maps the intervals $[y_1,y_2], [y_3,y_4], \dots, [y_{2M-3},y_{2M-2}]$
onto $I$ which accounts for $M-1$ times that $g$ attains the values
in $I$.
On the other hand, $g$ maps $(-R,y_0]$ onto $(g(-R), s_2]$ 
and $[y_{2M-1}, R)$ onto $[s_1, g(R))$.
Note $g(-R)  \geq s_1$ and $g(R) \leq s_2$ since
otherwise $g$ would attain $s_1$ or $s_2$
more than $M$ times.
If $g(R) > g(-R)$ then $g$ attains some values 
in $I$ more than $M$ times since then the
two intervals $(g(-R), s_2]$ and $[s_1,g(R))$
overlap.  
If $g(R) \leq g(-R)$, then the value $g(R)$ is only attained
$M-1$ times since in this case the
two intervals $(g(-R), s_2]$ and $[s_1,g(R))$
miss $g(R)$.  Thus, we conclude $g$ 
must attain the value $s_1$ first.

Therefore, $g$ alternates attaining the
values $s_1,s_2$ starting with $s_1$
exactly $M$ times ending with $s_2$.
These values are attained at
points given by consecutive branches
and thus 
$g$ maps $[-\psi(x_1;s_1,j), -\psi(x_1;s_2,j)]$
onto $I$ for $j=1,\dots, M$.
\end{proof}

Since $\psi'(0;t,j)$ is constant with respect to $t$ these
regions all have at least quadratic pinching at $(0,0)$:
there exists $k\geq 2$ such that 
\[
cx_1^k \leq |\psi(x_1;s_1,j) - \psi(x_1;s_2,j)| \leq C x_1^k.
\]
These regions are called \emph{horn regions} and they are
defined more formally in the next subsection.
If $p$ has contact order $K$ (necessarily even and at least $2$) 
we can always 
choose $j$ and a pair $s_1<s_2$ such that
\[
c x_1^K \leq |\psi(x_1;s_1,j) - \psi(x_1;s_2,j)| \leq C x_1^K.
\]

One basic conclusion of this is that if $x^{n} \to (0,0)$ in $\R^2$ and $f(x^n) \to s \in \R$
then eventually the points $x^n$ are trapped in the region \eqref{levelregions} for $s_1 = s-\delta, s_2= s+\delta$ for $\delta >0$.
Another conclusion is that the closure of $f(\D_r^2\cap \UHP^2)$ contains $\R$ for all $r>0$
since all of the level sets of $f$ pass through $(0,0)$ and points on these level sets can be perturbed 
to points of $\UHP^2$.

\subsection{Horn regions for RSFs via realization formulas}

In this section, we return to rational Schur functions in two variables and prove that they exhibit additional regularity properties possessed by rational inner functions. 
Let us formally define horn regions in $\R^2$.

\begin{definition}  \label{def:horn1} A \emph{non-trivial horn $\mathscr{H}$} at $(0,0)$ in $\mathbb{R}^2$ with slope $a\neq 0$ is a region of points $(x_1, x_2) \in \mathbb{R}^2$ sufficiently close to $(0,0)$ such that 
\begin{equation} \label{eqn:horn} |x_2 -a x_1| \le B x_1^2 \text{ for some fixed $B>0$.}  \end{equation}
A \emph{trivial horn $\mathscr{H}$} at $(0,0)$ in $\mathbb{R}^2$ is either
\begin{itemize}
\item Oriented along the $x_2$-axis and consists of $(x_1, x_2) \in \mathbb{R}^2$ satisfying
$|x_1 | \le B x_2^2$ for some fixed $B>0$, or
\item Oriented along the $x_1$-axis and  consists of $(x_1, x_2) \in \mathbb{R}^2$ satisfying
$|x_2 | \le B x_1^2$ for some fixed $B>0$.
\end{itemize}
\end{definition}

Definition \ref{def:horn1} generalizes the notion of a nontrivial horn from \cite{bps18}. There, the authors consider regions in $\mathbb{R}^2$ near $(0,0)$ with boundaries  
\[ x_2 =\frac{x_1}{m\pm b x_1} \text{ for $m <0$ and $b>0$.}\]
A simple power series computation shows that such regions satisfy an inequality of form \eqref{eqn:horn} and so, are encompassed by Definition \ref{def:horn1}.

The main theorem of this section is as follows.

\begin{theorem} \label{thm:horn} Let $f$ be a nonconstant rational Schur function on $\UHP^2$ with a singularity at $(0,0)$ and nontangential value $f^*(0,0)$. If a sequence $(x^n) \subset \R^2$ satisfies
\[ x^n \rightarrow (0,0)  \ \text{ and } \ f(x^n) \ \rightarrow \zeta_0 \ne f^*(0,0),\]
there is a finite number of horns $\mathscr{H}_1, \dots, \mathscr{H}_L$ in $\R^2$  at $(0,0)$ such that 
 for $n$ sufficiently large, each $x^n \in \bigcup_{\ell =1}^L \mathscr{H}_\ell$. Moreover, the number $L$ and the slopes of the horns do not depend on the sequence  $(x^n).$
\end{theorem}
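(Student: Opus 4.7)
The plan is to transport $f$ to a rational Pick function admitting a PIP realization and then read the horns off directly from the spectrum of the compression $Y$ appearing in the block form of that realization. After Cayley-transforming $f$ from $\D$-valued to $\UHP$-valued and applying Theorem \ref{makehsimple}, I replace $f$ with a rational Pick function $g:\UHP^2\to\UHP$ satisfying $g^*(0,0)\ne\infty$ and admitting the PIP realization from Proposition \ref{prop:simpleg}. The pre-composition involves conformal self-maps $\sigma_1,\sigma_2$ of $\UHP$ with $\sigma_2(0)=0$, and since $\sigma_2$ is real-analytic near $0$ with $\sigma_2'(0)\ne 0$, horn regions at $(0,0)$ in the new coordinates correspond to horn regions in the old coordinates with slopes rescaled by $\sigma_2'(0)$ (and pinching constants rescaled by $|\sigma_2'(0)|$). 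The hypothesis becomes $g(y^n)\to\eta_0\in(\R\cup\{\infty\})\setminus\{g^*(0,0)\}$ for $y^n\to(0,0)$ in $\R^2$, so it suffices to produce a finite list of horns at $(0,0)$ in $y$-coordinates depending only on $g$.

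By Theorem \ref{localrealization},
\[
g(w)=c-\bigl\langle\widehat S^{-1}\bigl(I+E(w)\widehat S^{-1}\bigr)^{-1}\alpha,\,\beta\bigr\rangle,\qquad E(w):=w_{22}-w_{21}w_{11}^{-1}w_{12},
\]
where $\widehat S$ is invertible with positive imaginary part, $g^*(0,0)=c-\langle\widehat S^{-1}\alpha,\beta\rangle$, and the blocks refer to $\ker(S)\oplus\text{Range}(S)$. In the $\ker(S)$ block, $w_{11}=w_1 Y+w_2(I-Y)$ with $Y:=P_{\ker(S)}PP_{\ker(S)}$ self-adjoint on $\ker(S)$ with spectrum in $[0,1]$. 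Diagonalizing $Y$ in an orthonormal basis yields eigenvalues $\lambda_1,\dots,\lambda_d\in[0,1]$, and for real $y^n$ the operator $y^n_{11}$ becomes diagonal with entries $\mu^n_j:=\lambda_j y^n_1+(1-\lambda_j)y^n_2$. Elementary norm estimates give
\[
\|y^n_{21}(y^n_{11})^{-1}y^n_{12}\|\le C\,\|y^n\|^2/\min_j|\mu^n_j|,\qquad \|y^n_{22}\|\le C\,\|y^n\|,
\]
so if $\min_j|\mu^n_j|/\|y^n\|^2\to\infty$ then $E(y^n)\to 0$ and $g(y^n)\to g^*(0,0)$. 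Contrapositively, $\eta_0\ne g^*(0,0)$ forces $|\mu^n_{j(n)}|\le C'\|y^n\|^2$ along a subsequence, for some index $j(n)$ and a constant $C'$ depending only on $g$; the case $\eta_0=\infty$ is handled symmetrically by observing that $g(y^n)\to\infty$ requires $E(y^n)\widehat S^{-1}$ to become nearly singular on $\alpha$, which again forces some $|\mu^n_j|=o(\|y^n\|^2)$.

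Since there are at most $d$ distinct eigenvalues, passing to a subsequence fixes a single index $j$, and the inequality $|\lambda_j y^n_1+(1-\lambda_j)y^n_2|\le C'\|y^n\|^2$ traps $y^n$ in a horn at $(0,0)$: a nontrivial horn of slope $-\lambda_j/(1-\lambda_j)$ when $\lambda_j\in(0,1)$, a trivial horn along the $y_1$-axis when $\lambda_j=0$, and along the $y_2$-axis when $\lambda_j=1$, all matching Definition \ref{def:horn1} once one checks that within such a horn $\|y^n\|^2$ is comparable to $(y^n_1)^2$ (respectively $(y^n_2)^2$). Taking the union over all $j$ produces the finite list $\mathscr H_1,\dots,\mathscr H_L$, with $L$ and the slopes determined solely by the spectrum of $Y$; transporting back through $\sigma_2$ yields the desired horns in the original coordinates. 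The main obstacle is the norm estimate driving the previous step: one must verify that the cross-term $y^n_{21}(y^n_{11})^{-1}y^n_{12}$ cannot conspire with $y^n_{22}$ to cancel precisely enough to permit sub-quadratic pinching. The self-adjointness of $y^n_{11}$ together with positivity of $\widehat S$ pins the leading-order spectral data in place, and any finer cancellation between the two terms is absorbed into a larger pinching constant without shifting the slope $-\lambda_j/(1-\lambda_j)$.
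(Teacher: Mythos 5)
Your proof follows the paper's own argument essentially verbatim: transport $f$ to a PIP realization via Theorem~\ref{makehsimple} and Theorem~\ref{localrealization}, estimate the size of $E(y)=y_{22}-y_{21}y_{11}^{-1}y_{12}$, and read the horn slopes off the spectrum of $Y$, with the paper isolating the spectral/horn step in a stand-alone lemma (Lemma~\ref{lem:horns}) and sidestepping $\eta_0=\infty$ more cleanly by replacing $f$ with $\tfrac12 f$ so that $g$ is valued in a compact subset of $\UHP$. Your closing worry about cancellation between $y_{22}$ and the cross term is a non-issue: the triangle-inequality upper bound $\|E(y)\|\le\|y_{22}\|+\|y_{21}\|\,\|y_{11}^{-1}\|\,\|y_{12}\|\lesssim\|y\|+\|y\|^2\|y_{11}^{-1}\|$ is all that is needed, since the argument runs in one direction only (outside the horns $\Rightarrow$ $E(y^n)\to 0$ $\Rightarrow$ $g(y^n)\to g^*(0,0)$, contradicting $\eta_0\ne g^*(0,0)$); no lower bound on $\|E\|$, and hence no control over possible cancellation, is ever invoked.
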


The first reduction we make is to compose $f$ with a M\"obius map $m:\D\to \UHP$ 
and study a rational Pick function $h = m\circ f$.
By Theorem \ref{makehsimple} we can further apply
conformal maps to $h$ that fix $(0,0)$ to obtain a
rational Pick function $g:\UHP^2 \to \UHP$
with a local PIP realization formula as in Theorem \ref{localrealization}.
The image of a horn region under the image of a pair of M\"obius maps $\UHP\to \UHP$
that fix $0$ is still a horn region so we have not lost anything in our reduction.
We relabel our sequence $x^n$ accordingly and assume $x^n \to (0,0)$ and $g(x^n) \to \eta_0 \ne g^*(0,0)$.
We can assume $\eta_0 \ne \infty$; for instance if we replace our
original $f$ with $\frac{1}{2}f$ then $g$ will take values in a compact 
subset of $\UHP$.
Recall  that the formula for $g$ is 
\[
g(w) 
=c - \left \langle \widehat{S}^{-1}\left(I+(w_{22} - w_{21} w_{11}^{-1} w_{12}) \widehat{S}^{-1} \right)^{-1}  \alpha,  \beta  \right \rangle_{\mathcal{H}}
\]
where we reiterate that $\mathcal{H}$ is a finite dimensional Hilbert space, 
\begin{itemize}
\item $T = \begin{bmatrix} c & \beta^* \\ \alpha & S\end{bmatrix}  \in \mathcal{L}(\mathbb{C} \oplus \mathcal{H})$ has positive imaginary part,
\item $\widehat{S}$ is the compression of $S$ to $\text{Range}(S)$, which is reducing for $S$,  and $\alpha, \beta \in \text{Range}(S)$,
\item  $w_P  = w_1 P + w_2(I-P) =\begin{bmatrix} w_{11} & w_{12} \\ w_{21} & w_{22} \end{bmatrix}$ is
the block decomposition of $w_P$ according to $(\text{ker}(S)) \oplus \text{Range}(S)$, 
where $P$ is a projection on $\mathcal{H}$. 
\end{itemize}

We also define the compression $Y = P_{\text{ker}(S)} P|_{\text{ker}(S)}$ so that $w_{11} = w_1 Y + w_2 (I-Y)$.
Notice that 
\begin{equation} \label{gzero}
g^*(0,0) = \lim_{t\searrow 0} g(it,it) = c- \langle \widehat{S}^{-1} \alpha, \beta \rangle
\end{equation}
since for $w = (it,it)$ we have $w_{22}-w_{21} w_{11}^{-1} w_{12} = it I$.

\begin{lemma}\label{lem:horns}
Assume the setup above and fix a constant $C>0$.
Then, there exist finitely many Horn regions $\mathscr{H}_1,\dots, \mathscr{H}_L$ 
with slopes determined by the eigenvalues of $Y$
such that for all $x \in \R^2$ sufficiently close to $(0,0)$ satisfying
 $\| x \|^2 \| x_{11}^{-1}  \| > C$ 
 we have 
 \[
 x \in \bigcup_{j=1}^{L} \mathscr{H}_j.
 \]
 \end{lemma}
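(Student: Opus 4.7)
The plan is to exploit the finite-dimensional self-adjoint structure of $Y$. Since $P$ is an orthogonal projection, $Y = P_{\ker(S)} P|_{\ker(S)}$ is self-adjoint with $0 \le Y \le I$, so it diagonalizes in an orthonormal basis of $\ker(S)$ with real eigenvalues $\lambda_1, \dots, \lambda_N \in [0,1]$. For real $x = (x_1, x_2) \in \R^2$, the operator $x_{11} = x_1 Y + x_2(I-Y)$ is self-adjoint and diagonal in the same basis, with eigenvalues $\mu_j(x) = \lambda_j x_1 + (1 - \lambda_j) x_2$. Consequently, whenever all $\mu_j(x) \ne 0$,
\[
\|x_{11}^{-1}\| = \frac{1}{\min_j |\mu_j(x)|},
\]
and the hypothesis $\|x\|^2 \|x_{11}^{-1}\| > C$ rephrases as the existence of some $j$ with $|\mu_j(x)| < \|x\|^2/C$.

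Let $\lambda_{(1)}, \dots, \lambda_{(L)}$ enumerate the distinct eigenvalues of $Y$, and for each such $\lambda$ set
\[
R_\lambda = \{x \in \R^2 : |\lambda x_1 + (1-\lambda) x_2| < \|x\|^2/C\}.
\]
It then suffices to show that, near the origin, each $R_\lambda$ lies inside a horn of Definition \ref{def:horn1} whose slope or axis is determined by $\lambda$. I would handle three cases. If $\lambda = 0$, the defining inequality becomes $|x_2| < (x_1^2 + x_2^2)/C$, which for $|x_2| < C/2$ forces $|x_2| \le 2 x_1^2/C$: a trivial horn along the $x_1$-axis. The case $\lambda = 1$ is symmetric and yields a trivial horn along the $x_2$-axis. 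For $\lambda \in (0,1)$, setting $a = -\lambda/(1-\lambda) < 0$, the condition becomes $|x_2 - a x_1| < \|x\|^2/(C(1-\lambda))$; on this region $x_2 = a x_1 + O(\|x\|^2)$, so $\|x\|^2 \sim (1+a^2) x_1^2$ for small $x$, and we obtain $|x_2 - a x_1| \le B x_1^2$ for an explicit $B$, a non-trivial horn of slope $a$.

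Taking the union over the $L$ distinct eigenvalues produces the desired horns $\mathscr{H}_1, \dots, \mathscr{H}_L$, whose orientations and slopes depend only on the spectrum of $Y$ and not on the particular $x$. The main technical point is upgrading the implicit bound $\|x\|^2$ appearing on the right-hand side of the defining inequality for $R_\lambda$ to a pure-power bound in the dominant coordinate, as required by Definition \ref{def:horn1}; this is handled by observing that in each case either a coordinate or the linear combination $x_2 - a x_1$ is forced to be quadratically smaller than the dominant one, so that $\|x\|$ is controlled by that dominant coordinate up to a multiplicative constant.
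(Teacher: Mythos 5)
Your proposal is correct and follows essentially the same route as the paper: diagonalize $Y$ (equivalently $x_{11}$) via its eigenvalues $\lambda_j \in [0,1]$, rewrite $\|x_{11}^{-1}\| = 1/\min_j|\lambda_j x_1 + (1-\lambda_j)x_2|$, and then translate the inequality $\|x\|^2 > C|\lambda x_1 + (1-\lambda)x_2|$ into a trivial or non-trivial horn according to whether $\lambda \in \{0,1\}$ or $\lambda \in (0,1)$. Your direct algebraic verification of the horn containment plays the role of the paper's passing geometric remark about the exterior of two tangent circles, but the content is the same.
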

 
\begin{proof}
Let $0 \le t_1, \dots, t_L \le 1$ denote the eigenvalues of the positive matrix $Y$. Then, $x_{11}$ is diagonalizable and has eigenvalues $t_\ell x_1 + (1-t_\ell)x_2$.  This implies 
\[ \| x_{11}^{-1} \| = \max_{1 \le \ell \le L}  \frac{1}{\left| x_1 t_\ell + x_2(1-t_\ell)\right|}.\]
Then  $ \| x \|^2 \| x_{11}^{-1}  \| > C$ implies that 
\[ x_1^2 +x_2^2 > C\left| x_1 t_\ell + x_2(1-t_\ell)\right|\]
for some $\ell$. The set of such $x$ is the exterior of a union of two circles that are tangent to the line  $ x_1 t_\ell + x_2(1-t_\ell)=0$ at $(0,0)$. It is a simple computation to show that in an $\epsilon$-neighborhood of $(0,0)$, if $t_\ell \ne 0,1$, any such $x$ must satisfy the non-trivial horn inequality 
\[ \left |x_2 +\frac{t_\ell}{1-t_\ell} x_1\right| \le B x_1^2 \]
 for a fixed $B>0$. Similarly, if $t_\ell =0$, then $x$ satisfies the trivial horn inequality, $|x_2| \le Bx_1^2$ and if $t_\ell =1$, $x$ satisfies the trivial horn inequality, $|x_1| \le Bx_2^2$. Here, each $B$ depends on $C$ and $t_{\ell}$, but not $x$. For each $\ell$, let $\mathscr{H}_{\ell}$ denote the horn associated to the $t_{\ell}, C$ horn inequality. Then if $x$ is sufficiently close to $(0,0)$ and $ \| x \|^2 \| x_{11}^{-1}  \| > C$, then $x \in \cup _{\ell=1}^L \mathscr{H}_\ell$. 
\end{proof}

\begin{proof}[Proof of Theorem \ref{thm:horn}]
For $x\in \R^2$ we write as above
$x_P = \begin{bmatrix} x_{11} & x_{12} \\ x_{21} & x_{22} \end{bmatrix}$ 
and set $\hat{x} = x_{22} - x_{21}x_{11}^{-1} x_{12}$.  

We claim there is a constant $C>0$ 
so that for 
$g(x)$ sufficiently close to $\eta_0$
we have
\[
\|\hat{x}\| > C.
\]
Then, since 
\[
\|\hat{x}\| \leq \|x\| + \|x\|^2 \|x_{11}^{-1}\|
\]
we will have 
$\|x\|^2\|x_{11}^{-1}\| > \tilde{C}>0$ 
for $\|x\|$ sufficiently small and $g(x)$ close to $\eta_0$.
The theorem will then follow from Lemma \ref{lem:horns}.

First by \eqref{gzero},
$g(x) - g^*(0,0) = -\langle \widehat{S}^{-1}\hat{x} \widehat{S}^{-1} (I+\hat{x} \widehat{S}^{-1})^{-1} \alpha, \beta\rangle$.
There is no loss in assuming 
\[
\|\hat{x}\| \leq \frac{1}{2\left\|\widehat{S}^{-1}\right\|}.
\]
Then, $\|(I+\hat{x}\widehat{S}^{-1})^{-1}\| \leq (1-\|\hat{x}\widehat{S}^{-1}\|)^{-1} \leq 2$
so
\[
| g(x) - g^*(0,0)|  \leq 2 \|\alpha\|\|\beta\|\|\widehat{S}^{-1}\|^2 \|\hat{x}\|.
\]
Assuming $|g(x)-\eta_0| \leq \frac{1}{2} |\eta_0 - g^*(0,0)|$ we then have $|g(x)-g^*(0,0)| \geq \frac{1}{2}|\eta_0 - g^*(0,0)|$ and therefore
\[
\frac{|\eta_0-g^*(0,0)|}{4\|\alpha\|\|\beta\| \|\widehat{S}^{-1}\|^2}
\leq \|\hat{x}\|,
\]
which proves the claim and completes the proof.
\end{proof}

Using conformal maps, Theorem \ref{thm:horn} can be easily translated to the bidisk, where it says: let $f$ be a rational
Schur function on $\mathbb{D}^2$ with a singularity at $(1,1)$ and nontangential value $f^*(1,1)$. If a sequence $(\tau^n) \subset \T^2$ satisfies
\[ \tau^n \rightarrow (1,1)  \ \text{ and } \ f(\tau^n) \ \rightarrow \lambda \ne f^*(1,1),\]
then there is a finite number of horns $\mathscr{H}_1, \dots, \mathscr{H}_L$ in $\T^2$  at $(1,1)$ such that $(\tau^n)$ gets stuck inside the union of the horns. Here
horns on $\T^2$ at $(1,1)$ correspond to sets of points $(e^{i\theta_1}, e^{i\theta_2})$ where $\theta_1, \theta_2 \in [-\pi,\pi]$ satisfy the same inequalities described in Definition \ref{def:horn1}. 
This conclusion clearly holds for all bounded rational functions on $\mathbb{D}^2$ as well.

The bidisk setting allows us to easily visualize this horn behavior. 

\begin{example} Let $p(z_1,z_2)=2-z_1-z_2$ and let $q = \tfrac{1}{2}(p + \tilde{p}).$ Then
\begin{equation} \label{eqn:rsf_fav} f(z_1, z_2) : = \frac{q(z_1,z_2)}{p(z_1,z_2)} = \frac{(z_1-1)(z_2-1)}{2-z_1-z_2}\end{equation}
is a bounded rational function on $\mathbb{D}^2$ with a singularity at $(1,1)$ and $f^*(1,1)=0.$ 
Consider the following four curves approaching $(1,1)$ in $\mathbb{T}^2$:
\[ 
\begin{aligned}
\gamma_1(s) = (e^{is}, e^{is}),  &\quad \gamma_2(s) = (e^{is}, e^{-is/2}),\\
\gamma_3(s) = (e^{is}, e^{-is}), &\quad \gamma_4(s) = (e^{is}, e^{-i\sin(s)}), \end{aligned} \]
which are graphed in Figure \ref{fig:horns}(a) below, using their arguments on $[-\pi,\pi]^2$. One can check
\[ \lim_{s\rightarrow 0} f(\gamma_1(s)) =   \lim_{s\rightarrow 0} f(\gamma_2(s)) =0 = f^*(1,1),\]
while 
\[ \lim_{s\rightarrow 0} f(\gamma_3(s)) =   \lim_{s\rightarrow 0} f(\gamma_4(s)) =1 \ne f^*(1,1).\]
Theorem \ref{thm:horn} indicates that $\gamma_3$ and $\gamma_4$ should get stuck in a horn region (or union of horn regions) near $(1,1)$, or equivalently near $(0,0)$ when considering their arguments. 
One can see this in Figure \ref{fig:horns}(a). The more general situation can be extracted from Figure \ref{fig:horns}(b), which graphs $|f(e^{i\theta_1}, e^{i\theta_2})|$ for $(\theta_1, \theta_2) \in [-\pi, \pi]^2.$
The picture shows that if a sequence $(\tau^n) \subseteq \mathbb{T}^2$ converges to $(1,1)$ and is not contained in a horn region (mapped onto a narrow ridge in the modulus plot) near the red curves $\gamma_3, \gamma_4$, then 
\[ |f(\tau^n)| \rightarrow 0, \text{ so } f(\tau^n) \rightarrow f^*(1,1).\]
Equivalently, if $ |f(\tau^n)| \rightarrow c \ne 0,$ then $(\tau^n)$ must get stuck in a narrow horn region near $\gamma_3, \gamma_4$.

\begin{figure}[h!]
    \subfigure[\textsl{The curves $\gamma_1, \gamma_2$ (in black) and $\gamma_3, \gamma_4$ (in red) graphed on $[-\pi, \pi]^2$ using their arguments. The curves $\gamma_3, \gamma_4$ approximate a ``horn region'' at $(1,1).$}]  
      {\includegraphics[width=0.4 \textwidth]{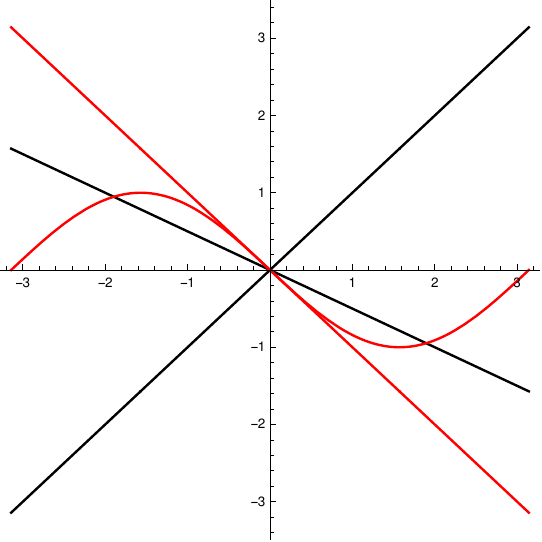}}
 \quad
    \subfigure[\textsl{The modulus $|f(e^{i\theta_1}, e^{i\theta_2})|$ and the curves $\gamma_1, \gamma_2$ (in black) and $\gamma_3, \gamma_4$ (in red) graphed on $[-\pi, \pi]^2.$}]
      {\includegraphics[width=0.5 \textwidth]{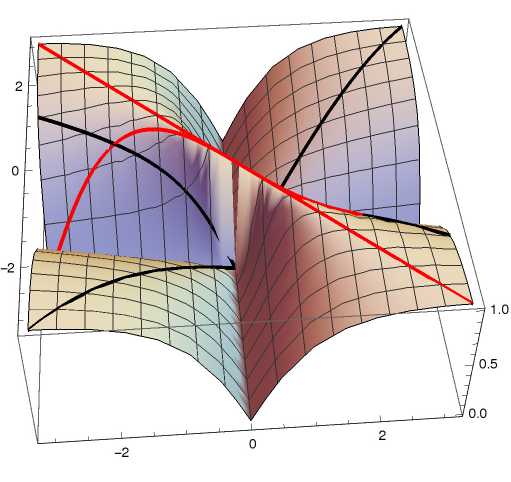}}
  \caption{\textsl{A horn region of $f$ from \eqref{eqn:rsf_fav}.}}
  \label{fig:horns}
\end{figure}
\end{example}

Some rational Schur functions appear to possess even narrower horn regions.

\begin{example} Set $p(z_1, z_2) = 4-z_2-3z_1-z_1z_2+z_1^2$ and $q = \tfrac{1}{2}(\tilde{p}-p).$ As in the previous example, 
\begin{equation} \label{eqn:rsf_amy} f(z_1, z_2) : = \frac{q(z_1,z_2)}{p(z_1,z_2)} = \frac{2z_1^2z_2-z_1^2-z_1z_2+z_1+z_2-2}{4-z_2-3z_1-z_1z_2+z_1^2}\end{equation}
is bounded on $\mathbb{D}^2$ with a singularity at $(1,1)$ and $f^*(1,1)=-1.$ 
Define the following four curves approaching $(1,1)$ in $\mathbb{T}^2$: 
\[ 
\begin{aligned}
\gamma_1(s) = (e^{is}, e^{is}),  &\quad \gamma_2(s) = (e^{is}, e^{-is/2}),\\
\gamma_3(s) = (e^{is}, h(e^{is})), &\quad \gamma_4(s) = (e^{is},  h(e^{is}) e^{i(1-\cos(s^4/20))}), 
\end{aligned}\]
where $h(z) = \frac{2-z+z^2}{1-z+2z^2}.$ The function $h$ is actually chosen so that the curve $\gamma_3$ parametrizes $\mathcal{Z}_q\cap \T^2$.  Then $f(\gamma_3(s)) \equiv 0$ where it is defined and so, must manifestly be different from $-1$ at $s=0.$ This suggests a way of finding horn regions for some functions: first solve $q=0$ on $\mathbb{T}^2$. If that yields a curve on $\mathbb{T}^2$, let that be one curve and then perturb the solution (up to some order, depending on $p$) to obtain another curve. Together these curves allow one to visualize a horn region.  These curves are graphed in Figure \ref{fig:horns2}(a). Similar to the previous example,
\[ \lim_{s\rightarrow 0} f(\gamma_1(s)) =   \lim_{s\rightarrow 0} f(\gamma_2(s)) =-1 = f^*(1,1),\]
while 
\[ \lim_{s\rightarrow 0} f(\gamma_3(s)) =   \lim_{s\rightarrow 0} f(\gamma_4(s)) =0 \ne f^*(1,1).\]
Thus, $\gamma_3, \gamma_4$ must become trapped in a horn region and as shown in Figure  \ref{fig:horns2}(a),
 the horn region appears to be so narrow that $\gamma_3, \gamma_4$ are indistinguishable near $(1,1)$. To see the global situation, consider the picture in Figure \ref{fig:horns2}(b). It is clear that if $(\tau^n)$ converges to $(1,1)$ and $|f(\tau^n)| \not \rightarrow 1$, then $(\tau^n)$ must get caught in a very narrow horn region (which is mapped to a steep valley in the modulus plot) containing the (basically) identical parts of $\gamma_3, \gamma_4.$
 
\begin{figure}[h!]
    \subfigure[\textsl{The curves $\gamma_1, \gamma_2$ (in black) and $\gamma_3, \gamma_4$ (in red) graphed on $[-\pi, \pi]^2$ via their arguments. The curves $\gamma_3, \gamma_4$ are basically indistinguishable near $(0,0)$.}]  
      {\includegraphics[width=0.4 \textwidth]{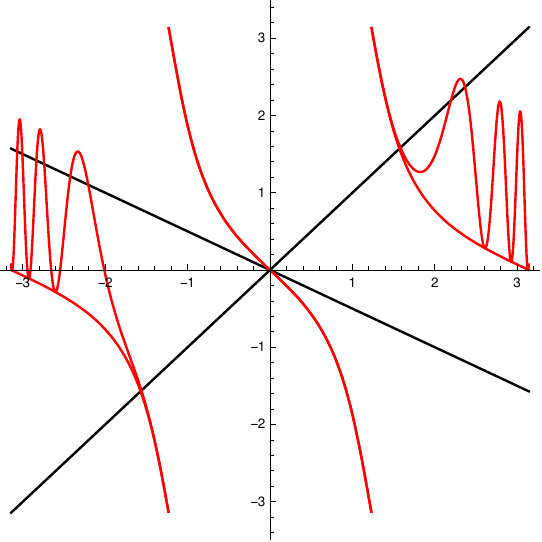}}
 \quad
    \subfigure[\textsl{The modulus $|f(e^{i\theta_1}, e^{i\theta_2})|$ and the curves $\gamma_1, \gamma_2$ (in black) and $\gamma_3, \gamma_4$ (in red) graphed on $[-\pi, \pi]^2.$ There is a very narrow horn region with negative slope along the overlapping red curves where $|f(e^{i\theta_1}, e^{i\theta_2})|$ is near $0$.}]
      {\includegraphics[width=0.5 \textwidth]{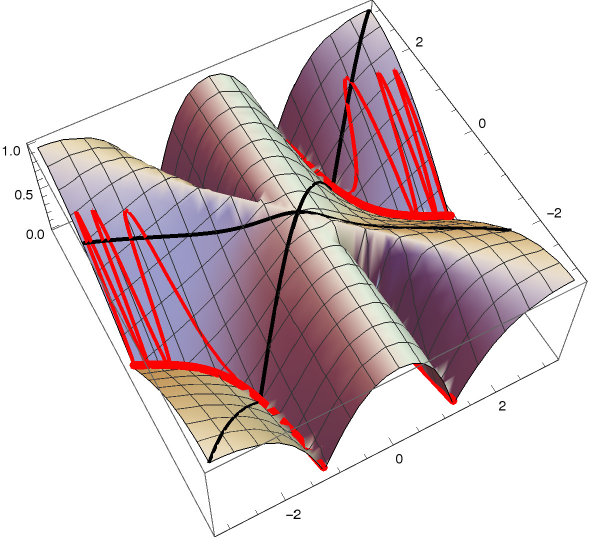}}
  \caption{\textsl{A horn region of $f$ from \eqref{eqn:rsf_amy}.}}
  \label{fig:horns2}
\end{figure}
\end{example}

\section{The ideal of admissible numerators}\label{sec:numerator}
In this section, we address the question of
characterizing the ideal
\[
\mathcal{I}^{\infty}_p = \{q\in \C[z_1,z_2]: q/p \in H^{\infty}(\D^2)\}
\]
for a given stable denominator $p\in \C[z_1,z_2]$.
Recall that $H^{\infty}(\D^2)$ 
is the set of bounded analytic functions on $\D^2$.
Recall from Theorems \ref{thm:nontanchar} and \ref{thm:uhplim}
(or rather their polydisk counterparts),
if $f=q/p$ is bounded on $\D^2$ and
$p(1,1) =0$, 
then assuming $p(1+z_1,1+z_2)$ has lowest order
homogeneous term $p_M$ at $(0,0)$ we have
that the $M$-th order homogeneous term of $q$
is a multiple of $p_M$.
We bring this up for two reasons.
First, we necessarily have
$\mathcal{Z}_p\cap\T^2 \subset \mathcal{Z}_q\cap \T^2$,
so
if $p$ has any toral factors then they
divide every element of $\mathcal{I}^{\infty}_p$.
We may therefore assume $p$ has no toral factors and
hence has only finitely many zeros on $\T^2$ (see Section \ref{sub:global}).
Second, this condition provides a necessary
condition for our problem.
Unfortunately,
this necessary condition is far from sufficient
and is actually somewhat misleading.

\begin{example}\label{ex:153}
Example \ref{ex153homog} is the polynomial 
with no zeros on $\D^2$
\[
p(z_1, z_2) = 4-5z_1-2z_2+2z_1 z_2+3z_1^2-z_1^2z_2-z_1^3 z_2.
\]
In \cite{Kne15} it is shown that for $q\in \C[z_1,z_2]$,
$\frac{q}{p} \in L^2(\T^2)$
if and only if $0= q(1,1) = \frac{\partial q}{\partial z_1}(1,1) - \frac{\partial q}{\partial z_2}(1,1)$ and
\begin{equation} \label{complicatedcondition}
\frac{\partial^2 q}{\partial z_1^2}(1,1) - 2\frac{\partial^2 q}{\partial z_1 \partial z_2}(1,1) + \frac{\partial^2 q}{\partial z_2^2}(1,1) +  2 \frac{\partial q}{\partial z_1}(1,1) \color{black} = 0.
\end{equation}
The condition $\frac{\partial q}{\partial z_1}(1,1) - \frac{\partial q}{\partial z_2}(1,1) = 0$ 
amounts to the requirement that
the first order homogeneous term of $q$ is a multiple of
the first order homogeneous term of $p$
(which again implies existence of non-tangential limits).
The last condition \eqref{complicatedcondition} is more complicated,
thus
making the condition of Theorem \ref{thm:uhplim}
not sufficient to even guarantee $\frac{q}{p} \in L^2(\T^2)$.
Moreover, this example also shows that for certain polynomials $p$
every $q$ with $q/p\in L^2(\T^2)$
has non-tangential limits at every point of $\T^2$.
We continue this example in Example \ref{ex:153:ideal}. \eox
\end{example}

On the other hand, 
a simple sufficient condition for $f=q/p$ to be bounded on $\D^2$
is that $q$ belongs to the polynomial ideal generated by $p$ and $\tilde{p}$.
This sufficient condition is not necessary and
we can get broader sufficient conditions by
looking at the local factorization of $p$
into Puiseux series, Theorem \ref{thm:purelp},
in the upper half plane setting.
Notice that we have reduced to the
case where $p$ has finitely many
zeros on the distinguished boundary, so we
can focus on a single isolated zero
of $p$. 

So, let us now work with $p \in \C[z_1,z_2]$ which
is pure stable (no zeros on $\UHP^2$ and no factors in common with
$\bar{p}$) such that $p(0,0)=0$.
We wish to describe all $q\in \C[z_1,z_2]$ 
such that $\frac{q}{p}$ is bounded
on a neighborhood of $(0,0)$ in $\C^2$
intersected with $\UHP^2$.
In general, if a function is analytic and bounded on $\UHP^2$ intersected with a neighborhood of
$(0,0)$ in $\C^2$ we shall say it is \emph{locally $H^{\infty}$} at $(0,0)$.
Let us define
\begin{equation}
\mathcal{I}^{\infty}(p, 0) = \left\{q \in \C[z_1,z_2]: \frac{q}{p} \text{ is locally } H^{\infty} \text{ at } (0,0) \right\}.
\label{def:Hinftyloc}
\end{equation}

Let $R_0 = \C\{z_1,z_2\}$ be the ring of convergent power series  centered at $(0,0)$.
 In order to state our broadest sufficient conditions for inclusion in $\mathcal{I}^{\infty}(p,0)$
we recall Theorem \ref{thm:purelp}
on the Puiseux expansion for pure stable polynomials.
We can factor $p=u p_1\cdots p_k$ where $u\in R_0$ is a unit
and each $p_j$ is an irreducible Weierstrass factor in $z_2$
 of pure stable type
\[
p_j(z_1,z_2) = \prod_{m=1}^{M_j} \left(z_2+q_j(z_1) + z_1^{2L_j} \psi_j(\mu_j^m z_1^{1/M_j})\right).
\]
Here $L_j$ is a positive integer,  $q_j(z_1) \in \R[z_1], q_j(0)=0, q_j'(0)>0, \deg q_j < 2L_j, \psi_j \in \C\{z_1\}, \psi_j(0) \in \UHP$,
$\mu_j = \exp(2\pi i/M_j)$.
Let us call each $z_2+q_j(z_1)$ an initial segment with cutoff $2L_j$ and multiplicity $M_j$ for $j=1,\dots k$.
Theorem \ref{PuiseuxLowerBound}
shows that 
\[
H(z_1,z_2) := \frac{z_1^{2L_j}}{z_2+q_j(z_1) + z_1^{2L_j} \psi_j(\mu_j^m z_1^{1/M_j})}
\]
is locally $H^{\infty}$ since $\Im(q_j(z_1) + z_1^{2L_j} \psi_j(\mu_j^m z_1^{1/M_j})) \geq c |z_1|^{2L_j}$.
Note we take $z_1^{1/M_j}$ to have a branch cut in 
the lower half plane.
Evidently, the function $z_1^{2L_j} \psi_j(\mu_j^m z_1^{1/M_j}) H(z_1,z_2)$ is still locally $H^{\infty}$ so we see that
\[
1- z_1^{2L_j}\psi_j(\mu_j^m z_1^{1/M_j}) H(z_1,z_2) = \frac{z_2+ q_j(z_1)}{z_2+q_j(z_1) + z_1^{2L_j} \psi_j(\mu_j^m z_1^{1/M_j})}
\]
is also locally $H^{\infty}$.  
This implies that if $q$ belongs to the product ideal $(z_2+q_j(z_1), z_1^{2L_j})^{M_j} R_0$  then $q/p_j$
is locally $H^{\infty}$.  
(Recall this means $q$ is in the ideal in $R_0$ generated by products $f_1\cdots f_{M_j}$ 
where each $f_i \in \{z_2+q_j(z_1), z_1^{2L_j}\}$.)  
Applied over all irreducible Weierstrass factors
this  implies the following theorem.

\begin{theorem} \label{thm:segmentinclusion} Assume the setting and subsequent conclusion of Theorem \ref{thm:purelp}. Then
\[
\C[z_1,z_2] \cap \prod_{j=1}^{k} (z_2+q_j(z_1), z_1^{2L_j})^{M_j} R_0 \subset \mathcal{I}^{\infty}(p,0).
\]
\end{theorem}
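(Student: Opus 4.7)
The plan is to leverage the detailed calculation already performed in the paragraph immediately preceding the theorem. The strategy reduces to three ingredients: (i) removing the unit $u$ from the factorization of $p$; (ii) showing that for a single irreducible Weierstrass factor $p_j$, any element of $(z_2+q_j(z_1), z_1^{2L_j})^{M_j} R_0$ produces a locally $H^{\infty}$ ratio with $p_j$; and (iii) combining these across $j$ by multiplicativity and extending by $R_0$-linearity to the full product ideal.

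First I would factor $p = u\, p_1 \cdots p_k$ as in Theorem \ref{thm:purelp}. Since $u \in R_0$ is a unit, $1/u \in R_0$ is analytic and bounded near $(0,0)$, so $q/p$ is locally $H^{\infty}$ if and only if $q/(p_1 \cdots p_k)$ is. Thus I may replace $p$ by $p_1 \cdots p_k$ throughout, and it suffices to show
\[
\prod_{j=1}^{k} (z_2+q_j(z_1), z_1^{2L_j})^{M_j} R_0 \;\subset\; \mathcal{I}^{\infty}(p_1\cdots p_k,0).
\]

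Next, fix a single factor $p_j$ and one of its $M_j$ Puiseux branches
\[
B_{j,m}(z) := z_2 + q_j(z_1) + z_1^{2L_j}\psi_j(\mu_j^m z_1^{1/M_j}).
\]
By Theorem \ref{PuiseuxLowerBound}, for $z$ in a fixed neighborhood of $(0,0)$ intersected with $\UHP^2$ one has $\Im B_{j,m}(z) \geq c|z_1|^{2L_j}$ (and also $\Im B_{j,m}(z) \geq \Im z_2 \geq 0$). This lower bound gives immediately that $z_1^{2L_j}/B_{j,m}$ is locally $H^{\infty}$, and hence, as observed in the paragraph before the theorem, so is $(z_2+q_j(z_1))/B_{j,m} = 1 - z_1^{2L_j}\psi_j(\mu_j^m z_1^{1/M_j})/B_{j,m}$. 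Therefore, for any choice $f_{j,1}, \ldots, f_{j,M_j} \in \{z_2+q_j(z_1),\, z_1^{2L_j}\}$ and any $g_j \in R_0$, the function
\[
\frac{g_j\, f_{j,1}\cdots f_{j,M_j}}{p_j} \;=\; g_j \prod_{m=1}^{M_j} \frac{f_{j,m}}{B_{j,m}}
\]
is locally $H^{\infty}$. This shows $(z_2+q_j(z_1), z_1^{2L_j})^{M_j} R_0 \subset \mathcal{I}^{\infty}(p_j,0)$ on the generators, and the containment extends to arbitrary elements of the ideal by taking finite sums.

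Finally, a typical generator of the product ideal $\prod_{j=1}^{k}(z_2+q_j(z_1), z_1^{2L_j})^{M_j} R_0$ has the form $h_1 h_2 \cdots h_k$, where each $h_j$ is a generator of the $j$-th factor ideal. For such a generator,
\[
\frac{h_1 \cdots h_k}{p_1 \cdots p_k} \;=\; \prod_{j=1}^{k} \frac{h_j}{p_j}
\]
is a product of locally $H^{\infty}$ functions by the previous step, hence itself locally $H^{\infty}$. Linearity over $R_0$ (and in particular over $\C[z_1,z_2]$) finishes the proof, since sums and $R_0$-multiples of locally $H^{\infty}$ functions are locally $H^{\infty}$. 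There is no serious obstacle: the analytic heart of the argument is Theorem \ref{PuiseuxLowerBound}, and everything after that is an algebraic bookkeeping argument that respects the product structure of $p$ and of the ideal.
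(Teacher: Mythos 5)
Your proposal is correct and takes essentially the same approach as the paper, which gives the argument informally in the paragraph preceding the theorem statement rather than in a separate proof environment: both rest on Theorem \ref{PuiseuxLowerBound} to show $z_1^{2L_j}/B_{j,m}$ and $(z_2+q_j(z_1))/B_{j,m}$ are locally $H^\infty$, and then combine across branches and factors by multiplicativity and $R_0$-linearity. The only difference is that you spell out the bookkeeping (removing the unit $u$, matching generators of the product ideal to products of branch quotients) more explicitly than the paper does.
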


It is not hard to see that 
$p_j \in (z_2+q_j(z_1), z_1^{2L_j})^{M_j} R_0$ 
since  symmetric functions of $\psi(\mu_j^m z_1^{1/M_j})$ over $m=1,\dots, M_j$
are analytic at $0$.  Therefore, since $p = u p_1\cdots p_k$ we have
\[
p \in \prod_{j=1}^{k} (z_2+q_j(z_1), z_1^{2L_j})^{M_j} R_0. 
\]
Since every $q_j(z_1)$ has real coefficients, we also have $\bar{p}$
in this ideal.
Setting 
\begin{equation} \label{squarep}
[p](z_1,z_2) = \prod_{j=1}^{k} (z_2+q_j(z_1) + i z_1^{2L_j})^{M_j}
\end{equation}
we see that
\[
\frac{p(z_1,z_2)}{[p](z_1,z_2)} \text{ and } \frac{[p](z_1,z_2)}{p(z_1,z_2)}
\]
are locally $H^{\infty}$ by Theorem \ref{PuiseuxLowerBound}.
This allows a big conceptual reduction:
\begin{theorem}\label{thm:squarep} Assume the setting and subsequent conclusion of Theorem \ref{thm:purelp}. Then
\[
\mathcal{I}^{\infty}(p,0) = \mathcal{I}^{\infty}([p],0).
\]
\end{theorem}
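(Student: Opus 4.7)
The plan is to reduce the theorem to the already-asserted fact that $p/[p]$ and $[p]/p$ are both locally $H^{\infty}$ at $(0,0)$. Once that is in hand, the theorem is immediate: if $q/p$ is locally $H^\infty$, then
\[
\frac{q}{[p]} = \frac{q}{p}\cdot\frac{p}{[p]}
\]
is a product of two locally $H^\infty$ functions and hence locally $H^\infty$, and the reverse inclusion is obtained by multiplying instead by $[p]/p$. So the content of the theorem lies entirely in proving that the two ratios $p/[p]$ and $[p]/p$ are locally $H^\infty$.

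To show this, I would first factor out the unit $u$ in $p = u\,p_1\cdots p_k$, which contributes nothing to local boundedness, and then pair each irreducible Weierstrass factor
\[
p_j(z_1,z_2) = \prod_{m=1}^{M_j}\bigl(z_2 + q_j(z_1) + z_1^{2L_j}\psi_j(\mu_j^m z_1^{1/M_j})\bigr)
\]
against the corresponding factor $(z_2+q_j(z_1)+iz_1^{2L_j})^{M_j}$ of $[p]$. It therefore suffices, for each pair $(j,m)$, to show that both
\[
\frac{z_2+q_j(z_1)+z_1^{2L_j}\psi_j(\mu_j^m z_1^{1/M_j})}{z_2+q_j(z_1)+iz_1^{2L_j}} \quad\text{and}\quad \frac{z_2+q_j(z_1)+iz_1^{2L_j}}{z_2+q_j(z_1)+z_1^{2L_j}\psi_j(\mu_j^m z_1^{1/M_j})}
\]
are locally $H^\infty$ at $(0,0)$. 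Each ratio rewrites as $1$ plus a term of the form $z_1^{2L_j}\cdot (\text{bounded analytic factor})/D$, where $D$ is the relevant denominator.

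The key estimate is to bound $z_1^{2L_j}/D$. For the denominator $z_2+q_j(z_1)+iz_1^{2L_j}$, Theorem \ref{PuiseuxLowerBound} applied with $\psi\equiv i$ (which is legitimate since $\Im\psi(0)=1>0$) gives constants $c,r>0$ such that $\Im(q_j(z_1)+iz_1^{2L_j})\geq c|z_1|^{2L_j}$ for $0<|z_1|<r$ and $\Im z_1\geq 0$, whence for $z\in\UHP^2$,
\[
\bigl|z_2+q_j(z_1)+iz_1^{2L_j}\bigr|\geq \Im z_2 + c|z_1|^{2L_j}\geq c|z_1|^{2L_j}.
\]
Exactly the same argument, now applied to the actual Puiseux branch with $\psi=\psi_j(\mu_j^m\,\cdot\,)$, bounds $|z_2+q_j(z_1)+z_1^{2L_j}\psi_j(\mu_j^m z_1^{1/M_j})|$ below by $c|z_1|^{2L_j}$ on the same region. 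Thus both ratios are $1 + O(1)$ near $(0,0)$ in $\UHP^2$, which proves local boundedness and completes the plan.

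The routine, non-obstacle steps are the algebraic manipulation and the product-of-$H^\infty$ argument. The one step requiring honest work is the application of Theorem \ref{PuiseuxLowerBound} to the synthetic ``branch'' $z_2+q_j(z_1)+iz_1^{2L_j}$ appearing in the definition of $[p]$; I expect this to be straightforward since $\psi\equiv i$ trivially satisfies the hypotheses of that theorem, but it is the only place where the specific choice of $i$ in the construction of $[p]$ is used, and it is what makes $[p]$ a legitimate stand-in for $p$ up to locally $H^\infty$ multiplicative factors.
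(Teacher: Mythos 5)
Your proposal is correct and coincides with the paper's own argument: the paper states $p/[p]$ and $[p]/p$ are locally $H^\infty$ ``by Theorem \ref{PuiseuxLowerBound}'' in the paragraph preceding Theorem \ref{thm:squarep} and then declares the theorem, which is exactly the two-sided-multiplication reduction you describe. Your factor-by-factor pairing of $p_j$ against $(z_2+q_j(z_1)+iz_1^{2L_j})^{M_j}$ and the application of Theorem \ref{PuiseuxLowerBound} with $\psi\equiv i$ (and, for the other ratio, with $\psi = \psi_j(\mu_j^m\cdot)$, $m$ replaced by $M_j$) simply fills in the details the paper leaves to the reader.
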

This shows Puiseux series do not play any role in our problem.
Instead, the combinatorics
of the different ways the initial segments can overlap 
becomes the crux of the matter.
As a warm-up we can give a satisfying 
answer when $p$ vanishes to order $1$ at $(0,0)$.

\begin{theorem}\label{thm:onebranch}
Assume $p\in \C[z_1,z_2]$ is pure stable and vanishes
to order 1 at $(0,0)$. 
Then, 
\[
\mathcal{I}^{\infty}(p,0) = (p,\bar{p})R_0 \cap \C[z_1,z_2].
\]
\end{theorem}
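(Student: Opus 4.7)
The plan is to split the proof into an easy inclusion that reduces to Theorem \ref{thm:segmentinclusion} after an ideal identification, and a harder inclusion verified by testing $q/p$ along a curve adapted to the Puiseux factorization.

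Since $p$ is pure stable with a zero of order $1$ at $(0,0)$, it has no monomial factors, and Theorem \ref{thm:purelp} provides a single irreducible Weierstrass factor of multiplicity one, so we may write $p = u \cdot (z_2 + q_1(z_1) + z_1^{2L}\psi(z_1))$, where $u \in R_0$ is a unit, $q_1 \in \R[z_1]$ with $q_1(0)=0$ and $\deg q_1 < 2L$, and $\psi \in \C\{z_1\}$ with $\Im\psi(0) > 0$. The first step is the algebraic identity
\[
(p,\bar p)R_0 \;=\; (z_2 + q_1(z_1),\, z_1^{2L})R_0 \;=:\; \mathcal{J}.
\]
The inclusion $\supseteq$ is immediate from the factorization. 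For the reverse, $\bar p = \bar u \cdot (z_2 + q_1(z_1) + z_1^{2L}\bar\psi(z_1))$ with $\bar\psi(z_1) := \overline{\psi(\bar z_1)}$ and $\bar u$ a unit, so $p/u - \bar p/\bar u = z_1^{2L}(\psi - \bar\psi)$, and since $\psi - \bar\psi$ has nonzero constant term $2i\,\Im\psi(0)$ it is a unit in $\C\{z_1\}$. This extracts $z_1^{2L} \in (p,\bar p)R_0$, and then $z_2 + q_1(z_1) = p/u - z_1^{2L}\psi \in (p,\bar p)R_0$, finishing the identification. Theorem \ref{thm:segmentinclusion} applied with $k = M_1 = 1$ now yields the easy inclusion $(p,\bar p)R_0 \cap \C[z_1,z_2] \subseteq \mathcal{I}^{\infty}(p,0)$.

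For the reverse inclusion I will use the quotient description $R_0/\mathcal{J} \cong \C\{z_1\}/(z_1^{2L})$ via the substitution $z_2 \mapsto -q_1(z_1)$: membership in $\mathcal{J}$ is equivalent to $q(z_1, -q_1(z_1))$ vanishing to order at least $2L$ at $z_1 = 0$. Suppose, aiming at a contradiction, that $q \in \mathcal{I}^{\infty}(p,0)$ but $q(z_1, -q_1(z_1)) = a_k z_1^k + O(z_1^{k+1})$ with $a_k \ne 0$ and $k < 2L$. The witness will be the curve $\gamma(t) = (t,\, -q_1(t) + i t^{2L})$ for small $t > 0$. A direct calculation gives $p(\gamma(t)) = u(\gamma(t)) \cdot t^{2L}(i + \psi(t))$, whose modulus is comparable to $t^{2L}$ since $i + \psi(0)$ has strictly positive imaginary part, while Taylor expansion in the second slot gives $q(\gamma(t)) = q(t,-q_1(t)) + O(t^{2L}) = a_k t^k + O(t^{k+1}) + O(t^{2L})$. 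For $k < 2L$ this forces $|q/p|(\gamma(t)) \sim t^{k-2L} \to \infty$.

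To convert this blow-up into a genuine contradiction, note that $\gamma(t)$ lies in $\R \times \UHP$ rather than $\UHP^2$, so I will approximate by $\gamma_s(t) := (t + is,\, -q_1(t) + it^{2L}) \in \UHP^2$ and let $s \searrow 0$; because $p(\gamma(t)) \ne 0$ for small $t>0$, the rational function $q/p$ is continuous at $\gamma(t)$, and the local $H^{\infty}$-bound passes to the boundary curve and contradicts the computed divergence. The main obstacle I anticipate is purely notational: tracking $u$, its conjugate, and $\psi$ cleanly through the ideal manipulation of the first step. The reason the order-one case admits such a direct argument, in contrast to the higher-order vanishing discussed elsewhere in the paper, is that a single irreducible Weierstrass factor of multiplicity one leaves only one initial segment $z_2 + q_1(z_1)$, so there is no combinatorial interference among overlapping segments to untangle.
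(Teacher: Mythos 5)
Your proof is correct and follows essentially the same route as the paper's: identify $(p,\bar p)R_0$ with the segment ideal $(z_2+q_1(z_1),z_1^{2L})R_0$ using that $\psi-\bar\psi$ is a unit, get one inclusion from Theorem \ref{thm:segmentinclusion}, and obtain the other by testing $q/p$ along a curve where $p$ vanishes to order exactly $2L$. The only cosmetic departures are that you phrase the reduction via the quotient isomorphism $R_0/\mathcal J\cong\C\{z_1\}/(z_1^{2L})$ rather than an explicit Weierstrass-type division of $f$, and you test along $(t,-q_1(t)+it^{2L})\in\R\times\UHP$ rather than the paper's real curve $(x,-q_1(x))\in\R^2$; also note your directional labels $\supseteq$/``reverse'' in the ideal identification appear to be swapped, though the underlying computation is right.
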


\begin{proof}
In this situation, $p= u p_1$ has one degree $1$ irreducible Weierstrass factor 
$p_1(z_1,z_2) = z_2 + q(z_1) + z_1^{2L} \psi(z_1)$ as above.
The proof breaks down into showing
\[
(p,\bar{p})R_0 = (z_2+q(z_1), z_1^{2L})R_0
\]
and
\begin{equation} \label{degree1inclusion}
\mathcal{I}^{\infty}(p,0) \subset (z_2+q(z_1), z_1^{2L})R_0. 
\end{equation}

For the first equality note
\[
\bar{p}(z_1,z_2) = \bar{u}(z_1,z_2)(z_2+q(z_1) + z_1^{2L} \bar{\psi}(z_1)).
\]
The factors $u, \bar{u}$ are units in $R_0$ and so the ideal $(p,\bar{p}) R_0$ is then equal to 
$(p_1,\bar{p}_1)R_0$.
Recall that in any commutative ring the ideal generated by $A,B$ equals
the ideal generated by $A, B+CA$ for any other element $C$.
We use this fact below
\[
\begin{aligned}
&(z_2 + q(z_1) + z_1^{2L} \psi(z_1), z_2 + q(z_1) + z_1^{2L} \bar{\psi}(z_1))R_0 \\
&= (z_2 + q(z_1) + z_1^{2L} \psi(z_1), z_1^{2L}(\psi(z_1)-\bar{\psi}(z_1))R_0 \\
&= (z_2 + q(z_1) + z_1^{2L} \psi(z_1), z_1^{2L})R_0 \\
&= (z_2 + q(z_1), z_1^{2L})R_0. 
\end{aligned}
\]
The second equality is because $\psi(z_1)-\bar{\psi}(z_1)$
is a unit (from $\Im \psi(0)\ne 0$).  

Next we prove \eqref{degree1inclusion} by writing $f\in R_0$
as 
\[
f(z_1,z_2) = f_0(z_1) + (z_2+q(z_1)) f_1(z_1,z_2)
\]
for $f_0 \in \C\{z_1\}$ and $f_1 \in R_0$.
This can be accomplished for instance via the change
of variable $w_2 = z_2+ q(z_1)$.
If $f \in \mathcal{I}^{\infty}(p,0)$ then we wish to show $f\equiv 0$
modulo the ideal $\mathcal{I} := (z_2 + q(z_1), z_1^{2L})R_0$.
We can freely reduce $f$ mod $\mathcal{I}$ and assume
$f_1 \equiv 0$ and $\deg f_0 < 2L$.

Finally, we examine $f/p$ along the path $z_2+q(z_1) = 0$
for $z_1 \in \R$.  Now, $p(z_1, -q(z_1))$ vanishes to order $2L$
and therefore $f(z_1, -q(z_1)) = f_0(z_1)$ 
must vanish to at least the same order.   Otherwise,
$f/p$ is unbounded in $\R^2$ along a path tending to $(0,0)$ 
and we can find arbitrarily
large values in $\UHP^2$ close to $(0,0)$ contradicting that
$f$ is locally $H^\infty$.  
Since $f_0$
has degree at most $2L$ we get $f_0\equiv 0$.
\end{proof}

We record the following useful corollary of the proof.

\begin{corollary}\label{cor:genericdim}
If $p\in \C[z_1,z_2]$ is pure stable and vanishes to order
$1$ at $(0,0)$, then $\dim R_0/(p,\bar{p})R_0$
is equal to the contact order of $p$ at $(0,0)$.
\end{corollary}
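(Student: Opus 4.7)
The plan is to read off the dimension directly from the explicit description of $(p,\bar{p})R_0$ that was produced in the proof of Theorem \ref{thm:onebranch}. Recall that the theorem established
\[
(p,\bar{p})R_0 = (z_2 + q(z_1),\, z_1^{2L})R_0,
\]
where $p = u p_1$ with $p_1 = z_2 + q(z_1) + z_1^{2L}\psi(z_1)$, and where the single branch has cutoff $2L$. By the discussion in Section~\ref{subsec:seg}, the contact order of $p$ at $(0,0)$ is precisely $2L$, so it suffices to prove that $R_0/(z_2+q(z_1), z_1^{2L})R_0$ is a $\C$-vector space of dimension $2L$.

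To compute the dimension I would invoke the change of variables $(z_1,z_2)\mapsto(z_1, w_2)$ with $w_2 := z_2 + q(z_1)$. Since $q\in \R[z_1]$ with $q(0)=0$, this is an analytic automorphism of a neighborhood of $(0,0)$ and therefore induces a ring automorphism $\Phi$ of $R_0 = \C\{z_1,z_2\}$ sending $z_2+q(z_1)$ to $w_2$ and fixing $z_1$. Hence
\[
R_0/(z_2+q(z_1),\, z_1^{2L})R_0 \;\cong\; \C\{z_1,w_2\}/(w_2,\, z_1^{2L})\C\{z_1,w_2\}.
\]

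The right-hand quotient is straightforward to analyze: modding out by $w_2$ reduces to $\C\{z_1\}/(z_1^{2L})$, which as a $\C$-vector space has basis $\{1, z_1, z_1^2, \dots, z_1^{2L-1}\}$ and hence dimension $2L$. This yields $\dim_{\C} R_0/(p,\bar{p})R_0 = 2L$ as required.

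There is no real obstacle here; the only thing to check carefully is that the change of variable is actually a well-defined automorphism of $R_0$ (it is, because $q(0)=0$, so $w_2$ and $z_1$ form a regular system of parameters at the origin), and that the contact order in the single-branch case is unambiguously $2L$ as discussed in Section~\ref{subsec:seg}. Both facts have already been established earlier in the paper, making this corollary a direct bookkeeping consequence of Theorem \ref{thm:onebranch}.
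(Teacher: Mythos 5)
Your proof is correct and follows essentially the same route as the paper: it uses the identity $(p,\bar{p})R_0 = (z_2+q(z_1),z_1^{2L})R_0$ established in the proof of Theorem \ref{thm:onebranch}, applies the change of variables $w_2 = z_2+q(z_1)$, and reads off the dimension $2L$. You simply spell out the basis $\{1,z_1,\dots,z_1^{2L-1}\}$ that the paper leaves implicit behind the phrase ``it is not hard to prove.''
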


\begin{proof}
In the proof above we have $(p,\bar{p})R_0 = (z_2+q(z_1), z_1^{2L})R_0$
where we note that $2L$ is the contact order of $p$
at $(0,0)$.  Using the change of variables  $w_2 = z_2+ q(z_1)$, it is not hard to prove
\[
\dim R_0/(z_2+q(z_1), z_1^{2L})R_0 = \dim R_0/(z_2, z_1^{2L})R_0 = 2L.
\]
\end{proof}

 \begin{example} \label{ex:153:ideal}
 Example \ref{ex:153} falls under the assumptions of
 the above theorem.
 The Cayley transformed version of Example \ref{ex:153}
 (designed to send $(1,1)$ to $(0,0)$)
 is
 \[
 p(z_1,z_2) = -4i(z_1+z_2- 2z_1^3-6z_1^2z_2 - iz_1(z_1+z_2-4z_1^2z_2)
  \]
 which vanishes to order $1$
 at $(0,0)$. This is the same polynomial
 as that of Examples \ref{ex153homog}, \ref{ex153puiseux}.
 Example \ref{ex153puiseux} shows 
 the pure stable Puiseux factor of $p$ starts out
 \[
 z_2+\underset{q(z_1)}{\underbrace{z_1+4z_1^3+24z_1^5}}+8iz_1^6 +\cdots.
 \]
 In this case $L = 3$ and 
 the ideal $\mathcal{I}^{\infty}(p,0)$ is therefore
 \[
 (z_2+z_1+4z_1^3+24z_1^5, z_1^6)R_0 \cap \C[z_1,z_2].
 \]
 \eox
 \end{example}

We can address three cases where the combinatorics is 
simple enough to prove the reverse inclusion
of Theorem \ref{thm:segmentinclusion}.
Here are the three cases:
\begin{description}
\item[Repeated segments] All of the initial segments of $p$ are the
same but we allow different cutoffs.
\item[Double points] $p$ vanishes to order $2$  at $(0,0)$.
\item[Ordinary multiple points] $p$ has distinct tangents
at $(0,0)$.
\end{description}

\begin{theorem}\label{thm:reverseinclusion}
Suppose $p\in \C[z_1,z_2]$ is pure stable with $p(0,0)=0$.
If any of the above conditions hold, then
using the notation of Theorem \ref{thm:segmentinclusion}
\[
\mathcal{I}^{\infty}(p,0) = \C[z_1,z_2]\cap 
\prod_{j=1}^{k} (z_2+q_j(z_1), z_1^{2L_j})^{M_j} R_0.
\]
\end{theorem}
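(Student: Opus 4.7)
The forward inclusion is Theorem \ref{thm:segmentinclusion}, so the task is the reverse. By Theorem \ref{thm:squarep} I may replace $p$ with the model $[p] = \prod_{j=1}^k \pi_j^{M_j}$, where $\pi_j := z_2 + q_j(z_1) + iz_1^{2L_j}$, and set $\mathcal{I} := \prod_j (z_2+q_j, z_1^{2L_j})^{M_j} R_0$. The goal is then: if $f/[p]$ is locally $H^\infty$ at $(0,0)$, then $f \in \mathcal{I}$. The two tools I would rely on throughout are Theorem \ref{PuiseuxLowerBound}, which gives $|\pi_j| \gtrsim |z_1|^{2L_j}$ on $\UHP^2$ near $(0,0)$ (so in particular $z_1^{2L_j}/\pi_j$ and $(z_2+q_j)/\pi_j$ are locally bounded), and Theorem \ref{thm:onebranch}, which handles a single smooth branch.

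For the ordinary multiple points case, all $M_j=1$ and the $q_j'(0)$ are pairwise distinct, so the $\pi_j$ are pairwise coprime in the UFD $R_0$. A partial-fraction identity in $R_0$ lets me write $f/[p] = \sum_j h_j/\pi_j$ with $h_j \in R_0$, equivalently $f = \sum_j h_j \prod_{l\neq j} \pi_l$. The plan is to argue that each individual $h_j/\pi_j$ is itself locally $H^\infty$, by approaching the branch of $\pi_{j_0}$ through $\UHP^2$ along paths on which $\pi_{j_0}$ attains its Puiseux lower bound while the transversal factors $\pi_l$ ($l\neq j_0$) stay comparable to $|z_1|$. Theorem \ref{thm:onebranch} then delivers $h_j \in (z_2+q_j, z_1^{2L_j}) R_0$, and since $\pi_l \in (z_2+q_l, z_1^{2L_l}) R_0$, we conclude $f \in \mathcal{I}$. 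For the double-point case, the only subcase not already covered by ordinary multiple points is $k=1,\ M_1 = 2$, so $[p] = \pi^2$. I would first note that $f/\pi^2$ bounded implies $f/\pi$ bounded (multiply by the locally bounded $\pi$), apply Theorem \ref{thm:onebranch} to write $f = A\pi + B z_1^{2L}$, and then iterate, using Weierstrass division by $\pi$ together with extra vanishing conditions extracted from approach paths inside $\UHP^2$ along the branch to push $A$ and $B$ each into $(z_2+q, z_1^{2L}) R_0$. For the repeated segments case, the local change of variable $w = z_2 + q(z_1)$ brings $[p]$ to $\prod_j (w + i z_1^{2L_j})^{M_j}$ and the target ideal to $\prod_j (w, z_1^{2L_j})^{M_j} R_0$; I would induct on $\sum_j M_j$, peeling off one factor via Theorem \ref{thm:onebranch} and controlling the residual with the Puiseux lower bound.

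The main obstacle I anticipate is the partial-fractions step in the ordinary multiple points case. Because Theorem \ref{PuiseuxLowerBound} forces every branch of $[p]$ to lie outside $\UHP^2$, I cannot localize $f/[p]$ near a single branch and read off boundedness of the corresponding summand directly; instead I need to construct a rich enough family of complex approach paths inside $\UHP^2$ along which one factor becomes dominantly small and the others remain transversal, and then certify that contributions from the other summands cannot cancel a hypothetical blow-up of $h_{j_0}/\pi_{j_0}$. A parallel difficulty in the double-point ($k=1,\,M_1=2$) and repeated-segment cases is that the two generators of $(z_2+q, z_1^{2L})$ share the tangent direction $z_2+q=0$, so the ideal powers $(z_2+q, z_1^{2L})^{M_j}$ cannot be disentangled by transversality and the argument must rely on Weierstrass bookkeeping together with finely tuned estimates from Theorem \ref{PuiseuxLowerBound}.
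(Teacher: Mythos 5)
Your proposal departs substantially from the paper's proof, and it contains both an acknowledged gap (the partial-fractions step) and an unacknowledged error in the case analysis for double points.

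The case analysis error: you claim that after the ordinary-multiple-point and repeated-segment arguments, the only remaining double-point configuration is $k=1$, $M_1=2$, i.e.\ $[p]=\pi^2$. That is not right. First, $k=1$, $M_1=2$ has a single initial segment $q_1$ of multiplicity two, so it \emph{is} covered by the repeated-segments case, not a new case. Second, the configuration you have missed is the tacnode: $k=2$, $M_1=M_2=1$, with $q_1\neq q_2$ but $q_1'(0)=q_2'(0)$. Here the tangents coincide, so it is not covered by ordinary multiple points, and $q_1\neq q_2$, so it is not covered by repeated segments. The paper's double-point argument handles exactly this case by tracking $K$, the vanishing order of $q_1-q_2$, which can be any integer $\geq 1$.

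The gap you flag is also a real one, and your back-of-envelope sketch does not close it. In the partial-fraction decomposition $f/[p]=\sum_j h_j/\pi_j$ (with $h_j=h_j(z_1)$ of $z_2$-degree zero), evaluating along the real curve $z_2=-q_{1}(z_1)$ makes $\pi_1$ vanish to order $2L_1$ while each transversal $\pi_l$ has order exactly $1$. The other summands $h_l/\pi_l$ therefore contribute a singularity of order at most $1$, which can genuinely cancel the lowest-order part of $h_1/\pi_1$. Boundedness of the sum thus only forces $h_1(z_1)$ to vanish to order $2L_1-1$, one short of what you need to invoke Theorem \ref{thm:onebranch} and place $h_1 \in (z_2+q_1,z_1^{2L_1})R_0$. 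Your proposed remedy --- complex approach paths in $\UHP^2$ certifying non-cancellation --- is exactly the heart of the matter, and it is not carried out.

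The paper avoids all of this by never separating $f/[p]$ into pieces. Instead, in each case it writes down an explicit generating set for $\mathcal{I}$ whose elements are powers of $z_1$ times products of initial segments, expresses an arbitrary $f\in R_0$ in a Weierstrass-division basis adapted to products of initial segments, and then \emph{reduces $f$ modulo $\mathcal{I}$ first}, yielding explicit degree bounds on the surviving coefficients $f_0,\dots,f_{M-1}\in\C\{z_1\}$. After this reduction, evaluating $f$ and $[p]$ along the curves $z_2=-q_j(z_1)$ (and, where needed, the perturbed curves $z_2=tz_1^{2L_j}-q_j(z_1)$) forces each $f_n$ to vanish to an order \emph{strictly exceeding} its degree bound, whence $f_n\equiv 0$. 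Because this works with the reduced $f$ as a single object rather than a sum of fractions, there is no cancellation problem to worry about. In short, the paper's approach converts the problem into a pure order-of-vanishing versus degree-bound comparison, whereas your approach tries to propagate boundedness term by term and runs into the non-separability you anticipated.
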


The proofs of all three cases follow the same strategy:
\begin{itemize}
\item Find a generating set of 
\[
\mathcal{I} = \prod_{j=1}^{k} (z_2+q_j(z_1),z_1^{2L_j})^{M_j} R_0
\]
with elements of the form $z_1^{K_j}$ times a product of 
$j$ initial segments where we want $K_j$ minimal.
\item Write an arbitrary $f\in R_0$ as a combination
of products of initial segments and 
for $f \in \mathcal{I}^{\infty}(p,0)$, reduce $f$ modulo $\mathcal{I}$.
\item Show that $f$'s coefficients are forced to vanish
to an order higher than their degree because of the order
of vanishing of $[p]$ along certain initial segments (or curves 
with high order of contact with initial segments).
\end{itemize}

\begin{proof}[Proof of Theorem \ref{thm:reverseinclusion} for Repeated segments]
Suppose $p$ has the single initial segment $z_2+q(z_1)$
repeated with cutoffs $2L_1,\dots, 2L_M$.  The cutoffs
need not be distinct.  We assume without loss
of generality that $2L_1\leq 2L_2\leq \cdots \leq 2L_M$.
Let $S_k = 2\sum_{j=1}^{k} L_j$ and $S_0 = 0$.
The product ideal
\[
\mathcal{I} := \prod_{j=1}^{M} (z_2+q(z_1), z_1^{2L_j}) R_0
\]
is generated by $z_1^{S_k} (z_2+q(z_1))^{M-k}$ for $k=0,\dots, M$
with respect to the ring $R_0$. 
Every $f\in R_0$ can be represented
\[
f(z) = \sum_{j=0}^{M-1} f_j(z_1) (z_2+q(z_1))^{j} + f_M(z_1,z_2) (z_2+q(z_1))^{M}
\]
where $f_j\in \C\{z_1\}$, $j=0,\dots, M-1$, $f_M \in R_0$.
If $f \in \mathcal{I}^{\infty}(p,0)$ then we can reduce $f$ modulo $\mathcal{I}$
and assume $\deg f_j < S_{M-j}$ and $f_M\equiv 0$.
Our goal is to show $f_0,\dots, f_{M-1} \equiv 0$.
To achieve this we examine $f/p$ on certain
curves in $\R^2$ and show that boundedness along
these curves implies all of the $f_j$ are zero.
At this stage there is no harm in replacing $z_2$ with $z_2-q(z_1)$
and assuming $q\equiv 0$.  
With this reduction we have
\[
[p](z) = \prod_{j=1}^{M} (z_2 + i z_1^{2L_j}).
\]

To show $f_0\equiv 0$ note that 
\[
\frac{f(z_1,0)}{[p](z_1,0)} = \frac{f_0(z_1)}{c z_1^{S_M}}.
\]
For this to be bounded yet $\deg f_0 < S_M$ 
we must have $f_0\equiv 0$.

The inductive argument might
be clearer if we present the next step.
Note $[p](z_1,t z_1^{2L_M})$ vanishes
 to order $S_M$ so that
 $f(z_1, t z_1^{2L_M})$ must also vanish to
 order at least $S_M$ for every $t\in \R$.
 Then,
 \[
 \lim_{t\to 0} \frac{1}{t} f(z_1,tz_1^{2L_M}) = f_1(z_1)z_1^{2L_M}
 \]
 must vanish to order at least $S_M$
 and so $f_1$ vanishes to order at least $S_{M-1} = S_M-2L_M$
 implying $f_1\equiv 0$ since $\deg f_1 < S_{M-1}$.
 
Now suppose
 $f_0,\dots, f_{j} \equiv 0$.
Note that $[p](z_1, t z_1^{2L_{M-j}})$ vanishes to order
$S_{M-j}+ j 2L_{M-j}$ and therefore $f(z_1, t z_1^{2L_{M-j}})$
must vanish to at least this order.  But,
\[
\lim_{t \to 0} \frac{1}{t^{j+1}} f (z_1, t z_1^{2L_{M-j}}) = f_{j+1}(z_1) z_1^{(j+1) 2L_{M-j}}
\]
must also vanish to at least this order implying $f_{j+1}(z_1)$
vanishes to order at least $S_{M-j}+j 2L_{M-j} - (j+1)2L_{M-j}= S_{M-j-1}$.
Since $\deg f_{j+1} < S_{M-j-1}$ we see that $f_{j+1}\equiv 0$.
\end{proof}

\begin{proof}[Proof of Theorem \ref{thm:reverseinclusion} for Double points]
For this case, we assume $p$ has two initial segments
$z_2+q_1(z_1), z_2+q_2(z_1)$ with cutoffs $2L_1,2L_2$.  
We may assume $q_1 \ne q_2$ by the previous case
and we assume $L_1\leq L_2$.
Let $K$ be the order of vanishing of $q_1(z_1)-q_2(z_1)$.  
Set $\mathcal{I} = (z_2+q_1(z_1),z_1^{2L_1})(z_2+q_2(z_1), z_1^{2L_2})R_0$.
Then,
\[
\begin{aligned}
\mathcal{I} 
&= ((z_2+q_1)(z_2+q_2), z_1^{2L_1}(z_2+q_2), z_1^{2L_2} (z_2+q_1), z_1^{2(L_1+L_2)}) R_0 \\
&=((z_2+q_1)(z_2+q_2), z_1^{2L_1}(z_2+q_2), z_1^{2L_2} (q_1-q_2), z_1^{2(L_1+L_2)}) R_0 \\
&= ((z_2+q_1)(z_2+q_2), z_1^{2L_1}(z_2+q_2), z_1^{2L_2+K}, z_1^{2(L_1+L_2)}) R_0 \\
&=((z_2+q_1)(z_2+q_2), z_1^{2L_1}(z_2+q_2), z_1^{2L_2+N} )R_0 \\
\end{aligned}
\]
where $N = \min\{2L_1, K\}$.  
Also note $[p](z) = (z_2+ q_1(z_1) + i z_1^{2L_1})(z_2+q_2(z_1) + i z_1^{2L_2}).$

Any $f \in R_0$  can be written
\[
f(z_1,z_2) = f_0(z_1) + f_1(z_1) (z_2+q_2(z_1)) 
+ f_2(z_1,z_2) (z_2+q_2(z_1))(z_2+q_1(z_1))
\]
for $f_0,f_1 \in \C\{z_1\}, f_2 \in R_0$. 
If $f\in \mathcal{I}^{\infty}(p,0)$ we
wish to show $f\in \mathcal{I}$ so we
may freely reduce $f$ modulo $\mathcal{I}$
and assume $f_2 \equiv 0$, $\deg f_0 < 2L_2+N$, $\deg f_1 < 2L_1$.

Now, $[p](z_1,-q_2(z_1))$ vanishes to order $2L_2+N$
hence $f(z_1,-q_2(z_1)) = f_0(z_1)$ must vanish to at least
that same order.
This implies $f_0\equiv 0$ by our degree bound.

Next, $q_2(z_1)-q_1(z_1) + t z_1^{2L_1}$
generically (in $t$) vanishes to order $N$
and so 
$[p](z_1, tz_1^{2L_1}-q_1(z_1))$
generically vanishes to order $2L_1 + N$.
Therefore, 
$f(z_1, tz_1^{2L_1} - q_1(z_1)) = f_1(z_1) (q_2(z_1)-q_1(z_1) + t z_1^{2L_1})$
vanishes to at least this order
implying that $f_1(z_1)$ vanishes to order at least $2L_1$.
Since $\deg f_1 < 2L_1$, we have $f_1\equiv 0$.
This shows $f \in \mathcal{I}$.
\end{proof}

\begin{proof}[Proof of Theorem \ref{thm:reverseinclusion} for 
Ordinary multiple points]
For this case we assume we have 
initial segments $z_2+q_j(z_1)$ with cutoffs $2L_j$, $j=1,\dots, M$
and each difference $q_j(z_1) - q_k(z_1)$ vanishes to order $1$ for $j\ne k$.
Assume $2L_1\leq 2L_2\leq \cdots \leq 2L_M$.  
Define the ideal
\[
\mathcal{I} := \prod_{j=1}^{M} (z_2+q_j(z_1), z_1^{2L_j}) R_0.
\]
Our strategy now is to iteratively simplify the generators of this
ideal.  
Since $H_k:= z_1^{2L_k} \prod_{j\ne k} (z_2+q_j(z_1)) \in \mathcal{I}$,
we see that $z_1^{2(L_{k+1}-L_k)} \color{black} H_k - H_{k+1}$ equals
 $z_1^{2L_{k+1} + 1} \prod_{j\ne k,k+1} (z_2+q_j(z_1))$
 times a unit.  
 Repeating this argument we see
 \[
 z_1^{2L_{k+2} +2} \prod_{j\ne k,k+1,k+2} (z_2+q_j(z_1)) \in \mathcal{I}
 \]
 and more generally
 \[
 z_1^{2L_{k+n}+ n} \prod_{j\ne k,\dots, k+n} (z_2+q_j(z_1)) \in \mathcal{I}
 \]
 for $k+n \leq M$.
 In particular,
 \[
 z_1^{2L_{n} +n-1}  \prod_{j=n+1}^{M} (z_2+q_j(z_1)) \in \mathcal{I}
 \]
 and it is these generators for $n=1,\dots, M$ that we
 will use to reduce elements of $\mathcal{I}^{\infty}(p,0)$ modulo $\mathcal{I}$. 

Given $f\in R_0$ we may write
\[
f(z_1,z_2) = f_0(z_1) + \sum_{n=1}^{M-1} f_n(z_1) \prod_{j=M-n+1}^{M} (z_2+q_j(z_1)) + f_M(z_1,z_2) \prod_{j=1}^M (z_2+q_j(z_1)) 
\]
where $f_n(z_1) \in \C\{z_1\}$ for $n=0,\dots, M-1$ and $f_M\in R_0$.
If $f\in \mathcal{I}^{\infty}(p,0)$ we may safely reduce $f$ modulo $\mathcal{I}$
and assume $f_M \equiv 0$, while $\deg f_n < 2L_{M-n} + M-n -1$.

Finally, we compare the order of vanishing of $[p]$ along different segments to $f$ and
show that $f \equiv 0$.
Note that $[p](z_1, -q_j(z_1))$ vanishes to order $2L_j + M-1$.
Then, $f(z_1,-q_M(z_1)) = f_0(z_1)$ must vanish to order at least
$2L_M + M-1$ implying $f_0\equiv 0$ by our degree bound.
Next, $f(z_1,-q_{M-1}(z_1)) = f_1(z_1) (q_{M}(z_1) - q_{M-1}(z_1))$ vanishes to order
at least $2L_{M-1} +M-1$ but $q_{M-1}-q_{M}$ vanishes to order $1$ because of
distinct tangents.
So, $f_1$ vanishes to order at least $2L_{M-1} + M-2$ again implying $f_1 \equiv 0$.
Continuing in this fashion, if $f_0,\dots, f_{n-1}\equiv 0$,
then $f(z_1, -q_{M-n}(z_1)) = f_{n}(z_1) \prod_{j=M-n+1}^{M} (q_j(z_1)- q_{M-n}(z_1))$
vanishes to order at least $2L_{M-n} + M-1$ but the product
vanishes to order $n$ so $f_n$ vanishes to order at least $2L_{M-n} +M-n-1$
implying $f_n\equiv 0$.
In the end we arrive at $f \equiv 0$ modulo $\mathcal{I}$.
\end{proof}

\section{Local and global integrability of derivatives}\label{sec:integrability}

In \cite{bps18, bps19a, bps19b}, the integrability of an RIF's partial derivatives on $\mathbb{T}^2$ 
was used to measure how badly behaved the function was near its singularities. In this section, we partially extend that analysis to rational Schur functions on $\mathbb{T}^2$. The denominators of rational Schur functions are exactly the atoral, stable polynomials on $\mathbb{D}^2$ and so those polynomials will be featured in this section.

\subsection{Integrability of functions with an atoral, stable denominator} 

We now count the number of integrability indices associated to a given atoral, stable $p$ on $\mathbb{D}^2$, where:

\begin{definition} A (possibly infinite) number $\p > 0$ is an \emph{integrability index of $p$} if there is a $q \in  \mathbb{C}[z_1, z_2]$ such that 
\[ \p = \sup_{\p' > 0} \left\{ \p': \tfrac{q}{p} \in L^{\p'}(\mathbb{T}^2) \right\}.\]
Here, $q$ is said to \emph{witness $p$ attaining its integrability index $\p$} (or, in brief, {\it witness $\p$}), and  $\p$ is called the \emph{integrability cut-off of $q$}.
\end{definition}

 It is not hard to show that each $q\in  \mathbb{C}[z_1, z_2]$ has a well-defined integrability cut-off; the details are given below in Remark \ref{rem:IIexists}. 
As $p$ has at most finitely many zeros on $\mathbb{T}^2$, the integrability of any $q/p$ on $\mathbb{T}^2$ will only depend on its integrability near those zeros. To examine this local integrability, let $\tau \in \mathbb{T}^2$ be a zero of $p$ and for $\p > 0$, define
\[ L^\p_\tau(\mathbb{T}^2) = \left \{ f  : f \in L^\p(U_{\tau})  \text{ for some open } U_{\tau} \subseteq \mathbb{T}^2 \text{ with } \tau \in U_{\tau} \right \}.\]
Let $R_\tau$ denote the local ring of convergent power series in $z_1, z_2$ centered at $\tau$. Then the integrability indices of $p$ at $\tau$ are defined as follows: 

\begin{definition}  A (possibly infinite) number  $\p > 0$ is an \emph{integrability index of $p$ at $\tau$} if there is a $q \in R_\tau$ such that 
\[ \p = \sup_{\p' > 0} \left\{ \p': \tfrac{q}{p} \in L^{\p'}_\tau(\mathbb{T}^2) \right\}.\]
Here, $q$ is said to \emph{witness $\p$} and $\p$ is called the \emph{$\tau$ integrability cut-off of $q$}.
\end{definition}

Note that $\p = \infty$ is always an integrability index of $p$ since it is witnessed by $q=p$. To study other $q$, let $N_{\tau}(p, \tilde{p})$ denote the intersection multiplicity of $p$ and $\tilde{p}$ at $\tau$. The positive integer $N_{\tau}(p, \tilde{p})$ is an algebraic characteristic of $p$ and can for example be computed via the equation
\begin{equation} \label{eqn:intmult} N_{\tau}(p, \tilde{p}) =\dim \left( R_\tau/ ( p, \tilde{p} ) R_\tau \right),\end{equation} 
see Section $12$ in \cite{Kne15} and Proposition 2.11 in \cite[Chapter 4]{clo05}. For more details about intersection multiplicity, we refer the reader to Section $12$ in \cite{Kne15}.
Here since $p$ and $\tilde{p}$ have no common factors, $N_{\tau}(p, \tilde{p})$ is finite. 
This allows one to show that each $q \in R_\tau$ has a well-defined $\tau$ integrability cut-off.

\begin{remark} \label{rem:IIexists} Let $M =N_{\tau}(p, \tilde{p})$ and let  $[1], [z_1], \dots, [z_1^M]$
 denote the cosets of  $1, z_1, \dots, z_1^M$ in $R_\tau/ ( p, \tilde{p} ) R_\tau$. By \eqref{eqn:intmult},
 these cosets are linearly dependent, thus there is some nonzero polynomial $r \in \mathbb{C}[z_1]$
such that $r \in  ( p, \tilde{p} ) R_\tau$. Fixing any $q \in R_\tau$, there is a small open set $U \subseteq \mathbb{T}^2$ containing $\tau$ such that 
\[\int_U \left |\tfrac{q}{p}(z) \right |^{\p} |dz| = \int_U \left |\tfrac{rq}{p}(z) \right |^{\p}   \left | \tfrac{1}{r} (z_1) \right |^{\p}  |dz| \lesssim \int_U  \left |\tfrac{1}{r(z_1)} \right |^{\p}  |dz| \le \int_{\mathbb{T}}  \left |\tfrac{1}{r(z_1)} \right |^{\p} |d z_1|.  \]
Note $|dz|$ is shorthand for $|dz_1||dz_2|$.
By the fundamental theorem of algebra, there is certainly some $\p >0$ such that $\tfrac{1}{r} \in L^{\p}(\mathbb{T})$,
which implies  that $q$ has a well-defined $\tau$ integrability cut-off.
Applying this argument at each zero of $p$ on $\mathbb{T}^2$ also shows that each $q\in  \mathbb{C}[z_1, z_2]$ has a well-defined integrability cut-off.
\end{remark} 

The result below bounds the number of finite integrability indices of $p$ at $\tau$ in terms of  $N_{\tau}(p, \tilde{p})$.

\begin{proposition} \label{prop:int} Let $p$ be an atoral, stable polynomial on $\mathbb{D}^2$ and let $\tau \in \mathbb{T}^2$ be a zero of $p$. Then, $p$ has at most $N_{\tau}(p, \tilde{p})$ finite integrability indices at $\tau.$
\end{proposition}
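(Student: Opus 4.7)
The plan is to exploit the identity $\dim_{\mathbb{C}} V = N_{\tau}(p, \tilde{p})$ from \eqref{eqn:intmult}, where $V := R_\tau/(p, \tilde{p}) R_\tau$, and show that witnesses of distinct finite integrability indices descend to linearly independent classes in $V$.

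First, I would verify that the $\tau$-integrability cut-off of $q \in R_\tau$ depends only on the class $[q] \in V$. The key input is that $\tilde{p}/p$ is a rational inner function on $\mathbb{D}^2$, and since $|\tilde{p}(\zeta)| = |p(\zeta)|$ for every $\zeta \in \mathbb{T}^2$, the ratio $\tilde{p}/p$ is locally bounded on $\mathbb{T}^2$ away from zeros of $p$. Thus whenever $q_1 - q_2 = fp + g\tilde{p}$ with $f, g \in R_\tau$, the difference
$$\frac{q_1}{p} - \frac{q_2}{p} = f + g\,\frac{\tilde{p}}{p}$$
is locally bounded at $\tau$, which, together with $\mathbb{T}^2$ having finite measure, shows $q_1/p \in L^{\p}_\tau(\mathbb{T}^2)$ if and only if $q_2/p \in L^{\p}_\tau(\mathbb{T}^2)$ for every $\p > 0$. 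Consequently the cut-off descends to a well-defined function on $V$.

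Next, suppose $p$ has distinct finite $\tau$-integrability indices $\p_1 < \p_2 < \cdots < \p_k$ with witnesses $q_1, \ldots, q_k \in R_\tau$. I would prove $[q_1], \ldots, [q_k]$ are linearly independent in $V$ by contradiction: if $\sum_{i=1}^k c_i q_i = fp + g\tilde{p}$ is a nontrivial relation and $j$ is the smallest index with $c_j \neq 0$, then
$$c_j \frac{q_j}{p} = f + g\,\frac{\tilde{p}}{p} - \sum_{i > j} c_i \frac{q_i}{p}.$$
Fix any $\p$ with $\p_j < \p < \p_{j+1}$ (any $\p > \p_j$ if $j = k$, where the sum is empty). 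The terms $f$ and $g\tilde{p}/p$ are locally bounded, hence in $L^{\p}_\tau$; each $q_i/p$ with $i > j$ lies in $L^{\p'}_\tau$ for every $\p' < \p_i$, so in particular in $L^{\p}_\tau$. Hence $q_j/p \in L^{\p}_\tau$ for some $\p > \p_j$, contradicting $\p_j$ being the cut-off of $q_j$. Combining the two steps gives $k \leq \dim V = N_{\tau}(p, \tilde{p})$.

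The main obstacle is the linear-independence step: one must pick the minimal $j$ to guarantee that every ``error'' term has strictly better integrability than $q_j/p$, and then use the strict separation of the finite indices to locate a single shared exponent $\p > \p_j$ for which every term on the right-hand side is $L^{\p}_\tau$. Once this shared exponent is chosen, the contradiction is immediate, and the dimension bound on $V$ closes the argument.
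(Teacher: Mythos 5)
Your proof is correct and follows essentially the same strategy as the paper: bound the number of distinct finite integrability indices by showing that witnesses of distinct indices map to linearly independent cosets in $R_\tau/(p,\tilde{p})R_\tau$, whose dimension equals $N_\tau(p,\tilde{p})$. The paper organizes this via a nested filtration of ideals $Q^\tau_\p$ and leaves the linear independence claim somewhat implicit, whereas you spell it out directly (cut-offs descend to cosets, then pick the minimal nonzero coefficient and use strict separation of indices to derive a contradiction), which is a worthwhile clarification but the same mechanism.
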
 

\begin{proof} First, define the following sets: $Q_0^\tau = R_\tau$,
\[ 
Q^{\tau}_\p = \left \{ q\in R_{\tau}:  \tfrac{q}{p} \in L^\p_\tau(\mathbb{T}^2) \right\} \quad \text{ for } 0 < \p <\infty, 
\]
and $Q^{\tau}_\infty = \cap_{\p>0} Q^{\tau}_\p.$ These sets are all ideals in $R_\tau$ and if $\p \le \hat{\p}$, then $Q_{\hat{\p}}^\tau \subseteq Q^\tau_{\p}.$ 
Now, assume that 
 $\p_1, \p_2, \dots, \p_M$ is an increasing sequence of distinct finite integrability indices of $p$ at $\tau$. We will show that $M \le N_{\tau}(p, \tilde{p})$. 
Choose $q_1, \dots, q_M \in R_\tau$ such that $q_j$ witnesses $\p_j$ for $j=1, \dots, M.$ Observe that if $\p'_1 <\p_j < \p'_2$, then $q_j \in Q^{\tau}_{\p'_1}$, but $q_j \not \in Q^{\tau}_{\p'_2}.$ Choose numbers $\kappa_1, \dots, \kappa_M$ such that 
\[ 0 < \p_1 < \kappa_1 < \p_2 < \kappa_2 < \dots < \p_M < \kappa_M < \infty.\] 
By the above observations and the fact that $q_1 \in Q_0^\tau$ automatically, we have
\[ q_1 \in Q_0^\tau \setminus Q_{\kappa_1}^\tau, \ \ q_2 \in Q_{\kappa_1}^\tau \setminus Q_{\kappa_2}^\tau, \dots, \ \ q_M  
\in Q_{\kappa_{M-1}}^\tau \setminus Q_{\kappa_M}^\tau.\]
Let $[q_1], \dots, [q_M]$ be the cosets generated by $q_1, \dots, q_M$ in $R_\tau/ ( p, \tilde{p} ) R_\tau$. Because $q_1/p, \dots, q_M/p$ have different integrability behaviors near $\tau$, $[q_1], \dots, [q_M]$ must be linearly independent in $R_\tau/ (p, \tilde{p} ) R_\tau$. By \eqref{eqn:intmult},
we must have $M \le N_{\tau}(p, \tilde{p})$, and the result follows.
\end{proof}

To move from local to global bounds, we will let $N_{\mathbb{T}^2}(p,\tilde{p})$ denote the sum of  $N_{\tau}(p, \tilde{p})$ over all of the common zeros $\tau$ of $p, \tilde{p}$ on $\mathbb{T}^2$. 
 As discussed in \cite{Kne15}, B\'ezout's theorem implies that if $\deg p = (n_1,n_2)$, then $N_{\mathbb{T}^2}(p,\tilde{p}) \le 2n_1n_2$.  Then Proposition \ref{prop:int} gives an immediate bound on the number of (global) integrability indices of $p$ and the possible integrability cut-offs for derivatives of rational functions with denominator $p$. 

\begin{corollary} \label{cor:int}  Let $p$ be an atoral, stable polynomial on $\mathbb{D}^2$. 
Then, $p$ has at most $N_{\mathbb{T}^2}(p, \tilde{p})$ finite integrability indices.
\end{corollary}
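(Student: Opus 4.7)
The plan is to reduce the global statement to the local one (Proposition \ref{prop:int}) by showing that every global integrability cut-off equals a local integrability cut-off at one of the finitely many zeros of $p$ on $\mathbb{T}^2$.

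First, I would observe that since $p$ is atoral stable, it has only finitely many zeros $\tau_1,\dots,\tau_k$ on $\mathbb{T}^2$. Away from these zeros, $1/p$ is bounded on $\mathbb{T}^2$, so for any $q\in\C[z_1,z_2]$ the finiteness of $\int_{\mathbb{T}^2}|q/p|^{\p}$ depends only on the behavior near each $\tau_j$. Choosing small disjoint open neighborhoods $U_{\tau_j}\subset\mathbb{T}^2$ that cover a neighborhood of all the zeros, one has $q/p \in L^{\p}(\mathbb{T}^2)$ if and only if $q/p \in L^{\p}(U_{\tau_j})$ for every $j$. Consequently, writing $\p^{\mathrm{glob}}(q)$ for the (global) integrability cut-off of $q$ and $\p^{\tau_j}(q)$ for its $\tau_j$ integrability cut-off,
\[
\p^{\mathrm{glob}}(q) \;=\; \min_{1\le j\le k}\p^{\tau_j}(q).
\]
Indeed, each set $\{\p>0:q/p\in L^{\p}_{\tau_j}(\mathbb{T}^2)\}$ is an interval $(0,\p^{\tau_j}(q))$ or $(0,\p^{\tau_j}(q)]$, and the global set is their intersection, whose supremum is the minimum of the individual suprema.

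Next, I would use this min formula to conclude that every finite global integrability index of $p$ is a finite local integrability index at some $\tau_j$. Precisely, if $\p$ is a finite integrability index witnessed by $q$, then $\p=\p^{\mathrm{glob}}(q)=\min_j \p^{\tau_j}(q)$ is attained at some index $j_0$, and since $\p<\infty$, $\p^{\tau_{j_0}}(q)$ is also finite; hence $\p$ is an integrability index of $p$ at $\tau_{j_0}$ witnessed by $q$ (viewed as an element of $R_{\tau_{j_0}}$). Therefore
\[
\#\bigl\{\text{finite integrability indices of }p\bigr\} \;\le\; \sum_{j=1}^{k}\#\bigl\{\text{finite integrability indices of }p\text{ at }\tau_j\bigr\}.
\]

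Finally, applying Proposition \ref{prop:int} at each $\tau_j$ bounds each term on the right by $N_{\tau_j}(p,\tilde{p})$, and summing gives the bound $\sum_{j=1}^{k}N_{\tau_j}(p,\tilde{p})=N_{\mathbb{T}^2}(p,\tilde{p})$, which is exactly the desired conclusion. The only real content beyond Proposition \ref{prop:int} is the min formula for global cut-offs; I don't expect any serious obstacle, but care must be taken with the case where the sup in the definition of cut-off is not attained, which is handled by noting that the defining level sets are downward-closed intervals so their intersection has supremum equal to the minimum of the suprema.
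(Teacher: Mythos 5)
Your argument is correct and is exactly the intended (though not explicitly written out) route in the paper: the paper remarks that Proposition~\ref{prop:int} gives an ``immediate bound'' once one sums over the finitely many zeros, and you supply the missing details by observing that the global cut-off of $q$ is the minimum of the local cut-offs (since $1/p$ is bounded off small neighborhoods of the zeros and the relevant level sets are downward-closed intervals), so every finite global index appears as a finite local index at some $\tau_j$. The counting then follows directly from Proposition~\ref{prop:int} applied at each $\tau_j$ and summing.
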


\begin{corollary} \label{cor:RSFderint}  Let $p$ be an atoral, stable polynomial on $\mathbb{D}^2$, and let the real numbers $\p_1, \dots, \p_M$ denote the finite integrability indices of $p^2$ and $\p_{M+1} = \infty.$ Then $M \le 4 N_{\mathbb{T}^2}(p, \tilde{p})$ and for each 
$q \in \mathbb{C}[z_1, z_2]$, there is a $j$ with $1 \le j \le M+1$ such that  
\[ \p_ j = \sup_{\p' > 0}  \left\{ \p':  \partial_{z_1}(\tfrac{q}{p}) \in  L^{p'}(\mathbb{T}^2)\right\}.\]
\end{corollary}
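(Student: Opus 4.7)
The plan is to derive the corollary from Corollary~\ref{cor:int} applied to $p^{2}$ in place of $p$, using the elementary observation that a first-order derivative of $q/p$ is a rational function whose denominator is $p^{2}$.

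First I would write, for any $q\in\mathbb{C}[z_{1},z_{2}]$,
\[
\partial_{z_{1}}\!\left(\tfrac{q}{p}\right) \;=\; \frac{p\,\partial_{z_{1}}q \;-\; q\,\partial_{z_{1}}p}{p^{2}} \;=:\; \frac{Q}{p^{2}},
\]
where $Q\in\mathbb{C}[z_{1},z_{2}]$. Since $p$ is atoral stable, the polynomial $p^{2}$ is also stable, and $\widetilde{p^{2}}=\tilde{p}^{2}$ shares no factor with $p^{2}$, so $p^{2}$ is atoral as well. Also, $p^{2}$ has only finitely many zeros on $\mathbb{T}^{2}$ (the same ones as $p$), so by Remark~\ref{rem:IIexists} applied to $p^{2}$ the polynomial $Q$ has a well-defined integrability cut-off relative to $p^{2}$. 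Directly from the definitions,
\[
\sup_{\p'>0}\{\p' : \partial_{z_{1}}(q/p)\in L^{\p'}(\mathbb{T}^{2})\} \;=\; \sup_{\p'>0}\{\p' : Q/p^{2}\in L^{\p'}(\mathbb{T}^{2})\}.
\]
If this supremum is finite, then $Q$ witnesses it as a finite integrability index of $p^{2}$, so it equals some $\p_{j}$ with $1\le j\le M$; otherwise it equals $\infty = \p_{M+1}$. This settles the second assertion.

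Next I would bound $M$ by applying Corollary~\ref{cor:int} to $p^{2}$, giving $M\le N_{\mathbb{T}^{2}}(p^{2},\widetilde{p^{2}}) = N_{\mathbb{T}^{2}}(p^{2},\tilde{p}^{2})$. It remains to relate this to $N_{\mathbb{T}^{2}}(p,\tilde{p})$. At each common zero $\tau\in\mathbb{T}^{2}$ of $p$ and $\tilde{p}$ (equivalently of $p^{2}$ and $\tilde{p}^{2}$), the standard multiplicativity of local intersection multiplicity in each slot—applicable because $p$ is atoral, so the pairs being considered share no component—yields
\[
N_{\tau}(p^{2},\tilde{p}^{2}) \;=\; 2\,N_{\tau}(p,\tilde{p}^{2}) \;=\; 4\,N_{\tau}(p,\tilde{p}).
\]
Summing over the finitely many common zeros of $p$ and $\tilde{p}$ on $\mathbb{T}^{2}$ gives $N_{\mathbb{T}^{2}}(p^{2},\tilde{p}^{2}) = 4\,N_{\mathbb{T}^{2}}(p,\tilde{p})$, and thus $M\le 4\,N_{\mathbb{T}^{2}}(p,\tilde{p})$.

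I do not anticipate any substantial obstacle: the entire content of the corollary is the observation that differentiating $q/p$ only squares the denominator, and what remains is routine book-keeping with atorality and the additivity of local intersection multiplicity.
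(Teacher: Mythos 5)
Your proof is correct and takes essentially the same approach as the paper: note that $\partial_{z_1}(q/p)$ has denominator $p^2$, apply Corollary~\ref{cor:int} to $p^2$, and use $N_{\mathbb{T}^2}(p^2,\tilde{p}^2) = 4\,N_{\mathbb{T}^2}(p,\tilde{p})$. The paper simply asserts the latter identity, whereas you justify it via the additivity of local intersection multiplicity; that is the only substantive difference.
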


\begin{proof} Note that $\partial_{z_1}(q/p)$ has denominator $p^2$ and $4 N_{\mathbb{T}^2}(p, \tilde{p}) = N_{\mathbb{T}^2}(p^2, \tilde{p}^2)$. Then the result follows immediately from Corollary \ref{cor:int}.
\end{proof}

\subsection{Derivative Integrability}
We now obtain more refined integrability results for partial derivatives of rational Schur functions.

\begin{definition} A number $\p > 0$ is a \emph{$z_1$-derivative integrability index of $p$} if there is a $q \in  \mathbb{C}[z_1, z_2]$ such that $q/p$ is a rational Schur function and
\begin{equation} \label{eqn:z1dindex} \p = \sup_{\p' > 0} \left\{ \p': \partial_{z_1}(\tfrac{q}{p}) \in L^{\p'}(\mathbb{T}^2) \right\}.\end{equation}
Since multiplying by a nonzero constant does not affect integrability, the $z_1$-derivative integrability indices of $p$
are exactly the numbers $\p$ such that \eqref{eqn:z1dindex} holds for some $q\in   \mathbb{C}[z_1, z_2]$ with $q/p \in H^{\infty}(\mathbb{D}^2).$
In both cases, $q$ is said to \emph{witness $\p$} and $\p$ is called the  \emph{$z_1$-derivative integrability cut-off of $q$}.  \end{definition}

We can easily identify some $z_1$-derivative integrability indices of $p$, using the concept of contact order defined in Section \ref{subsec:seg} and estimates derived from work in \cite{bps18}. To avoid a lengthy digression in the proof, we present some of those estimates in the following remark.

\begin{remark} \label{rem:estimates} Let $\phi = \frac{\tilde{p}}{p}$ be the RIF associated to $p$. If $K$ is the maximum contact order of $p$ at its zeros on $\mathbb{T}^2$, then Theorem 4.1 in  \cite{bps18} states that for $1 \le \p <\infty$, 
\[ \tfrac{\partial \phi}{\partial z_1} \in L^\mathfrak{p}(\mathbb{T}^2) \text{ if and only if } \mathfrak{p} < \frac{K+1}{K}.\]
The proof of Proposition \ref{prop:ii2} below includes  a local version of this for functions of the form $(z_2-\tau_2)^n \frac{\partial \phi}{\partial z_1}$. We provide some of the necessary estimates here. To that end, fix $\p$ with $1 \le \p < \infty$ and recall that standard properties of finite Blaschke products (as in Lemmas $4.2$ and $4.3$ in \cite{bps18}) show that for every interval $I \subseteq \mathbb{T}$ and almost every $z_2 \in \mathbb{T}$, 
\begin{equation} \label{eqn:FBP1}  \int_{I}  \left| \tfrac{\partial \phi}{\partial z_1}(z_1, z_2) \right|^\p |dz_1 | \approx \max_j \int_I \left( \frac{ 1-|a_j|^2}{|1-\bar{a}_j z_1|^2} \right)^\p |dz_1|,\end{equation}
where the $a_j$ are the zeros of $\tilde{p}(\cdot, z_2)$ in $\mathbb{D}$. Let $\tau=(\tau_1, \tau_2) \in \mathbb{T}^2$  be a zero of $p$. Then, as Theorem \ref{thm:purelp} can be translated to atoral stable polynomials on $\mathbb{D}^2$ (and the roles of $z_1, z_2$ interchanged), it can be used to obtain a parameterization 
\[ z_1 = \psi_1(z_2), \dots, z_1 = \psi_L(z_2)\]
of the components of the zero set $\mathcal{Z}_{\tilde{p}}$ that go through $\tau$ in some small neighborhood $U$ of $\tau$. Let $K_\tau^\ell$ denote the contact order of each such curve, so that if $z_2$ is close to $\tau_2$, then
\[ 1-|\psi_\ell (z_2) | \approx | z_2 - \tau_2|^{K_\tau^\ell}.\]
This is basically \eqref{eqn:CO} at the level of branches.  
Then if we let $I_1$ and $I_2$ be sufficiently small intervals in $\mathbb{T}$ around $\tau_1$ and $\tau_2$ respectively, \eqref{eqn:FBP1} implies that for each $z_2 \in I_2$
\begin{equation} \label{eqn:FBP2}  \int_{I_1}  \left| \tfrac{\partial \phi}{\partial z_1}(z_1, z_2) \right|^\p |dz_1 | \approx \max_{\ell} \int_{I_1} \left( \frac{ 1-|\psi_\ell(z_2)|^2}{|1-\overline{\psi_\ell(z_2)}
 z_1|^2} \right)^\p |dz_1|. \end{equation}
 The arguments in Lemma 4.3 \cite{bps18} control the integrals on the right-hand-side of \eqref{eqn:FBP2} when $I_1 = \mathbb{T}$ and $|\psi_\ell(z_2)| \ge 1/2$. However, the dominating part is the integral over a small interval in $\mathbb{T}$ centered at $e^{i\text{Arg}(\psi_\ell(z_2))}$. Thus, if one shrinks $I_2$ so that for each $z_2 \in I_2$, $|\psi_\ell(z_2)| >1/2$ and $I_1$ contains an interval of fixed length centered at $e^{i\text{Arg}(\psi_\ell(z_2))},$ the estimates in Lemma $4.3$  give
 \[ \int_{I_1} \left( \frac{ 1-|\psi_\ell(z_2)|^2}{|1-\overline{\psi_\ell(z_2)}
 z_1|^2} \right)^\p |dz_1| \approx (1-|\psi_\ell (z_2) |)^{1-\p} \approx | z_2 - \tau_2|^{K_\tau^\ell(1-\p)}\]
 for all $z_2 \in I_2$. Letting $K_\tau$ denote the contact order of $\phi$ at $\tau$ (or equivalently, the largest of the $K_\tau^\ell$), this gives
  \begin{equation} \label{eqn:FBP3} \int_{I_1}  \left| \tfrac{\partial \phi}{\partial z_1}(z_1, z_2) \right|^\p |dz_1 | \approx  | z_2 - \tau_2|^{K_\tau(1-\p)},\end{equation}
  for all $z_2 \in I_2$, which is exactly the estimate we need in the following proof.
\end{remark}


\begin{proposition} \label{prop:ii2}  Let $p$ be an atoral, stable polynomial on $\mathbb{D}^2$ with a zero at $\tau \in \mathbb{T}^2$. Let $K_\tau$ denote the contact order of $\phi = \tilde{p}/p$ at $\tau$. Then 
\begin{equation} \label{eqn:intlist}  \frac{K_{\tau}+1}{K_\tau},  \ \ \frac{K_{\tau}+1}{K_\tau-1}, \ \  \frac{K_{\tau}+1}{K_\tau-2}, \dots, \ \  \frac{K_{\tau}+1}{1}, \ \ \infty, \end{equation}
are  $z_1$-derivative integrability indices of $p$.
\end{proposition}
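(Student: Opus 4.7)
The plan is to exhibit, for each value in the list \eqref{eqn:intlist}, an explicit rational Schur function $q_n/p$ whose $z_1$-derivative has integrability cut-off equal to that value. For $n = 0, 1, \ldots, K_\tau$, set
\[
q_n = c_n\,(z_2-\tau_2)^n\, h\, \tilde p,
\]
where $h \in \C[z_1,z_2]$ is a polynomial bounded on $\overline{\D^2}$ with $h(\tau) \ne 0$ and vanishing to sufficient order at every other zero of $p$ on $\T^2$, and $c_n > 0$ is chosen so that $q_n/p = c_n (z_2-\tau_2)^n h\,\phi$ is a rational Schur function (for instance $c_n = (2^n \|h\|_{\overline{\D^2}})^{-1}$, using $|\phi| \le 1$ on $\D^2$ and $|z_2-\tau_2| \le 2$).

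To build $h$: for each $\tau' \in \mathcal{Z}_p \cap \T^2$ with $\tau' \neq \tau$, at least one coordinate $\tau'_i$ differs from $\tau_i$, so I would include a factor of the form $(z_i - \tau'_i)^{N_{\tau'}}$ for $N_{\tau'}$ sufficiently large compared to $K_{\tau'}$. Taking $h$ to be the product of all such factors, $h$ is bounded on $\overline{\D^2}$, $h(\tau) \ne 0$, and an argument paralleling Remark~\ref{rem:estimates} applied locally near each $\tau'$ shows that $h\, \partial_{z_1}\phi \in L^\p(\T^2)$ for every finite $\p$. In other words, with sufficient vanishing of $h$ at each $\tau'$, the behavior at $\tau'$ imposes no finite cut-off on $\partial_{z_1}(q_n/p) = c_n (z_2-\tau_2)^n h\, \partial_{z_1}\phi$.

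Next I would compute the contribution at $\tau$ itself. Since $h(\tau_2) \neq 0$, we have $|h(z_2)|^\p \approx |h(\tau_2)|^\p$ for $z_2$ in a small interval $I_2$ around $\tau_2$. Combining this with the local estimate $\int_{I_1} |\partial_{z_1}\phi(z_1,z_2)|^\p |dz_1| \approx |z_2-\tau_2|^{K_\tau(1-\p)}$ from Remark~\ref{rem:estimates}, the local $L^\p$-mass of $\partial_{z_1}(q_n/p)$ near $\tau$ is comparable to
\[
\int_{I_2} |z_2-\tau_2|^{n\p + K_\tau(1-\p)}\, |dz_2|,
\]
which converges if and only if $n\p + K_\tau(1-\p) > -1$. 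For $0 \le n < K_\tau$ this gives the exact cut-off $\p < \frac{K_\tau+1}{K_\tau - n}$, while for $n = K_\tau$ convergence holds for every finite $\p$. Hence the $z_1$-derivative cut-off of $q_n$ is exactly $\frac{K_\tau+1}{K_\tau-n}$ for $n = 0,1,\ldots,K_\tau-1$ and $\infty$ for $n = K_\tau$, recovering every value in \eqref{eqn:intlist}.

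The main obstacle is the construction and analysis of $h$: one must verify that, with the chosen factors of $h$, the function $h\, \partial_{z_1}\phi$ is globally in $L^\p(\T^2)$ for all finite $\p$. This requires a local analysis analogous to Remark~\ref{rem:estimates} but carried out at each competing singular point $\tau' \ne \tau$, and becomes delicate in the degenerate case where $\tau$ and $\tau'$ share one coordinate, since then the suppressing factor in $h$ vanishes in the direction transverse to the variable of integration in \eqref{eqn:FBP2}. In that case one invokes the full Puiseux description of the branches of $\tilde p$ at $\tau'$ from Theorem~\ref{thm:purelp} together with the standard finite Blaschke estimates to bound the resulting integrals.
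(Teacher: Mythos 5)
Your local computation at $\tau$ is exactly the paper's: both take $q_n=(z_2-\tau_2)^n\tilde p$, both use the Blaschke estimate $\int_{I_1}|\partial_{z_1}\phi|^{\p}\,|dz_1|\approx|z_2-\tau_2|^{K_\tau(1-\p)}$ from Remark~\ref{rem:estimates}, and both conclude the local cut-off is $\tfrac{K_\tau+1}{K_\tau-n}$. The global step, however, has a genuine gap. You assert, but only sketch, that some choice of powers in $h=\prod(z_i-\tau'_i)^{N_{\tau'}}$ makes $h\,\partial_{z_1}\phi\in L^{\p}$ near every other zero $\tau'$ for \emph{all} finite $\p$, and you yourself flag the case where the suppressing factor is $(z_1-\tau'_1)^N$ (i.e.\ varies in the variable of integration) as delicate. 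It is: the Poisson-kernel mass concentrates near $z_1 \approx \psi_\ell(z_2)$, so one must know how fast the peak pulls away from $\tau'_1$ as $z_2\to\tau'_2$, which requires the nonzero branch slope coming from Corollary~\ref{cor:purelp}. You gesture at the Puiseux expansion but do not carry out this estimate, so the global conclusion is not established.

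The paper sidesteps the estimate entirely by choosing the auxiliary factor from the ideal $(p,\tilde p)R_\lambda$. Since $\dim R_\lambda/(p,\tilde p)R_\lambda = N_\lambda(p,\tilde p)<\infty$, the cosets of $1,z_1,\dots,z_1^{N_\lambda}$ are dependent, yielding a one-variable $r_\lambda(z_1)\in(p,\tilde p)R_\lambda$, and dividing out $(z_1-\tau_1)$ factors makes $r_\lambda(\tau)\ne0$. Writing $r_\lambda=f_1p+f_2\tilde p$ gives
\[
r_\lambda\,\partial_{z_1}\phi = f_1\big(\partial_{z_1}\tilde p-\phi\,\partial_{z_1}p\big)+f_2\,\phi\big(\partial_{z_1}\tilde p-\phi\,\partial_{z_1}p\big),
\]
which is \emph{bounded} near $\lambda$ on $\T^2$ because $|\phi|\le1$ there --- no kernel estimate is needed, and the local $\p$-cut-off at $\lambda$ is automatically $\infty$. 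Your factor can in fact be rescued this way: for the coordinate $i$ with $\tau'_i\ne\tau_i$, $(z_i-\tau'_i)^N$ for $N$ large enough lies in $(p,\tilde p)R_{\tau'}$ (factor the paper's $r_{\tau'}$ as a power of $(z_i-\tau'_i)$ times a unit in $R_{\tau'}$), so the algebraic argument above applies. But as written you have neither made that observation nor carried out the analytic alternative you propose, so this step of the proof remains open.
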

\begin{proof} We first produce a polynomial that witnesses each index in \eqref{eqn:intlist} \emph{near} $\tau$. Write $\tau = (\tau_1, \tau_2)$ and let $q_n = (z_2-\tau_2)^n \tilde{p}$, for each $0 \le n \le K_\tau.$ Then, $ \partial z_1(q_n/p) \in  L_\tau^{\p}(\mathbb{T}^2)$ if and only if $(z_2-\tau_2)^n \frac{\partial \phi}{\partial z_1} \in L_{\tau}^{\p}(\mathbb{T}^2).$ Now  for $1 \le \p < \infty$,  recall from Remark \ref{rem:estimates} that we can choose intervals $I_1, I_2 \subseteq \mathbb{T}$ around $\tau_1, \tau_2$ respectively such that \eqref{eqn:FBP3} holds for almost every $z_2 \in I_2.$
Then
\[ 
\begin{aligned}
\int_{I_1 \times I_2} \left |(z_2-\tau_2)^n \tfrac{\partial \phi}{\partial z_1}(z) \right|^\p |d z| &=
\int_{I_2} | z_2- \tau_2|^{n \p} \int_{I_1}  | \tfrac{\partial \phi}{\partial z_1}(z)|^\p |dz_1 | |dz_2| \\
&\approx \int_{I_2} |z_2 - \tau_2|^{n\p} |z_2 -\tau_2|^{(1-\p)K_{\tau}} |dz_2|.
\end{aligned}
\]
It follows immediately that the above $L^\p$ integral is finite if and only if $n \p + (1-\p) K_{\tau} > -1$. Solving for $\p$ gives  $\p < \frac{K_\tau+1}{K_\tau-n}$ when $n<K_{\tau}$. If $n = K_{\tau}$, then the integral is finite for all $\p$. For this last piece, we do not need to worry about whether $\p \ge 1$. This follows because for each value of $n$, the $L^\p$ integral is finite for some $\p >1$. Then the nested properties of these $L^\p$ spaces imply that integral is also finite for all $ 0 < \p \le 1$. 

To show that each value $\p =  \frac{K_\tau+1}{K_\tau-n}$ is a global $z_1$-derivative integrability index, we must construct a polynomial $s_n$ such that $s_n/p$ is bounded on $\mathbb{D}^2$ and $s_n$ witnesses $\p$.  
If $p$ has a single zero
on $\T^2$ we are done, so let
 $\lambda \ne \tau$ be any other zero of $p$ on $\mathbb{T}^2$ and without loss of generality, assume $\tau_1 \ne \lambda_1$. Then for $N = N_{\lambda}(\tilde{p},p)$, we know $\dim (R_\lambda /( p, \tilde{p}) R_\lambda )= N$ and so $[1], [z_1], \dots,  [z_1^N]$ are linearly dependent in $R_\lambda /( p, \tilde{p}) R_\lambda$. Thus, there exist $a_0, \dots, a_N \in \mathbb{C}$ not all zero so
\[ r_\lambda(z) := \sum_{j=0}^N a_j z_1^j \in ( p, \tilde{p}) R_\lambda.\]
Since we can divide out any factors of $(z_1-\tau_1)$ without affecting inclusion in $ ( p, \tilde{p}) R_\lambda$, we can further assume that $r_\lambda(\tau) \ne 0$. Furthermore, by construction, 
\begin{equation} \label{eqn:infty}  \sup_{\p' > 0}  \left\{ \p':   r_\lambda \tfrac{\partial \phi}{\partial z_1}\in  L^{p'}_\lambda(\mathbb{T}^2)\right\} =\infty.\end{equation}
Let $r$ be the product of these $r_\lambda$ where $\lambda$ 
varies over all zeros different
from $\tau$ on $\T^2$
and set $s_n = rq_n$.  
Then $s_n/p$ is a bounded rational function. As $r(\tau) \ne 0$, our previous arguments combined with \eqref{eqn:infty} imply that the $z_1$-derivative integrability cut-off of $s_n$ is $\p= \frac{K_\tau+1}{K_\tau-n}$ if $n < K_{\tau}$ and $\p=\infty$ if $n=K_{\tau}.$
\end{proof}

There is also a simple bound for the number of ``local'' $z_1$-derivative integrability indices of $p$
for certain restricted numerators.

\begin{theorem} \label{thm:ii}  Let $p$ be an atoral stable polynomial on $\mathbb{D}^2$ with a zero at $\tau \in \mathbb{T}^2$.  Then there is a list of integrability indices   $\p_1, \dots, \p_M, \p_{M+1}$ of $p^2$ at $\tau$ with $M \le  N_\tau(p, \tilde{p})$ and $\p_{M+1} = \infty$ such that if
$q \in  ( p, \tilde{p}) R_\tau,$
then 
\[\sup_{\p' > 0}  \left\{ \p':  \partial_{z_1}(q/p) \in  L^{\p'}_\tau(\mathbb{T}^2) \right\} = \p_j \text{ for some $j$ with $1 \le j \le M+1$} .\]
\end{theorem}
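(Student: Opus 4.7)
The plan is to parametrize elements of $(p,\tilde p)R_\tau$ by a "second coordinate" living in the finite-dimensional quotient ring $R_\tau/(p,\tilde p)R_\tau$, and then run a filtration argument parallel to the proof of Proposition~\ref{prop:int}. Given $q \in (p,\tilde p)R_\tau$, write $q = f_1 p + f_2 \tilde p$ with $f_1, f_2 \in R_\tau$. Since $p$ is atoral, $p$ and $\tilde p$ share no common factor, so $f_2$ is determined modulo $p R_\tau \subset (p,\tilde p) R_\tau$, and the class $[f_2] \in R_\tau/(p,\tilde p)R_\tau$ is well defined. Differentiating $q/p = f_1 + f_2\phi$, where $\phi=\tilde p/p$, gives
\[
\partial_{z_1}\!\left(\frac{q}{p}\right) = \partial_{z_1} f_1 \;+\; \phi\,\partial_{z_1} f_2 \;+\; f_2\,\partial_{z_1}\phi.
\]
Because $\phi$ is an RIF with $|\phi|\le 1$ and $f_1,f_2 \in R_\tau$, the first two summands lie in $L^\infty$ near $\tau$. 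Hence the $z_1$-derivative integrability cut-off of $q/p$ at $\tau$ equals that of $f_2\,\partial_{z_1}\phi$ at $\tau$.

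Next, verify that this cut-off depends only on the class $[f_2]$. Using the identity $\partial_{z_1}\phi = (p\,\partial_{z_1}\tilde p - \tilde p\,\partial_{z_1} p)/p^2$, replacing $f_2$ by $f_2 + g p$ adds $g\,\partial_{z_1}\tilde p - g\phi\,\partial_{z_1} p$, which is bounded, while replacing $f_2$ by $f_2 + h\tilde p$ adds $h\phi\,\partial_{z_1}\tilde p - h\phi^2\,\partial_{z_1} p$, which is also bounded. Consequently, for each $\p > 0$ the subset
\[
\tilde Q^\tau_\p \;=\; \bigl\{[f] \in R_\tau/(p,\tilde p)R_\tau : f\,\partial_{z_1}\phi \in L^\p_\tau(\mathbb{T}^2)\bigr\}
\]
is a well-defined ideal in $R_\tau/(p,\tilde p)R_\tau$, and these form a decreasing filtration as $\p$ increases.

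Finally, count. Suppose finite cut-offs $\p_1 < \cdots < \p_M$ are witnessed by $q_1, \dots, q_M \in (p,\tilde p)R_\tau$ with associated classes $[f_{2,1}], \dots, [f_{2,M}]$. Choosing $\kappa_j$ with $\p_j < \kappa_j < \p_{j+1}$ places $[f_{2,j}]$ in $\tilde Q^\tau_{\kappa_{j-1}} \setminus \tilde Q^\tau_{\kappa_j}$. A nontrivial linear relation among the $[f_{2,j}]$ would make $\sum_j c_j f_{2,j}\,\partial_{z_1}\phi$ bounded, and then isolating the lowest nonzero term would force $f_{2,j_0}\,\partial_{z_1}\phi$ to inherit integrability strictly better than $\p_{j_0}$, a contradiction exactly as in Proposition~\ref{prop:int}. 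Hence the classes are linearly independent, giving $M \le \dim R_\tau/(p,\tilde p)R_\tau = N_\tau(p,\tilde p)$; together with the always-present index $\p_{M+1}=\infty$ this yields the theorem. The delicate step is the $\tilde p$-reduction in the middle paragraph: although $\tilde p$ itself vanishes on $\mathcal{Z}_p \cap \mathbb{T}^2$, the factor $\tilde p / p = \phi$ restores boundedness, and this cancellation is precisely what improves the bound from the coarser $N_\tau(p^2,\tilde p^2) = 4 N_\tau(p,\tilde p)$ of Corollary~\ref{cor:RSFderint} to the sharp $N_\tau(p,\tilde p)$ stated here.
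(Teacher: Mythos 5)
Your proof is correct and takes essentially the same approach as the paper: you decompose $q/p = f_1 + f_2\phi$, isolate the only unbounded term $f_2\,\partial_{z_1}\phi$, and bound the number of distinct cut-offs by producing linearly independent cosets in $R_\tau/(p,\tilde p)R_\tau$. The only cosmetic difference is that you carry out the filtration argument directly on the quotient ring $R_\tau/(p,\tilde p)R_\tau$ (after verifying the cut-off descends there), whereas the paper defines the filtration $R^\tau_{\mathfrak{p}}$ inside $R_\tau$ and passes to cosets only at the final counting step; the two formulations are interchangeable.
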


\begin{proof} We first need to limit the number of finite indices to at most $N_\tau(p, \tilde{p})$. To begin, write $\phi = \frac{\tilde{p}}{p}$ and $\frac{\partial \phi}{\partial z_1} = \frac{Q}{p^2}$, for some $Q \in \mathbb{C}[z_1, z_2].$ Mimicking the setup of Proposition \ref{prop:int}, define $R_0^\tau = R_\tau,$
\[ 
R^{\tau}_\p = \left \{ \hat{q}\in  R_\tau : \hat{q}Q/p^2 \in  L_{\tau}^{\p}(\mathbb{T}^2) \right\} \  \text{ for }  0 < \p < \infty, \\
\]
 and $R^{\tau}_\infty = \cap_{\p>0} R^{\tau}_\p.$ As in Proposition \ref{prop:int}, these are nested ideals in $R_\tau$. Observe that
\begin{equation} \label{eqn:ii}  \sup_{\p' > 0} \left\{ \p': \hat{q} \in R^{\tau}_{\p'} \right\} = \sup_{\p' > 0} \left\{ \p': \hat{q}Q/p^2 \in  L_{\tau}^{\p'}(\mathbb{T}^2) \right\} \end{equation} 
is well defined for each $\hat{q} \in R_\tau$. Clearly $\p=\infty$ equals \eqref{eqn:ii} when $\hat{q} = p^2$. 

Now we will show that there are at most $N_\tau(p, \tilde{p})$ finite values that \eqref{eqn:ii} can take. To that end, assume that $\p_1, \dots, \p_M$ is an  increasing list of distinct finite numbers such that for each $j$, there is a $ \hat{q}_j \in R_\tau$ such that $\p_j$ equals the quantity in \eqref{eqn:ii}.
Choose numbers $\kappa_1, \dots, \kappa_M$ such that 
\[ 0 < \p_1 < \kappa_1 < \p_2 < \kappa_2 < \dots < \p_M < \kappa_M < \infty.\] 
Then  $\hat{q}_1, \dots, \hat{q}_M$ satisfy
\[ \hat{q}_1 \in R_0^\tau \setminus R_{\kappa_1}^\tau, \ \ \hat{q}_2 \in R_{\kappa_1}^\tau \setminus R_{\kappa_2}^\tau, \dots, \ \ \hat{q}_M  
\in R_{\kappa_{M-1}}^\tau \setminus R_{\kappa_M}^\tau.\]
Now let $[\hat{q}_1], \dots, [\hat{q}_M]$ be the cosets generated by $\hat{q}_1, \dots, \hat{q}_M$ in $R_\tau/ ( p, \tilde{p} ) R_\tau$. Because $\hat{q}_1Q/p^2, \dots, \hat{q}_MQ/p^2$ have different integrability behaviors near $\tau$ on $\mathbb{T}^2$, the elements $[\hat{q}_1], \dots, [\hat{q}_M]$ must be linearly independent in $R_\tau/ ( p, \tilde{p} ) R_\tau$. Then \eqref{eqn:intmult} implies that $M \le N_{\tau}(p, \tilde{p})$. 

This argument shows that  \eqref{eqn:ii}  can take at most $N_{\tau}(p, \tilde{p})$ finite values as $\hat{q}$ ranges over $R_\tau$. With a slight abuse of notation, we label those values $\p_1, \dots, \p_M$ with $M \le  N_{\tau}(p, \tilde{p})$ and let $\p_{M+1} =\infty.$

To finish the proof
 fix $q\in (p,\tilde{p})R_{\tau}$, so that locally $f := q/p = r_1 + r_2 \phi$ for some $r_1, r_2 \in R_\tau$. Then, near $\tau$ we can write
\[ \frac{\partial f}{\partial z_1} = \frac{\partial r_1 }{\partial z_1} + r_2 \frac{\partial \phi}{\partial z_1} + \phi \frac{\partial r_2 }{\partial z_1}.\]
The only term that is not necessarily bounded near $\tau$ is $ r_2  \frac{\partial \phi}{\partial z_1}$, so it will determine the integrability of $\frac{\partial f}{\partial z_1}$. Recall that $r_2 \frac{\partial \phi}{\partial z_1} = r_2 \frac{Q}{p^2}$. By our previous argument, one of the $\p_j$ must satisfy 
\[ \p_j = \sup_{\p' > 0} \left\{ \p': r_2 Q/p^2 \in  L_{\tau}^{\p'}(\mathbb{T}^2) \right\}  = \sup_{\p' > 0}  \left\{ \p':  \partial_{z_1}(q/p) \in  L^{\p'}_\tau(\mathbb{T}^2)\right\},\]
which completes the proof. 
\end{proof}

In a generic situation Theorem \ref{thm:onebranch}, coupled with Theorem \ref{thm:ii} and Proposition \ref{prop:ii2}, allows us to precisely identify all of the $z_1$-derivative integrability indices of an atoral stable polynomial. 

\begin{theorem} \label{thm:RSFii}  Let $p$ be an atoral stable polynomial on $\mathbb{D}^2$ with  zeros $\tau^1, \dots, \tau^m \in \mathbb{T}^2$ and respective contact orders $K_{\tau^1}$, \dots, $K_{\tau^m}$. Further assume that $p$ vanishes to order 1 at each $\tau^j$.  Then, the  $z_1$-derivative integrability indices of $p$ are exactly 
\begin{equation} \label{eqn:globalint}  \frac{K_{\tau^j}+1}{K_{\tau^j}},  \ \ \frac{K_{\tau^j}+1}{K_{\tau^j}-1}, \ \  \frac{K_{\tau^j}+1}{K_{\tau^j}-2}, \dots, \ \  \frac{K_{\tau^j}+1}{1}\end{equation}
for $j=1, \dots, m$ and $\infty.$
\end{theorem}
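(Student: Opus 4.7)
The plan is to show, zero by zero, that at each $\tau^j$ the local $z_1$-derivative integrability indices of $p$ are \emph{exactly} the list from Proposition \ref{prop:ii2}, and then to assemble these local descriptions into the global statement using Theorem \ref{thm:onebranch}. The four ingredients are: Proposition \ref{prop:ii2} (which exhibits the indices in \eqref{eqn:globalint} and $\infty$ as locally attained at each $\tau^j$), Theorem \ref{thm:ii} (which bounds the number of local indices above by $N_{\tau^j}(p,\tilde{p})+1$ whenever the numerator is drawn from the ideal $(p,\tilde{p})R_{\tau^j}$), Theorem \ref{thm:onebranch} (which, because $p$ vanishes to order one at each $\tau^j$, forces every $q$ with $q/p \in H^\infty(\D^2)$ to lie in $(p,\tilde{p})R_{\tau^j}$), and Corollary \ref{cor:genericdim} (which computes $N_{\tau^j}(p,\tilde{p})$ to equal the contact order $K_{\tau^j}$ in this order-one case).

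First I would fix $j$ and carry out the local analysis near $\tau^j$. Since $p$ vanishes to order $1$ there, Corollary \ref{cor:genericdim} (transferred from $\UHP^2$ to $\D^2$ by Cayley transform) yields
\[
N_{\tau^j}(p,\tilde{p}) \;=\; \dim R_{\tau^j}/(p,\tilde{p})R_{\tau^j} \;=\; K_{\tau^j}.
\]
Theorem \ref{thm:ii} then supplies a list of at most $K_{\tau^j}$ finite local indices plus $\infty$, valid for every $q \in (p,\tilde{p})R_{\tau^j}$. On the other hand, Proposition \ref{prop:ii2} already exhibits the $K_{\tau^j}$ distinct finite values $\frac{K_{\tau^j}+1}{K_{\tau^j}-n}$ for $n=0,\dots,K_{\tau^j}-1$ together with $\infty$ as local indices witnessed by $q_n=(z_2-\tau_2^j)^n\tilde{p}$. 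A counting argument forces equality: these $K_{\tau^j}+1$ values form the complete list of local $z_1$-derivative integrability indices of $p$ at $\tau^j$ among numerators in $(p,\tilde{p})R_{\tau^j}$.

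To promote this to the global statement, let $q \in \C[z_1,z_2]$ with $q/p \in H^\infty(\D^2)$. By the polydisk version of Theorem \ref{thm:onebranch}, $q$ lies in $(p,\tilde{p})R_{\tau^j}$ for every $j$. Since $p^2$ vanishes only at the $\tau^j$ on $\T^2$, the function $\partial_{z_1}(q/p)$ is smooth off $\{\tau^1,\dots,\tau^m\}$, and the global integrability cut-off of $\partial_{z_1}(q/p)$ is the minimum over $j$ of its local cut-offs at each $\tau^j$. Each local cut-off lies in the list for $\tau^j$, so the global cut-off must lie in \eqref{eqn:globalint} together with $\infty$. Conversely, the witnesses $s_n = r\,q_n$ constructed in the proof of Proposition \ref{prop:ii2} are engineered so that the auxiliary factor $r$ pushes the local cut-offs at all zeros $\lambda \neq \tau^j$ up to $\infty$ while leaving the local cut-off at $\tau^j$ equal to $\frac{K_{\tau^j}+1}{K_{\tau^j}-n}$; hence each value in \eqref{eqn:globalint} is realized as a global $z_1$-derivative integrability index, and $\infty$ is realized by $q=p$.

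The hard part will be checking that no hidden slack creeps into the comparison between Proposition \ref{prop:ii2} and Theorem \ref{thm:ii}: one must verify that the $K_{\tau^j}$ finite values produced by $q_n = (z_2-\tau_2^j)^n\tilde{p}$ are genuinely \emph{distinct}, so that together with $\infty$ they saturate the upper bound $N_{\tau^j}(p,\tilde{p})+1$ from Theorem \ref{thm:ii}. Their distinctness is immediate from the explicit formula $\frac{K_{\tau^j}+1}{K_{\tau^j}-n}$, but one must also ensure, via the witness construction in Proposition \ref{prop:ii2}, that the multiplier $r$ can be chosen simultaneously with $r(\tau^j)\neq 0$ for every $j$, which is a routine consequence of the independence of the finitely many local conditions at distinct $\tau^j$.
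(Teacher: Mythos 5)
Your proof is correct and follows essentially the same route as the paper: lower bound from Proposition \ref{prop:ii2}, upper bound from Theorem \ref{thm:ii} combined with Theorem \ref{thm:onebranch} and Corollary \ref{cor:genericdim} to identify $N_{\tau^j}(p,\tilde p)=K_{\tau^j}$, then pass from local to global by taking the minimum of local cut-offs. The only minor slip in your write-up is the closing remark about choosing one multiplier $r$ with $r(\tau^j)\neq 0$ simultaneously for all $j$; in Proposition \ref{prop:ii2} a \emph{separate} auxiliary factor is built for each target $\tau^j$ (so no simultaneous choice is needed), but this does not affect the validity of the argument.
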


\begin{proof} Proposition \ref{prop:ii2} immediately implies that both $\infty$ and each number in \eqref{eqn:globalint} are $z_1$-derivative integrability indices of $p$. 

Now fix $q \in \mathbb{C}[z_1,z_2]$ so that $q/p$ is a rational Schur function. 
Since $p$ vanishes to order 1 at each boundary zero, Theorem \ref{thm:onebranch} implies $q \in ( p, \tilde{p}) R_{\tau^j}$ for each $j$. (Although Theorem \ref{thm:onebranch} is stated on the bi-upper half plane, its conclusions can be easily moved to the bidisk via conformal maps.) 
Fix $j$ with $1\le j \le m$. By Theorem \ref{thm:ii}, 
 \begin{equation} \label{eqn:localii} \sup_{\p' > 0}  \left\{ \p':  \partial z_1(q/p) \in  L^{p'}_{\tau^j}(\mathbb{T}^2)\right\} \end{equation}
must equal one of at most $N_{\tau^j}(p, \tilde{p})$ finite numbers or $\infty$. By the argument in the proof of Proposition \ref{prop:ii2}, $K_{\tau^j}$ of those finite numbers are given in \eqref{eqn:globalint}. 
Because $p$ vanishes to order 1 at $\tau$, $N_{\tau^j}(p,\tilde{p}) = K_{\tau^j}$ by Corollary \ref{cor:genericdim}
(again by translating to the bidisk).
This means \eqref{eqn:globalint} must contain all finite numbers that could equal \eqref{eqn:localii}. As the $z_1$-derivative integrability cut-off of $q$ is the minimum of  \eqref{eqn:localii} over $j=1, \dots, m$, this establishes the claim.
\end{proof}

\begin{example}  Let $p(z_1,z_2) = 2-z_1-z_2.$ Then $p$ has a single zero on $\mathbb{T}^2$ at $\tau = (1,1)$ with  contact order  $K_{\tau}=2$, and $p$ vanishes to order $1$ at $(1,1)$. 
Theorem \ref{thm:RSFii} implies that the $z_1$-derivative integrability indices of $p$ 
are exactly $\tfrac{3}{2},$ $3,$ $\infty$. Furthermore, the proof of Proposition  \ref{prop:ii2} implies that $q=\tilde{p}$ witnesses $\p =\tfrac{3}{2},$ $q= (z_2-1)\tilde{p}$ witnesses $\p=3$ and $ q=(z_2-1)^2\tilde{p}$ witnesses $\p=\infty$.

Similarly, let  $p(z_1,z_2) = 4-3z_1-z_2-z_1z_2+z_1^2$. Then $p$ again has a single zero at $\tau=(1,1)$, this time with contact order $K_\tau=4$ and order of vanishing $1$. In this case, Theorem \ref{thm:RSFii} implies that the $z_1$-derivative integrability indices of $p$ 
are exactly $\frac{5}{4}$ $\frac{5}{3},$ $\frac{5}{2}$, $5$, $\infty$. As before, these are witnessed by polynomials $q= (z_2-1)^n \tilde{p}$ for $n=0, \dots, 4$. 

See \cite[p.~298]{bps18} for both contact order computations. \eox
\end{example}

Our next example has multiple zeros on $\mathbb{T}^2$ with different associated contact orders.

\begin{example}
Consider $p$ defined by
\[p(z_1,z_2)=4-z_2+z_1z_2-3z_1^2z_2-z_1^3z_2.\]
A slight variant of this polynomial is discussed in  \cite[p.~491]{bps19a}. It is also not hard to check directly that this $p$ does not vanish on $\mathbb{D}^2$ and has precisely 
two zeros on $\mathbb{T}^2$. These occur at $\tau^1=(1,1)$ and $\tau^2=(-1,1)$, respectively, with associated 
contact orders $K_{\tau^1}=2$ and $K_{\tau^2}=4$. One can verify these contact orders by first solving $\tfrac{\tilde{p}}{p}(z_1, z_2) = \lambda$ for $z_2$ to get $z_2 = \psi(z_1; \lambda)$ and then finding the power series expansions of $ \psi(z_1; \lambda)$ around $1$ and $-1$. The first series has coefficients $a_n$ depending on $\lambda$ starting with $n = 2$ (giving contact order $2$) and the second has coefficients $a_n$ depending on $\lambda$ starting with $n = 4$ (giving contact order $4$). 
Since $p$ 
vanishes to order $1$ at both zeros, Theorem \ref{thm:RSFii} implies that the 
$z_1$-derivative integrability indices of $p$ are $\frac{3}{2},3, \infty,$
coming from $K_{\tau^1}$, and $\frac{5}{4}, \frac{5}{3}, \frac{5}{2}, 5, \infty,$
from $K_{\tau^2}$.
Combining these in increasing order, we obtain the global list of $z_1$-integrability indices for $p$:
\[\frac{5}{4}, \frac{3}{2}, \frac{5}{3}, \frac{5}{2}, 3, 5, \infty. \] 
The first part of the proof of Proposition \ref{prop:ii2} implies that the integrability indices $\tfrac{5}{4}, \tfrac{5}{3}, \tfrac{5}{2}, 5, \infty$ are witnessed by $q=(1-z_2)^n\tilde{p}$ for $n=0,\dots, 4$. To witness the integrability indices $\tfrac{3}{2}, 3$,  the second part of the proof of Proposition \ref{prop:ii2} suggests that we should find a polynomial $r$ satisfying
\[ r \in (p,\tilde{p})R_{\tau^2} \ \text{ and } \ r(\tau^1) \ne 0.\]
To that end, let $r(z) = (1+z_1)(1+z_1 z_2)$. Then $r(\tau^1)  \ne 0$ and one can check that
\[ r(z) = -\tfrac{1}{(1-z_1)^2} \left( z_1 p(z) + \tilde{p}(z) \right) \in  (p,\tilde{p})R_{\tau^2}.\]
Set $q =(1-z_2)^nr \tilde{p}$. Then  the proof of Proposition \ref{prop:ii2}  implies that if $n=0$, $q$ witnesses $\p = \tfrac{3}{2}$ and if $n=1$, $q$ witnesses $\p = 3$.\eox
\end{example}

\subsection*{Acknowledgments}
Part of this paper was completed during visits to Stockholm University and the University of Florida, 
whom we thank for their hospitality and stimulating research environments.  
The authors would like to sincerely thank Professor Ryan Tully-Doyle for
helpful comments on a draft of this paper.
Finally, thank you to the anonymous referees for numerous
suggestions and a thorough reading of the paper.


\normalsize

\end{document}